\documentclass[a4paper, reqno]{amsart}

\title[{}]{Quasi-lisse extension of affine $\sll_2$ \`{a} la Feigin--Tipunin}

\author{Thomas Creutzig}
\address{Department of Mathematical and Statistical Sciences, University of Alberta, 632 CAB, Edmonton, Alberta, Canada T6G 2G1}
\email{creutzig@ualberta.ca}

\author{Shigenori Nakatsuka}
\address{Department of Mathematical and Statistical Sciences, University of Alberta, 632 CAB, Edmonton, Alberta, Canada T6G 2G1}
\email{shigenori.nakatsuka@ualberta.ca}

\author{Shoma Sugimoto}
\address{Faculty of Mathematics, Kyushu University, Fukuoka 819-0386, Japan}
\email{shomasugimoto361@gmail.com}

\usepackage{latexsym}
\usepackage{amsmath}
\usepackage{amsfonts}
\usepackage{amssymb}
\usepackage{amsxtra}
\usepackage{mathrsfs}
\usepackage{eucal}
\usepackage{tikz} \usetikzlibrary{calc} \tikzset{>=latex} \usetikzlibrary{backgrounds}
\usepackage[all]{xy}
\usepackage{color}
\usepackage{braket}
\usepackage{graphics, setspace}
\usepackage{etoolbox, textcomp}
\usepackage{comment}
\usepackage{changepage}
\usepackage{xcolor}
\definecolor{rouge}{rgb}{0.85,0.1,.4}
\definecolor{bleu}{rgb}{0.1,0.2,0.9}
\definecolor{violet}{rgb}{0.7,0,0.8}
\usepackage[colorlinks=true,linkcolor=bleu,urlcolor=violet,citecolor=rouge]{hyperref}
\usepackage{cleveref}
\usepackage{lscape}

\usepackage{tikz}			%dynkin-diagrams
\usetikzlibrary{matrix}
\usetikzlibrary{automata, arrows.meta, positioning,cd}
\usepackage{dynkin-diagrams}
\usetikzlibrary{cd}			%commutative-diagrams

\newtheorem{definition}{Definition}[section]
\newtheorem{proposition}[definition]{Proposition}
\newtheorem{theorem}[definition]{Theorem}
\newtheorem{corollary}[definition]{Corollary}
\newtheorem{lemma}[definition]{Lemma}
\newtheorem{example}[definition]{Example}
\newtheorem{remark}[definition]{Remark}
\newtheorem{conjecture}[definition]{Conjecture}

\newtheorem*{thmA}{Theorem A}
\newtheorem*{thmB}{Theorem B}
\newtheorem*{thmC}{Theorem C}
\newtheorem*{thmD}{Theorem D}

\numberwithin{equation}{section}

\newcommand{\Z}{\mathbb{Z}}

\newcommand{\C}{\mathbb{C}}
\newcommand{\V}{\mathcal{V}}
\newcommand{\as}{\mathbf{a}}
\newcommand{\s}{\mathbf{s}}

\newcommand{\Hom}{\operatorname{Hom}}

\newcommand{\Com}{\operatorname{Com}}

\newcommand{\Ker}{\operatorname{Ker}}

\newcommand{\im}{\operatorname{Im}}

\newcommand{\ch}{\operatorname{ch}}
\newcommand{\tr}{\operatorname{tr}}

\newcommand{\Q}{\mathbb{Q}}

\newcommand{\W}{\mathcal{W}}
\newcommand{\cU}{\mathcal{U}}
\newcommand{\scU}{\mathscr{U}}
\newcommand{\fN}{\mathfrak{N}}

\newcommand{\g}{\mathfrak{g}}

\newcommand{\h}{\mathfrak{h}}
\newcommand{\bo}{\mathfrak{b}}

\newcommand{\gl}{\mathfrak{gl}}
\newcommand{\sll}{\mathfrak{sl}}
\newcommand{\af}{\mathrm{aff}}
\newcommand{\ssqrt}[1]{\operatorname{\sqrt{\smash[b]{#1}}}}
% Notation for Feigin-Tipunin algebras 

\newcommand{\FT}{\mathrm{FT}_p}
\newcommand{\sFT}{\mathcal{FT}_p}

\newcommand\doi[2]{\href{http://dx.doi.org/#1}{#2}}

\newcommand{\afWb}[3]{\mathcal{W}_{{#1},{#2}}^{[#3]}}
\newcommand{\afWp}[2]{\mathcal{W}_{{#1},{#2}}^{+}}
\newcommand{\afWm}[2]{\mathcal{W}_{{#1},{#2}}^{-}}

\newcommand{\afXp}[2]{\mathcal{X}_{{#1},{#2}}^{+}}
\newcommand{\afXm}[2]{\mathcal{X}_{{#1},{#2}}^{-}}
\newcommand{\afXpm}[2]{\mathcal{X}_{{#1},{#2}}^{\pm}}
\newcommand{\afXmp}[2]{\mathcal{X}_{{#1},{#2}}^{\mp}}
\newcommand{\genafXp}[1]{\mathcal{X}_{#1}^{+}}

\begin{document}
\maketitle

\begin{abstract}
We study the affine analogue $\FT(\sll_2)$ of the triplet algebra.
We show that $\FT(\sll_2)$ is quasi-lisse and the associated variety is the nilpotent cone of $\sll_2$. 
We realize $\FT(\sll_2)$ as the global sections of a sheaf of vertex algebras in the spirit of Feigin--Tipunin and thereby construct infinitely many simple modules and, in particular solve a conjecture by Semikhatov and Tipunin.
We introduce the Kazama--Suzuki dual superalgebra $s\W_p(\sll_{2|1})$ of $\FT(\sll_2)$ and their singlet type subalgebras $s\mathcal{M}_p(\sll_{2|1})$ and $\mathcal{M}_p(\sll_2)$ and show their correspondence of categories. For $p=1$, we show the logarithmic Kazhdan--Lusztig correspondence for these (super)algebras and, in particular, show that the quantum group corresponding to $s\mathcal{M}_1(\sll_{2|1})$ is the unrolled restricted quantum supergroup $u^H_{-1}(\sll_{2|1})$ as suggested by Semikhatov and Tipunin.
\end{abstract}

\section{Introduction}

One of the most important problems in the theory of vertex operator algebras (VOAs) is to establish and understand the structure of braided tensor category on their module categories.
The existence of braided tensor structure is highly nontrivial and was first established by Kazhdan and Lusztig \cite{KL} for the the affine vertex algebra $V^k(\g)$ associated with simple Lie algebra $\g$ at shifted levels $k+h^\vee\notin \Q_{\geq0}$ by the dual Coxeter number $h^\vee$ on the category of $\g$-integrable $V^k(\g)$-modules.
The braided tensor category turns out equivalent to the module category of the quantum group $U_q(\g)$ at $q=e^{2\pi\ssqrt{-1}/r^\vee(k+h^\vee)}$ where $r^\vee$ is the lacity of $\g$.

The general theory on the existence and construction of braided tensor structure on module categories for VOAs has been layed down by Huang--Lepowsky \cite{HL} and Huang--Lepowsky--Zhang \cite{HLZ}, see also \cite{CKM,M} for some recent works. 
The braided tensor categories thus obtained ---although difficult to establish in examples--- are expected to be equivalent to the categories of modules over certain (quasi-)Hopf algebras. This conjectural equivalence is broadly called the logarithmic Kazhdan--Lusztig correspondence, see \cite{CGR,CLR,CRR,FGST1,FGST2,L}.
The term ``logarithmic" means that the representation category is non-semisimple and thus has a rich structure.
Besides being interesting in their own right, the logarithmic vertex algebras and their module characters are also important in the connection to invariants of 3-manifolds \cite{CCFGH,CCFFGHP} and quantum field theories \cite{CDGG,BCDN}.

The affine vertex algebras $V^k(\g)$ at shifted levels $k+h^\vee\notin \Q_{\geq0}$ include the admissible case and are important for the representation theory \cite{KW,Ar4,AvE2}. On the other hand, the quantum groups $U_q(\g)$ are at roots of unity and behave quite differently since the module categories are non-semisimple and the center becomes larger. The latter gives rise to several choices of the quantum groups $u_q(\g)$, called small or restricted quantum groups.
In the light of correspondences, a natural question would be what kind of vertex algebra corresponds to $u_q(\g)$. 
Natural candidates would be $V^k(\g)$ themselves, the $\W$-algebras $\W^k(\g,f)$ \cite{FF3,KRW} obtained from $V^k(\g)$ via the quantum Drinfeld--Sokolov reduction (a.k.a BRST reduction), including the generalization to the superalgebras, and their extensions or quotients.

Feigin--Gainutdinov--Semikhatov--Tipunin \cite{FGST1,FGST2} gave a conjecture for $\g=\sll_2$ that the vertex algebra would be the triplet algebra $\W(p)$ \cite{K} which is an extension of the $(1,p)$ Virasoro vertex algebra $L(c_{1,p},0)$.
The correspondence is established recently \cite{CLR,CLR2, GN} under the quasi-Hopf algebra modification of $u_q(\sll_2)$ after a series of important works \cite{AM,AM2,CGR,KoSa,MY,NT,TW}.

Feigin and Tipunin \cite{FT} gave conjectural VOAs $\W(p)_Q$ corresponding to $u_q(\g)$ as extensions of principal $\W$-algebras appearing in the global sections of sheaves of certain vertex algebras over flag varieties.
Although the geometric construction enables us to obtain some simple modules \cite{Sug,Sug2} and some logarithmic modules are known \cite{AM3}, the representation theory is still unknown and, in particular, the $C_2$-cofiniteness of $\W(p)_Q$ itself, which implies that the representation category is finite, remains as an important conjecture.

Since the $\W$-algebras are defined through the BRST reduction from $V^k(\g)$, it is natural to study first the representation theory of $V^k(\g)$ and then the properties of the BRST reduction to finally understand the representation theory of the $\W$-algebras. 
Indeed, the $C_2$-cofiniteness of the minimal series representation of the principal $\W$-algebras was established by Arawaka in this way \cite{Ar3}.

Since the construction of $\W(p)_Q$ is generalized to all the $\W$-algebras, including the affine case $V^k(\g)$ \cite{CNS}, we expect that we can use this approach for $\W(p)_Q$.

\subsection{Main results}
In this paper, we consider the Feigin--Tipunin type construction for the rank one case, namely $\FT(\sll_2)$ and $\W(p)$ extending $V^k(\sll_2)$ at $k=-2+1/p$ and $L(c_{1,p},0)$ $(p\geq 1)$ and study their relations.
The free field realizations \cite{Wak,FF,FF3} of $V^k(\sll_2)$ and $L(c_{1,p},0)$ enable us to construct equivariant sheaves of vertex algebras over the flag variety $\mathbb{P}^1=\mathrm{SL}_2/\mathrm{B}$. The global sections
\begin{align}\label{intro geometric setting}
    \FT(\sll_2)=H^0(\mathrm{SL}_2\times_\mathrm{B}\beta\gamma\otimes V_{\sqrt{p}A_1}),\quad \FT(\sll_2,f)=H^0(\mathrm{SL}_2\times_\mathrm{B}V_{\sqrt{p}A_1})
\end{align}
are vertex algebras extending $V^k(\sll_2)$ and $L(c_{1,p},0)$, respectively. 
The restriction of these algebras to the fiber at the identity is injective and gives another realization
\begin{align}\label{Screenings}
    \mathrm{FT}_p(\sll_2)\simeq \mathrm{Ker}\ Q_-\mid_{\beta\gamma\otimes V_{\sqrt{p}A_1}},\quad \mathrm{FT}_p(\sll_2,f)\simeq \mathrm{Ker}\ \mathcal{Q}-\mid_{V_{\sqrt{p}A_1}},
\end{align}
by the kernel of screening operators. 
The latter one is the geometric realization of the triplet algebra $\W(p)$ due to Feigin--Tipunin \cite{FT} and the former one is Theorem \ref{identificatio of affine FT} ($p\geq 2$) and Proposition \ref{decomposition for p=1} $(p=1)$ in this paper.
This realization of $\mathrm{FT}_p(\sll_2)$ appears in the works \cite{ST1} on the logarithmic Kazhdan--Lusztig correspondence and the first VOA constructions are \cite{Ad3, ACGY}. 
In physics, these algebras appear  as chiral algebras in four dimensional physics, namely in so-called Argyres--Douglas theories (conjectured in \cite{C} and proven in \cite{ACGY}) as well as 
 large level limits of the kernel VOAs  of $S$-duality \cite{CG, CGL}.

Although $\W(p)$ is a $C_2$-cofinite vertex algebra \cite{AM}, $\mathrm{FT}_p(\sll_2)$ lies in a wider class, called \emph{quasi-lisse} \cite{AK}, which is a common property of vertex algebras appearing in four dimensional physics e.g. \cite{BLLPRvR,Ar2}.
More precisely, our first theorem tells the following.
\begin{thmA}[{Theorem \ref{theorem: associated variety}}] The vertex algebra $\FT(\sll_2)$ has finite strong generators and the associated variety of $\FT(\sll_2)$ is the nilpotent cone of $\sll_2$. Hence, $\FT(\sll_2)$ is quasi-lisse.
\end{thmA}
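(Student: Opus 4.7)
My plan has two parts: establishing finite strong generation, and computing the associated variety as the nilpotent cone $\mathcal{N} \subset \sll_2$. Both exploit the dual descriptions of $\FT(\sll_2)$ in (\ref{intro geometric setting}) and (\ref{Screenings}).

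For finite strong generation, I would prove that $\FT(\sll_2)$ is finitely generated as a module over the subalgebra $V^k(\sll_2)$; the conclusion then follows since $V^k(\sll_2)$ is itself strongly generated by its three $\sll_2$-currents. Applying Borel--Weil to the $\mathrm{SL}_2$-equivariant structure on $\mathbb{P}^1 = \mathrm{SL}_2/\mathrm{B}$ decomposes $\FT(\sll_2)$ into $\sll_2$-isotypic components indexed by the $\mathrm{B}$-lowest-weight vectors in the fiber $\beta\gamma \otimes V_{\sqrt{p}A_1}$, with each component sitting inside a free $V^k(\sll_2)$-module of finite rank. A conformal weight bound on the minimal $V^k(\sll_2)$-module generators of each isotypic component, obtained from the OPE structure on the fiber, then yields a finite list of strong generators.

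For the associated variety, the upper bound $X_{\FT(\sll_2)} \subseteq \mathcal{N}$ is the crucial step. The inclusion $V^k(\sll_2) \hookrightarrow \FT(\sll_2)$ induces an $\mathrm{SL}_2$-equivariant Poisson homomorphism $\mathbb{C}[\sll_2^*] = R_{V^k(\sll_2)} \to R_{\FT(\sll_2)}$ and hence a map $X_{\FT(\sll_2)} \to \sll_2^*$ whose image is a closed $\mathrm{Ad}$-invariant Poisson subvariety of $\sll_2^*$, i.e.\ a union of coadjoint orbit closures, so it must be one of $\{0\}$, $\mathcal{N}$, or $\sll_2^*$. To rule out the generic orbit, I would use the screening realization (\ref{Screenings}): pass to the associated graded with respect to Li's filtration, where $Q_-$ becomes a Poisson derivation of $R_{\beta\gamma \otimes V_{\sqrt{p}A_1}}$, and check by a direct residue computation that the Casimir of $\mathbb{C}[\sll_2^*]$ lies in $Q_- \cdot R_{\beta\gamma \otimes V_{\sqrt{p}A_1}}$, so that its image in $R_{\FT(\sll_2)} \subseteq \mathrm{Ker}\, Q_-$ vanishes. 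For the lower bound, the infinite graded dimension of $\FT(\sll_2)$ rules out lisseness, so the image is positive-dimensional, $\mathrm{SL}_2$-invariant, and contained in $\mathcal{N}$; the orbit structure of $\mathcal{N}$ for $\sll_2$ then forces equality.

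The main obstacle I anticipate is the explicit verification of the Casimir identity in the associated graded of the screening kernel, since the $C_2$-algebra of $\beta\gamma \otimes V_{\sqrt{p}A_1}$ is somewhat delicate to analyze directly. A cleaner alternative would be to first identify the simple quotient $L_k(\sll_2)$ at $k = -2 + 1/p$ as a subalgebra of $\FT(\sll_2)$ and invoke known results on the associated variety of $L_{-2+1/p}(\sll_2)$ at these boundary levels, thereby transferring the bound $X_{L_k(\sll_2)} \subseteq \mathcal{N}$ to $\FT(\sll_2)$ via the standard Arakawa-type argument for associated varieties of vertex algebra extensions.
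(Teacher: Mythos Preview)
Your proposal has genuine gaps in both the upper and lower bounds for the associated variety. For the lower bound, the claim that ``the infinite graded dimension of $\FT(\sll_2)$ rules out lisseness'' is simply false: lisse vertex algebras (lattice VOAs, simple affine VOAs at positive integer level, $\W(p)$ itself) are typically infinite-dimensional. What actually excludes $X_{\FT(\sll_2)}=\{0\}$ in the paper is the existence of a \emph{continuous family} of inequivalent simple $\FT(\sll_2)$-modules $\afWb{r}{s}{b}$ (Theorem~\ref{simplicity of X}), which a $C_2$-cofinite VOA cannot have. For the upper bound, your screening argument is circular: the natural map $R_{\FT(\sll_2)}\to R_{\beta\gamma\otimes V_{\sqrt{p}A_1}}$ need not be injective, so knowing the Casimir lies in $Q_-\cdot R_{\beta\gamma\otimes V_{\sqrt{p}A_1}}$ says nothing about its vanishing or nilpotence in $R_{\FT(\sll_2)}$; and even granting injectivity, being simultaneously in $\mathrm{Ker}\,Q_-$ and $\mathrm{Im}\,Q_-$ does not force an element to be zero. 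Your alternative via $L_k(\sll_2)$ is also not available off the shelf: the levels $k=-2+1/p$ are not admissible (the numerator of $k+2=1/p$ is $1$), so Arakawa's results on associated varieties of admissible affine VOAs do not apply.

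The paper's approach is entirely different and much more hands-on. It first exhibits explicit strong generators $e,h,f,x_{ij}=f_0^iQ_+^je^{-\sqrt{p}\alpha}$ ($0\le i,j\le 2$), using the decomposition $\FT(\sll_2)\simeq\bigoplus_{n\text{ even}}L(n\varpi)\otimes\mathbb{V}^k(n\varpi)$ together with the concrete OPE identity $x_{n+1,00}=(x_{1,00})_{(-2pn-1)}x_{n,00}$ to inductively express all higher isotypic pieces; your ``conformal weight bound'' sketch omits precisely this step. The lattice relation $(x_{00})_{(-1)}x_{00}=0$ then gives $x_{00}^2=0$ in $R_{\FT(\sll_2)}$, and a derivation lemma propagates nilpotency to all $x_{ij}$. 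The crucial point is that the quadratics $x_{01}^2,\,x_{01}x_{11},\,x_{11}^2+x_{01}x_{21},\,x_{11}x_{21},\,x_{21}^2$ actually lie in the subalgebra $V^k(\sll_2)\subset\FT(\sll_2)$, where one can compute them explicitly via the Wakimoto realization; a Gr\"obner basis computation shows the ideal they generate in $\mathrm{S}(\sll_2)$ contains a power of the Casimir. This forces $\iota^\sharp(X_{\FT(\sll_2)})\subseteq\mathcal{N}$, and the module-theoretic argument above then excludes $\{0\}$.
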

We construct explicit strong generators for the proof. In \cite{ST1}, the subalgebra generated by them is called $\mathbf{W}(2,(2p)^{\times 3\times 3})$ and thus we prove a conjecture by Semikhatov and Tipunin that $\mathbf{W}(2,(2p)^{\times 3\times 3})$ agrees with $\FT(\sll_2)$.

Since $\FT(\sll_2)$ is not $C_2$-cofinite, there must be infinitely many simple modules. To construct such modules, we replace the fiber $\beta\gamma\otimes V_{\sqrt{p}A_1}$ by their modules or certain $V^k(\sll_2)$-submodules, following the idea of \cite{FT,Sug,Sug2}. 
We parameterize the fibers by
\begin{align}\label{intro: fibers}
    \Pi[b]\otimes \V_{r,s}\supset \tau(\Pi[b]\otimes \V_{r,s}),\quad ([b]\in \C/\Z,\ s=1,2,\ 1\leq r\leq p),
\end{align}
and $\tau(\Pi[b]\otimes \V_{r,s})$ is proper only for special cases $[b]=[0], [\frac{r}{p}]$, see \S \ref{section:FT-modules} for detail.
\begin{thmB}[{Theorem \ref{FT(sl2,0)}/\ref{simplicity of X}}]
The higher cohomology $H^{>0}(\mathrm{SL}_2\times_\mathrm{B}M)$ vanishes for the fiber $M$ as in \eqref{intro: fibers}. Moreover, the global sections
\begin{align*}
    & \afWb{r}{s}{b}=H^0(\mathrm{SL}_2\times_\mathrm{B}\Pi[b]\otimes \V_{r,s}),\quad ([b]\in \C/\Z\ \backslash\{[0],[\tfrac{r}{p}]\}),\\
    &\afXp{r}{s}=H^0(\mathrm{SL}_2\times_\mathrm{B}\tau(\Pi[0]\otimes \V_{r,s})),\\
    &\afXm{r}{s}=H^0(\mathrm{SL}_2\times_\mathrm{B}\tau(\Pi[\tfrac{r}{p}]\otimes \V_{r,s})),\quad (r\neq p),
\end{align*}
and their spectral flow twists are simple $\mathrm{FT}_p(\sll_2)$-modules.
\end{thmB}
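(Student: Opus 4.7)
The plan is to adapt the Feigin--Tipunin geometric argument \cite{FT} to the affine setting. Since $\mathbb{P}^1 = \mathrm{SL}_2/\mathrm{B}$ has cohomological dimension one, $H^i = 0$ automatically for $i \geq 2$, so only $H^1$ must be controlled. The strategy is to introduce a $\mathrm{B}$-equivariant filtration on the fiber $M$ with finite-dimensional layers and reduce the vanishing to Borel--Weil--Bott for line bundles on $\mathbb{P}^1$. For simplicity of the global sections, I would use the injective restriction to the identity fiber afforded by \eqref{Screenings} together with a highest-weight argument.

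First I would equip each fiber $M \in \{\Pi[b]\otimes \V_{r,s},\ \tau(\Pi[b]\otimes \V_{r,s})\}$ with the conformal-weight filtration $M_{\leq n}$. The Wakimoto-type free-field action of $\mathrm{B}$ (with $\nil$ acting nilpotently via the positive modes of the screening current and $\h$ acting semisimply) preserves this filtration, so each quotient $M_{\leq n}/M_{\leq n-1}$ is a finite-dimensional $\mathrm{B}$-module. The associated graded sheaf $\mathrm{SL}_2 \times_{\mathrm{B}} \gr M$ then decomposes as a direct sum of line bundles $\cO(\lambda)$ indexed by the $\h$-weights of $\gr M$. Since $H^1(\mathbb{P}^1, \cO(\lambda)) = 0$ iff $\lambda \geq -1$, I would enumerate the $\h$-weights appearing in each fiber and verify that every weight $\lambda$ satisfies $\lambda \geq -1$. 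For $\Pi[b]\otimes \V_{r,s}$ this bound is immediate from the structure of $\Pi[b]$ (whose Heisenberg weights are bounded below in each conformal grade) combined with the finite weight string of $\V_{r,s}$; for $\tau(\Pi[b]\otimes \V_{r,s})$ at the exceptional values $[b]=[0]$ and $[\tfrac{r}{p}]$ the definition of $\tau$ is chosen precisely so that the weight spaces violating $\lambda \geq -1$ are removed. Induction on the filtration degree together with the long exact sequence in cohomology then lifts the vanishing on $\gr M$ to the vanishing on $M$ itself, yielding $H^{>0}(\mathrm{SL}_2 \times_\mathrm{B} M) = 0$.

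For the simplicity assertion, the cohomology vanishing ensures that restriction to the identity fiber is injective, identifying $\afWb{r}{s}{b}$ (resp.\ $\afXpm{r}{s}$) with the intersection of $M$ with the kernel of the screening $Q_-$ from \eqref{Screenings}. Any nonzero $\FT(\sll_2)$-submodule $N$ therefore restricts to a nonzero $\nil$-invariant subspace of $M$. I would then locate a distinguished vector in $N$ of extremal conformal weight, identify its $V^k(\sll_2)$-submodule as a known simple Wakimoto-type module, and use that the extension operators generating $\FT(\sll_2)$ over $V^k(\sll_2)$ act transitively on the weight layers of the global sections to conclude $N = H^0$. The spectral flow twists are then simple because spectral flow is an auto-equivalence on the relevant category of $V^k(\sll_2)$-modules which extends to $\FT(\sll_2)$-modules.

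The main obstacle I anticipate is the weight bookkeeping in the Borel--Weil--Bott step for the twisted sub-fiber $\tau(\Pi[b]\otimes \V_{r,s})$ at $[b]=[0], [\tfrac{r}{p}]$; the fiber there is not a tensor product of standard modules but a carefully chosen $\mathrm{B}$-stable sub-object, and one must check that $\tau$ removes precisely the offending weight spaces while remaining closed under the $\mathrm{B}$-action. A secondary difficulty lies in the simplicity step: a priori $\Ker(Q_-)$ applied to a simple free-field module need not be simple over the extended algebra, so the conclusion must be pinned down from a careful socle/top-component analysis inside $M$, guided by the conformal embedding $V^k(\sll_2) \hookrightarrow \FT(\sll_2)$ together with the explicit strong generators produced in the proof of Theorem A.
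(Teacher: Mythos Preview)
Your vanishing argument has a genuine gap. The $\mathrm{B}$-action on the fibers has $f$ acting by the screening operator $Q_+$, which is the residue of a weight-one current and hence preserves each conformal-weight layer exactly; thus the conformal-weight filtration does not trivialize the $\nil$-action on the associated graded, and $\gr M$ is not a sum of line bundles. Worse, even with a filtration that did kill $f$, the $h$-weights of $\V_{r,s}=\bigoplus_{s'\in s+2\Z}\pi^\alpha_{\alpha_{r,s'}}$ are (up to a shift) the integers $s'-1$ running over $s+2\Z$, hence unbounded below; infinitely many of the resulting line bundles would have nonvanishing $H^1$, and the same unboundedness persists in the $\tau$-submodules. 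So the claim ``every weight $\lambda$ satisfies $\lambda\geq -1$'' is false and the induction cannot start. The definition of $\tau$ in the paper has nothing to do with removing bad $h$-weights: $\tau(M)$ is the maximal $V^k(\sll_2)$-submodule lying in the positive-energy ind-category.

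The paper avoids both problems by a structural reduction rather than a filtration. The automorphism $g$ of $\Pi[0]\otimes V_{\sqrt{p}A_1}$ intertwines the affine $\mathrm{B}$-action with the Virasoro one, under which $\Pi[b]$ is $\mathrm{B}$-trivial; this reduces the computation for $\Pi[b]\otimes\V_{r,s}$ to that for $\V_{r,s}$ alone. There the Felder complex $0\to\W_{r,s}\to\V_{r,s}\to\W_{p-r,3-s}(-\varpi)\to 0$ has $\W_{r,s}$ and $\W_{p-r,3-s}$ already integrated to $\mathrm{SL}_2$-modules, so the long exact sequence in sheaf cohomology together with $H^n(\underline{\C}_0)=\delta_{n,0}\C$ and $H^n(\underline{\C}_{-\varpi})=0$ gives the vanishing with Borel--Weil--Bott invoked only at the two weights $0$ and $-\varpi$. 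For the $\tau$-fibers the paper runs the same long-exact-sequence argument against the affine Felder-type short exact sequences of Propositions~\ref{cor: Felder complex for sl2 I} and~\ref{cor: Felder complex for sl2 II}, whose outer terms are again $\mathrm{SL}_2$-integrated (possibly twisted by $-\varpi$).

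For simplicity your sketch is too vague to succeed as written; the mechanism the paper uses is contragredient duality. One first identifies $(\afXpm{r}{s})^*$ with a spectral-flow twist of another $\mathcal{X}$-module, shows both $\afXpm{r}{s}$ and its dual are cyclic over the generalized vertex algebra $\genafXp{1}$, and then uses a Dong--Mason Galois-type argument \cite{DM} to pass from simplicity over $\genafXp{1}$ to simplicity over $\FT(\sll_2)$.
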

These modules are realized inside the fiber $M$ at the identity and the simplicity of $\mathcal{X}_{r,s}^{\pm}$ and the discrete family $\afWb{r}{s}{b}$ ($[b]\in \frac{1}{p}\Z/\Z$) under this realization was conjectured by Semikhatov--Tipunin \cite[\S 7.4]{ST1}.

Next, we consider the relationships of $\FT(\sll_2)$ and $\W(p)$. Indeed, $\W(p)$ itself is obtained as the BRST reduction $H_{DS}^0$ of $\mathrm{FT}_p(\sll_2)$ \cite{ACGY}, which is expected by construction \eqref{intro geometric setting} as the fibers are already related by $H_{DS}^0$. These two facts can be formulated as the commutativity of two functors
\begin{align}\label{intro: commutativity of functors}
        H_{DS}^0(H^0(\mathrm{SL}_2\times_\mathrm{B}M))\simeq H^0(\mathrm{SL}_2\times_\mathrm{B}H_{DS}^0(M))
\end{align}
applied for $M=\beta\gamma\otimes V_{\sqrt{p}A_1}$. Indeed, this property is generalized for modules. 
\begin{thmC}[{Theorem \ref{compatibility of FT algeras for sl2}}] The commutativity \eqref{intro: commutativity of functors} holds for fibers $M$ as in \eqref{intro: fibers}.
Hence, the simple $\W(p)$-modules $\W_{r,s}(=H^0(\mathrm{SL}_2\times_\mathrm{B}\V_{r,s}))$
are obtained from simple $\FT(\sll_2)$-modules by the BRST reduction:
\begin{align*}
    \W_{r,s}\simeq H_{DS}^0(\mathcal{X}_{r,s}^\pm)\ (r\neq p),\quad \W_{p,s}\simeq H_{DS}^0(\mathcal{X}_{p,s}^\pm).
\end{align*}
\end{thmC}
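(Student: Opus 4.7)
The plan is to deduce the module-level identification from the commutativity (\ref{intro: commutativity of functors}) and to prove the latter by comparing two spectral sequences attached to a sheafified BRST complex. For $\sll_2$ the Drinfeld--Sokolov complex is a two-term complex $C_{DS}^\bullet(-)=(-)\otimes F^{\mathrm{gh},\bullet}$ with a free ghost factor on which $\mathrm{B}$ acts trivially on the piece relevant to induction, so applying $\mathrm{SL}_2\times_\mathrm{B}(-)$ termwise to any $M$ from (\ref{intro: fibers}) gives a complex of $\mathrm{B}$-equivariant quasi-coherent sheaves on $\mathbb{P}^1$. A \v{C}ech (or injective) resolution then produces a double complex whose total hypercohomology can be computed in two ways.

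Running the spectral sequences, the vertical-first one has $E_2$-term $H^n(\mathrm{SL}_2\times_\mathrm{B}H_{DS}^i(M))$ and collapses to $H^n(\mathrm{SL}_2\times_\mathrm{B}H_{DS}^0(M))$ once $H_{DS}^{i\neq 0}(M)=0$. The horizontal-first one has $E_2$-term $H_{DS}^i(H^j(\mathrm{SL}_2\times_\mathrm{B}(M\otimes F^{\mathrm{gh},\bullet})))$ and collapses to $H_{DS}^n(H^0(\mathrm{SL}_2\times_\mathrm{B}M))$ once Theorem B (and the same vanishing for the trivially ghost-tensored fibers, which is immediate since the ghost factor is a free tensor factor) gives $H^{j>0}(\mathrm{SL}_2\times_\mathrm{B}-)=0$ on each term of the BRST complex. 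Matching the two expressions yields (\ref{intro: commutativity of functors}).

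For the DS-vanishing I would use that the $\sll_2$ BRST complex lives in cohomological degrees $0$ and $1$ and that, on the Wakimoto-type free-field realization, the DS screening acts along the $\beta\gamma$ (equivalently, $\Pi$) direction. A direct free-field computation in the spirit of \cite{ACGY} then shows $H_{DS}^1=0$ on each fiber $\Pi[b]\otimes \V_{r,s}$ and identifies $H_{DS}^0$ with the corresponding Fock-type module extending $\V_{r,s}$. Substituting the three families from (\ref{intro: fibers}) into the established commutativity and invoking the Feigin--Tipunin identification $H^0(\mathrm{SL}_2\times_\mathrm{B}\V_{r,s})=\W_{r,s}$ yields the claimed isomorphisms $\W_{r,s}\simeq H_{DS}^0(\afXp{r}{s})$, $\W_{r,s}\simeq H_{DS}^0(\afXm{r}{s})$ $(r\neq p)$ and $\W_{p,s}\simeq H_{DS}^0(\afXp{p}{s})$.

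The hardest input will be the DS-vanishing on the proper submodules $\tau(\Pi[0]\otimes \V_{r,s})$ and $\tau(\Pi[\tfrac{r}{p}]\otimes \V_{r,s})$, which are singled out by a resonance between the $\Pi$-parameter $b$ and the lattice weight $r/p$, so a soft conformal-weight or charge-grading argument does not directly apply to them. I would either match graded characters of $H_{DS}^0$ against the expected Fock-type submodule or, more structurally, verify that $H_{DS}^0$ is exact on the category of Wakimoto-like modules relevant here and that it intertwines the spectral flow $\tau$ on the $\Pi$-side with the natural shift on the $\W(p)$-side. Once that input is available, both parts of Theorem C drop out formally from the spectral-sequence machinery above.
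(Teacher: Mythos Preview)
Your spectral-sequence framework is sound in principle, and you correctly identify where the difficulty lies: the DS-vanishing on the atypical fibers $\tau(\Pi[\tfrac{r}{p}]\otimes\V_{r,s})$. But this is exactly the step you leave open, and your proposed fixes (character matching, or hoping $H_{DS}^0$ is exact on some ambient category containing these) are not sharp enough. The fiber $\tau(\Pi[\tfrac{r}{p}]\otimes\V_{r,s})$ is \emph{not} a direct sum of Wakimoto modules, so the free-field argument you invoke for $\Pi[b]\otimes\V_{r,s}$ does not carry over; and invoking exactness begs the question, since that is essentially what you are trying to prove.

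The paper takes a different and more concrete route that bypasses the fiber computation entirely. Rather than computing $H_{DS}^\bullet$ on the fiber and then taking $H^0(\mathrm{SL}_2\times_\mathrm{B}-)$, it computes $H_{DS}^\bullet$ directly on the \emph{global sections} $\afXpm{r}{s}$. The point is that by the decomposition \eqref{eq: X-decomposition},
\[
\afXpm{r}{s}\simeq\bigoplus_{n\geq0}\C^{2n+s}\otimes L_k^+(\lambda_{\pm r,\pm(2n+s)}),
\]
so it suffices to compute $H_{DS}^\bullet$ on each simple highest-weight module $L_k^+(\lambda_{r,s})$. For this the paper first derives, from the character formula in Corollary~\ref{character formula for sl2}, a two-step resolution $0\to\mathbb{M}_k^+(\lambda_{r,-s})\to\mathbb{M}_k^+(\lambda_{r,s})\to L_k^+(\lambda_{r,s})\to 0$ (Corollary~\ref{cor: inverse Feigin-Fuks short exact sequence}), and then applies Arakawa's vanishing and Verma-to-Verma theorem \cite{Ar} to identify $H_{DS}^n(L_k^+(\lambda_{r,s}))\simeq\delta_{n,0}L_{r,s}$ via the Feigin--Fuchs resolution. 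Summing over $n$ gives $H_{DS}^n(\afXpm{r}{s})\simeq\delta_{n,0}\W_{r,s}$ as $L(c_{1,p},0)$-modules. The upgrade to $\W(p)$-modules and the vertex-algebra statement for $r=s=1$ then follow by applying $H_{DS}^\bullet$ to the affine Felder complex (first row of Proposition~\ref{cor: Felder complex for sl2 I}) and using that the $L(c_{1,p},0)$-submodule $\W_{r,s}\subset\V_{r,s}$ is unique.

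So the key idea you are missing is: do not try to push $H_{DS}$ through the sheaf; instead exploit that the global sections already decompose into highest-weight $V^k(\sll_2)$-modules where Arakawa's machinery applies directly. Your approach trades this explicit representation-theoretic input for an abstract double-complex argument, but the price is precisely the DS-vanishing on $\tau(\Pi[\tfrac{r}{p}]\otimes\V_{r,s})$ that you cannot easily obtain without going through something like the paper's argument anyway.
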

We note that thanks to the isomorphism $\W(p)\simeq H_{DS}^0(\FT(\sll_2))$, Theorem A implies the $C_2$-cofiniteness of $\W(p)$ \cite{AM} (Collorary \ref{C2 cofiniteness of triplet}).

The characterization of $\FT(\sll_2)$ and $\W(p)$ by the screening kernels \eqref{Screenings} inside free field algebras is important to understand their representation category and the logarithmic Kazhdan--Lustzig correspondence as was first noticed by Semikhatov and Tipunin \cite{ST,ST1} in general. 
Let $A$ be a free field algebra and $V$ a vertex subalgebra characterized by the joint kernel of screening operators $\{S_i\}$. Then under a mild assumption, $\{S_i\}$ generate a Nichols algebra $\fN$ inside the category $\mathrm{Rep}^{\mathrm{wt}}(A)$ of weight $A$-modules \cite{L}. Then it is expected \cite{CLR,ST1,L} that the category $\mathrm{Rep}^{\mathrm{wt}}(V)$ of weight $V$-modules has the structure of braided tensor category and is equivalent to the category of $(\fN,\fN)$ Yetter--Drinfeld modules in $\mathrm{Rep}^{\mathrm{wt}}(A)$:
\begin{align}\label{intro: logKL}
    \mathrm{Rep}^{\mathrm{wt}}(V)\simeq {}^{\fN}_\fN\mathcal{YD}(\mathrm{Rep}^{\mathrm{wt}}(A)).
\end{align}
The category $\mathrm{Rep}^{\mathrm{wt}}(A)$ is often realized as a representation category of a commutative (quasi-)Hopf algebra and then ${}^{\fN}_\fN\mathcal{YD}(\mathrm{Rep}^{\mathrm{wt}}(A))$ as the category of the braided Drinfeld double of $\fN$ which is likely to coincide with a quasi-quantum group $\mathscr{U}$ thanks to the Heckenberger's classification \cite{He} of finite dimensional Nichols algebras \cite{GLO,LS}.

In the case of $\W(p)$, $\mathscr{U}$ is realized as a quasi-Hopf algebra modification of the small quantum group $u_q(\sll_2)$ \cite{CGR,CLR,GN}.
It it worth mentioning that the subalgebra $\mathcal{M}(p)\subset \W(p)$, called the singlet algebra \cite{A2}, does not need the modification and indeed $\mathscr{U}$ is realized as the unrolled restricted version $u_q^H(\sll_2)$ of $u_q(\sll_2)$ \cite{CLR}.

In the case of $\FT(\sll_2)$, Semikhatov and Tipunin \cite{ST1} conjectured that $\mathscr{U}$ is a version of the small quantum group $u_q(\sll_{2|1})$. Again, it is convenient to introduce the singlet type subalgebra $\mathcal{M}_p(\sll_2)\subset \FT(\sll_2)$ and, moreover, to switch to the Kazama--Suzuki dual (KS dual) side \cite{KaSu} as the dual of $V^k(\sll_2)$ is the $\mathcal{N}=2$ superconfromal algebra $\W^\ell(\sll_{2|1})$ at dual level $\ell$. 
The relations of these algebras are summarized as follows.
\begin{center}
\begin{tikzpicture}[x=2mm,y=2mm]
\useasboundingbox (0,0) rectangle (40,15);
% Object
\node (A1) at (0,11) {$\W(p)$ };
\node (B1) at (0,6) {$\mathcal{M}(p)$};
\node (C1) at (0,1) {$L(c_{1,p},0)$};
\node (A2) at (20,11) {$\FT(\sll_2)$ };
\node (B2) at (20,6) {$\mathcal{M}_p(\sll_2)$};
\node (C2) at (20,1) {$V^k(\sll_2)$};
\node (A3) at (40,11) {$s\W_p(\sll_{2|1})$};
\node (B3) at (40,6) {$s\mathcal{M}_p(\sll_{2|1})$};
\node (C3) at (40,1) {$\W^\ell(\sll_{2|1})$};
\node at (0,3) {$\cup$};
\node at (0,8) {$\cup$};
\node at (20,3) {$\cup$};
\node at (20,8) {$\cup$};
\node at (40,3) {$\cup$};
\node at (40,8) {$\cup$};
\draw[<->] (A2) -- (A3);
\draw[<->] (B2) -- (B3);
\draw[<->] (C2) -- (C3);
\draw[densely dotted, ->] (A2) -- (A1);
\draw[densely dotted,->] (B2) -- (B1);
\draw[densely dotted,->] (C2) -- (C1);
\node at (30,13) {{\small KS dual}};
\node at (10,13) {{\small $H_{DS}$}};
\end{tikzpicture}
\end{center}
The Kamaza-Suzuki dual algebras enjoy the correspondence \cite{FST,CGNS} for the categories of weight modules and the intertwiners for modules (Theorem \ref{Feigin-Semikhatov duality}).
Since all of these algebras are characterized by the screening kernel inside free field algebras, we can use the formulation \eqref{intro: logKL} for the logarithmic Kazhdan--Lusztig correspondence.
In particular, the quasi-quantum group $\mathscr{U}$ for $s\mathcal{M}_p(\sll_{2|1})$ is the honest unrolled restricted quantum supergroup $u_q^H(\sll_{2|1})$.
For $p=1$, these algebras are also obtained as vertex (super)algebra extension of the singlet algebra $\mathcal{M}(2)$ and thus the logarithmic Kazhdan--Lusztig correspondence for $\mathcal{M}(2)$ can be used to show the correspondence for this case. 
\begin{thmD}[{Theorem \ref{log KL theorem}}]
For $p=1$, the logarithmic Kazhdan--Lusztig correspondence \eqref{intro: logKL} holds and, in particular, 
\begin{align*}
    \mathrm{Rep}^{\mathrm{wt}}(s\mathcal{M}_1(\sll_{2|1}))\simeq \mathrm{Rep}^{\mathrm{wt}}(u_{-1}^H(\sll_{2|1})).
\end{align*}
\end{thmD}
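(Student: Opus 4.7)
The plan is to exploit the presentation at $p=1$ of $s\mathcal{M}_1(\sll_{2|1})$ as a vertex superalgebra extension of the singlet algebra $\mathcal{M}(2)$, and to transport the known logarithmic Kazhdan--Lusztig correspondence
\begin{align*}
\mathrm{Rep}^{\mathrm{wt}}(\mathcal{M}(2)) \simeq \mathrm{Rep}^{\mathrm{wt}}(u_{-1}^H(\sll_2))
\end{align*}
of \cite{CLR} across this extension.

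First, I would use the screening realization to pin down a commutative superalgebra object $\mathcal{A} \in \mathrm{Rep}^{\mathrm{wt}}(\mathcal{M}(2))$ for which $s\mathcal{M}_1(\sll_{2|1})$ is the associated simple-current extension of $\mathcal{M}(2)$. At $p=1$ the screenings split into a ``bosonic'' subset that already cuts out $\mathcal{M}(2)$ inside the free field algebra, together with a ``fermionic'' screening that organises the extra fields of $s\mathcal{M}_1(\sll_{2|1})$ into a direct sum of $\mathcal{M}(2)$-simple currents endowed with an associative odd product. By the extension theory of \cite{CKM},
\begin{align*}
\mathrm{Rep}^{\mathrm{wt}}(s\mathcal{M}_1(\sll_{2|1})) \simeq \mathrm{Rep}_{\mathcal{A}}^{\mathrm{loc}}\bigl(\mathrm{Rep}^{\mathrm{wt}}(\mathcal{M}(2))\bigr)
\end{align*}
as braided tensor supercategories.

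Second, the equivalence \cite{CLR} transports $\mathcal{A}$ to a commutative superalgebra $\tilde{\mathcal{A}}$ in $\mathrm{Rep}^{\mathrm{wt}}(u_{-1}^H(\sll_2))$. Using the Nichols-algebraic framework \eqref{intro: logKL} together with Heckenberger's classification \cite{He}, I would identify $\tilde{\mathcal{A}}$ with the induction algebra along the standard embedding $u_{-1}^H(\sll_2) \hookrightarrow u_{-1}^H(\sll_{2|1})$; equivalently, $\tilde{\mathcal{A}}$ corepresents the forgetful functor from $u_{-1}^H(\sll_{2|1})$-weight modules to $u_{-1}^H(\sll_2)$-weight modules, so that
\begin{align*}
\mathrm{Rep}_{\tilde{\mathcal{A}}}^{\mathrm{loc}}\bigl(\mathrm{Rep}^{\mathrm{wt}}(u_{-1}^H(\sll_2))\bigr) \simeq \mathrm{Rep}^{\mathrm{wt}}(u_{-1}^H(\sll_{2|1})).
\end{align*}
Concatenating the three equivalences yields the theorem for $s\mathcal{M}_1(\sll_{2|1})$; the parallel statements for $\mathcal{M}_1(\sll_2)$, for $\FT(\sll_2)$ at $p=1$, and for $s\W_p(\sll_{2|1})$ at $p=1$ follow by running the same extension argument with the appropriate commutative algebra object and by invoking the Kazama--Suzuki duality established earlier in the paper.

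The main obstacle will be the explicit identification of $\tilde{\mathcal{A}}$ as the induction algebra on the quantum supergroup side. This requires matching the odd Nichols-algebra generator of $u_{-1}^H(\sll_{2|1})$ with the fermionic screening of $s\mathcal{M}_1(\sll_{2|1})$, keeping track of the correct braiding, parity, and $R$-matrix data rather than only the underlying algebra structure. A subtler point is the compatibility of the unrolled Cartan extension $H$ across the extension: the continuous weight grading on $s\mathcal{M}_1(\sll_{2|1})$-modules must be shown to align with the Cartan action on weight modules of $u_{-1}^H(\sll_{2|1})$, which amounts to verifying that the relevant Hopf-algebraic projection intertwines the two gradings.
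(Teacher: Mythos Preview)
Your proposal has a structural mismatch with what actually happens at $p=1$. You propose to view $s\mathcal{M}_1(\sll_{2|1})$ as a simple-current extension of a \emph{single} copy of $\mathcal{M}(2)$, with the screenings splitting into ``a bosonic subset that cuts out $\mathcal{M}(2)$'' plus ``a fermionic screening''. But for $p=1$ both screenings $\widehat{S}_1,\widehat{S}_2$ in \eqref{screening charges} carry the $V_\Z$-factor $e^x$ and are fermionic; there is no bosonic screening singling out one $\mathcal{M}(2)$. The actual decomposition (Proposition~\ref{decomposition for p=1} and \eqref{decomposition of affine singlet for P=1}, transported to the Kazama--Suzuki dual side) is
\[
s\mathcal{M}_1(\sll_{2|1})\;\simeq\;\bigoplus_{n\in\Z}\mathcal{M}_n\otimes\mathcal{M}_n\otimes\pi^v_n,
\]
i.e.\ a rank-one simple-current extension of $\mathcal{M}(2)\otimes\mathcal{M}(2)\otimes\pi^v$. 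As an extension of a single $\mathcal{M}(2)$ the object is an infinite-multiplicity sum of simple currents, and the commutative-algebra structure is not visible in $\mathrm{Rep}^{\mathrm{wt}}(\mathcal{M}(2))$ alone; your local-module step would not produce a rank-two Cartan on the quantum-group side. (Relatedly, the singlet correspondence for $\mathcal{M}(2)$ is at the $p=2$ value of $q$, not $q=-1$, so ``$u_{-1}^H(\sll_2)$'' is already the wrong target.)

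The paper's argument instead applies the singlet correspondence of \cite{CLR} to \emph{each} of the two $\mathcal{M}(2)$ factors and tensors with the Heisenberg piece, landing on the braided Drinfeld double $\widetilde{\scU}=\mathrm{Drin}_{\widetilde{\mathcal{H}}}(\fN(X^\s),\fN(X^\s)^*)$ over $\widetilde{\mathcal{H}}=\C[H_0,H_i,K_i^{\pm1}\mid i=1,2]$. The degenerate feature of $p=1$ that makes this work is precisely that the rank-two Nichols algebra $\fN(X^\s)$ factors (the generalized Dynkin diagram is a disjoint union of two nodes), so the two-variable problem reduces to two independent singlet correspondences. The final step is an \emph{uprolling} along the rank-one lattice $L=\Z x$ via \cite[\S10]{CLR}, which replaces $\widetilde{\mathcal{H}}$ by $\mathcal{H}^\s$ and yields $u_{-1}^H(\sll_{2|1})=\mathscr{U}^\s$. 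Your proposed identification of $\tilde{\mathcal{A}}$ with an induction algebra along $u_{-1}^H(\sll_2)\hookrightarrow u_{-1}^H(\sll_{2|1})$ has no counterpart in this picture and would, in any case, go in the wrong direction (local modules over an induction algebra recover modules for the \emph{smaller} Hopf algebra, not the larger one).
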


\subsection{Outlook}
It is important to study systematically the Feigin--Tipunin type extension of the $\W$-algebras and especially of the affine vertex algebras. We hope that one might be able to understand their representation categories following the ideas of the geometric representation theory at the level of abelian categories and then to study the logarithmic Kazhdan--Lusztig correspondence at the level of braided tensor categories. 
It is plausible that the Feigin--Tipunin type extension of affine vertex algebras are quasi-lisse and their associated varieties likely coincide with the nilpotent cone and that the Feigin--Tipunin type extension of other $\W$-algebras are uniformly obtained with the BRST reduction. It implies that we would obtain a new family of quasi-lisse vertex algebras and especially settle the $C_2$-cofiniteness conjecture of the original algebra $\W(p)_Q$.
For the hook-type $\W$-algebras, we expect that the degenerate case $p=1$ can be understood by using the singlet algebras $\mathcal{M}(p)$.
Another important direction is their possible applications to the invariants of 3-manifolds, namely q-series called the homological blocks \cite{CCFGH,CCFFGHP}. The invariants depend on the choice of gauge group $G$. In the case of $G=\mathrm{SL}_2$, they are expressed in terms of the module characters of the singlet algebras $\mathcal{M}(p)$. 
We hope that the module characters of the Feigin--Tipunin type extension of affine vertex algebra, where the gauge group $G$ is clear, would lead a more direct relationships. 

\subsection*{Organization of the paper} 
In \S \ref{section: basics}, we introduce $\FT(\sll_2)$ and derive the presentation as screening kernel. 
In \S\ref{section: triplet}, we review the triplet algebra $\W(p)$.  In \S\ref{section: weight V-modules}, we study free field representations of $V^k(\sll_2)$ appearing in the construction of $\FT(\sll_2)$-modules. 
In \S \ref{section:FT-modules}, we introduce $\afWb{r}{s}{b}$ and $\afXpm{r}{s}$ as $(\mathrm{B},V^k(\sll_2))$-bimodules and study their properties based on \S\ref{section: triplet}-\ref{section: weight V-modules}. 
In \S\ref{section: main results}, we show Theorem B and C.
In \S\ref{section: associated variety}, we show Theorem A and propose a conjecture on the relation of the relative semi-infinite cohomology for $\gl_1$ and associated varieties, which would lead to yet another proof of the $C_2$-cofiniteness of $\W(p)$ and some new $C_2$-cofinite VOAs.
In \S\ref{section: the case p=1}, we treat the degenerate case $p=1$. 
In \S\ref{section: log KL}, we study the logarithmic Kazhdan--Lusztig correspondence for $\FT(\sll_2)$ and related algebras and prove Theorem D. 

\subsection*{Acknowledgements}
We appreciate many fruitful discussions on related topics with Simon Lentner. S.N. wishes to express his gratitude to Jinwei Yang for valuable discussions.
T.C. is supported by NSERC Grant Number RES0048511, S.N by JSPS Overseas Research Fellowships Grant Number 202260077 and S.S. by JSPS
KAKENHI Grant number 22J00951. A part of this work is done while S.S. was staying at University of Alberta. He is grateful to the institute for the hospitality.

\section{extension of affine vertex algebras}\label{section: basics}
\subsection{Equivariant VOA bundles}\label{Sec: construction}
Let $\pi^\alpha$ be the Heisenberg vertex algebra generated by the field $\alpha(z)$ satisfying the OPE 
\begin{align*}
\alpha(z)\alpha(w)\sim \frac{2}{(z-w)^2},
\end{align*}
$\pi^\alpha_{\lambda}$ the Fock module of $\pi^\alpha$ of highest weight $\lambda\in \C\alpha$ and $V_{\sqrt{p}A_1}=\bigoplus_{n\in \Z}\pi^\alpha_{\sqrt{p}n\alpha}$ the lattice vertex algebra associated with the rescaled lattice $\sqrt{p}A_1=\sqrt{p}\Z\alpha$ ($p> 0$). 
By the Wakimoto realization \cite{Wak,FF}, the universal affine vertex algebra $V^k(\sll_2)$ of $\sll_2$ at level $k=-2+\frac{1}{p}$ is realized as 
\begin{align}\label{sl2 wakimoto}
\begin{split}
\mu_{\mathrm{Wak}}\colon &V^k(\sll_2)\hookrightarrow \beta\gamma \otimes \pi^{\alpha}\\
&e\mapsto \beta\otimes 1,\quad h\mapsto -2\gamma\beta\otimes 1-1\otimes \tfrac{1}{\sqrt{p}}\alpha,\\
& f\mapsto -\gamma^2\beta\otimes 1-\gamma\otimes \tfrac{1}{\sqrt{p}}\alpha+(-2+\tfrac{1}{p})\partial \gamma\otimes 1\\
&L_{\mathrm{sug}}\mapsto \beta\partial \gamma\otimes 1+1\otimes (\tfrac{1}{4}\alpha^2+\sqrt{p}\partial \varpi)
\end{split}
\end{align}
where the symbol $:AB:$ of the normally ordered product is omitted, $\beta\gamma$ denotes the $\beta\gamma$-system, $L_{\mathrm{sug}}$ is the conformal vector by the Segal--Sugawara construction and $\varpi=\frac{1}{2}\alpha$ the fundamental weight. 
The image is contained in
\begin{align}\label{sl2 long screening}
V^k(\sll_2)\subset \mathrm{Ker}\left(Q_+\colon \beta\gamma\otimes \pi^\alpha \rightarrow \beta\gamma\otimes \pi^\alpha_{\sqrt{p}\alpha}\right),\quad 
Q_+=\int Y(\beta\otimes e^{\sqrt{p}\alpha},z) dz.
\end{align}
Note that $\beta\otimes e^{\sqrt{p}\alpha}$ lies in $\beta\gamma\otimes V_{\sqrt{p}A_1}$ and thus we have 
\begin{align*}
Q_+\colon \beta\gamma\otimes V_{\sqrt{p}A_1}\rightarrow \beta\gamma\otimes V_{\sqrt{p}A_1}.
\end{align*}
It is straightforward to show that the assignment
\begin{align}\label{f-action}
f \mapsto Q_+,
\quad   h|_{\beta\gamma\otimes \pi^{\alpha}_{\lambda}}=-\tfrac{1}{\sqrt{p}}(h,\lambda),\quad (\lambda \in \sqrt{p}A_1)
\end{align}
defines an action of the lower Borel subalgebra $\bo\subset \sll_2$, which integrates to the action of the Borel subgroup $\mathrm{B}=\mathrm{H}\ltimes \mathrm{N}_-$.
Therefore, we may construct the sheaf of vertex algebras over the flag variety $\mathbb{P}^1=\mathrm{SL_2}/\mathrm{B}$ by the local sections of the equivariant vector bundle 
$$\sFT(\sll_2):=\mathrm{SL}_2\times_\mathrm{B} (\beta\gamma \otimes V_{\sqrt{p}A_1}).$$
The affine Feigin--Tipunin algebra $\mathrm{FT}_p(\sll_2)$ of $\sll_2$ is, by definition, the vertex algebra on the global sections:
\begin{align*}
\mathrm{FT}_p(\sll_2):=H^0(\sFT(\sll_2)).
\end{align*}
It contains $V^k(\sll_2)$ as a vertex subalgebra since the subalgebra of the constant sections recovers \eqref{sl2 long screening}:
\begin{align*}
    H^0(\sFT(\sll_2))^{\mathrm{SL}_2}\simeq (\beta\gamma \otimes V_{\sqrt{p}A_1})^\mathrm{B}\simeq (\beta\gamma \otimes \pi^\alpha)^{\mathrm{N}_-}=
    \mathrm{Ker}_{\beta\gamma\otimes \pi^\alpha}Q_+.
\end{align*}

The construction of $\mathrm{FT}_p(\sll_2)$ is a natural analogue of the celebrated construction due to Feigin and Tipunin \cite{FT} of logarithmic extensions of the simply-laced principal $\W$-algebras. The relevant case is the extension of the Virasoro $(1,p)$-model $L(c_{1,p},0)$ of central charge $c_{1,p}:=1-6(p-1)^2/p$, also known as the triplet algebra $\W(p)$ \cite{K,AM}. 
It is obtained by taking the BRST reduction of the realization \eqref{sl2 wakimoto}, which gives an embedding 
\begin{align}\label{vir wakimoto}
\mu_{\mathrm{Wak}}\colon &L(c_{1,p},0)\hookrightarrow  \pi^{\alpha},\quad 
L\mapsto \omega_{1,p}=\tfrac{1}{4}\alpha^2+\tfrac{p-1}{2\sqrt{p}}\partial \alpha,
\end{align}
whose image is contained in the kernel of the cohomology class of the screening operator $\mathcal{Q}_+=[Q_+]$, namely 
\begin{align}\label{free field realization of vir}
L(c_{1,p},0)\subset \mathrm{Ker}\left(\mathcal{Q}_+\colon \pi^\alpha \rightarrow \pi^\alpha_{\sqrt{p}\alpha}\right),\quad 
\mathcal{Q}_+=\int Y(e^{\sqrt{p}\alpha},z) dz.
\end{align}
By replacing ${Q}_+$ with $\mathcal{Q}_+$ in \eqref{f-action}, we obtain a $\mathrm{B}$-action on $V_{\sqrt{p}A_1}$, giving rise to the vertex algebra
$$\mathrm{FT}_p(\sll_2,f):=H^0(\mathrm{SL}_2\times_\mathrm{B} V_{\sqrt{p}A_1}).$$
By \cite{FT,Sug}, it is known that the global sections $\mathrm{FT}_p(\sll_2,f)$ embeds into the fiber at the identity, which gives rise to another realization 
\begin{align}\label{triplet by short screening}
   \mathrm{FT}_p(\sll_2,f)\simeq \mathrm{Ker}\left(\mathcal{Q}_-\colon \pi^\alpha\rightarrow \pi^\alpha_{-\frac{1}{\sqrt{p}}\alpha}\right),\quad \mathcal{Q}_-=\int Y(e^{-\frac{1}{\sqrt{p}}\alpha},z)dz,
\end{align}
which is the definition of the triplet algebra $\W(p)$ \cite{K,AM}.
We will obtain the analogous realization for $\mathrm{FT}_p(\sll_2,f)$, which appears in \cite{ACGY,ST1}. 

\subsection{Inverse Hamiltonian reduction}\label{sec: iHR}
It is useful to realize $V^k(\sll_2)$ by using $L(c_{1,p},0)$, called the inverse Hamiltonian reduction \cite{Ad}.
Let $V_{\Z u\oplus \Z v}$ denote the lattice vertex superalgebra associated with the lattice $\Z u\oplus \Z v$ equipped with bilinear form defined by $(u,u)=1=-(v,v)$. 
By using the vertex subalgebra 
$$\Pi[0]:=\bigoplus_{n\in \Z}\pi^{u,v}_{n(u+v)}\subset V_{\Z u\oplus \Z v},$$
we may realize the $\beta\gamma$-system as follows \cite{FMS}: 
\begin{align}\label{FMS}
\begin{split}
\mu_{\mathrm{FMS}}\colon &\beta\gamma \xrightarrow{\simeq} \mathrm{Ker} \left( Q_{\mathrm{FMS}}\colon \Pi[0]\rightarrow V_{\Z u\oplus \Z v} \right), \quad Q_{\mathrm{FMS}}:=\int Y(e^u,z)dz,\\
&\beta\mapsto e^{u+v},\quad \gamma \mapsto -ue^{-(u+v)}.
\end{split}
\end{align}
By composing it with \eqref{sl2 wakimoto}, we obtain a realization
\begin{align}\label{wakimoto map}
\mu:=\mu_{\mathrm{FMS}}\circ \mu_{\mathrm{Wak}}\colon V^k(\sll_2)\hookrightarrow \Pi[0]\otimes _{\sqrt{p}A_1}    
\end{align}
We note that $L_{\mathrm{sug}}(z)$ gives the following conformal weights:
\begin{align}\label{align: conformal weight of lattice vectors}
\Delta(e^{au+bv})=\frac{1}{2}(a^2-b^2)+
{\frac{1}{2}}
(a+b),\quad \Delta(e^{n\sqrt{p}\alpha})=p(n^2-n).
\end{align} 
\begin{comment}
Explicit formula of embedding is like as follows:
\begin{align}\label{intermediate realization}
\begin{split}
&e\mapsto e^{u+v},\quad h\mapsto -2v-\frac{1}{\sqrt{p}}\alpha,\quad 
f\mapsto \left((k+1)(u^2-\partial u)+(k+2)u v+\frac{1}{\sqrt{p}}u\alpha \right)e^{-(u+v)} \\
&L_{\mathrm{sug}}\mapsto \frac{1}{2}(u^2-v^2)-\frac{1}{2}\partial (u-v)+\frac{1}{4}\alpha^2+\frac{\sqrt{p}}{2}\partial \alpha.
\end{split}
\end{align}
\end{comment}

The vertex algebra $\Pi[0]\otimes V_{\sqrt{p}A_1}$ has the following particular automorphism, which can be checked by direct calculation:
\begin{comment}
%We exchanged $g$ and $g^{-1}$, because $g^{-1}$ is frequently used.
\begin{align}\label{isom of vertex algebra}
\begin{array}{ccl}
g\colon\Pi[0]\otimes V_{\sqrt{p}A_1} &\xrightarrow{\simeq} & \Pi[0]\otimes V_{\sqrt{p}A_1} \\
1\otimes e^{\sqrt{p}\alpha} &\mapsto &e^{u+v}\otimes e^{\sqrt{p}\alpha}\\
e^{u+v} \otimes 1 & \mapsto & e^{u+v}\otimes 1\\
1\otimes \alpha &\mapsto & 1\otimes \alpha + \tfrac{1}{\sqrt{p}}(u+v)\otimes 1\\
u\otimes 1 &\mapsto & u\otimes 1 -\tfrac{1}{4p}(u+v)\otimes 1-1\otimes\tfrac{1}{2\sqrt{p}}\alpha\\
v\otimes 1 &\mapsto & v\otimes 1 +\tfrac{1}{4p}(u+v)\otimes 1+1\otimes\tfrac{1}{2\sqrt{p}}\alpha.
\end{array}
\end{align}
\end{comment}
\begin{align}\label{isom of vertex algebra}
\begin{array}{ccl}
g\colon\Pi[0]\otimes V_{\sqrt{p}A_1} &\xrightarrow{\simeq} & \Pi[0]\otimes V_{\sqrt{p}A_1} \\
1\otimes e^{\sqrt{p}\alpha} &\mapsto &e^{-(u+v)}\otimes e^{\sqrt{p}\alpha}\\
e^{u+v} \otimes 1 & \mapsto & e^{u+v}\otimes 1\\
1\otimes \alpha &\mapsto & 1\otimes \alpha - \tfrac{1}{\sqrt{p}}(u+v)\otimes 1\\
u\otimes 1 &\mapsto & u\otimes 1 -\tfrac{1}{4p}(u+v)\otimes 1+1\otimes\tfrac{1}{2\sqrt{p}}\alpha\\
v\otimes 1 &\mapsto & v\otimes 1 +\tfrac{1}{4p}(u+v)\otimes 1-1\otimes\tfrac{1}{2\sqrt{p}}\alpha.
\end{array}
\end{align}

The inverse Hamiltonian reduction \cite{Ad} states the following.
\begin{proposition}[\cite{Ad}]\label{eq: inverse hamiltonian diagram}\hspace{0mm}\\
\textup{(1)} There is an embedding of vertex algebras
\begin{align*}
\begin{array}{ccl}
\Phi\colon V^k(\sll_2) &\hookrightarrow & \Pi[0]\otimes L(c_{1,p},0) \\
e &\mapsto & e^{u+v} \\
h &\mapsto &-2\mathbf{v}\\
f &\mapsto & \left(\frac{1}{p}\omega_{1,p}-\mathbf{u}^2-(k+1)\partial \mathbf{u} \right)e^{-(u+v)}
\end{array}
\end{align*}
where 
$$\mathbf{u}=u-\frac{1}{4p}(u+v),\quad \mathbf{v}=v-\frac{1}{4p}(u+v).$$
Moreover, $\Phi$ sends
$$L_{\mathrm{sug}} \mapsto\frac{1}{2}(u^2-v^2)+\frac{1}{4p}\partial(u+v)-\partial u+\omega_{1,p}.$$
\textup{(2)} The embedding $\Phi$ makes the following diagram commute:
\begin{align*}
\xymatrix@=18pt{
\beta\gamma\otimes \pi^{\alpha} \ar[rr]_-{\mu_{\mathrm{FMS}}}& & \Pi[0]\otimes V_{\sqrt{p}A_1}\ar[rr]_{g}^{\simeq} && \Pi[0]\otimes V_{\sqrt{p}A_1} \\
V^k(\sll_2)\ar[u]^-{\mu_{\mathrm{Wak}}} \ar[rrrr]^{\Phi} &&&& \Pi[0]\otimes L(c_{1,p},0). \ar[u]_-{\mathrm{id}\otimes\mu_{\mathrm{Wak}}}
}
\end{align*}
\end{proposition}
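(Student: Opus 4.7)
The strategy is to verify Proposition~\ref{eq: inverse hamiltonian diagram} by direct OPE computations on the generators $e,h,f$, using the universal property of $V^k(\sll_2)$: any triple of fields in a vertex algebra $\mathcal{V}$ satisfying the $\hat{\sll}_2$ OPEs at level $k$ determines a unique homomorphism $V^k(\sll_2)\to\mathcal{V}$. For part (1), the first step is to record the Heisenberg OPEs of the shifted fields $\mathbf{u}$ and $\mathbf{v}$. Using $(u+v,u+v)=0$, one computes $(\mathbf{u},\mathbf{v})=0$, $(\mathbf{v},\mathbf{v})=k/2$, $(\mathbf{u},u+v)=1$ and $(\mathbf{v},u+v)=-1$, from which the OPEs $h(z)h(w)\sim 2k/(z-w)^2$, $h(z)e(w)\sim 2e/(z-w)$ and $h(z)f(w)\sim -2f/(z-w)$ are immediate. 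The substantial step is the $e$-$f$ OPE, which must equal $k/(z-w)^2+h(w)/(z-w)$. Since $(u+v,u+v)=0$, the exponential prefactor in $e^{u+v}(z)e^{-(u+v)}(w)$ is trivial, and one must expand to order $(z-w)^{-2}$ the contractions of the coefficient $\tfrac{1}{p}\omega_{1,p}-\mathbf{u}^2-(k+1)\partial\mathbf{u}$ against $e^{u+v}(z)e^{-(u+v)}(w)$; the specific constants $1/p$, $-1$ and $-(k+1)$ are arranged precisely so that the conformal anomaly from $\omega_{1,p}$ combines with the double Wick contractions of $\mathbf{u}^2$ to produce the central term $k$, while the derivative correction $-(k+1)\partial\mathbf{u}$ cancels the residual $\partial(u+v)$ contributions via $(\mathbf{u},u+v)=1$. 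The formula for $\Phi(L_{\mathrm{sug}})$ is then verified by a parallel Wick computation, e.g.\ from the Sugawara expression $L_{\mathrm{sug}}=\tfrac{1}{2(k+2)}\bigl(\tfrac{1}{2}h^2+ef+fe\bigr)$.

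For part (2), commutativity of the diagram is checked on the generators. For $e$, $\mu_{\mathrm{Wak}}(e)=\beta\mapsto e^{u+v}\mapsto e^{u+v}$ under $g$, matching $(\mathrm{id}\otimes\mu_{\mathrm{Wak}})(\Phi(e))$. For $h$, $\mu_{\mathrm{Wak}}(h)=-2\gamma\beta-\tfrac{1}{\sqrt{p}}\alpha$ becomes, via $\mu_{\mathrm{FMS}}$ and the Wick identity obtained from $(u,u+v)=1$, an explicit element in $u$ and $\alpha$; applying $g$ from \eqref{isom of vertex algebra} then yields $-2\mathbf{v}$, in agreement with $(\mathrm{id}\otimes\mu_{\mathrm{Wak}})(\Phi(h))$. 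The check for $f$ is structurally identical but longer, as one has to expand the cubic expression $\mu_{\mathrm{Wak}}(f)$ under $\mu_{\mathrm{FMS}}$ and $g$ and compare with $\Phi(f)$; the Sugawara case is analogous. The injectivity of $\Phi$ asserted in part (1) follows \emph{a posteriori} from the commutativity: if $\Phi(x)=0$, then $g(\mu_{\mathrm{FMS}}(\mu_{\mathrm{Wak}}(x)))=0$, and since $g$ is an isomorphism and both $\mu_{\mathrm{FMS}}$ and $\mu_{\mathrm{Wak}}$ are injective free-field realizations, $x=0$.

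The main obstacle is the $e$-$f$ OPE in part (1) and the matching of $f$ in part (2): both require careful bookkeeping of normal-ordering corrections and second-order $(z-w)$-expansions of lattice vertex-operator contractions, with the interaction of $\omega_{1,p}$ with the shifted Heisenberg fields introducing several cancellations. Each step is nonetheless a routine Wick computation once conventions are fixed, and the isomorphism $g$ in \eqref{isom of vertex algebra} is constructed precisely so that the diagram commutes; the nontrivial arithmetic is therefore essentially forced by the consistency of the Wakimoto and FMS realizations, and no conceptual difficulty arises beyond the computational one.
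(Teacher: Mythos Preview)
Your approach is correct and is the natural way to verify this proposition. Note, however, that the paper does not supply its own proof: the proposition is quoted from \cite{Ad} (Adamovi\'c's inverse Hamiltonian reduction), and the surrounding text only records the automorphism $g$ with the remark that it ``can be checked by direct calculation.'' Your outline---verifying the $\hat{\sll}_2$ OPEs on the images of $e,h,f$ via the Heisenberg pairings of $\mathbf{u},\mathbf{v}$, then checking commutativity of the diagram on generators, and deducing injectivity from the injectivity of $\mu_{\mathrm{Wak}}$, $\mu_{\mathrm{FMS}}$ and the invertibility of $g$---is exactly the kind of direct computation such a citation stands in for, and your numerical checks (e.g.\ $(\mathbf{v},\mathbf{v})=-1+\tfrac{1}{2p}=k/2$, $(\mathbf{u},u+v)=1$) are correct.
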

Note that $g$ sends $Q_+$ and $Q_{\mathrm{FMS}}$ to
\begin{align}\label{modifying the screenings}
\mathcal{Q}_+=\int Y\left(\mathbf{1}\otimes e^{\sqrt{p}\alpha},z\right)dz,\quad \mathcal{Q}:=\int Y\left(e^\mathbf{u}\otimes e^{\frac{1}{2\sqrt{p}}\alpha},z\right)dz,
\end{align}
respectively.
\begin{theorem}\label{identificatio of affine FT}
We have an isomorphism of vertex algebras
\begin{align*}
    \mathrm{FT}_p(\sll_2)\simeq \mathrm{Ker}\ Q_-\mid_{\beta\gamma\otimes V_{\sqrt{p}A_1}},\quad Q_-:=\int Y\left(e^{-\frac{1}{p}(u+v)}\otimes e^{-\tfrac{1}{\sqrt{p}}\alpha},z\right)dz.
\end{align*}
\end{theorem}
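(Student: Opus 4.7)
The plan is to invoke a Feigin--Tipunin-type description of the global sections: for the equivariant sheaf $\sFT(\sll_2)$ on $\mathbb{P}^1 = \mathrm{SL}_2/\mathrm{B}$, the global sections $H^0$ embed into the fiber at the identity and are cut out by the kernel of an ``opposite short screening'' implementing the complementary Borel action. For the triplet fiber $V_{\sqrt{p}A_1}$ this is \eqref{triplet by short screening}, with opposite screening $\mathcal{Q}_-$; I seek the affine analogue on the fiber $\beta\gamma \otimes V_{\sqrt{p}A_1}$.

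First I would pass to the enlarged ambient $\Pi[0] \otimes V_{\sqrt{p}A_1}$ via $g \circ \mu_{\mathrm{FMS}}$, under which $\beta\gamma \otimes V_{\sqrt{p}A_1}$ is realized as $\ker \mathcal{Q}$ and the $B$-action \eqref{f-action} transports into $f \mapsto \mathcal{Q}_+$, acting only on the $V_{\sqrt{p}A_1}$-factor. Then I take the ansatz $Q_{-,\lambda} = \int Y(e^{\lambda(u+v)} \otimes e^{-\frac{1}{\sqrt{p}}\alpha},z)dz$. Because $(u+v, u+v) = 0$, the prefactor $e^{\lambda(u+v)}$ has vanishing OPE order with $\beta = e^{u+v}$ and $\gamma = -u e^{-(u+v)}$ for every $\lambda$, while the $V_{\sqrt{p}A_1}$-factor gives integer OPE exponents with every $e^{\sqrt{p}n\alpha}$; hence $Q_{-,\lambda}$ is a well-defined screening on $\beta\gamma \otimes V_{\sqrt{p}A_1}$ for any $\lambda$.

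The scalar $\lambda$ is then pinned down by the requirement $V^k(\sll_2) \subset \ker Q_{-,\lambda}$, which follows from $V^k(\sll_2) \subset H^0(\sFT(\sll_2))$. Concretely, the Wakimoto image of the Cartan generator, which in the $\Pi[0]$-picture reads $h = 2u - \frac{1}{\sqrt{p}}\alpha$, must be annihilated. A direct OPE computation shows that the simple-pole coefficient of the OPE $Y(e^{\lambda(u+v)} \otimes e^{-\frac{1}{\sqrt{p}}\alpha},z)\, h(w)$ equals $\bigl(2\lambda + \tfrac{2}{p}\bigr)\, e^{\lambda(u+v)} \otimes e^{-\frac{1}{\sqrt{p}}\alpha}$, which vanishes precisely when $\lambda = -\tfrac{1}{p}$. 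This recovers the $Q_-$ of the statement; the remaining Wakimoto generators $e = \beta$ and $f$ of $V^k(\sll_2)$ are then seen to lie in $\ker Q_-$ by an analogous check of OPE exponents.

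Finally, I would invoke the generalised Feigin--Tipunin theorem (in the form developed in \cite{FT, Sug, CNS}) to conclude $\FT(\sll_2) = H^0(\sFT(\sll_2)) = \ker Q_-$ inside the fiber. The main obstacle is transferring that theorem to the present affine, non-principal setting: one must verify that the sheaf $\sFT(\sll_2)$ is resolved by the standard two-chart \v{C}ech complex on $\mathbb{P}^1$ and that the higher cohomology $H^{>0}(\sFT(\sll_2))$ vanishes. The latter should reduce, via the Fock-weight decomposition of the fiber, to a rank-one Borel--Weil vanishing on $\mathbb{P}^1$.
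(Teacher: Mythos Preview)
Your first step is exactly the paper's key move: transport the $\mathrm{B}$-action via the automorphism $g$ so that $f$ acts by $\mathcal{Q}_+$ on the $V_{\sqrt{p}A_1}$-factor alone. But you then abandon this observation and switch to an ansatz $Q_{-,\lambda}$, pinning down $\lambda$ by the necessary condition $V^k(\sll_2)\subset\ker Q_{-,\lambda}$, and finally appeal to an unproven ``generalized Feigin--Tipunin theorem'' for the affine fiber. That last step is the genuine gap, and you rightly flag it as the main obstacle.

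The paper's argument avoids this obstacle entirely by following your first step to its conclusion. Once the $\mathrm{B}^{\mathrm{vir}}$-action lives only on $V_{\sqrt{p}A_1}$, the $\Pi[0]$-factor is $\mathrm{B}$-trivial and the cohomology factors:
\[
H^n\bigl(\mathrm{SL}_2\times_{\mathrm{B}^{\mathrm{vir}}}(\Pi[0]\otimes V_{\sqrt{p}A_1})\bigr)\;\simeq\;\Pi[0]\otimes H^n\bigl(\mathrm{SL}_2\times_{\mathrm{B}}V_{\sqrt{p}A_1}\bigr)\;\simeq\;\delta_{n,0}\,\Pi[0]\otimes\W(p),
\]
invoking the \emph{already known} triplet result \eqref{triplet by short screening}. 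No new Borel--Weil vanishing or affine Feigin--Tipunin theorem is required. Since $\W(p)=\Ker\mathcal{Q}_-|_{V_{\sqrt{p}A_1}}$, this is $\Ker\mathcal{Q}_-|_{\Pi[0]\otimes V_{\sqrt{p}A_1}}$; transporting back by $g$ yields $\Ker Q_-|_{\Pi[0]\otimes V_{\sqrt{p}A_1}}$, where $Q_-$ is the $g$-conjugate of $\mathcal{Q}_-$ --- which is exactly the operator in the statement, not something to be guessed. Your ansatz computation then becomes a consistency check rather than a determination.

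There is also a final step you do not mention: passing from the enlarged fiber $\Pi[0]\otimes V_{\sqrt{p}A_1}$ down to $\beta\gamma\otimes V_{\sqrt{p}A_1}=\Ker Q_{\mathrm{FMS}}$. The paper handles this by left-exactness of $H^0(\mathbb{P}^1,\text{--})$, which commutes $H^0$ past $\Ker Q_{\mathrm{FMS}}$ and gives $\FT(\sll_2)\simeq\Ker Q_{\mathrm{FMS}}\cap\Ker Q_-|_{\Pi[0]\otimes V_{\sqrt{p}A_1}}=\Ker Q_-|_{\beta\gamma\otimes V_{\sqrt{p}A_1}}$.
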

\begin{proof}
Let us write the $\mathrm{B}$-action on $\Pi[0]\otimes V_{\sqrt{p}A_1}$ by \eqref{f-action} (resp.\ \eqref{free field realization of vir}) as $\mathrm{B}^{\mathrm{aff}}$ (resp.\ $\mathrm{B}^{\mathrm{vir}}$). Then the automorphism $g^{-1}$ gives an isomorphism of $\mathrm{B}$-modules
\begin{align*}
    g^{-1}\colon (\mathrm{B}^{\mathrm{aff}},\Pi[0]\otimes V_{\sqrt{p}A_1})\xrightarrow{\simeq} (\mathrm{B}^{\mathrm{vir}},\Pi[0]\otimes V_{\sqrt{p}A_1}).
\end{align*}
Hence we have 
\begin{align}\label{computation by automorphisms}
\begin{split}
    H^n(\mathrm{SL}_2&\times_{\mathrm{B}^{\mathrm{aff}}}(\Pi[0]\otimes V_{\sqrt{p}A_1})) \xrightarrow[g^{-1}]{\simeq} H^n(\mathrm{SL}_2\times_{\mathrm{B}^{\mathrm{vir}}}(\Pi[0]\otimes V_{\sqrt{p}A_1}))\\
    &\simeq \delta_{n,0}\Ker\ \mathcal{Q}_-|_{\Pi[0]\otimes V_{\sqrt{p}A_1}}
    \xrightarrow[g]{\simeq} \delta_{n,0}\Ker\ Q_-|_{\Pi[0]\otimes V_{\sqrt{p}A_1}}
\end{split}
\end{align}
by \eqref{triplet by short screening} and \eqref{modifying the screenings}.   
Since \eqref{triplet by short screening} is induced by the restriction to the fiber at the identity, the left exactness of the global section functor $H^0(\mathbb{P}^1,\text{-})$ implies
\begin{align*}
    \FT(\sll_2)
    &= H^0(\mathrm{SL}_2\times_{\mathrm{B}^{\mathrm{aff}}}(\beta\gamma\otimes V_{\sqrt{p}A_1}))\\
    &\simeq \Ker\ Q_{\mathrm{FMS}}|_{H^0(\mathrm{SL}_2\times_{\mathrm{B}^{\mathrm{aff}}}(\Pi[0]\otimes V_{\sqrt{p}A_1}))}\\
    &\simeq \Ker\ Q_{\mathrm{FMS}} \cap \Ker\ {Q}_-|_{\Pi[0]\otimes V_{\sqrt{p}A_1}}
    \simeq \Ker {Q}_-|_{\beta\gamma\otimes V_{\sqrt{p}A_1}}.
\end{align*}
This completes the proof.
\end{proof}
In \cite{ACGY}, the abelian intertwining algebra $\mathcal{V}^{(p)}$ is introduced as 
$$\mathcal{V}^{(p)}:=\mathrm{Ker}\ Q_-\mid_{\beta\gamma\otimes V_{\sqrt{p}A_1^*}}$$
where $A_1^*=\frac{1}{2}A_1$ is the weight lattice. Since $\Z_2=A_1^*/A_1$ acts as automorphisms of this algebra, the fixed point subalgebra is the vertex algebra
$$(\mathcal{V}^{(p)})^{\Z_2}\simeq \mathrm{Ker}\ Q_-\mid_{\beta\gamma\otimes V_{\sqrt{p}A_1}}$$
originally introduced by Semikhatov and Tipnin \cite{ST1}.
The above theorem immediately implies the following.
\begin{corollary} 
We have an isomorphism of vertex algebras
$$\mathrm{FT}_p(\sll_2)\simeq (\mathcal{V}^{(p)})^{\Z_2}.$$
\end{corollary}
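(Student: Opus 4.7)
The plan is to combine Theorem \ref{identificatio of affine FT} with the description of $(\mathcal{V}^{(p)})^{\Z_2}$ recalled just above the statement of the corollary. By definition, $\mathcal{V}^{(p)} = \mathrm{Ker}\, Q_- \bigm|_{\beta\gamma \otimes V_{\sqrt{p}A_1^*}}$, and the index-two inclusion $\sqrt{p}A_1 \subset \sqrt{p}A_1^*$ produces the $\Z_2$-grading
\begin{equation*}
V_{\sqrt{p}A_1^*} = V_{\sqrt{p}A_1} \oplus V_{\sqrt{p}A_1 + \frac{\sqrt{p}}{2}\alpha},
\end{equation*}
in which the nontrivial element of $\Z_2 = A_1^*/A_1$ acts by $+1$ on the first summand and by $-1$ on the second.

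Tensoring with $\beta\gamma$ preserves this decomposition, and I would verify that the screening $Q_-$ is compatible with the grading in the sense that it maps each $\Z_2$-eigenspace into a corresponding eigenspace of its target module. This is immediate because $Q_-$ is the residue of a single fixed intertwining field with a definite lattice label, so it splits as a direct sum of two intertwiners indexed by the cosets of $A_1$ in $A_1^*$. Taking the $\Z_2$-fixed part of $\mathcal{V}^{(p)}$ therefore picks out exactly the kernel on the trivial-coset summand:
\begin{equation*}
(\mathcal{V}^{(p)})^{\Z_2} \simeq \mathrm{Ker}\, Q_- \bigm|_{\beta\gamma \otimes V_{\sqrt{p}A_1}}.
\end{equation*}

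Invoking Theorem \ref{identificatio of affine FT}, the right-hand side is exactly $\FT(\sll_2)$, which yields the desired isomorphism of vertex algebras. There is no serious obstacle: the only delicate point is confirming that the $\Z_2$-action commutes with $Q_-$, but this reduces to the observation that the coset of the integrand is constant and therefore equivariance is automatic on each summand. Hence the corollary is a direct consequence of Theorem \ref{identificatio of affine FT} together with the stated $\Z_2$-decomposition of $\mathcal{V}^{(p)}$.
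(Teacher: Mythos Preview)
Your proof is correct and follows essentially the same approach as the paper. The paper states the identification $(\mathcal{V}^{(p)})^{\Z_2}\simeq \mathrm{Ker}\ Q_-\mid_{\beta\gamma\otimes V_{\sqrt{p}A_1}}$ in the paragraph immediately preceding the corollary and then simply notes that Theorem~\ref{identificatio of affine FT} ``immediately implies'' the result; you supply the details of that identification (the coset decomposition and the fact that $Q_-$ splits over the $\Z_2$-summands), but the logic is identical.
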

\section{Triplet algebra $\W(p)$}\label{section: triplet}
We review some results of $\W(p)$-modules, following \cite{AM,CRW,FT,Sug}.
\subsection{$(1,p)$-model}\label{subsection: (1,p)-model}
The $C_1$-cofinite simple $L(c_{1,p},0)$-modules are parametrized by the following highest weight modules 
\begin{align}\label{parametrization of h.wt.}
L_{r,s}:=L(c_{1,p},h_{r,s}),\quad h_{r,s}:=\frac{(r-sp)^2-(1-p)^2}{4p},\quad (1\leq r \leq p,\ s\geq1).
\end{align}
As $L_{r,s}=L_{-r,-s},\ L_{r+p,s+1}$, it is also useful to use $L_{r,s}$ ($r,s\in \Z$).
By using $L_{r,s}$'s as a building block, we consider the Fock modules over $\pi^\alpha$, which are $L(c_{1,p},0)$-modules by \eqref{free field realization of vir}. Following \cite{CRW}, let us introduce
\begin{align}\label{parametrization}
\alpha_{r,s}:=-(s-1)\sqrt{p}\varpi+\frac{r-1}{\sqrt{p}}\varpi,\quad (1\leq r\leq p,\quad s\in \Z),
\end{align}
which represent $\alpha_{r,s}$'s for all $r,s\in \Z$ since $\alpha_{r+p,s}=\alpha_{r,s-1}$.
As easily seen from \eqref{vir wakimoto}, $e^{\alpha_{r,s}}$ is a highest weight vector of $L(c_{1,p},0)$ of highest weight $h_{r,s}$, which indeed generates the submodule $L_{r,s}\subset \pi^\alpha_{\alpha_{r,s}}$.
By \cite{TK}, for $r\neq p$, the integrals
\begin{align}\label{screening operators}
\begin{split}
&\mathcal{Q}_-^{[r]}:=\int_{[\Gamma_r]} \prod_{a=1}^r Y(e^{-\frac{1}{\sqrt{p}}\alpha},z_a)dz\colon \pi^{\alpha}_{\alpha_{r,s}}\rightarrow \pi^{\alpha}_{\alpha_{-r,s}}(=\pi^{\alpha}_{\alpha_{p-r,s+1}}),
\end{split}
\end{align}
over certain cycles $[\Gamma_r]$ give $L(c_{1,p},0)$-module homomorphisms, which generalize $\mathcal{Q}_-$ in \eqref{triplet by short screening} for $r=1$. It is convenient to set $\mathcal{Q}_-^{[r]}=0$ for $r=p$.
Then we have the short exact sequence of $L(c_{1,p},0)$-modules associated with $\mathcal{Q}_-^{[r]}$
\begin{align}\label{canonical exact seq}
0\rightarrow \Ker \mathcal{Q}_-^{[r]}|_{\pi^{\alpha}_{\alpha_{r,s}}} \rightarrow \pi^{\alpha}_{\alpha_{r,s}}\rightarrow \mathrm{Coker} \mathcal{Q}_-^{[r]}|_{\pi^{\alpha}_{\alpha_{r,s}}} \rightarrow 0.
\end{align}
Introduce the $L(c_{1,p},0)$-modules
$$\mathcal{M}_{r,s}:=\bigoplus_{n\geq0}L_{r,s+2n},\quad \mathcal{M}_{p-r',s'+1}:=\bigoplus_{n\geq0}L_{r',-s'-2n}$$
for $1\leq r,r'\leq p$ and $s\geq 1$, $s'\geq0$. Note that the notation is consistent, see \eqref{parametrization of h.wt.}.
\begin{proposition}[e.g. \cite{AM,CRW}]\label{description as virasoro}
Set $1\leq r,r' \leq p$ with $r'\neq p$. Then
\begin{align*}
\Ker \mathcal{Q}_-^{[r]}|_{\pi^{\alpha}_{\alpha_{r,s}}}\simeq
\begin{cases}
\mathcal{M}_{r,s} &\hspace{-1mm}(s\geq 1) \\
\mathcal{M}_{r,2-s} &\hspace{-1mm} (s\leq 1),
\end{cases}
\
\mathrm{Coker} \mathcal{Q}_-^{[r']}|_{\pi^{\alpha}_{\alpha_{r',s}}}\simeq
\begin{cases}
\mathcal{M}_{p-r',1+s}&\hspace{-1mm} (s\geq 0) \\
\mathcal{M}_{p-r',1-s}&\hspace{-1mm} (s\leq 0),
\end{cases}
\end{align*}
as $L(c_{1,p},0)$-modules. In particular,
\begin{align*}
\mathrm{Coker} \mathcal{Q}_-^{[r]}|_{\pi^{\alpha}_{\alpha_{r,s}}}\simeq \Ker \mathcal{Q}_-^{[p-r]}|_{\pi^{\alpha}_{\alpha_{p-r,s+1}}}.
\end{align*}
\end{proposition}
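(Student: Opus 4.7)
The plan is to combine the classical Feigin--Fuchs classification of Virasoro Fock module structure at central charge $c_{1,p}$ with a character identity and a splitting argument. First I recall the socle/extension diagram of $\pi^\alpha_{\alpha_{r,s}}$ as an $L(c_{1,p},0)$-module: for $1\leq r\leq p-1$ it is of chain type, with an infinite zig-zag of composition factors that naturally split into two Kac families, one consisting of the simple subquotients $L_{r, s+2n}$ (indexed by $n \geq 0$, with the appropriate shift of labels when $s \leq 1$ so that the index $2-s$ becomes the lowest weight on this side) and a parallel family of $L_{p-r,\bullet}$ subquotients. For $r = p$ the Fock module is already semisimple. The screening $\mathcal{Q}_-^{[r]}$ is by construction a Virasoro homomorphism $\pi^\alpha_{\alpha_{r,s}} \to \pi^\alpha_{\alpha_{-r,s}}$, compatible with these diagrams.

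Second, I compute how $\mathcal{Q}_-^{[r]}$ acts on composition factors. By \cite{TK} (or equivalently, by a direct residue calculation applied to the Fock singular vectors of conformal weight $h_{r, s+2n}$), the operator $\mathcal{Q}_-^{[r]}$ annihilates the first family $L_{r, s+2n}$ sitting in $\pi^\alpha_{\alpha_{r,s}}$, while mapping the complementary $L_{p-r,\bullet}$-family isomorphically onto the corresponding factors in $\pi^\alpha_{\alpha_{-r,s}}$. A character check, combining the standard Fock character $q^{h_{r,s}}/\eta(q)$ with the Rocha--Caridi formulas for $\mathrm{ch}\,L_{r,s}$, then verifies that $\Ker \mathcal{Q}_-^{[r]}|_{\pi^\alpha_{\alpha_{r,s}}}$ and the proposed $\mathcal{M}_{r,s}$ (resp.\ $\mathcal{M}_{r,2-s}$) have equal graded characters; together with the composition-factor identification this pins down the kernel up to extensions.

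The step I expect to be the main obstacle is the upgrade from ``correct composition factors'' to an \emph{actual} direct sum. In the ambient Fock module the simples $L_{r, s+2n}$ are linked by non-split extensions, but crucially those extensions are always mediated by factors from the complementary family $L_{p-r,\bullet}$. Since precisely these intermediate factors are mapped isomorphically out of the kernel by $\mathcal{Q}_-^{[r]}$, the extensions between the $L_{r, s+2n}$ surviving in $\Ker \mathcal{Q}_-^{[r]}$ are forced to split, giving the semisimple decomposition $\mathcal{M}_{r,s}$. This semisimple socle description recovers the analogous pictures in \cite{AM, CRW}, to which I would appeal for the fine details of the extension structure.

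For the cokernel, I apply the dual analysis on $\pi^\alpha_{\alpha_{-r',s}}$: the image of $\mathcal{Q}_-^{[r']}$ is precisely the submodule generated by the $L_{p-r',\bullet}$-family just identified, so the cokernel is a direct sum of the remaining $L_{p-r',\bullet}$ factors, which packages exactly as $\mathcal{M}_{p-r', 1+s}$ for $s\geq 0$ and $\mathcal{M}_{p-r', 1-s}$ for $s \leq 0$ under the Kac-table arithmetic. The final ``in particular'' assertion is then pure bookkeeping: substituting $(r, s) \mapsto (p-r, s+1)$ in the kernel formula produces verbatim the cokernel formula, which immediately yields $\mathrm{Coker}\,\mathcal{Q}_-^{[r]}|_{\pi^\alpha_{\alpha_{r,s}}} \simeq \Ker \mathcal{Q}_-^{[p-r]}|_{\pi^\alpha_{\alpha_{p-r,s+1}}}$ without further argument.
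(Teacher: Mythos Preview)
The paper gives no proof of this proposition; it is quoted as a known result from \cite{AM,CRW}. Your sketch is the standard argument found in that literature---Feigin--Fuchs socle structure of the Fock modules at $c_{1,p}$, identification of the screening kernel with the socle, and character matching---and is correct in outline.

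One terminological point is worth flagging. From the exact sequence \eqref{canonical exact seq} in the paper, the symbol $\mathrm{Coker}\,\mathcal{Q}_-^{[r']}|_{\pi^\alpha_{\alpha_{r',s}}}$ here denotes the quotient of the \emph{source} Fock module by the kernel (i.e., the coimage, canonically isomorphic to the image), not the quotient of the target $\pi^\alpha_{\alpha_{-r',s}}$ by the image. Your ``dual analysis on $\pi^\alpha_{\alpha_{-r',s}}$'' therefore addresses a slightly different object, though the final identification with $\mathcal{M}_{p-r',1+s}$ is unaffected since coimage and image agree.
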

The singlet algebra is, by definition, the vertex algebra on the kernel
$$\mathcal{M}(p):= \mathrm{Ker}\mathcal{Q}_-|_{\pi^\alpha},\quad (p\geq 2).$$
The positively graded simple $\mathcal{M}(p)$-modules are classified in \cite{A2} and, in particular, 
$\mathcal{M}_{r,s}:=\Ker \mathcal{Q}_-^{[r]}|_{\pi^{\alpha}_{\alpha_{r,s}}}$ is a simple $\mathcal{M}(p)$-module and \eqref{canonical exact seq} is indeed a socle decomposition of $\pi^\alpha_{r,s}$ as a $\mathcal{M}(p)$-module:
\begin{align}\label{singlet decompsoition}
    0\rightarrow \mathcal{M}_{r,s} \rightarrow \pi^{\alpha}_{\alpha_{r,s}}\rightarrow \mathcal{M}_{p-r,s+1} \rightarrow 0.
\end{align}

\subsection{Simple triplet modules}\label{subsection: simple W(p)-modules}
We parameterize the simple $V_{\sqrt{p}A_1}$-modules by 
$$\mathcal{V}_{r,s}=V_{\sqrt{p}A_1+\alpha_{r,s}}=\bigoplus_{s'\in s+2\Z}\pi^{\alpha}_{\alpha_{r,s'}},\quad ( 1\leq r \leq p,\ 1\leq s\leq 2)$$
according to \eqref{parametrization}.
Introduce the $L(c_{1,p},0)$-submodules 
\begin{align}\label{simple triplet modules}
\W_{r,s}:=\mathrm{Ker}\left(\mathcal{Q}_-^{[r]}\colon \mathcal{V}_{r,s}\rightarrow \mathcal{V}_{p-r,3-s} \right),\ (1\leq r < p),\ \W_{r,s}:=\mathcal{V}_{p,s},\ (r=p).
\end{align}
\begin{theorem}[\cite{AM}] 
The $L(c_{1,p},0)$-modules $\W_{r,s}$, $(1\leq r\leq p,\ s=1,2)$ are $\W(p)$-modules and form a complete set of all the inequivalent simple $\W(p)$-modules.
\end{theorem}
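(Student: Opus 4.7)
The plan is to prove the three assertions in turn: the $\W(p)$-module structure on each $\W_{r,s}$, their simplicity, and the exhaustion of isomorphism classes of simple modules.

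First, I would establish the $\W(p)$-module structure. The simple lattice module $\mathcal{V}_{r,s}$ is automatically a $V_{\sqrt{p}A_1}$-module and hence a $\W(p)$-module by restriction along the inclusion $\W(p) = \Ker \mathcal{Q}_-|_{V_{\sqrt{p}A_1}}$ of \eqref{triplet by short screening}. The iterated screening $\mathcal{Q}_-^{[r]}$ of \eqref{screening operators} is $\W(p)$-linear: this reduces to showing that the zero mode of every field in $\Ker \mathcal{Q}_-$ commutes with each factor $\int Y(e^{-\frac{1}{\sqrt{p}}\alpha},z_a)\, dz_a$, which is a standard contour-deformation argument for the multi-variable integral over $[\Gamma_r]$. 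Consequently $\W_{r,s} = \Ker \mathcal{Q}_-^{[r]}|_{\mathcal{V}_{r,s}}$ is a $\W(p)$-submodule, the case $r=p$ being trivial.

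For simplicity I would decompose $\mathcal{V}_{r,s} = \bigoplus_{s' \in s + 2\Z}\pi^{\alpha}_{\alpha_{r,s'}}$ into Fock summands and apply Proposition \ref{description as virasoro} on each summand, yielding the singlet decomposition
\begin{align*}
\W_{r,s} = \bigoplus_{s' \in s+2\Z} \mathcal{M}_{r,s'},
\end{align*}
with each $\mathcal{M}_{r,s'}$ simple as a module over the singlet subalgebra $\mathcal{M}(p) \subset \W(p)$ by \eqref{singlet decompsoition}. To upgrade $\mathcal{M}(p)$-simplicity of the summands to $\W(p)$-simplicity of the total, it remains to exhibit $\W(p)$-elements connecting adjacent summands. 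The three weight-$(2p-1)$ primary generators $W^{0},W^{\pm}$ of $\W(p)$ form an $\sll_2$-triplet inside $V_{\sqrt{p}A_1}$ containing $e^{\pm\sqrt{p}\alpha}$, and an explicit Fock-space computation shows that suitable modes $W^{\pm}_n$ send a $\mathcal{M}(p)$-highest weight vector of $\mathcal{M}_{r,s'}$ to a nonzero vector in $\mathcal{M}_{r,s'\pm 2}$. Since the decomposition is two-sided and infinite for every $(r,s)$ in the stated range, no proper $\W(p)$-submodule can exist.

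Finally, for the exhaustion statement I would use $C_2$-cofiniteness of $\W(p)$ (from \cite{AM}, or alternatively as a corollary of Theorem A via $H_{DS}^0$), which forces the number of simple modules to be finite, and match the count to $2p$ via Zhu's algebra. A more conceptual route, closer to the philosophy of the present paper, is to note that any simple $\W(p)$-module contains a joint eigenvector for the Heisenberg and Virasoro zero modes; such a vector sits inside a Fock module, and the kernel-of-screening description of $\W(p)$ together with the previous step forces the module to coincide with some $\W_{r,s}$. The main obstacle is the explicit non-vanishing of $W^{\pm}_n$ on singlet highest-weight vectors, which constitutes the technical heart of the argument and is handled by the original computation in \cite{AM}.
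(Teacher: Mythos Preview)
The paper does not give its own proof of this statement: it is quoted as a result from \cite{AM} and used as background for the Feigin--Tipunin construction in \S\ref{section: triplet}. There is therefore nothing in the paper to compare your argument against.

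That said, your sketch is broadly the route taken in the original reference \cite{AM}: the $\W(p)$-module structure comes from the screening-kernel description, simplicity is argued by decomposing into singlet modules and using the explicit triplet generators $W^{\pm}$ to connect adjacent summands, and exhaustion goes through $C_2$-cofiniteness and the computation of Zhu's algebra. One correction: your singlet decomposition is not quite right. The kernel $\Ker \mathcal{Q}_-^{[r]}|_{\pi^\alpha_{\alpha_{r,s'}}}$ for $s'\leq 0$ is $\mathcal{M}_{r,2-s'}$ rather than $\mathcal{M}_{r,s'}$ (see Proposition~\ref{description as virasoro}), so the correct decomposition is \eqref{decomposition of triplet}, namely $\W_{r,s}\simeq \bigoplus_{n\geq 0}\C^{2n+s}\otimes L_{r,2n+s}$ with multiplicities, not a multiplicity-free sum over all $s'\in s+2\Z$. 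This matters for the simplicity argument, since the $\sll_2$-triplet acts on the multiplicity spaces $\C^{2n+s}$ rather than simply shifting an index.
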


It follows from Proposition \ref{description as virasoro} that 
\begin{align}\label{decomposition of triplet}
\W_{r,s}\simeq \bigoplus_{n\geq 0}\C^{2n+s}\otimes L_{r,2n+s}
\end{align}
as $L(c_{1,p},0)$-modules.
The basis of the multiplicity space $\C^{2n+s}$ is given by the highest weight vectors
\begin{align*}
v_{r,2n+s;a}:=\mathcal{Q}_+^ae^{\alpha_{r,2n+s}},\quad (a=0,\cdots,2n+s-1).
\end{align*}
It follows that $\C^{2n+s}$ is a $\mathrm{B}$-module and, moreover, a simple $\mathrm{SL}_2$-module. Hence $\W_{r,s}\subset \V_{r,s}$ is an $(\mathrm{SL}_2,L(c_{1,p},0))$-submodule.
For $r\neq p$, \eqref{canonical exact seq} implies a short exact sequence of $\W(p)$-modules
\begin{align}\label{virasoro felder complex}
0 \rightarrow \W_{r,s}\rightarrow \mathcal{V}_{r,s} \rightarrow \W_{p-r,3-s}({-\varpi})\rightarrow 0
\end{align}
called the Felder complex. Here $\W_{p-r,3-s}({-\varpi})$ is the tensor product $\W_{p-r,3-s}\otimes \C_{-\varpi}$ twisting the $\mathrm{H}$-action by $-\varpi$, which makes  \eqref{virasoro felder complex} into a complex of $\mathrm{B}$-modules.
Now, we can realize the simple $\W(p)$-modules $\W_{r,s}$ on the cohomologies of the equivariant vector bundles $\mathrm{SL}_2\times_{\mathrm{B}}\V_{r,s}$ where the $\mathrm{B}$-action is defined by the formulas
\begin{align}\label{B-action for vir}
f\mapsto \mathcal{Q}_+,\quad h|_{\pi_{\alpha_{r,s'}}^{\alpha}}=-\frac{1}{\sqrt{p}}h(\alpha_{1,s'}),\quad (s'\in s+2\Z),
\end{align}
see \S \ref{Sec: construction} for the case $\V_{1,1}=V_{\sqrt{p}A_1}$.
\begin{theorem}[\cite{FT,Sug}]\label{main result for the virasoro triplet algebra}
We have the cohomology vanishing
\begin{align}\label{cohomology vanishing}
H^n(\mathrm{SL}_2\times_\mathrm{B} \mathcal{V}_{r,s})\simeq \delta_{n,0}\W_{r,s}.
\end{align}
Moreover, $\W_{r,s}$ are $\W(p)$-modules characterized as the maximal $\mathrm{B}$-submodule of $\mathcal{V}_{r,s}$ which is an $\mathrm{SL}_2$-module.
\end{theorem}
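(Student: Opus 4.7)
The plan is to apply the functor $H^{\bullet}(\mathrm{SL}_2\times_\mathrm{B}-)$ to the Felder exact sequence \eqref{virasoro felder complex} and reduce everything to two standard cohomology computations on $\mathbb{P}^1$: the trivial bundle and the line bundle $\mathcal{O}(-1)$. The guiding principle is the elementary fact that for any $\mathrm{B}$-module $M$ whose action integrates to $\mathrm{SL}_2$, the bundle $\mathrm{SL}_2\times_\mathrm{B}M$ trivializes via $[g,v]\mapsto ([g],g\cdot v)$, so $H^0\simeq M$ and $H^{>0}=0$; while for the $\varpi$-twist $M\otimes\C_{-\varpi}$, the associated bundle is $M\otimes\mathcal{O}(-1)$, whose cohomology vanishes in every degree.

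The first step is to promote the $\mathrm{B}$-structure on $\W_{r,s}$ to a genuine $\mathrm{SL}_2$-action. Using the decomposition \eqref{decomposition of triplet}, the multiplicity spaces $\C^{2n+s}$ with basis $\{\mathcal{Q}_+^a e^{\alpha_{r,2n+s}}\}_{0\leq a\leq 2n+s-1}$ are simple $\mathrm{SL}_2$-modules (as recorded right after \eqref{decomposition of triplet}). Since $\mathcal{Q}_+$ commutes with the $L(c_{1,p},0)$-action, the $e$-operator on each multiplicity space extends by the identity on the $L_{r,2n+s}$ factor, giving a global $\mathrm{SL}_2$-action on $\W_{r,s}$ that restricts to the original $\mathrm{B}$-structure. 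By the trivialization principle, $H^n(\mathrm{SL}_2\times_\mathrm{B}\W_{r,s})\simeq \delta_{n,0}\W_{r,s}$.

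For $r=p$, one has $\V_{p,s}=\W_{p,s}$, so the above already yields the desired formula. For $r<p$, I would apply the long exact cohomology sequence to \eqref{virasoro felder complex}. The cokernel $\W_{p-r,3-s}(-\varpi)\simeq \W_{p-r,3-s}\otimes\C_{-\varpi}$ produces the bundle $\W_{p-r,3-s}\otimes\mathcal{O}(-1)$ by the $\mathrm{SL}_2$-integration of $\W_{p-r,3-s}$, and its cohomology vanishes in every degree. The long exact sequence collapses to $H^n(\mathrm{SL}_2\times_\mathrm{B}\V_{r,s})\simeq H^n(\mathrm{SL}_2\times_\mathrm{B}\W_{r,s})\simeq \delta_{n,0}\W_{r,s}$, establishing the cohomology formula. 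The maximality characterization then follows from the general fact that restriction to the fiber over $[e]\in\mathbb{P}^1$ identifies $H^0(\mathrm{SL}_2\times_\mathrm{B}\V_{r,s})$ with the largest $\mathrm{B}$-submodule of $\V_{r,s}$ on which the $\mathrm{B}$-action integrates to $\mathrm{SL}_2$.

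The main point requiring care is the promotion step: one must verify that the $e$-operator, defined on each multiplicity space via the standard $\sll_2$-formula applied to the $\{\mathcal{Q}_+^a e^{\alpha_{r,2n+s}}\}$, is well-defined globally on $\W_{r,s}$, i.e., that the Virasoro descendants of these vectors can be unambiguously moved by $e$ and the resulting operator satisfies $[e,\mathcal{Q}_+]=h$ on the entire module. This rests on $\mathcal{Q}_+$ commuting with the Virasoro action, a standard property of screening operators, which ensures the multiplicity-space $e$ extends as $\mathrm{id}$ on each $L_{r,2n+s}$ factor. Once this integration is granted, the remainder of the argument is essentially bookkeeping with the long exact sequence and the Borel--Weil--Bott computation on $\mathbb{P}^1$.
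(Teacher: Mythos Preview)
Your proposal is correct and follows essentially the same route as the paper's proof: both use that $\W_{r,s}$ integrates to an $\mathrm{SL}_2$-module (via the decomposition \eqref{decomposition of triplet}), reduce the computation to the line bundles $\underline{\C}_0$ and $\underline{\C}_{-\varpi}$ on $\mathbb{P}^1$ (your ``trivialization principle'' is exactly the paper's identity $H^n(\mathrm{SL}_2\times_\mathrm{B}M(\mu))\simeq H^n(\underline{\C}_\mu)\otimes M$), and then apply the long exact sequence to the Felder complex \eqref{virasoro felder complex}. The only cosmetic difference is that the paper records the $\mathrm{SL}_2$-integration of $\W_{r,s}$ as a fact established just before the theorem, whereas you devote a paragraph to justifying it; your extra care there is not misplaced but is not a different argument.
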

\begin{proof}
We reproduce the proof in \cite{FT,Sug} as we will use the same argument later.
Note that given an $\mathrm{SL}_2$-module $M$ and $\mu\in \Z\varpi$, the $\mathrm{B}$-module $M(\mu)$ satisfies
\begin{align}\label{fact:Sug-114}
H^n(\mathrm{SL}_2\times_\mathrm{B}M(\mu))\simeq H^n(\underline{\C}_\mu)\otimes M
\end{align}
where $\underline{\C}_\mu$ is the line bundle $\underline{\C}_\mu=\mathrm{SL}_2\times_\mathrm{B}\C(\mu)$
and that 
\begin{align}\label{coh for line bundles}
H^n(\underline{\C}_0)\simeq \delta_{n,0}\C,\quad H^n(\underline{\C}_{-\varpi})= 0.    
\end{align}
As $\W(p)=H^0(\mathrm{SL}_2\times_\mathrm{B} V_{\sqrt{p}A_1})$ acts on $H^n(\mathrm{SL}_2\times_\mathrm{B} \mathcal{V}_{r,s})$, the second assertion follows from \eqref{cohomology vanishing} and \eqref{fact:Sug-114}. Hence it suffices to show \eqref{cohomology vanishing}. 
The case $r=p$ is immediate from \eqref{fact:Sug-114} since $\W_{p,s}=\V_{p,s}$ is an $\mathrm{SL}_2$-module by \eqref{decomposition of triplet}.
The case $r\neq p$ follows from the long exact sequence for $H^\bullet(\mathbb{P}^1,?)$ applied to the Felder complex \eqref{virasoro felder complex}:
\begin{align*}
0
&\rightarrow H^0(\underline{\C}_0)\otimes \W_{r,s} \rightarrow H^0(\mathrm{SL}_2\times_\mathrm{B} \mathcal{V}_{r,s})\rightarrow H^0(\underline{\C}_{-\varpi})\otimes \W_{p-r,3-s}\\
&\rightarrow H^1(\underline{\C}_0)\otimes \W_{r,s} \rightarrow H^1(\mathrm{SL}_2\times_\mathrm{B} \mathcal{V}_{r,s})\rightarrow H^1(\underline{\C}_{-\varpi})\otimes \W_{p-r,3-s}\rightarrow 0.
\end{align*}
We obtain the assertion by using \eqref{coh for line bundles}.
\end{proof}

\section{Weight $V^{k}(\sll_2)$-modules}\label{section: weight V-modules}
Here, following \cite[\S 7]{Ad}, we consider weight $V^k(\sll_2)$-modules which play the role of the $L(c_{1,p},0)$-modules $L_{r,s}$ for the triplet algebra $\W(p)$.
\subsection{Weight modules}
Introduce the $\Pi[0]$-modules
\begin{align}\label{Pi[0]-modules}
\Pi_a[b]:=\bigoplus_{b'\in [b]} \pi^{u,v}_{a\mathbf{v}+b'(u+v)},\quad (a\in \Z,\ [b]=b+\Z \subset\C).
\end{align}
The parameters $a\in\Z$ correspond to the spectral flow twists \cite{Li}, i.e. the $\Pi[0]$-modules 
$\mathcal{S}_{x}\Pi_0[b]:=(\Pi_0[b],Y_{x}(\text{-},z))$ for $x=a \mathbf{v}$ where $Y_x(A, z)=Y(\Delta(x,z)A, z)$ with
$\Delta(x,z):=z^{x_{0}}E^-(x,-z)$ (see \eqref{def of Epm} below for notation). Then we have
\begin{align}\label{twists of free field}
  \Pi_a[b]\simeq \mathcal{S}_{a\mathbf{v}}\Pi_0[b] 
\end{align}
as $\Pi[0]$-modules.
Now, we consider the weight $V^k(\sll_2)$-module realized, via the embedding $\Phi$ in Proposition \ref{eq: inverse hamiltonian diagram}, on the $\Pi[0]\otimes L(c_{1,p},0)$-modules
\begin{align}\label{definition of E}
E_{r,s}^{a,[b]}=\Pi_a[b]\otimes L_{r,s},
\end{align}
see \eqref{parametrization of h.wt.} and \eqref{Pi[0]-modules}.
They are positively graded iff $a=1$ since the lattice vectors
\begin{align*}
\theta_{a,b'}:=e^{a\mathbf{v}+b'(u+v)}\otimes v_{r,s}\in E_{r,s}^{a,[b]}
\end{align*}
have the conformal weights $(1-a)b'+\frac{1}{4}ka^2+h_{r,s}$. For $V^k(\sll_2)$-modules $M$, we write the spectral flow twists as $\mathcal{S}^aM=\mathcal{S}_{a \varpi}M$. Since
\begin{align}\label{E=SE}
E^{a,[b]}_{r,s}\simeq \mathcal{S}^{-a}E^{0,[b]}_{r,s}  
\end{align}
as $V^k(\sll_2)$-modules by \eqref{twists of free field}, we restrict ourselves to the case $a=1$. 
To understand the module structure of $E^{1,[b]}_{r,s}$, we begin with the lowest conformal weight subspace 
\begin{align*}
\Omega_{r,s}^{1,[b]}:=\bigoplus_{b'\in [b]} \C \theta_{1,b'}
\subseteq E_{r,s}^{1,[b]}
\end{align*}
where $\sll_2$ acts by the 0-th modes of the fields, or explicitly by
\begin{align}\label{description of sl2 action}
\begin{split}
&e_0\ \theta_{1,b'}=\theta_{1,b'+1},\\
&h_0\ \theta_{1,b'}=(2b'-k)\theta_{1,b'},\\
& f_0\ \theta_{1,b'}=-(b'+a_{r,s})(b'+a_{-r,-s})\theta_{1,b'-1}
\end{split}
\end{align}
and
\begin{align*}
a_{r,s}:=\left(\alpha_{r,s},\frac{1}{\sqrt{p}}\varpi\right)=\frac{(1-s)p+(r-1)}{2p}.
\end{align*}

To describe the $\sll_2$-module structure $\Omega_{r,s}^{1,[b]}$, the following convention is useful: given a module $M$ with composition series $0=M_{n+1}\subset \cdots \subset M_1\subset M_0=M$ with $M_i/M_i+1$ simple, we draw 
$$M_n\dashrightarrow M_{n-1}/M_n \dashrightarrow\cdots \dashrightarrow M_1/M_2 \dashrightarrow M_0/M_1.$$
Let $L^\pm(a)=L^\pm(a\varpi)$ denote the highest (lowest) weight simple $\sll_2$-module of highest (lowest) weight $a \varpi$. It is straightforward to show the following by using \eqref{description of sl2 action}.

\begin{lemma}\label{lem: decomposition}
The $\sll_2$-module $\Omega_{r,s}^{1,[b]}$ decomposes as follows:
\hspace{0mm}\\
\textup{(1)} $(1\leq r <p)$
\begin{enumerate}
\renewcommand{\labelenumii}{(\arabic{enumii})}
\item[(i)] $[b]\neq[-a_{\pm r,\pm s}]\colon$ $\Omega_{r,s}^{1,[b]}$ is simple.
\item[(ii)] $[b]=[-a_{\pm r,\pm s}]\colon$
$L^-(-2a_{\pm r,\pm s}-k) \dashrightarrow L^+(-2a_{\pm r,\pm s}-2-k)$.
\end{enumerate}
\textup{(2)} $(r=p)$
\begin{enumerate}
\item[(i)] $[b]\neq[-a_{p,s}]\colon$ $\Omega_{r,s}^{1,[b]}$ is simple.
\item[(ii)] $[b]=[-a_{p,s}],\ s=1\colon$ $L^-(1)\dashrightarrow L^+(-1)$.
\item[(iii)] $[b]=[-a_{p,s}],\ s\neq 1\colon$ $L^-(s) \dashrightarrow L^+(s-2) \dashrightarrow L^+(-s)$.
\end{enumerate}
\end{lemma}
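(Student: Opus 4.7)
The plan is to read off the $\sll_2$-module structure of $\Omega_{r,s}^{1,[b]}$ directly from the formulas in \eqref{description of sl2 action}. The crucial observation is that $e_0$ acts as the shift $\theta_{1,b'}\mapsto\theta_{1,b'+1}$ and is therefore always bijective on $\Omega_{r,s}^{1,[b]}$, so the whole module structure is controlled by the quadratic $f_0$-coefficient $-(b'+a_{r,s})(b'+a_{-r,-s})$. Hence $\Omega_{r,s}^{1,[b]}$ is reducible iff this quadratic vanishes at some $b'\in[b]$, i.e., iff $[b]\in\{[-a_{r,s}],[-a_{-r,-s}]\}$. A direct computation gives $a_{-r,-s}-a_{r,s}=s-r/p$, so the two cosets $[-a_{r,s}]$ and $[-a_{-r,-s}]$ coincide iff $r/p\in\Z$, iff $r=p$; this produces the dichotomy between (1) and (2).

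For $1\leq r<p$, case (i) is immediate because both $e_0$ and $f_0$ act injectively, so every nonzero vector generates the whole module. In case (ii), by the symmetry $(r,s)\leftrightarrow(-r,-s)$ it suffices to treat $[b]=[-a_{r,s}]$: the vector $\theta_{1,-a_{r,s}}$ is killed by $f_0$, and its $e_0$-orbit spans a submodule whose lowest $h_0$-weight is $-2a_{r,s}-k$. Since $-a_{-r,-s}\notin[b]$, the remaining $f_0$-coefficients are all nonzero, so this submodule is simple and is identified with $L^{-}(-2a_{r,s}-k)$. The quotient is generated by the class of $\theta_{1,-a_{r,s}-1}$, which is a highest weight vector of weight $-2a_{r,s}-2-k$, and is likewise simple because $f_0$ remains nonvanishing on it, giving $L^{+}(-2a_{r,s}-2-k)$.

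For $r=p$, both zeros lie in the same coset, separated by the integer $a_{-p,-s}-a_{p,s}=s-1$. When $s=1$ they merge into a double root at $-a_{p,1}$ and the argument of (1)(ii) applies verbatim (using $-2a_{p,1}-k=1$) to yield $L^{-}(1)\dashrightarrow L^{+}(-1)$. When $s\geq 2$ (the relevant range, since $s\geq 1$ in the parameterization), the larger zero is at $-a_{p,s}$, and the $e_0$-orbit of $\theta_{1,-a_{p,s}}$ spans a simple lowest weight submodule $L^{-}(s)$, using $-2a_{p,s}-k=s$. In the quotient, $\bar\theta_{1,-a_{-p,-s}}$ is still annihilated by $f_0$, and its $e_0$-orbit contains exactly $s-1$ basis vectors (the positions strictly between the two original zeros) before it maps into the first submodule and vanishes; this produces a finite-dimensional simple subquotient $L^{+}(s-2)$. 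The further quotient has no remaining zero of $f_0$, and its class $\bar{\bar\theta}_{1,-a_{-p,-s}-1}$ is annihilated by $e_0$ with $h_0$-weight $-s$, giving $L^{+}(-s)$. The one mildly delicate step is this last subcase: one must verify the middle subquotient is finite-dimensional by counting precisely the $s-1$ basis vectors between the two zero positions, whereas everything else reduces to routine bookkeeping with the bijectivity of $e_0$ and the computation of $h_0$-weights via $k=-2+1/p$.
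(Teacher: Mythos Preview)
Your argument is correct and follows exactly the approach the paper intends: the paper states just before the lemma that the result is straightforward from \eqref{description of sl2 action}, and its (commented-out) proof proceeds, as you do, by locating the zeros of the $f_0$-coefficient $-(b'+a_{r,s})(b'+a_{-r,-s})$, using $a_{-r,-s}-a_{r,s}=s-r/p$ to decide whether the two zeros lie in the same coset, and then reading off the composition series from the resulting lowest/highest weight vectors. Your write-up is in fact more explicit than the paper's sketch (you verify the weights via $k=-2+1/p$ and check simplicity of each subquotient), but the strategy is identical.
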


\begin{comment}
%private proof
\proof By \eqref{description of sl2 action}, we have a proper submodule of $\Omega_{r,s}^{1,[b]}$ only if $[b]=[-a_{\pm r,\pm s}]$.
In this case, we have a lowest weight vector $\theta_{1,b'}$ with $b'=-a_{r,s}$ (resp.\ $b'=-a_{-r,-s}$) which has a weight $(1-\tfrac{r-sp}{p})\varpi$ (resp.\ $(1+\tfrac{r-sp}{p})\varpi$). 
Note that $a_{-r,-s}-a_{r,s}=s-\frac{r}{p}$. 
(i) We only have a single lowest weight vector in the case, which implies the assertion. (ii) If $s=1$, we have $a_{r,s}=a_{-r,-s}(=\tfrac{p-1}{2p})$ and thus only have a single lowest weight vector in this case as well, which implies the second case. If $s\neq 1$ we have $a_{-r,-s}-a_{r,s}=s-1$ and thus $a_{r,s}\neq a_{-r,-s}$, which implies we have two lowest weight vector and the third case is obtained. This completes the proof.
\endproof
\end{comment}

\begin{comment}
\begin{remark}
We introduce the highest (resp.\ lowest) Verma module for $\sll_2$ with highest (resp.\ lowest) weight $\lambda\in \mathfrak{h}^*$ as
\begin{align*}
M^\pm(\lambda):=U(\sll_2)\underset{U(\mathfrak{b}_\pm)}{\otimes}\C_\lambda,\quad \mathfrak{b}_+=\mathrm{span}\{e,h\},\ \mathfrak{b}_-:=\mathrm{span}\{f,h\}.
\end{align*}
Then, for $1\leq r<p$, we have
\begin{align*}
L^-((1\pm \tfrac{r-sp}{p})\varpi_1)\simeq M^-((1\pm \tfrac{r-sp}{p})\varpi_1),\ L^+((-1\pm \tfrac{r-sp}{p})\varpi_1)\simeq M^+((-1\pm \tfrac{r-sp}{p})\varpi_1).
\end{align*}
\end{remark}
\end{comment}

\subsection{Some Lemmas} We denote by $U(\widehat{\sll}_{2,k})$ the enveloping algebra of the affine Lie algebra $\widehat{\sll}_2$ at level $k$.
\begin{lemma}\label{lem: position of relaxed highest weight vector}
Non-zero $V^k(\sll_2)$-submodules $M\subset E^{1,[b]}_{r,s}$ satisfy $M\cap\Omega_{r,s}^{1,[b]}\neq 0$.
\end{lemma}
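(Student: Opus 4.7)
The plan is to exploit the positive grading of $E^{1,[b]}_{r,s}$ by conformal weight. The lattice vectors $\theta_{1,b'}=e^{\mathbf{v}+b'(u+v)}\otimes v_{r,s}$ all share the same conformal weight $\tfrac{k}{4}+h_{r,s}$, independent of $b'\in[b]$, while any vector obtained by applying at least one nontrivial negative mode of the Heisenberg pair $u,v$ or of the $(1,p)$-Virasoro on $L_{r,s}$ has strictly larger weight. Hence $\Omega^{1,[b]}_{r,s}$ is exactly the lowest $L_0$-eigenspace of $E^{1,[b]}_{r,s}$.

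Now let $0\neq M\subseteq E^{1,[b]}_{r,s}$ be a $V^k(\sll_2)$-submodule. Since $L_0$ lies in the mode algebra of $V^k(\sll_2)$, $M$ is $L_0$-graded; let $d_0$ be the minimum $L_0$-eigenvalue on $M$. Any nonzero $m\in M_{d_0}$ is automatically annihilated by every positive mode $A_n$ ($n\geq1$, $A\in V^k(\sll_2)$): otherwise $A_n m\in M_{d_0-n}$ would contradict the minimality of $d_0$. The lemma therefore reduces to the singular-vector characterization that every nonzero vector of $E^{1,[b]}_{r,s}$ annihilated by all positive $V^k(\sll_2)$-modes lies in $\Omega^{1,[b]}_{r,s}$.

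To prove this, I would argue by induction on the level $N$ above the ground state, representing a candidate vector in the PBW-type basis $u_{-\vec n}v_{-\vec m}L^{\mathrm{vir}}_{-\vec k}\theta_{1,b'}$ of $\Pi_1[b]\otimes L_{r,s}$ (with $L^{\mathrm{vir}}_{-k}$ denoting the Virasoro modes of $\omega_{1,p}$ acting on $L_{r,s}$). Using the explicit formulas from Proposition \ref{eq: inverse hamiltonian diagram}, the mode $e_n=e^{u+v}_n$ detects the $u_{-n}$- and $v_{-n}$-descendants: the commutators $[e^{u+v}_n,u_{-n}]$ and $[e^{u+v}_n,v_{-n}]$ are nonzero multiples of $e^{u+v}_0$ with opposite signs, so that any combination not cancelling to a pure $(u+v)_{-n}$-descendant is ruled out. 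The mode $h_n=-2\mathbf{v}_n$ then kills the residual $(u+v)_{-n}$-components via the nonvanishing pairing $(\mathbf{v},u+v)=-1$, and the mode $f_n$ finally picks up the Virasoro descendants through the $\tfrac{1}{p}\omega_{1,p}e^{-(u+v)}$-piece in the formula for $f$. Iterating over $n\geq1$ peels off every PBW monomial of positive level, leaving only vectors in $\Omega^{1,[b]}_{r,s}$.

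The hardest step will be the $f_n$-analysis, because $f$ is a composite field entangling the Heisenberg, lattice and Virasoro sectors, so the commutators of $f_n$ with negative oscillators mix all three tensor factors. A clean way to decouple them is to introduce an auxiliary filtration on $E^{1,[b]}_{r,s}$ by the number of Heisenberg oscillators and pass to the associated graded, in which the $e^{-(u+v)}$-factor of $f_n$ acts as a pure lattice shift on $\theta_{1,b'}$ and the surviving contribution is a multiple of the Virasoro positive mode $L^{\mathrm{vir}}_n$ on the $L_{r,s}$-factor. The simplicity of $L_{r,s}$, whose only Virasoro singular vector is $v_{r,s}$, then forces the Virasoro part of any singular vector in $E^{1,[b]}_{r,s}$ to vanish, completing the induction.
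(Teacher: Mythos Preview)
Your strategy is essentially the paper's: use positive modes of $e$ to remove the $\xi=\tfrac12(u-v)$ part of the Heisenberg oscillators, then positive modes of $h$ to remove the $\zeta=\tfrac12(u+v)$ part, and finally deal with the Virasoro descendants on $L_{r,s}$. The paper frames this as a direct reduction (start from any $w\in M$ and keep applying positive modes until you land in $\Omega$), while you frame it contrapositively (a minimal-weight vector is singular, and singular vectors must lie in $\Omega$); the technical content is the same. The paper makes your $e_n$-step precise by computing
\[
e(z)w=z^{-1}E^+(2\zeta,z)\,F(\zeta,\xi-z^{\bullet})\,e^{\mathbf{v}+(b'+1)(u+v)}\otimes\overline{w}
\]
for $w=F(\zeta,\xi)e^{\mathbf{v}+b'(u+v)}\otimes\overline{w}$, so that the most negative $z$-power in $F(\zeta,\xi-z^{\bullet})$ isolates a nonzero $e_Nw$ whenever $F$ has genuine $\xi$-dependence. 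Since $e$ and $h$ act only on the $\Pi_1[b]$-factor, treating $w$ as a simple tensor is harmless here.

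Where the paper is substantially simpler is your ``hardest step''. Instead of $f_n$ and an auxiliary filtration, the paper uses the modes $L_{\mathrm{sug},n}$: the Sugawara vector lies in $V^k(\sll_2)$, and under $\Phi$ it decomposes as the Heisenberg conformal vector on $\Pi[0]$ plus $\omega_{1,p}$. Once the Heisenberg part of $w$ has been stripped to the bare vacuum $e^{\mathbf{v}+b'(u+v)}$, the positive Heisenberg--Virasoro modes annihilate it, leaving $L_{\mathrm{sug},n}w = e^{\mathbf{v}+b'(u+v)}\otimes L^{\mathrm{vir}}_n\overline{w}$ for $n>0$. Simplicity of $L_{r,s}$ then forces $\overline{w}\in\C v_{r,s}$ at once, so your filtration argument (and the delicate decoupling of the three pieces of $f$) is unnecessary.
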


\proof
It suffices to show $\theta_{1,b'}\in M$ for some $b'$.
By using the fields $\zeta(z):=\frac{1}{2}(u+v)(z), \xi(z):=\frac{1}{2}(u-v)(z)$,
we have
\begin{align}\label{e and h by zeta and xi}
&e(z)=T_{2\zeta}z^{-1} E^+(2\zeta,z)E^-(2\zeta,z),\quad h(z)=-2v(z)+ \frac{1}{p}\zeta(z),
\end{align}
by Proposition \ref{eq: inverse hamiltonian diagram} where we set $T_{2\zeta}$ to be the shift operator and 
\begin{align}\label{def of Epm}
&E^{\pm}(t,z)
=\mathrm{exp}\left(-\sum_{\pm m<0}\frac{t_{m}}{m}z^{-m} \right),\quad \sum_{n\geq0}s_n(t)z^n=E^{+}(t,z).
\end{align}
Since $h_0$ acts on $\pi^{u,v}_{\mathbf{v}+b'(u+v)}\otimes L_{r,s}$ by 
%$2b'-4ak$, 
distinct scalars $2b'-4k$, it follows that
$M \cap \pi^{u,v}_{\mathbf{v}+b'(u+v)}\otimes L_{r,s}\neq 0$ for some $b'\in [b]$.
Hence, we may take a nonzero element 
\begin{align}\label{nonzero element}
w=F(\zeta,\xi)e^{\mathbf{v}+b'(u+v)}\otimes \overline{w} \in M
\end{align}
for some differential polynomial $F(\zeta,\xi)$. 
We may assume $F(\zeta,\xi)=F(\zeta,0)$. 
Indeed, in the case $F(\zeta,\xi)\neq F(\zeta,0)$, we can replace $w$ with another element of the same form \eqref{nonzero element} with $F(\zeta,\xi)=F(\zeta,0)$ by applying $e_n$'s with $n>0$ to $w$ as follows.
By the commutation relations
\begin{align}\label{commutation relation}
[\zeta_{m}, \zeta_{n}]=[\xi_{m}, \xi_{n}]=0,\quad
[\zeta_{m}, \xi_{n}]=\frac{1}{2}m\delta_{m+n,0}
\end{align}
and the Taylor series expansion $g(t+a)=\mathrm{exp}(a\partial_t)g(t)$, we have
\begin{align}\label{applying e(z) to w}
e(z)w
=&z^{-1}E^+(2\zeta,z)F(\zeta,\xi-z^\bullet)e^{\mathbf{v}+(b'+1)(u+v)}\otimes\overline{w}
\end{align}
where $F(\zeta,\xi-z^\bullet)=F(\zeta,\xi)|_{\xi_{m}\rightarrow\xi_{m}-z^{m}}$.
Notice that $F(\zeta,\xi-z^\bullet)=F(\zeta,\xi)$ iff $F(\zeta,\xi)=F(\zeta,0)$ since we can inductively show that if $F(\zeta,\xi-z^\bullet)=F(\zeta,\xi)$ then the coefficient of $z^{-n}$ in $F(\zeta,\xi-z^\bullet)$ agrees with $-\partial_{\xi_{-n}}F(\zeta,\xi)$ starting with $n=1$.

Now, if $F(\zeta,\xi)\neq F(\zeta,0)$, then we have $F(\zeta,\xi-z^\bullet)\neq F(\zeta,\xi)$ and thus $F(\zeta,\xi-z^\bullet)=F^{(1)}(\zeta,\xi)z^{-N}+(\text{higher order terms})$ for some $N>0$. Then we replace $w$ with 
\begin{align}\label{process}
  w^{(1)}:=e_{N}w=F^{(1)}(\zeta,\xi)e^{\mathbf{v}+(b'+1)(u+v)}\otimes\overline{w}.  
\end{align}
Since the degree of $F^{(1)}(\zeta,\xi)$ is strictly lower than that of $F(\zeta,\xi)$, after repeating this procedure for finitely many times, we reach $w^{(n)}=F^{(n)}(\zeta,\xi)e^{\mathbf{v}+(b'+n)(u+v)}\otimes\overline{w}$ such that $F^{(n)}(\zeta,\xi)= F^{(n)}(\zeta,0)$ as desired.

It follows from $F(\zeta,\xi)=F(\zeta,0)$ that $h_nw=\partial_{\zeta_{-n}}w$ ($n>0$) by \eqref{commutation relation}.
Hence, by applying $h_n$'s to $W$, we obtain the element $w':=e^{\mathbf{v}+b'(u+v)}\otimes \overline{w}\in M$. 
Since $L_{r,s}$ is simple, by applying suitable $L_{\mathrm{sug},n}$'s to $w'$, we obtain $\theta_{1,b'}=e^{\mathbf{v}+b'(u+v)}\otimes v_{r,s}\in M$ and thus the assertion.
\endproof

\begin{lemma}\label{lemm-UOmega=E}
$U(\widehat{\sll}_{2,k})\Omega_{r,s}^{1,[b]}=E_{r,s}^{1,[b]}$.
\end{lemma}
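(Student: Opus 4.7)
The plan is to mirror the argument of Lemma \ref{lem: position of relaxed highest weight vector} in the opposite direction: starting from $\theta_{1,b'}\in\Omega_{r,s}^{1,[b]}$ for $b'\in[b]$, I will iteratively apply modes of $e$, $h$, and of the Sugawara conformal vector to cover every element of $E_{r,s}^{1,[b]}=\Pi_1[b]\otimes L_{r,s}$. Throughout, set $N:=U(\widehat{\sll}_{2,k})\Omega_{r,s}^{1,[b]}$.

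In stage one I would extract all polynomial actions of $\zeta,\xi$-modes on the $\theta$'s. From $e(z)\theta_{1,b'}=z^{-1}E^+(2\zeta,z)\theta_{1,b'+1}$, which follows from \eqref{e and h by zeta and xi} together with the regularity $\zeta(z)e(w)\sim 0$ (since $(u+v,u+v)=0$), one computes $e_{-n}\theta_{1,b'}=s_n(2\zeta)\theta_{1,b'+1}$, with $s_n$ the $n$-th complete symmetric polynomial. Because $[\zeta_m,e_n]=0$ for all $m,n$, iterating $e$-modes produces $s_{n_1}(2\zeta)\cdots s_{n_k}(2\zeta)\theta_{1,b'+k}\in N$; since $\{s_n\}_{n\geq 1}$ generates $\C[\zeta_{-1},\zeta_{-2},\ldots]$ as a polynomial ring and $b'\in[b]$ may be chosen arbitrarily negative, this yields $P(\zeta_\bullet)\theta_{1,b''}\in N$ for every polynomial $P$ and every $b''\in[b]$. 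Writing $h=k\zeta+2\xi$, the relation $h_{-n}\theta_{1,b''}=k\zeta_{-n}\theta_{1,b''}+2\xi_{-n}\theta_{1,b''}$ then delivers $\xi_{-n}\theta_{1,b''}\in N$, and a joint induction on $\zeta$- and $\xi$-degrees---using $[\zeta_{-m},\xi_{-n}]=0$ for $m,n>0$---upgrades this to $R(\zeta_\bullet,\xi_\bullet)\theta_{1,b''}\in N$ for every polynomial $R$.

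In stage two I would generate the $L_{r,s}$-factor. Using the decomposition $L_{\mathrm{sug}}=\omega_{1,p}+X$ from Proposition \ref{eq: inverse hamiltonian diagram}(1), where $X=\tfrac12(u^2-v^2)+\tfrac{1}{4p}\partial(u+v)-\partial u$ is a field in the $\Pi[0]$-factor only, the key inductive statement to prove, by induction on depth $d$ of $w\in L_{r,s}$ under $(\omega_{1,p})_n$-modes, is: for all $b''\in[b]$, every $w$ of depth $\leq d$, and every polynomial $R$, $R(\zeta_\bullet,\xi_\bullet)\cdot e^{\mathbf{v}+b''(u+v)}\otimes w\in N$. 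The base $d=0$ is exactly stage one. For $d\mapsto d+1$, write $w=(\omega_{1,p})_n w_0$ and apply $L_{\mathrm{sug},n}$ to $e^{\mathbf{v}+b''(u+v)}\otimes w_0$: by the inductive hypothesis, both the left-hand side and the compensator $X_n(e^{\mathbf{v}+b''(u+v)})\otimes w_0$ lie in $N$, so $e^{\mathbf{v}+b''(u+v)}\otimes w\in N$; rerunning the stage-one argument with $e^{\mathbf{v}+b''(u+v)}\otimes w$ in place of $\theta_{1,b''}$ then promotes this to arbitrary polynomial prefactors $R$. Since $L_{r,s}$ is simple as an $L(c_{1,p},0)$-module, $v_{r,s}$ is cyclic under the $(\omega_{1,p})_\bullet$-action, so every $w\in L_{r,s}$ has finite depth; summing over $b''\in[b]$ then gives $N\supseteq\bigoplus_{b''\in[b]}\pi^{u,v}_{\mathbf{v}+b''(u+v)}\otimes L_{r,s}=E_{r,s}^{1,[b]}$.

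The main obstacle will be the correct formulation of the inductive statement in stage two: merely tracking $e^{\mathbf{v}+b''(u+v)}\otimes w\in N$ would not be enough, because the Sugawara compensator $X_n$ applied to descendants produces $\zeta,\xi$-polynomial actions on those descendants, forcing one to carry along the stronger statement $R(\zeta_\bullet,\xi_\bullet)\cdot e^{\mathbf{v}+b''(u+v)}\otimes w\in N$ throughout the induction. With this stronger hypothesis in place, however, each inductive step reduces to a direct reuse of the stage-one technique.
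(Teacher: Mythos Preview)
Your argument is correct and follows the same two-stage template as the paper: first use the modes of $e(z)$ and $h(z)$ (equivalently, $\zeta$- and $\xi$-descendants) to fill out all of $\Pi_1[b]\otimes w$ from the bare vectors $e^{\mathbf{v}+b''(u+v)}\otimes w$, and then climb through $L_{r,s}$ by a field that carries $\omega_{1,p}$.  The one genuine difference is your choice of that field: the paper applies $f(z)$, whose image under $\Phi$ contains $\tfrac{1}{p}\omega_{1,p}\,e^{-(u+v)}$, and therefore needs the auxiliary ``reduce any nonzero element of $M_w$ back to a bare vector'' step (the ``Conversely'' clause invoking the process of Lemma~\ref{lem: position of relaxed highest weight vector}) to undo the lattice shift and the $\mathbf{u}$-contamination; you instead use $L_{\mathrm{sug}}=\omega_{1,p}+X$ with $X$ purely in the $\Pi[0]$-factor, so the compensator $X_n$ lands directly back in the inductive hypothesis and no reduction step is needed.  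This is a mild simplification.  One small point to keep in mind when writing it up: the Sugawara operators $L_{\mathrm{sug},n}$ lie only in a completion of $U(\widehat{\sll}_{2,k})$, but on the smooth module $E^{1,[b]}_{r,s}$ each $L_{\mathrm{sug},n}v$ is a finite $U(\widehat{\sll}_{2,k})$-combination, so they preserve $N$ and the argument is legitimate.
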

\begin{proof}
For $w\in L_{r,s}$, we can show by induction of the conformal weight that $M_w=\Pi_1[b]\otimes w$ is obtained from  $e^{\mathbf{v}+b'(u+v)}\otimes w$ by applying the fields in \eqref{e and h by zeta and xi}. 
Conversely, by using the replacement \eqref{process}, we obtain $e^{\mathbf{v}+b'(u+v)}\otimes w$ from any nonzero element of $M_w$ by using the fields in \eqref{e and h by zeta and xi}. 
Now, we can show by induction of the conformal weight on $L_{r,s}$ that starting with $w=v_{r,s}$, we apply $f(z)$ to the subspace $M_w$ and use the above argument to obtain the subspace $M_{L_nw}$ ($n<0$). Since $E_{r,s}^{1,[b]}=\sum_w M_{w}$, we obtain the assertion.   
\end{proof}

\begin{comment}
\begin{lemma}[Frobenius reciprocity]\label{lemm-Frobenius-reciprocity}
Let $M$ be a $V^k(\sll_2)$-module such that $M^{\sll_2[[t]]t}\supseteq L^{\pm}(\lambda)$ for some $\lambda\in\mathfrak{h}^\ast$. 
Then there exists a $V^k(\sll_2)$-module homomorphism
$\mathbb{V}^{\pm}_k(\lambda)\rightarrow M$
such that $\im\mathbb{V}^{\pm}_k(\lambda)=U(V^k(\sll_2))L^{\pm}(\lambda)=U(\sll_2[[t^{-1}]]t^{-1})L^{\pm}(\lambda)$.
\end{lemma}
\begin{proof}
By the Frobenius reciprocity, we have
\begin{align*}
\Hom_{U(\hat{\sll_2})}(\mathbb{V}^{\pm}_k(\lambda), M)
\simeq
\Hom_{U(\hat{\sll_2})[[t]]}(L^{\pm}(\lambda), M)
\simeq
\Hom_{U(\hat{\sll_2})}(L^{\pm}(\lambda), M^{\sll_2[[t]]t}).
\end{align*}
By assumption, we have the embedding $L^{\pm}(\lambda)\hookrightarrow M^{\sll_2[[t]]t}$ and the corresponding element in $\Hom_{U(\hat{\sll_2})}(\mathbb{V}^{\pm}_k(\lambda), M)$ is the desired one.
\end{proof}
\end{comment}

\subsection{Structure of weight modules}

Let us introduce the $V^k(\sll_2)$-module
\begin{align*}
\mathbb{V}^\pm_k(\lambda):=U(\widehat{\sll}_{2,k})\underset{U(\sll_2[\![t]\!])}{\otimes} L^\pm(\lambda)
\end{align*}
and denote by $L^\pm_k(\lambda)$ its unique simple quotient.

\begin{proposition}
%[{\cite[(28)]{Ad}}]
\label{decomposition of weight modules}
The $V^k(\sll_2)$-module $E_{r,s}^{1,[b]}$ decomposes as follows:
\hspace{0mm}\\
%As a $V^k(\sll_2)$-module, $E_{r,s}^{1,[b]}$ is indecomposable and has a composition series $E_{r,s}^{1,[b]}=M_0\supset M_1\cdots \supset M_n=0$ as follows (see Figure \ref{fig:Loewy daogrum of E}).\\
%\begin{enumerate}\renewcommand{\labelenumi}{(\roman{enumi})}
\textup{(1)} $(1\leq r <p)$
\begin{enumerate}
\renewcommand{\labelenumii}{(\arabic{enumii})}
\item[(i)]\label{decomposition of weight modules 1-1}
$[b]\neq[-a_{\pm r,\pm s}]\colon$ $E_{r,s}^{1,[b]}$ is simple.
\item[(ii)]\label{decomposition of weight modules 1-2}
$[b]=[-a_{\pm r,\pm s}]\colon$%\hspace{4mm} 
%$M_0/M_1\simeq L^+_k(-2a_{\pm r,\pm s}-k)$, $M_1\simeq L^-_k(-2a_{\pm r,\pm s}-2-k)$.
$L^-_k(-2a_{\pm r,\pm s}-k)\dashrightarrow L^+_k(-2a_{\pm r,\pm s}-2-k)$.
\end{enumerate}
\textup{(2)} $(r=p)$
%\item
\begin{enumerate}
\renewcommand{\labelenumii}{(\arabic{enumii})}
\item[(i)]\label{decomposition of weight modules 2-1}
$[b]\neq[-a_{p,s}]\colon$ %\hspace{10mm} $M_1=0$, i.e. 
$E_{p,s}^{1,[b]}$ is simple.
\item[(ii)]\label{decomposition of weight modules 2-2}
$[b]=[-a_{p,s}],\ s=1\colon$ 
%$M_0/M_1\simeq L^+_k(-\varpi)$, $M_1/M_2\simeq L^+_k(\varpi)$, $M_2\simeq L^-_k(\varpi)$.
$L^-_k(\varpi)\dashrightarrow L^+_k(\varpi)\dashrightarrow L^+_k(-\varpi)$.
\item[(iii)]\label{decomposition of weight modules 2-3}
$[b]=[-a_{p,s}],\ s\neq 1\colon$
%$M_0/M_1\simeq L^+_k(-s\varpi)$, $M_1/M_2\simeq L^+_k(s\varpi)$, $M_2/M_3\simeq L^+_k((s-2)\varpi)$, $M_3\simeq L^-_k(s\varpi)$.
$L^-_k(s\varpi)\dashrightarrow L^+_k((s-2)\varpi)\dashrightarrow 
L^+_k(s\varpi)\dashrightarrow 
L^+_k(-s\varpi)$.
\end{enumerate}
%\end{enumerate}
\end{proposition}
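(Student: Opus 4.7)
The plan is to reduce the $V^k(\sll_2)$-module structure of $E^{1,[b]}_{r,s}$ to the $\sll_2$-module structure of its lowest-conformal-weight subspace $\Omega^{1,[b]}_{r,s}$ given in Lemma \ref{lem: decomposition}. Two intersection-type inputs are available: Lemma \ref{lem: position of relaxed highest weight vector} guarantees that every nonzero $V^k(\sll_2)$-submodule of $E^{1,[b]}_{r,s}$ meets $\Omega^{1,[b]}_{r,s}$ nontrivially, and Lemma \ref{lemm-UOmega=E} gives $E^{1,[b]}_{r,s} = U(\widehat{\sll}_{2,k})\,\Omega^{1,[b]}_{r,s}$. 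Combined with Frobenius reciprocity, which for each $\sll_2$-simple submodule $L^{\pm}(\lambda) \subseteq \Omega^{1,[b]}_{r,s}$ provides a surjection $\mathbb{V}^{\pm}_k(\lambda) \twoheadrightarrow U(\widehat{\sll}_{2,k})\,L^{\pm}(\lambda)$, these tools suffice to read off the simple composition factors as modules of the form $L^{\pm}_k(\lambda)$.

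For $r < p$, this program is essentially automatic. In case (1)(i), $\sll_2$-simplicity of $\Omega^{1,[b]}_{r,s}$ forces any nonzero submodule to contain $\Omega^{1,[b]}_{r,s}$ and hence to equal $E^{1,[b]}_{r,s}$. In case (1)(ii), letting $a \in \{a_{r,s}, a_{-r,-s}\}$ denote the value with $[b] = [-a]$ and $N := U(\widehat{\sll}_{2,k})\,L^{-}(-2a-k)$, the PBW decomposition of $U(\widehat{\sll}_{2,k})$ combined with the vanishing of positive modes on $\Omega^{1,[b]}_{r,s}$ implies $N \cap \Omega^{1,[b]}_{r,s} = L^{-}(-2a-k)$. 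Applying Lemma \ref{lem: position of relaxed highest weight vector} to submodules of $N$ and to submodules of $E^{1,[b]}_{r,s}$ properly containing $N$ then shows that both $N$ and $E^{1,[b]}_{r,s}/N$ are simple, and Frobenius reciprocity identifies them with $L^{-}_k(-2a-k)$ and $L^{+}_k(-2a-2-k)$ respectively.

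The case $r = p$ is more delicate. Case (i) proceeds as above. In cases (ii)--(iii), the coincidence $[-a_{p,s}] = [-a_{-p,-s}]$ enlarges the $\sll_2$-composition series of $\Omega^{1,[b]}_{p,s}$ to length $2$ (for $s=1$) or $3$ (for $s > 1$), yet the claimed affine composition length is one greater. Thus a strictly intermediate affine submodule must exist whose intersection with $\Omega^{1,[b]}_{p,s}$ coincides with that of the next-smaller submodule in the naive filtration; its quotient contributes an extra factor $L^{+}_k(s\varpi)$ (respectively $L^{+}_k(\varpi)$ when $s=1$). I would produce the corresponding highest weight vector of weight $s\varpi$ at higher conformal weight by tracking the singular vector of the simple Virasoro module $L_{p,s} \subset \pi^{\alpha}_{\alpha_{p,s}}$ through the inverse Hamiltonian reduction $\Phi$ of Proposition \ref{eq: inverse hamiltonian diagram} and the free-field formulas \eqref{e and h by zeta and xi}: the Virasoro singular vector translates into an affine highest weight vector inside $\Pi_1[b] \otimes L_{p,s}$. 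Frobenius reciprocity again identifies the simple subquotient it generates, and a variant of the intersection argument applied above $\Omega^{1,[b]}_{p,s}$ controls the remaining filtration.

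The main obstacle is the $r = p$ analysis. For $r < p$ the $\sll_2$-submodule lattice of $\Omega^{1,[b]}_{r,s}$ determines the $V^k(\sll_2)$-submodule lattice of $E^{1,[b]}_{r,s}$ bijectively, so the statement follows essentially formally from Lemma \ref{lem: decomposition}. For $r = p$ this bijection fails precisely by one extra simple factor, and locating that factor requires a quantitative understanding of the singular vectors of the $(1,p)$-Virasoro modules $L_{p,s}$ at level $k = -2 + 1/p$ and of how they lift to affine singular vectors under the inverse Hamiltonian reduction.
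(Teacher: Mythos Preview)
Your treatment of case~(1) is essentially the paper's approach, but with one genuine gap in (1)(ii): from $M \supsetneq N$ you cannot directly conclude $M \cap \Omega^{1,[b]}_{r,s} \supsetneq L^-(-2a-k)$ via Lemma~\ref{lem: position of relaxed highest weight vector}, because the extraction process there (applied to some $v \in M\setminus N$) may well land back inside $L^-(-2a-k)$. The paper closes this by first characterizing $N$ intrinsically as $\{v \in E^{1,[b]}_{r,s} : f_0^{\gg 0}v = 0\}$, arguing that the quotient by $N$ would otherwise contain an $\sll_2$-integrable piece while the $\h$-weights of $E^{1,[b]}_{r,s}$ are non-integral for $r<p$. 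Once this is established, any $v \notin N$ satisfies $f_0^{N}v \neq 0$ for all $N$; applying $f_0$ sufficiently many times before running the Lemma~\ref{lem: position of relaxed highest weight vector} process forces the extracted $\theta_{1,b'}$ into the $L^+$-piece of $\Omega^{1,[b]}_{r,s}$, giving simplicity of $E^{1,[b]}_{r,s}/N$.

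For $r=p$ the paper takes a completely different route from the one you sketch. It does \emph{not} hunt for the extra singular vector directly. Instead it sets up the filtration $N_3^s \subset N_2^s \subset N_1^s \subset N_0^s$ (with $N_1^s=\{v:f_0^{\gg0}v=0\}$), identifies the outer layers $N_3^s\simeq L_k^-(s\varpi)$ and $N_0^s/N_1^s\simeq L_k^+(-s\varpi)$ as in case~(1)(ii), obtains a surjection $\mathbb{V}_k^+((s-2)\varpi)\twoheadrightarrow N_2^s/N_3^s$ by Frobenius reciprocity, and then \emph{postpones} the remaining identification of $N_1^s/N_2^s$ and $N_2^s/N_3^s$ until after developing the Felder-complex diagrams for the $\FT(\sll_2)$-modules in \S\ref{section:FT-modules}. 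The missing piece is then read off from the $r<p$ rows of Proposition~\ref{cor: Felder complex for sl2 I} (for $r=p-1$ and $r=1$) by a counting argument, together with simplicity of the Weyl modules (Corollary~\ref{Property of Weyl modules}) for $p=1$. Your proposed mechanism---tracking a singular vector of $L_{p,s}$ through $\Phi$---cannot work as stated: $L_{p,s}$ is the \emph{simple} Virasoro module and has no nontrivial singular vectors. The extra affine highest-weight vector the paper eventually produces (denoted $y_{2n+s}$) comes not from internal Virasoro structure of $L_{p,s}$ but from the cosingular vectors of \cite{AM} in the ambient lattice module $\V_{p,s}$, pulled back through the $(\mathrm{B}^{\af},V^k(\sll_2))$-structure on $\Pi[0]\otimes \V_{p,s}$.
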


\begin{comment}
\begin{figure}
\centering
%\includegraphics{}
\begin{tikzpicture}[x=2mm,y=2mm]
\node (A) at (0,0) {$E_{r,s}^{1,[b]}$};
\node (B1) at (15,0) {$L^+_k((-2a_{\pm r,\pm s}-k)\varpi)$};
\node (B0) at (15,5) {$L^-_k((-2a_{\pm r,\pm s}-2-k)\varpi)$};
\draw [line width=1pt] (B0) -- (B1);
\node (C2) at (30,0) {$L^-_k(\varpi)$};
\node (C1) at (30,5) {$L^+_k(\varpi)$};
\node (C0) at (30,10) {$L^+_k(-\varpi)$};
\draw [line width=1pt] (C0) -- (C1);
\draw [line width=1pt] (C1) -- (C2);
\node (D3) at (50,0) {$L^-_k(s\varpi)$};
\node (D2) at (40,5) {$L^+_k((s-2)\varpi)$};
\node (D1) at (60,5) {$L^+_k(s\varpi)$};
\node (D0) at (50,10) {$L^+_k(-s\varpi)$};
\draw [line width=1pt] (D0) -- (D1);
\draw [line width=1pt,red] (D1) -- (D2);
\draw [line width=1pt] (D2) -- (D3);
\draw [line width=1pt,blue] (D1) -- (D3);
\draw [line width=1pt,blue] (D2) -- (D0);
\end{tikzpicture}
\caption{Loewy diagrams of $E_{r,s}^{1,[b]}$.
In the last case, if $0\rightarrow M_2/M_3\rightarrow M_1/M_3\rightarrow M_1/M_2\rightarrow 0$ splits, then the blue lines shall be adopted, otherwise the red line shall be adopted.}
\label{fig:Loewy daogrum of E}
\end{figure}
\end{comment}
\proof
We postpone a part of the proof of \textup{(2)-(ii), (iii)} until \S \ref{section:FT-modules} and show the remaining cases.
We show\textup{(1)}-(i) and \textup{(2)}-(i). 
Take a nonzero $V^k(\sll_2)$-submodule $M\subset E_{r,s}^{1,[b]}$. Then the $\sll_2$-submodule $M\cap \Omega_{r,s}^{1,[b]}$ is nonzero by Lemma \ref{lem: position of relaxed highest weight vector} and thus $\Omega_{r,s}^{1,[b]}$ by Lemma \ref{lem: decomposition}. Hence $M=E_{r,s}^{1,[b]}$ holds by Lemma \ref{lemm-UOmega=E} and thus the assertion.

We show \textup{(1)}-(ii).
Set $\lambda=(-2a_{\pm r,\pm s}-k)\varpi$.
By Lemma \ref{lem: decomposition}, we have an $\sll_2$-submodule $L^-(\lambda)\subset \Omega_{r,s}^{1,[b]}$ and thus a $V^k(\sll_2)$-submodule $M:=U(\widehat{\sll}_{2,k})L^-(\lambda)\subset E_{r,s}^{1,[b]}$.
For a nonzero submodule $N\subset M$, $N\cap \Omega_{r,s}^{1,[b]}\subset L^-(\lambda)$ is nonzero by Lemma \ref{lem: position of relaxed highest weight vector} and coincides with $L^-(\lambda)$ by the simplicity. Hence, $N=M$ and thus $M$ is simple, i.e. $M=L^-_k(\lambda)$.

%\textbf{Step 2.2:}
Set $L:=\{v\in E_{r,s}^{1,[b]} \mid f_0^{\gg0}v=0\}\subset E_{r,s}^{1,[b]}$ where $f_0^{\gg0}$ means $f_0^N$ for large enough $N$. It is a $V^k(\sll_2)$-submodule and contains $L_k^-(\lambda)$. 
We show $L=L_k^-(\lambda)$. Indeed, it follows from the structure of $\Omega_{r,s}^{1,[b]}$ (Lemma \ref{lem: decomposition}) and Lemma \ref{lemm-UOmega=E} that $e_0^{\gg0}v\in L_k^-(\lambda)$ for all $v\in L$.
Therefore, the quotient $L/L_k^-(\lambda)$ is an integrable $\sll_2$-submodule.
Since $-2a_{r,s}-k\notin \Z$, $\h$-weights of $E_{r,s}^{1,[b]}$ are not integral. Thus $L/L_k^-(\lambda)=0$ i.e. $L=L_k^-(\lambda)$. 

We show $E_{r,s}^{1,[b]}/L_k^-(\lambda) \simeq L^+_k(\lambda-\alpha)$ by using $L=L_k^-(\lambda)$. By Lemma \ref{lem: decomposition}, it suffices to show the simplicity. Take $\overline{v}\in E_{r,s}^{1,[b]}/L_k^-(\lambda)$ and a lift $v\in E_{r,s}^{1,[b]}$. Since $f_0^Nv\neq0$ for all $N$, we may take a large enough $N$ so that  $U(\widehat{\sll}_{2,k})v \cap \Omega_{r,s}^{1,[b]}$ intersects non-trivially with  $L^+(\lambda-2)\subset \Omega_{r,s}^{1,[b]}$ by repeating the process in the proof of Lemma \ref{lem: position of relaxed highest weight vector}.
Hence, $U(\widehat{\sll}_{2,k})v= E_{r,s}^{1,[b]}$ by Lemma \ref{lemm-UOmega=E} and thus $E_{r,s}^{1,[b]}/L_k^-(\lambda)$ is simple. 
This completes the proof of \textup{(1)}-(ii). 

Finally we consider the cases \textup{(2)}-(ii), (iii).
For $s\geq 1$, set
\begin{align}\label{socle sequence of E}
\begin{split}
&N_0^s:=E^{1,[a_{p,s}]}_{p,s},~
N_1^s:=\{v\in N_0~|~f_0^{\gg0}v=0\},\\
&N_2^s:=U(V^k(\sll_2))\theta_{1,-s+2},~
N_3^s:=U(V^k(\sll_2))\theta_{1,s}.
\end{split}
\end{align}
Proofs of $N_3^s\simeq L^-_k(s\varpi)$ and $N_0^s/N_1^s\simeq L^+_k(-s\varpi)$ are similar to that of the case \textup{(1)}-(ii).
By $L^+(s-2)\hookrightarrow(N_2^s/N_3^s)^{U(\sll_2[\![t]\!]t)}$
and Frobenius reciprocity, we obtain a surjective $V^k(\sll_2)$-module homomorphism $\mathbb{V}^+_{k}((s-2)\varpi)\rightarrow N_2^s/N_3^s$.
The proof of remaining assertions is postponed to the last of \S\ref{section:FT-modules} below.
%The simplicity of $\mathbb{V}^+_{k}((s-2)\varpi)$ (and hence $N_2^s/N_3^s$) and $N_1^s/N_2^s\simeq L^+_k(s\varpi)$ are proved in \S\ref{section:FT-modules}.
\endproof

\section{Free field $V^k(\sll_2)$-modules}\label{section:FT-modules}
Here, we consider the structure of free field  $V^k(\sll_2)$-modules which we will use to construct simple $\FT(\sll_2)$-modules in the next section.

\subsection{Free field modules}\label{subsection: decomposition through Phi}
Let us introduce free field representations 
\begin{align*}
    \Pi_a[b]\otimes \V_{r,s},\quad (a\in \Z, [b]\in \C/\Z,\ 1\leq r\leq p, s=1,2)
\end{align*}
over $V^k(\sll_2)$ through the Wakimoto realization $\mu$ in \eqref{wakimoto map}. 
The assignment
\begin{align*}
&f\mapsto Q_+=\int Y(e^{u+v}\otimes e^{\sqrt{p}\alpha},z)dz,\quad 
h|_{\Pi_a[b]\otimes \pi_{\alpha_{r,s'}}^{\alpha}}=\left\lceil-\frac{1}{\sqrt{p}}h\left(\alpha_{r,s'}-\frac{a}{\sqrt{p}}\varpi\right)\right\rceil,
\end{align*}
where $\lceil t \rceil$ is the maximal integer $t_0\leq t$, induces a $\mathrm{B}$-action, which we denote by $\mathrm{B}^\af$. Then $\Pi_a[b]\otimes \V_{r,s}$ is a $(\mathrm{B}^\af, V^k(\sll_2))$-bimodule.

The automorphism $g$ in \eqref{isom of vertex algebra} induces an isomorphism of $\Pi[0]\otimes V_{\sqrt{p}\mathrm{A}_1}$-modules
\begin{align}\label{pull-back of PiV}
g \colon 
\Pi_a[b]\otimes \mathcal{V}_{r,s} \simeq g^*(\Pi_a[\tfrac{a}{4p}+b-a_{r,s}]\otimes \mathcal{V}_{r-a,s}).
\end{align}
By Proposition \ref{eq: inverse hamiltonian diagram}, it intertwines the $\mathrm{B}^{\af}$-action given by $\mathrm{B}^{\mathrm{vir}}$ and $\mathrm{B}^{\af}$ respectively, and the $V^k(\sll_2)$-action through $\Phi$ and $\mu$ respectively.
Let us first consider the structure of the $(\mathrm{B}^{\mathrm{vir}}, V^k(\sll_2))$-bimodule $\Pi_a[\tfrac{a}{4p}+b-a_{r,s}]\otimes \mathcal{V}_{r-a,s}$.
Since the spectral flow twist $\mathcal{S}^a=\mathcal{S}_{-a\mathbf{v}}$ acts on the first factor and 
\begin{align*}
\Pi_0[b'-a_{r,s}]\otimes \W_{r,s}\simeq\mathcal{S}^{a}(\Pi_a[b'-a_{r,s}]\otimes \W_{r,s}),
\end{align*}
as $(\mathrm{SL}_2^{\mathrm{vir}},V^k(\sll_2))$-bimodules, we may restrict to the case $a=0$ and write $\Pi[b]=\Pi_0[b]$ to simplify the notation.

Let us consider the case $r\neq p$ and take a tensor product of $\Pi[b-a_{r,s}]$ with the Felder complex \eqref{virasoro felder complex}. 
Then we obtain a short exact sequence of $(\mathrm{B}^{\mathrm{vir}},V^k(\sll_2))$-bimodules
\begin{align}\label{vir-aff Felder complex}
\begin{split}
0\rightarrow \Pi[b-a_{r,s}]\otimes \W_{r,s}&\rightarrow \Pi[b-a_{r,s}]\otimes \mathcal{V}_{r,s}\\
&\rightarrow \Pi[b-a_{r,s}]\otimes \W_{p-r,3-s}(-\varpi)\rightarrow 0.
\end{split}
\end{align}
We apply Proposition \ref{decomposition of weight modules} to the first and third term 
by using 
\begin{align*}
[-a_{r,2n+s}]=[-a_{r,s}],\quad 
[-a_{-r,-(2n+s)}]=[-a_{r,s}+\tfrac{r}{p}]
\end{align*}
and the isomorphism of $V^k(\sll_2)$-modules
\begin{align}\label{align L=SL}
\mathcal{S}L^-_k(\lambda)\simeq L^+_k(\lambda+k\varpi)\quad(\lambda\in\mathfrak{h}^{\ast}).
\end{align}
Then it is straightforward to show that $\Pi_0[b-a_{r,s}]\otimes \W_{r,s}$ decomposes into
\begin{align}\label{easy decomposition I}
\Pi[b-a_{r,s}]\otimes \W_{r,s} \simeq \displaystyle \bigoplus_{n\geq0}\C^{2n+s}\otimes E^{0,[b-a_{r,s}]}_{r,2n+s}
\end{align}
as $(\mathrm{SL}_2^{\mathrm{vir}},V^k(\sll_2))$-bimodules and, moreover, 

\begin{align}\label{easy decomposition II}
\begin{cases}
    [b]=[0]\colon & 0\rightarrow \mathcal{X}_{r,s}^{\mathrm{vir},+} \rightarrow \Pi[b-a_{r,s}]\otimes \W_{r,s}
\rightarrow  \mathcal{S}\mathcal{X}_{r+1,s}^{\mathrm{vir},+} \rightarrow 0,\\ 
&\\
    [b]=[\tfrac{r}{p}]\colon & 0\rightarrow \mathcal{X}_{r,s}^{\mathrm{vir},-}\rightarrow \Pi[b-a_{r,s}]\otimes \W_{r,s}
\ \rightarrow \mathcal{S}\mathcal{X}_{r-1,s}^{\mathrm{vir},-}\rightarrow 0,
\end{cases}
\end{align}
where we set the $(\mathrm{SL}_2^{\mathrm{vir}},V^k(\sll_2))$-bimodules
\begin{align}\label{decomposition data}
\mathcal{X}_{r,s}^{\mathrm{vir},\pm}=\bigoplus_{n\geq0}\C^{2n+s}\otimes L_k^+(\lambda_{\pm r,\pm(2n+s)}) 
\end{align}
%\sug{Since $\mathcal{X}_{r,s}^{\mathrm{vir},\pm}$ (and hence $\afXpm{r}{s}$) are given explicitly, we can define them explicitly and ignore $\mathcal{X}_{p,s}^{\mathrm{vir},-}$ and $\afXm{p}{s}$ from the beggining}
with
\begin{align*}
\lambda_{r,s}:=-\frac{1}{\sqrt{p}}\alpha_{r,s}=\left(s-1-\frac{r-1}{p}\right)\varpi\quad(r,s\in\Z).
\end{align*}

Now, we return back to the $(\mathrm{B}^{\af},V^k(\sll_2))$-bimodule $\Pi[b]\otimes \V_{r,s}$ through \eqref{pull-back of PiV}. We introduce the following $(\mathrm{SL}_2^{\af},V^k(\sll_2))$-bimodules
\begin{align*}
\begin{split}
\afWb{r}{s}{b}&:=g^\ast\left(\Pi[b-a_{r,s}]\otimes \W_{r,s}\right)\subseteq \Pi[b]\otimes\mathcal{V}_{r,s}\quad (b\in\C),\\
\afXpm{r}{s}&:=g^{\ast}\mathcal{X}^{\mathrm{vir},\pm}_{r,s}\subseteq \mathcal{W}_{r,s}^\pm:=\afWb{r}{s}{b}\quad ([b]=[0],[\tfrac{r}{p}]).
\end{split}
\end{align*}
%We also use the notation $\afXm{0}{s}:=\mathcal{S}\afXp{1}{s}$.
In parallel to the definition of $\W_{r,s}$ in \eqref{simple triplet modules} as screening kernels, $\afWb{r}{s}{b}$ is also characterized as
\begin{align*}
\afWb{r}{s}{b} \simeq \displaystyle{\mathrm{Ker}\ Q_-^{[r]}|_{\Pi[b]\otimes \V_{r,s}}},\ (1 \leq r <p),\ \afWb{p}{s}{b} \simeq \Pi[b]\otimes \V_{p,s}.
\end{align*}
through the screening operator 
\begin{align*}
Q_{-}^{[r]}=\int_{[\Gamma_r]} \prod_{a=1}^r Y(e^{-\frac{1}{p}(u+v)}\otimes e^{-\frac{1}{\sqrt{p}}\alpha},z_a)dz,
\end{align*}
which is the pull-back of $\mathcal{Q}_-^{[r]}$ in \eqref{screening operators}.
It follows from \eqref{easy decomposition I} and \eqref{decomposition data} that
\begin{align}\label{eq: X-decomposition}
\afWb{r}{s}{b}\simeq \bigoplus_{n\geq0} \C^{2n+s}\otimes g^*\left(E^{0,[b-a_{r,s}]}_{r,2n+s}\right),\
\afXpm{r}{s}\simeq  \bigoplus_{n\geq0} \C^{2n+s}\otimes L^+_k(\lambda_{\pm r,\pm(2n+s)})
\end{align}
as $(\mathrm{SL}_2^{\af},V^k(\sll_2))$-bimodules and, from \eqref{vir-aff Felder complex} and \eqref{easy decomposition II} that
\begin{align}
&0\rightarrow \afWb{r}{s}{b} 
\rightarrow \Pi[b]\otimes \V_{r,s}
\rightarrow \afWb{p-r}{3-s}{b+\tfrac{p-r}{p}}(-\varpi)
\rightarrow 0,\label{Felder complex for PiV}
\\
&0 \rightarrow \afXp{r}{s} 
\rightarrow \mathcal{W}_{r,s}^{+} 
\rightarrow \mathcal{S}\afXp{r+1}{s} 
\rightarrow 0,
\label{sequence by affine representation 1}
\\
&0\rightarrow \afXm{r}{s}  \rightarrow \W_{r,s}^{-}\rightarrow \mathcal{S}\afXm{r-1}{s} \rightarrow 0.
\label{sequence by affine representation 2}
\end{align}
since $[-a_{r,s}]=[\tfrac{p-r}{p}-a_{p-r,3-s}]$.

%From now on, unless otherwise noted, $B$-action means $B^{\af}$-action.

\begin{comment}
\begin{lemma}\label{Felder complex for pi[0]}\hspace{0mm}\\
%\begin{enumerate}\renewcommand{\labelenumi}{(\roman{enumi})}
\textup{(i)} 
%\item
We have an isomorphism of $(SL_2^{\mathrm{aff}},V^k(\sll_2))$-bimodules
\begin{align*}
\W_{r,s}^{\mathrm{aff}}[b] \simeq 
\begin{cases}
\displaystyle{\mathrm{Ker}\ Q_-^{[r]}|_{\Pi_0[0]\otimes \V_{r,s}}} &(1 \leq r <p),\\
\Pi_0[b]\otimes \V_{p,s} & (p=r).
\end{cases}
\end{align*}
\textup{(ii)} 
%\item
For $1 \leq r <p$, we have a short exact sequence of $(B^{\mathrm{aff}},V^k(\sll_2))$-bimodules
\begin{align*}
0\rightarrow \W_{r,s}^{\mathrm{aff}}[b] \rightarrow \Pi_0[b]\otimes \V_{r,s}\rightarrow \W_{p-r,3-s}^{\mathrm{aff}}[b+\tfrac{p-r}{p}] (-\varpi)\rightarrow 0.
\end{align*}
%\end{enumerate}
\end{lemma} 
\proof
(i) follows straightforwardly from construction and (ii) is immediate from \eqref{eq: immediate seq} since 
$\Pi_0[b-a_{r,s}]=\Pi_0[b+\tfrac{p-r}{p}-a_{p-r,3-s}]$.
\endproof
Let us introduce the $V^k(\sll_2)$-submodules
\begin{align*}
&\mathcal{X}_{r,s}^{\af}[0]:=(g^{-1})^{\ast}\mathcal{X}^{\mathrm{vir},-}_{r,s}\subseteq \Pi[0]\otimes\mathcal{V}_{r,s}\\
&\mathcal{X}_{r,s}^{\af}[\tfrac{r}{p}]:=(g^{-1})^\ast\mathcal{X}^{\mathrm{vir},+}_{r+2,s}\subseteq\Pi[\tfrac{r}{p}]\otimes \V_{r,s}.
\end{align*}
Here we identify $\Pi_0[b]\otimes\mathcal{V}_{r,s}$ with $\Pi_0[b-a_{r,s}]\otimes\mathcal{V}_{r,s}$ by Lemma \ref{dictionary of pullback}.

Let us relate \eqref{sequence by affine representation 1} to Corollary \ref{cor: digression of betagamma}.
\end{comment}

\subsection{Structure}\label{subsection structure of free field modules}
The following proposition can be proven by the same manner as Proposition \ref{decomposition of weight modules} by using $\beta(z)=Y(e^{u+v},z)$ and $Y(\beta\gamma,z)=v(z)$ through \eqref{FMS}.
\begin{proposition}[\cite{RW}]
\label{prop: digression betagamma}\hspace{0mm}\\
\textup{(1)} 
$\Pi_a[b]$ for $[b]\neq [\tfrac{a}{4p}]$ is a simple $\beta\gamma$-module.\\
\textup{(2)} 
$\Pi_a[\tfrac{a}{4p}]$ is indecomposable and admits a short exact sequence 
\begin{align*}
0\rightarrow \mathcal{S}_{av}\beta\gamma \rightarrow \Pi_a[\tfrac{a}{4p}]\rightarrow \mathcal{S}_{(a-1)v}\beta\gamma \rightarrow 0.
\end{align*} 
Moreover, $\mathcal{S}_{av}\beta\gamma$ is generated by $e^{av}$ and $\mathcal{S}_{(a-1)v}\beta\gamma$ by the image of $e^{av-(u+v)}$.
\end{proposition}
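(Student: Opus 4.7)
The strategy is to mimic the proof of Proposition \ref{decomposition of weight modules}, with $V^k(\sll_2)$ replaced by $\beta\gamma$ through the FMS embedding \eqref{FMS}, and the bottom space $\Omega_{r,s}^{1,[b]}$ replaced by $\Omega^a_{[b]}:=\bigoplus_{b'\in[b]}\C\,\theta_{a,b'}$, where $\theta_{a,b'}:=e^{a\mathbf{v}+b'(u+v)}$. The first step is the analogue of Lemma \ref{lem: position of relaxed highest weight vector}: every non-zero $\beta\gamma$-submodule $M\subseteq \Pi_a[b]$ meets $\Omega^a_{[b]}$. Since $Y(\beta\gamma,z)=v(z)$ acts diagonally on $\pi^{u,v}_{a\mathbf{v}+b'(u+v)}$ by distinct scalars, one picks a non-zero $w\in M$ of the form $F(\zeta,\xi)\,\theta_{a,b'}$ and then uses positive modes of $\beta(z)=Y(e^{u+v},z)$ to kill the $\xi$-dependence of $F$, and positive modes of $v(z)$ to kill the $\zeta$-dependence, exactly as in \eqref{applying e(z) to w}--\eqref{process}. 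The second step is the analogue of Lemma \ref{lemm-UOmega=E}: the $\beta\gamma$-submodule generated by $\Omega^a_{[b]}$ is all of $\Pi_a[b]$, by the same conformal-weight induction with $\beta(z),\gamma(z)$ in place of $e(z),f(z)$.

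The third step analyses the effective action of $\beta$ and $\gamma$ on $\Omega^a_{[b]}$: the appropriate modes map $\theta_{a,b'}$ to scalar multiples of $\theta_{a,b'\pm 1}$. A direct computation from the product formulas $Y(e^{u+v},z)\theta_{a,b'}$, $Y(e^{-(u+v)},z)\theta_{a,b'}$ and the OPE $u(z)e^{-(u+v)}(w)\sim -(w-z)^{-1}e^{-(u+v)}(w)$ shows that the $\beta$-shift has a non-zero coefficient for every $b'$, while the $\gamma$-shift picks up a non-zero scalar multiple of $b'-\tfrac{a}{4p}$. Hence the $\gamma$-shift vanishes on $\theta_{a,b'}$ precisely when $b'=\tfrac{a}{4p}$, which occurs within $[b]$ if and only if $[b]=[\tfrac{a}{4p}]$; in that case the critical vector is $\theta_{a,a/4p}=e^{av}$, and the next one below is $\theta_{a,a/4p-1}=e^{av-(u+v)}$.

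For $[b]\neq[\tfrac{a}{4p}]$ every shift coefficient is non-zero, so $\Omega^a_{[b]}$ is irreducible under the effective zero-mode action, and Steps 1--2 upgrade this to simplicity of $\Pi_a[b]$, proving (1). For $[b]=[\tfrac{a}{4p}]$, the vanishing at $e^{av}$ produces a proper $\beta\gamma$-submodule $M$ generated by $e^{av}$ with bottom $\bigoplus_{n\geq 0}\C\,\theta_{a,a/4p+n}$, whose quotient has bottom generated by the image of $e^{av-(u+v)}$, yielding the asserted short exact sequence. Finally, the identifications $M\simeq \mathcal{S}_{av}\beta\gamma$ and $\Pi_a[\tfrac{a}{4p}]/M\simeq \mathcal{S}_{(a-1)v}\beta\gamma$ follow from the standard realization of spectral flow twists of $\beta\gamma$ inside $\Pi[0]$: the vectors $e^{av}$ and (the image of) $e^{av-(u+v)}$ satisfy exactly the characterizing annihilation and eigenvalue conditions of the ground states of the respective twists; note here that $\mathcal{S}_{a\mathbf{v}}$ restricts to $\mathcal{S}_{av}$ on $\beta\gamma$ because $\mathbf{v}-v=-\tfrac{1}{4p}(u+v)$ is isotropic. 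The main technical point is this last identification with the spectral flow twists; everything else is a faithful transcription of the argument in \S\ref{section: weight V-modules}.
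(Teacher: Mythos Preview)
Your proposal is correct and is precisely the approach the paper indicates: the paper's entire proof consists of the sentence that the proposition ``can be proven by the same manner as Proposition~\ref{decomposition of weight modules} by using $\beta(z)=Y(e^{u+v},z)$ and $Y(\beta\gamma,z)=v(z)$ through \eqref{FMS},'' and you have carried out exactly this transcription, including the correct identification of the critical value $b'=\tfrac{a}{4p}$ and the spectral-flow identification of the sub- and quotient modules.
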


Let us introduce some notation.
We denote by $V^k(\sll_2)$-wmod the category of weak $V^k(\sll_2)$-modules, or equivalently, the category of smooth $\widehat{\sll}_{2}$-module of level $k$, $V^k(\sll_2)\text{-bmod}$ the full subcategory consisting of objects whose conformal grading (in other words, $L_{\mathrm{sug},0}$-eigenvalues) are bounded from below, and $V^k(\sll_2)\text{-mod}^{\geq 0}$ the full subcategory consisting of \emph{ind-objects} of $V^k(\sll_2)\text{-bmod}$, namely, objects in $V^k(\sll_2)$-wmod which are isomorphic to directs limits of objects of  $V^k(\sll_2)\text{-bmod}$. 
Obviously, these categories are related as 
\begin{align*}
V^k(\sll_2)\text{-bmod} \subset V^k(\sll_2)\text{-mod}^{\geq 0} \subset V^k(\sll_2)\text{-wmod}.
\end{align*}
Note that for $M\in V^k(\sll_2)\text{-wmod}$, there exists a unique maximal $V^k(\sll_2)$-submodule $\tau(M)$ lying in $V^k(\sll_2)\text{-mod}^{\geq 0}$.  Indeed, it is the union of all the $V^k(\sll_2)$-submodules which are objects in $V^k(\sll_2)\text{-bmod}$.
For $\lambda\in \mathfrak{h}^\ast$ and $r,s\in\Z$, we have 
\begin{align}\label{lemm:bounded L+ and unbounded SL+}
L^+_k(\lambda)\in V^k(\sll_2)\text{-bmod},\quad
\mathcal{S}L^+_k(\lambda_{r,s})\in V^k(\sll_2)\text{-bmod}
\iff 
\lambda_{r,s}=\lambda_{1,s'\geq 1}
\end{align}

We start with $\Pi[0]\otimes \V_{r,s}$. We have the $\beta\gamma\otimes V_{\sqrt{p}\mathrm{A}_1}$-module isomorphisms
\begin{align}\label{align: digression betagamma 1}
\begin{split}
\mathcal{S}_{-v-\frac{1}{\sqrt{p}}\varpi}(\beta\gamma\otimes\mathcal{V}_{r+1,s})
&\simeq 
\mathcal{S}_{-v}(\beta\gamma)\otimes \mathcal{V}_{r,s}
\simeq
\Pi[0]\otimes\mathcal{V}_{r,s}/\beta\gamma\otimes\mathcal{V}_{r,s},\\
\bold{1}\otimes e^{\alpha_{r+1,s}}
&\mapsto
\bold{1}\otimes e^{\alpha_{r,s}}
\mapsto
[e^{-(u+v)}\otimes e^{\alpha_{r,s}}]
\end{split}
\end{align} 
and thus obtain the short exact sequence
\begin{align}\label{short exact sequence for PiV and betagamma}
0\rightarrow
\beta\gamma\otimes\mathcal{V}_{r,s}
\rightarrow 
\Pi[0]\otimes\mathcal{V}_{r,s}
\rightarrow
\mathcal{S}(\beta\gamma\otimes\mathcal{V}_{r+1,s})(\delta_{r,p}\varpi)
\rightarrow
0.
\end{align}
%Let us compare the short exact sequence with \eqref{sequence by affine representation 1}.

\begin{lemma}\label{prop: analysis on X}
For $1\leq r< p$,\\
\textup{(1)} 
$\afXp{r}{s}=\beta\gamma\otimes\mathcal{V}_{r,s}\cap \afWp{r}{s}$.\\
\begin{comment}
$\mathcal{X}_{r,s}^{\af}[0]$ is a $(SL_2^{\af},V^k(\sll_2))$-bimodule and decomposes into
\begin{align}\label{eq: X-decomposition}
\mathcal{X}_{r,s}^{\af}[0]\simeq \bigoplus_{n\geq0}\C^{2n+s}\otimes L_k^+(\lambda_{r,s}^{n,-}).
\end{align}  
\end{comment}
\textup{(2)}
The restriction of \eqref{short exact sequence for PiV and betagamma} to $\mathcal{W}_{r,s}^{+}\subset \Pi[0]\otimes \V_{r,s}$ is isomorphic to \eqref{sequence by affine representation 1}.
\end{lemma}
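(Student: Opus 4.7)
The plan is to use the maximal positively graded submodule functor $M\mapsto \tau(M)$ together with the boundedness dichotomy \eqref{lemm:bounded L+ and unbounded SL+}: any direct sum of modules $L^+_k(\lambda)$ lies in $V^k(\sll_2)\text{-bmod}\subseteq V^k(\sll_2)\text{-mod}^{\geq 0}$, whereas the spectral flow $\mathcal{S}L^+_k(\lambda_{r,s})$ with $r\geq 2$ lies outside $V^k(\sll_2)\text{-mod}^{\geq 0}$. First I will prove (1) by pinning down $\beta\gamma\otimes\V_{r,s}$ and $\afXp{r}{s}$ as $\tau$ of their respective ambient modules, and then I will derive (2) from (1) by a short argument.

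For (1), from \eqref{short exact sequence for PiV and betagamma} one sees that $\beta\gamma\otimes\V_{r,s}$ is positively graded, while the quotient $\mathcal{S}(\beta\gamma\otimes\V_{r+1,s})$ is a nontrivial spectral-flow twist of a positively graded module and hence not positively graded; therefore $\tau(\Pi[0]\otimes\V_{r,s})=\beta\gamma\otimes\V_{r,s}$. Parallel to this, the decomposition \eqref{eq: X-decomposition} presents $\afXp{r}{s}\simeq \bigoplus_{n\geq 0}\C^{2n+s}\otimes L^+_k(\lambda_{r,2n+s})$ as a direct sum of positively graded modules, while the sequence \eqref{sequence by affine representation 1} identifies the quotient $\afWp{r}{s}/\afXp{r}{s}\simeq \mathcal{S}\afXp{r+1}{s}$ as a direct sum of $\mathcal{S}L^+_k(\lambda_{r+1,2n+s})$ with $r+1\geq 2$, none of which is positively graded by \eqref{lemm:bounded L+ and unbounded SL+}. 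Hence $\tau(\afWp{r}{s})=\afXp{r}{s}$, and combining,
\begin{equation*}
\beta\gamma\otimes\V_{r,s}\cap\afWp{r}{s} = \tau(\Pi[0]\otimes\V_{r,s})\cap\afWp{r}{s} = \tau(\afWp{r}{s}) = \afXp{r}{s}.
\end{equation*}

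For (2), restricting \eqref{short exact sequence for PiV and betagamma} to $\afWp{r}{s}$ and using (1) yields a short exact sequence
\begin{equation*}
0\to \afXp{r}{s} \to \afWp{r}{s} \to \pi(\afWp{r}{s}) \to 0,
\end{equation*}
where $\pi$ denotes the quotient map onto $\mathcal{S}(\beta\gamma\otimes\V_{r+1,s})$. Comparison with \eqref{sequence by affine representation 1} identifies the third term as $\afWp{r}{s}/\afXp{r}{s}\simeq \mathcal{S}\afXp{r+1}{s}$, delivering the asserted isomorphism of short exact sequences. I expect the main obstacle to be the identification $\tau(\afWp{r}{s})=\afXp{r}{s}$ in (1): while the inclusion $\afXp{r}{s}\subseteq \tau(\afWp{r}{s})$ is immediate from \eqref{eq: X-decomposition}, the opposite inclusion relies on the fine structure of $\afWp{r}{s}$ coming from Proposition \ref{decomposition of weight modules}, in particular on the fact that every non-positively-graded composition factor of $\afWp{r}{s}$ survives in the quotient $\mathcal{S}\afXp{r+1}{s}$.
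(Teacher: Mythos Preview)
Your approach via the functor $\tau$ is different from the paper's, and it contains a real gap in the step where you claim $\tau(\Pi[0]\otimes\V_{r,s})=\beta\gamma\otimes\V_{r,s}$. You argue that the quotient $\mathcal{S}(\beta\gamma\otimes\V_{r+1,s})$ is ``not positively graded'' and conclude that $\tau$ of the total space collapses to the submodule. But ``not in $V^k(\sll_2)\text{-bmod}$'' is strictly weaker than ``$\tau(\,\cdot\,)=0$'': a non-bounded module may very well contain nonzero bounded submodules. This is not hypothetical here: the quotient $\mathcal{S}(\beta\gamma\otimes\V_{r+1,s})$ is far from semisimple over $V^k(\sll_2)$, so the dichotomy \eqref{lemm:bounded L+ and unbounded SL+} does not apply to it directly, in contrast with your (correct) argument for $\tau(\afWp{r}{s})=\afXp{r}{s}$, where the quotient $\mathcal{S}\afXp{r+1}{s}$ \emph{is} a direct sum of simple, individually unbounded modules. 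In the paper, the identity $\tau(\Pi[0]\otimes\V_{r,s})=\beta\gamma\otimes\V_{r,s}$ is established only in Proposition~\ref{cor: Felder complex for sl2 I}, whose proof uses the present lemma; invoking it here would be circular.

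Concretely, your chain gives only one inclusion: since $\beta\gamma\otimes\V_{r,s}$ is bounded below, $\beta\gamma\otimes\V_{r,s}\cap\afWp{r}{s}\subseteq\tau(\afWp{r}{s})=\afXp{r}{s}$. What is missing is $\afXp{r}{s}\subseteq\beta\gamma\otimes\V_{r,s}$, and the paper proves this by a direct computation that your argument bypasses. The paper tracks the explicit highest weight vectors $v^{\af}_{r,2n+s;a}=Q_+^a(\mathbf{1}\otimes e^{\alpha_{r,2n+s}})$ and the cogenerators $\overline{v}^{\af}_{r,2n+s;a}=Q_+^a(e^{-(u+v)}\otimes e^{\alpha_{r,2n+s}})$, and then uses $\beta\gamma\otimes\V_{r,s}=\mathrm{Ker}\,Q_{\mathrm{FMS}}$ together with $[Q_{\mathrm{FMS}},Q_+]=0$ to see that the former lie in $\beta\gamma\otimes\V_{r,s}$ while the latter do not. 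This simultaneously gives both inclusions and makes (2) immediate. Your $\tau$-argument can be salvaged by supplying exactly this missing inclusion (for instance by observing that the generators $\mathbf{1}\otimes e^{\alpha_{r,2n+s}}$ of $\afXp{r}{s}$ visibly sit inside $\beta\gamma\otimes\V_{r,s}$ and that $\beta\gamma\otimes\V_{r,s}$ is stable under $Q_+$ and under $V^k(\sll_2)$), but as written the proof is incomplete.
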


\begin{comment}
Let us prove a lemma for the proof of Proposition \ref{prop: analysis on X}.

\begin{lemma}\label{lemma:generators of PiW}
Under the embedding $\mathcal{W}^{\af}_{r,s}[0]\subseteq\Pi[0]\otimes\mathcal{V}_{r,s}$ in Lemma \ref{dictionary of pullback},
the multiplicity space $\C^{2n+s}$ consisting of highest weight vector for $L_k^+(\lambda_{r,s}^{n,-})$ (resp. generator for $\mathcal{S}_1L_k^+(\lambda_{r+1,s}^{n,-})$) are spanned by 
\begin{align}\label{shape of (co)singular vectors}
v_{r,2n+s;a}^{\af}=Q_+^a (\mathbf{1}\otimes e^{\alpha_{r,2n+s}}),\quad \overline{v}_{r,2n+s;a}^{\af}=Q_+^a (e^{-(u+v)}\otimes e^{\alpha_{r,2n+s}}),
\end{align}
respectively $(0 \leq a < 2n+s-1)$.
\end{lemma}
\proof
By Proposition \ref{decomposition of weight modules}, the multiplicity space $\C^{2n+s}$ consisting of highest weight vector for $L_k^+(\lambda_{r,s}^{n,-})$ (resp. generator for $\mathcal{S}_1L_k^+(\lambda_{r+1,s}^{n,-})$) in $\Pi_0[-a_{r,s}]\otimes\W_{r,s}(p)$ are spanned by 
\begin{align*}
v_{r,2n+s;a}^{\mathrm{vir}}=\mathcal{Q}_+^a (e^{-a_{r,2n+s}(u+v)}\otimes e^{\alpha_{r,2n+s}}),
\quad 
\overline{v}_{r,2n+s;a}^{\mathrm{vir}}=\mathcal{Q}_+^a (e^{(-a_{r,2n+s}-1)(u+v)}\otimes e^{\alpha_{r,2n+s}})
\end{align*}
respectively $(0 \leq a < 2n+s-1)$.
Hence the images by Lemma \ref{dictionary of pullback} gives \eqref{shape of (co)singular vectors}.
\endproof
\end{comment}
\proof
(1) 
It follows from Proposition \ref{decomposition of weight modules}, \eqref{pull-back of PiV} and \eqref{eq: X-decomposition} that 
highest weight vectors of copies of $L_k^+(\lambda_{r,2n+s})$ (resp. generator for $\mathcal{S}L_k^+(\lambda_{r+1,2n+s})$) inside $\afWp{r}{s}\subset \Pi[0]\otimes \V_{r,s}$ are spanned by 
\begin{align}\label{shape of (co)singular vectors}
v_{r,2n+s;a}^{\af}=Q_+^a (\mathbf{1}\otimes e^{\alpha_{r,2n+s}}),\quad 
\overline{v}_{r,2n+s;a}^{\af}=Q_+^a (e^{-(u+v)}\otimes e^{\alpha_{r,2n+s}}),
\end{align}
respectively $(0 \leq a < 2n+s-1)$.
As $[Q_{\mathrm{FMS}},Q_+]=0$, we have
\begin{align*}
Q_{\mathrm{FMS}}\ v_{r,2n+s;a}^{\af}=0,\quad 
%Q_{\mathrm{FMS}}\ \overline{v}_{r,2n+s;a}^{\af}=\overline{v}_{r,2n+s;a}^{\af}.
Q_{\mathrm{FMS}}\ \overline{v}_{r,2n+s;a}^{\af}\not= 0.
\end{align*}
As $\beta\gamma \otimes \V_{r,s} =\mathrm{Ker} Q_{\mathrm{FMS}}|_{\Pi[0]\otimes \V_{r,s}}$, the assertion follows.
(2)  Clear from (1).
\endproof

To understand the $(B^{\af},V^k(\sll_2))$-module structure of $M_0:=\Pi[0]\otimes\V_{p,s}$, set
\begin{align}\label{socle sequence of PiVp}
\begin{split}
M_1:=\{v\in M_0~|~f_{1}^{\gg0}v=0\},\quad
M_2:=\beta\gamma\otimes\mathcal{V}_{p,s}, \quad
%M_3:=\bigoplus_{n\geq 0}\bigoplus_{a=0}^{2n+s-1}Q_+^{a}U(V^k(\sll_2))(\mathbf{1}\otimes e^{\alpha_{p,2n+s}}).
M_3:=\afXp{p}{s}
\end{split}
\end{align}
Obviously, these $(B^{\af},V^k(\sll_2))$-submodules of $M_0$ are related as $M_i\subsetneq M_{i+1}$.
The last paragraph of the proof of Proposition \ref{decomposition of weight modules} implies that
\begin{align*}
M_3\simeq\bigoplus_{n\geq0}\C^{2n+s}\otimes g^\ast\mathcal{S}N_3^{2n+s}\simeq\afXp{p}{s},~
M_0/M_1\simeq\bigoplus_{n\geq0}\C^{2n+s}\otimes g^\ast\mathcal{S}(\tfrac{N_0^{2n+s}}{N_1^{2n+s}})\simeq\mathcal{S}\afXm{p-1}{s}.
\end{align*}
Let us give evaluations of ${M_2}/{M_3}$ and ${M_1}/{M_2}$ from below, which would later turn out to be isomorphic to ${M_2}/{M_3}$ and ${M_1}/{M_2}$, respectively.
Note that we have
\begin{align*}
Q_+^{2n+s-1}(e^{N(u+v)}\otimes e^{\alpha_{p,2n+s}})\in\C^{\times}e^{(N+2n+s-1)(u+v)}\otimes e^{(2n+s-1)\sqrt{p}\alpha+\alpha_{p,2n+s}}
\end{align*}
for $N\in\Z$.
Set $v_{2n+s}=e^{-(2n+s-1)(u+v)}\otimes e^{\alpha_{p,s+2n}}$.
As $Q_+^j(\mathbf{1}\otimes e^{\alpha_{p+1,2n+s}})=0$ iff $j\geq s+2n-1$, the image $[Q_+^j(e^{-(u+v)}\otimes e^{\alpha_{p+1,2n+s}})]$ under the isomorphism \eqref{align: digression betagamma 1} so is.
In other words, $Q_+^j(e^{-(u+v)}\otimes e^{\alpha_{p+1,2n+s}})$ (and hence $Q_+^jv_{2n+s}$) is in $\beta\gamma\otimes\V_{p,s}$ iff $j\geq s+2n-1$.
On the other hand, the isomorphism \eqref{align: digression betagamma 1} for $r=p-1$ sends $Q_+^{2n+s-1}v_{2n+s}$ to $[e^{-(u+v)}\otimes e^{(2n+s-1)\sqrt{p}\alpha+\alpha_{p+1,2n+s}}]$.
Since the cosingular vector in \cite[Theorem 1.1,1.2]{AM}
gives the highest weight vector
\footnote{For $p=1$ it is given by $e^{-(2n+s)(u+v)}\otimes e^{\alpha_{p-1,2n+s}}$.} 
of the $(2n+s-1)$-dimensional $\mathrm{SL}_2^{\af}$-module with the lowest weight vector $e^{-(u+v)}\otimes e^{(2n+s-1)\sqrt{p}\alpha+\alpha_{p+1,2n+s}}$, there exists a highest weight vector $y_{2n+s}$ of the $(\mathrm{SL}_2^{\af},V^k(\sll_2))$-bimodule $\C^{2n+s}\otimes N_1^{2n+s}$ such that 
$Q_+v_{2n+s}+y_{2n+s-2}\in M_2$,
Therefore, we have
\begin{align}\label{lemm lower bound of $M_i/M_{i+1}$}
\begin{split}
&{M_2}/{M_3}\supseteq\bigoplus_{n\geq 0}\C^{2n+s-1}\otimes g^\ast\mathcal{S}(\tfrac{N_2^{2n+s}}{N_3^{2n+s}})\twoheadrightarrow\afXm{0}{3-s}(-\varpi),\\
&{M_1}/{M_2}\supseteq[\bigoplus_{n\geq 0}\C^{2n+s-1}\otimes U(V^k(\sll_2))v_{2n+s}]\twoheadrightarrow\afXm{0}{3-s}(\varpi).
\end{split}
\end{align}
In particular, at the level of Grothendieck group, we have
\begin{align}\label{lemm lower bound of betagamma}
[\beta\gamma\otimes\V_{1,3-s}]=[\mathcal{S}^{-1}\tfrac{M_0}{M_2}]=[\mathcal{S}^{-1}\tfrac{M_0}{M_1}]+[\mathcal{S}^{-1}\tfrac{M_1}{M_2}]\geq
[\afXp{1}{3-s}]+[\afXm{p-1}{s}].
\end{align}

\begin{proposition}\label{cor: Felder complex for sl2 I}\hspace{0mm}
For $r\not= p$, \\
\textup{(1)} $\tau(\Pi[0]\otimes \V_{r,s})=\beta\gamma\otimes \V_{r,s}$ and $\tau(\afWp{r}{s})=\afXp{r}{s}$,\\
\textup{(2)} we have the following commutative diagram of $(B^{\af},V^k(\sll_2))$-bimodules.
\begin{align*}
\xymatrix{
& 0 \ar[d]& 0 \ar[d] &0 \ar[d] &\\
0 \ar[r] & \afXp{r}{s} \ar[r] \ar[d] & \beta\gamma\otimes \V_{r,s} \ar[r] \ar[d] & \afXm{p-r}{3-s}(-\varpi) \ar[r] \ar[d] & 0  \\
0 \ar[r] & \afWp{r}{s} \ar[r] \ar[d] &  \Pi[0]\otimes \V_{r,s} \ar[r]\ar[d] &  \afWm{p-r}{3-s} (-\varpi) \ar[r] \ar[d] & 0  \\
0 \ar[r] & \mathcal{S}\afXp{r+1}{s} \ar[r] \ar[d] & \mathcal{S}(\beta\gamma\otimes \V_{r+1,s}) \ar[r] \ar[d] &
\mathcal{S}\afXm{p-r-1}{3-s}(-\varpi) \ar[r] \ar[d] & 0  \\
& 0 & 0 &0  &
}
\end{align*}
%and the arrows are $B^{\af}$-module homomorphisms except for $\mathcal{S}(\beta\gamma\otimes\mathcal{V}_{p,s})\rightarrow\afXm{0}{3-s}(-\varpi)$.
\end{proposition}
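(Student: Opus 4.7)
The plan is to establish (1) by an application of the boundedness criterion \eqref{lemm:bounded L+ and unbounded SL+}, and to derive (2) by a snake-lemma argument using the three columns already in hand and the middle row \eqref{Felder complex for PiV}. For the first equality in (1), the inclusion $\afXp{r}{s}\subseteq \tau(\afWp{r}{s})$ is immediate from \eqref{eq: X-decomposition}, since each summand $L^+_k(\lambda_{r,2n+s})$ lies in $V^k(\sll_2)\text{-bmod}$. For the reverse inclusion I would use \eqref{sequence by affine representation 1}: any bounded submodule $L\subseteq \afWp{r}{s}$ projects to a bounded submodule of $\mathcal{S}\afXp{r+1}{s}\cong \bigoplus_n \C^{2n+s}\otimes \mathcal{S}L^+_k(\lambda_{r+1,2n+s})$. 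Schur's lemma (applicable because each $\mathcal{S}L^+_k(\lambda_{r+1,2n+s})$ is simple with endomorphism ring $\C$, and the simples occurring are pairwise non-isomorphic) forces such a submodule to have the form $\bigoplus_n V_n\otimes \mathcal{S}L^+_k(\lambda_{r+1,2n+s})$; boundedness then forces each nonzero $V_n$ to pair with a bounded simple factor, but \eqref{lemm:bounded L+ and unbounded SL+} shows $\mathcal{S}L^+_k(\lambda_{r+1,2n+s})\notin V^k(\sll_2)\text{-bmod}$ for all $1\leq r<p$, as $r+1\not\equiv 1\pmod{p}$. Hence the projection vanishes and $L\subseteq \afXp{r}{s}$. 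The identity $\tau(\Pi[0]\otimes \V_{r,s})=\beta\gamma\otimes\V_{r,s}$ is then proved by the same argument applied to \eqref{short exact sequence for PiV and betagamma}, after listing the composition factors of $\mathcal{S}(\beta\gamma\otimes \V_{r+1,s})$—read off from its embedding into $\mathcal{S}(\Pi[0]\otimes\V_{r+1,s})$ via Lemma \ref{prop: analysis on X} and \eqref{Felder complex for PiV}—as modules of the forms $\mathcal{S}L^+_k(\lambda_{r+1,s'})$ and (using $\lambda_{r+p,s}=\lambda_{r,s-1}$) $\mathcal{S}L^+_k(\lambda_{r+1,s''-2})$, all of which are unbounded by \eqref{lemm:bounded L+ and unbounded SL+}. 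The minus analog $\tau(\afWm{p-r}{3-s})=\afXm{p-r}{3-s}$ is obtained from \eqref{sequence by affine representation 2} by exactly the same method.

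For (2), the middle row is \eqref{Felder complex for PiV} at $b=0$, and the three columns are \eqref{sequence by affine representation 1}, \eqref{short exact sequence for PiV and betagamma} (with $\delta_{r,p}=0$), and \eqref{sequence by affine representation 2} at parameters $(p-r,3-s)$ twisted by $-\varpi$. Lemma \ref{prop: analysis on X}(2) identifies the left column as the restriction of the middle column to $\afWp{r}{s}$, furnishing a morphism of short exact sequences whose vertical maps are all injective. The snake lemma collapses to a short exact sequence of cokernels
\begin{align*}
    0\to Q_1\xrightarrow{\;\iota\;}\afWm{p-r}{3-s}(-\varpi)\to Q_3\to 0,
\end{align*}
with $Q_1=\beta\gamma\otimes\V_{r,s}/\afXp{r}{s}$, $Q_3=\mathcal{S}(\beta\gamma\otimes\V_{r+1,s})/\mathcal{S}\afXp{r+1}{s}$, and $\iota$ induced by the middle row's quotient map.

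The identifications $Q_1\simeq \afXm{p-r}{3-s}(-\varpi)$ and $Q_3\simeq \mathcal{S}\afXm{p-r-1}{3-s}(-\varpi)$ then follow from the boundedness dichotomy. Since $Q_1$ is bounded, $\iota(Q_1)\subseteq \tau(\afWm{p-r}{3-s}(-\varpi))=\afXm{p-r}{3-s}(-\varpi)$ by the minus analog of (1). Combining with the right column, this exhibits $Q_3$ as an extension $0\to \afXm{p-r}{3-s}(-\varpi)/Q_1\to Q_3\to \mathcal{S}\afXm{p-r-1}{3-s}(-\varpi)\to 0$; since $Q_3$—as a subquotient of $\mathcal{S}(\beta\gamma\otimes \V_{r+1,s})$ whose composition factors were shown in (1) to be all unbounded—admits no nonzero bounded submodule, the bounded piece $\afXm{p-r}{3-s}(-\varpi)/Q_1$ must vanish. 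The top and bottom rows of the diagram are then read off from the snake-lemma cokernel sequence and the right column, and commutativity is built into the construction. I expect the main obstacle to be the final cokernel identification: it rests on matching the two types of composition factors appearing in $\mathcal{S}(\beta\gamma\otimes\V_{r+1,s})$ against the arithmetic criterion \eqref{lemm:bounded L+ and unbounded SL+}, which in turn requires a careful inverse-Hamiltonian-reduction-level analysis of the filtration on $\Pi[0]\otimes\V_{r+1,s}$, including a separate check of the edge case $r+1=p$ where the structure of $\beta\gamma\otimes\V_{p,s}$ is governed by \eqref{socle sequence of PiVp}.
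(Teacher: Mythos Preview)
Your approach is essentially the paper's: the same three columns \eqref{sequence by affine representation 1}, \eqref{short exact sequence for PiV and betagamma}, \eqref{sequence by affine representation 2}, the middle row \eqref{Felder complex for PiV}, Lemma \ref{prop: analysis on X} for the left square, and the boundedness criterion \eqref{lemm:bounded L+ and unbounded SL+} to force the identifications. The paper organizes it slightly differently---rather than proving (1) first and then running the snake lemma, it derives the induced maps $X_{12}/X_{11}\hookrightarrow X_{13}$ and $X_{32}/X_{31}\twoheadrightarrow X_{33}$ and then settles everything at once by a Grothendieck-group count $[X_{22}]=[X_{11}]+[X_{13}]+[X_{31}]+[X_{33}]=[X_{12}]+[X_{32}]$, which is a bit cleaner than your composition-factor analysis of $Q_3$ (and in particular avoids having to list composition factors of $\mathcal{S}(\beta\gamma\otimes\V_{r+1,s})$). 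Your ``minus analog'' $\tau(\afWm{p-r}{3-s})=\afXm{p-r}{3-s}$ is also used in the paper (stated there as ``$\tau(X_{23})=X_{13}$, clear from \eqref{lemm:bounded L+ and unbounded SL+}''), so no circularity there.

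The one place where you have only a flag rather than an argument is the edge case $r=p-1$. Your route through ``all composition factors of $\mathcal{S}(\beta\gamma\otimes\V_{p,s})$ are unbounded'' does not close here: $\beta\gamma\otimes\V_{p,s}=M_2$ has the subquotient $M_2/M_3$, whose identification with $\afXm{0}{3-s}(-\varpi)$ is only established \emph{after} the present proposition (see the final paragraph of the proof of Proposition \ref{decomposition of weight modules}), so pointing to \eqref{socle sequence of PiVp} alone is circular. The paper resolves this concretely: for $p\geq 3$ one applies $\mathcal{S}^{-1}$ to the already-established third row at $r=p-2$ to obtain the first row at $r=p-1$ and $\tau(X_{22}^{p-1})=X_{12}^{p-1}$; for $p=2$ one combines the induced injection $X_{12}/X_{11}\hookrightarrow X_{13}$ with the explicit lower bound \eqref{lemm lower bound of betagamma}. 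You should incorporate one of these devices rather than leaving the edge case open.
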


\proof
Note that the second row is \eqref{Felder complex for PiV} and the columns are \eqref{sequence by affine representation 1}, \eqref{short exact sequence for PiV and betagamma}, \eqref{sequence by affine representation 2} respectively from the left, and that the left two columns commute by Lemma \ref{prop: analysis on X}.
Therefore, we have to show that the right two columns commute. 
We denote by $X_{ij}^r$ the term in the $(i,j)$-entry of the diagram.
It is clear from \eqref{lemm:bounded L+ and unbounded SL+} or the conformal gradings that
$$\tau(X_{21}^r)=X_{11}^r,\quad \tau(X_{23}^r)=X_{13}^r,\quad \tau(X_{12}^r)=X_{12}^r.$$
Then $X_{22}^r\twoheadrightarrow X_{23}^r$ induces 
\begin{align}\label{induced injection and surjection}
    X_{12}^r/X_{11}^r\hookrightarrow X_{13}^r,\quad X_{32}^{r}/X_{31}^r\twoheadrightarrow X_{33}^{r}.
\end{align}
Therefore, at the level of Grothendieck group, we have 
$[X_{12}^r]=[X_{11}^r]+[\im(X_{12}^r\rightarrow X_{13}^r)]\leq [X_{11}^r]+[X_{13}^r]$ for $r\not= p$ and similarly
$[X_{32}^{r}]\leq [X_{31}^{r}]+[X_{33}^{r}]$ for $r<p-1$.
Thus we have $\tau(X_{22}^{r})=X_{12}^{r}$.
Since
\begin{align*}
[X_{22}^r]&=[X_{21}^r]+[X_{23}^r]=[X_{11}^r]+[X_{13}^r]+[X_{31}^r]+[X_{33}^r]\\
&=[X_{12}^r]+[X_{32}^r]=[X_{11}^r]+[\im(X_{12}^r\rightarrow X_{13}^r)]+[\Ker(X_{32}^r\rightarrow X_{33}^{r})]+[X_{33}^{r}],
\end{align*}
by \eqref{lemm:bounded L+ and unbounded SL+}, we have $[\im(X_{12}^r\rightarrow X_{13}^r)]=[X_{13}^r]$ and $[\Ker(X_{32}^r\rightarrow X_{33}^{r})]=[X_{31}^r]$.
It completes the proof for $r<p-1$.
Let us consider the case of $r=p-1$.
For $p\geq 3$ (resp. for $p=2$), the first row and $\tau(X_{22}^{p-1})=X_{12}^{p-1}$ are obtained by applying $\mathcal{S}_{-1}$ to the third row of the case $r=p-2$ (resp. \eqref{lemm lower bound of betagamma} and \eqref{induced injection and surjection}).
One can show the remaining similarly.
\endproof

To understand the structure of $\Pi[\tfrac{r}{p}]\otimes \V_{r,s}$, we make a preparation.
\begin{lemma}[{\cite{ACK}}]\label{Ext vanishing}
For $\lambda,\mu\in \mathfrak{h}^*$ such that $\lambda-\mu> \alpha$ and the $\sll_2$-module $M^-(\lambda)$ is $U(\mathfrak{n}_+)$-free and  $M^+(\mu)$ is $U(\mathfrak{n}_-)$-free, then
$$\mathrm{Ext}^1(L_k^-(\lambda),L_k^+(\mu))=0,\quad \mathrm{Ext}^1(L_k^+(\mu),L_k^-(\lambda))=0.$$
\end{lemma}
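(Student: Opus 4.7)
The plan is to prove both vanishings by a common lift-and-obstruct mechanism applied to an extension of one of the two simple modules by the other. By the symmetry interchanging $\mathfrak{n}_+\leftrightarrow\mathfrak{n}_-$ (and thus $L_k^+\leftrightarrow L_k^-$), it suffices to establish $\mathrm{Ext}^1(L_k^-(\lambda),L_k^+(\mu))=0$; the second vanishing will follow either by an entirely symmetric argument or by the exact contragredient duality that swaps $L_k^+$ and $L_k^-$. Given an extension
$$0 \to L_k^+(\mu) \to E \to L_k^-(\lambda) \to 0,$$
let $v_\lambda$ denote the lowest weight generator of $L_k^-(\lambda)$, which is annihilated by $f_0$ and by the positive modes $x_n$ for $x\in\sll_2$, $n\geq 1$. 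I would choose a lift $\tilde v_\lambda \in E$ of the same $\mathfrak{h}_0$-weight $\lambda$ and the same $L_{\mathrm{sug},0}$-eigenvalue as $v_\lambda$, so that the obstruction vectors $f_0\tilde v_\lambda$ and $x_n\tilde v_\lambda$ all land in $L_k^+(\mu)$; splitting the extension then amounts to finding a correction $w\in L_k^+(\mu)$ of weight $\lambda$ and the matching conformal weight such that $\tilde v_\lambda - w$ satisfies all the defining relations of the lowest weight vector.

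The existence of such a $w$ is an $\mathrm{Ext}^1$-vanishing that I would reduce via Frobenius reciprocity (Shapiro's lemma) for the induction from $\sll_2[[t]]\oplus\C c$ to $\widehat{\sll}_{2,k}$:
$$\mathrm{Ext}^1_{\widehat{\sll}_{2,k}}\bigl(\mathbb{V}_k^-(\lambda),\,L_k^+(\mu)\bigr)\;\simeq\;\mathrm{Ext}^1_{\sll_2[[t]]}\bigl(L^-(\lambda),\,L_k^+(\mu)\bigr).$$
Under the hypothesis that $M^-(\lambda)$ is $U(\mathfrak{n}_+)$-free, $L^-(\lambda) = M^-(\lambda)$ is a simple Verma module, so the right-hand Ext decomposes along $\mathfrak{h}_0$- and $L_{\mathrm{sug},0}$-gradings and reduces to a weight-space computation inside $L_k^+(\mu)$. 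The strict inequality $\lambda-\mu>\alpha$ together with the freeness of $M^+(\mu)$ over $U(\mathfrak{n}_-)$ ensures that every weight-$\lambda$ vector of $L_k^+(\mu)$ at conformal weight $\leq \Delta_\lambda$ either vanishes for weight-count reasons (the relevant conformal slice does not contain weight $\lambda$) or lies in the image of $f_0$ from weight $\lambda+\alpha$, supplying the desired correction $w$.

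To pass from the induced module $\mathbb{V}_k^-(\lambda)$ to its simple quotient $L_k^-(\lambda)$, I would apply the long exact sequence associated with $0 \to K \to \mathbb{V}_k^-(\lambda) \to L_k^-(\lambda) \to 0$. The freeness of $M^-(\lambda)$ over $U(\mathfrak{n}_+)$ forces every composition factor of $K$ to have lowest weight strictly above $\lambda$ in the positive-root order, whereupon the same weight count under $\lambda-\mu>\alpha$ yields $\mathrm{Hom}(K,L_k^+(\mu)) = \mathrm{Ext}^1(K,L_k^+(\mu)) = 0$. Combined with the vanishing of $\mathrm{Ext}^1(\mathbb{V}_k^-(\lambda),L_k^+(\mu))$ proven in the previous step, this gives the desired $\mathrm{Ext}^1(L_k^-(\lambda),L_k^+(\mu))=0$.

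The principal technical obstacle will be the careful conformal-grading bookkeeping — in particular aligning the Sugawara eigenvalues $\Delta_\lambda$ and $\Delta_\mu$ (which in general differ by a non-integer) and controlling the weight multiplicities in the conformal slice of $L_k^+(\mu)$ at height $\Delta_\lambda$. This is also the step where the precise strict inequality $\lambda-\mu>\alpha$ enters essentially: it is calibrated so that the smallest potentially obstructing weight-$\lambda$ component in $L_k^+(\mu)$ is either absent or sits above $f_0$ applied to weight $\lambda+\alpha$. Once this bookkeeping is in place, all relevant obstruction spaces vanish and the splitting is produced.
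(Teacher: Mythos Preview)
Your approach differs substantially from the paper's. The paper gives a direct construction: given an extension $0\to L_k^+(\mu)\to M\to L_k^-(\lambda)\to 0$, it exploits the $U(\mathfrak{n}_+)$-freeness of $M^-(\lambda)=L^-(\lambda)$ to pick an element $v$ (a lift of $e_0^N v_\lambda^-$ for $N\gg 0$) whose $(L_0,h_0)$-weight sits far enough in the positive $h$-direction that the shape of the submodule $N=U(\widehat{\sll}_{2,k})\,v$ can be read off from the weight-support picture. A three-case analysis on the sign of $\Delta_\lambda-\Delta_\mu$, using $\lambda-\mu>\alpha$ and the $U(\mathfrak{n}_-)$-freeness of $M^+(\mu)$, then shows $N\subsetneq M$; since $N$ surjects onto the simple quotient $L_k^-(\lambda)$ this forces $N\simeq L_k^-(\lambda)$ and splits the sequence. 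There is no Shapiro reduction, no generalized Verma module $\mathbb{V}_k^-(\lambda)$, and no long exact sequence.

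Your homological route is a reasonable alternative in outline, but it defers rather than resolves the central difficulty. Two points in particular are not addressed: (i) to split in the $\sll_2[[t]]$-category you must correct \emph{all} the obstructions $f_0\tilde v_\lambda$ and $x_n\tilde v_\lambda$ ($n\geq 1$) by a \emph{single} $w$, and ``lies in the image of $f_0$'' handles only one of these; (ii) the claim that every composition factor of $K=\ker(\mathbb{V}_k^-(\lambda)\to L_k^-(\lambda))$ has lowest weight strictly above $\lambda$ does not follow from the hypotheses and need not hold at the fractional levels $k=-2+1/p$ in question. The paper's device of shifting the chosen generator along $M^-(\lambda)$ --- rather than insisting on lifting $v_\lambda^-$ itself --- is exactly what sidesteps both issues and turns the problem into a picture.
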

\proof
We reproduce a proof for the convenience of the readers.
For the first assertion, it suffices to show that every short exact sequence 
\begin{align*}
0\rightarrow L_k^+(\mu) \rightarrow M \rightarrow L_k^-(\lambda) \rightarrow 0
\end{align*}
splits. It is useful to see the shape of the sets of $(L_0,h)$-eigenvalues of the modules $L_k^+(\mu)$ and $L_k^-(\lambda)$. 
Indeed they are supported in the following shape of regions.
\begin{align*}
\begin{tikzpicture}[x=7mm,y=7mm]
\draw[->] (0,1) -- (0.5,1);
\draw[->] (0,1)-- (0,1.5);
\draw (0.5,1) node[right]{$h$};
\draw (0,1.5) node[right]{$L_0$};
\draw (2,0) --(4,0)--(5,1);
\draw (4,0) node[right]{$v_\mu^+$};
\draw (2.5,1) node[right]{$L_k^+(\mu)$};
\draw (7,1) --(8,0)--(10,0);
\draw (8,0) node[left]{$v_\lambda^-$};
\draw (8,1) node[right]{$L_k^-(\lambda)$};
\end{tikzpicture}
\end{align*}
Since $M^-(\lambda)=U(\mathfrak{n}_+)v_\lambda^-$, we can find an element $v$ in the copy $M^-(\lambda)$ inside $M$ such that the set of $(L_0,h)$-eigenvalues of $U(\sll_2[t^{-1}]t^{-1})v$ has no intersection with that of $L_k^+(\mu)$.
Then we consider the $V^k(\sll_2)$-submodule $N$ generated by $v$. 
By the PBW base theorem, $N=U(\sll_2[t])v$. Then we have the following three possibilities of the relative position of the set of $(L_0,h)$-eigenvalues according to the conformal weights $\Delta_\lambda$ and  $\Delta_\mu$ of $v_\lambda^-$ and $v_\lambda^+$.
\begin{align*}
\begin{tikzpicture}[x=7mm,y=7mm]
\draw (0,2) node[right]{(a) $\Delta_\lambda< \Delta_\mu$};
\draw (1,0.5) --(2,0.5)--(3,1.5);
\draw (4.5,0) --(3.5,0)--(2,1.5);
\draw (5,2) node[right]{(b) $\Delta_\lambda= \Delta_\mu$};
\draw (6,0.5) --(7,0.5)--(8,1.5);
\draw (7,1.5) --(8,0.5)--(9,0.5);
\draw (10,2) node[right]{(c) $\Delta_\lambda> \Delta_\mu$};
\draw (11,0) --(12,0)--(13.5,1.5);
\draw (12,1.5) --(13,0.5)--(14,0.5);
\end{tikzpicture}
\end{align*}
We show case by case that $N$ is a proper submodule and thus is isomorphic to $L_k^-(\mu)$.
(a) The subspace of lowest conformal weight of $M$ is $M^-(\lambda)$. Since $M^+(\mu)$ is $U(\mathfrak{n}_-)$-free, $M^+(\mu)$ contains a vector lying outside of $N$ and thus $N$ is proper. (b) The subspace of lowest conformal weight of $M$ is an extension of $M^-(\lambda)$ by $M^+(\mu)$. By $\lambda-\mu> \alpha$, this extension splits and thus is isomorphic to $M^-(\lambda)\bigoplus M^+(\mu)$. Therefore, $N$ is proper. (c) $N$ does not contain $M^+(\mu)$ and thus proper. Hence, we have shown that $N\simeq L_k^-(\mu)$ and thus $M$ splits. The proof for the second assertion is similar and so we omit it. This completes the proof.
\endproof

\begin{proposition}\label{cor: Felder complex for sl2 II}\hspace{0mm}
\begin{comment}
\textup{(i)} We have a short exact sequence of $(B^{\af},V^k(\sll_2))$-bimodules
\begin{align*}
0 \rightarrow \tau(\Pi_0[\tfrac{r}{p}]\otimes \V_{r,s}) \rightarrow \Pi_0[\tfrac{r}{p}]\otimes \V_{r,s}
\rightarrow \mathcal{S}_1\tau(\Pi_0[\tfrac{r+1}{p}]\otimes \V_{r+1,s}) \rightarrow 0. 
\end{align*}
\end{comment}
\begin{comment}
\textup{(i)} 
%\item \label{cor: Felder complex for sl2 II-1}
We have a short exact sequence of $(B^{\af},V^k(\sll_2))$-bimodules
\begin{align}\label{affine Felder complex for sl2 II}
0 \rightarrow \mathcal{X}_{r,s}^{\af}[\tfrac{r}{p}] \rightarrow \tau(\Pi_0[\tfrac{r}{p}]\otimes \V_{r,s})
\rightarrow \mathcal{X}_{p-r,3-s}^{\af}[0](-\varpi) \rightarrow 0.
\end{align}
\textup{(ii)} 
%\item\label{cor: Felder complex for sl2 II-2}
\end{comment}
For $r\not= p$, we have $\tau(\afWm{r}{s})=\afXm{r}{s}$.
Furthermore, we have the commutative diagram of $(\mathrm{B}^{\af},V^k(\sll_2))$-bimodules

\begin{align*}
\xymatrix{
& 0 \ar[d]& 0 \ar[d] &0 \ar[d] &\\
0 \ar[r] & \afXm{r}{s} \ar[r] \ar[d] & \tau(\Pi[\tfrac{r}{p}]\otimes \V_{r,s}) \ar[r] \ar[d] & \afXp{p-r}{3-s}(-\varpi) \ar[r] \ar[d] & 0  \\
0 \ar[r] & \afWm{r}{s} \ar[r] \ar[d] &  \Pi[\tfrac{r}{p}]\otimes \V_{r,s} \ar[r]\ar[d] &  \afWp{p-r}{3-s}(-\varpi) \ar[r] \ar[d] & 0  \\
0 \ar[r] & \mathcal{S}\afXm{r-1}{s} \ar[r] \ar[d] & 
N_{r,s}
\ar[r] \ar[d] & \mathcal{S}\afXp{p-r+1}{3-s}(-\varpi) \ar[r] \ar[d] & 0  \\
& 0 & 0 &0  &
}
\end{align*}
where $N_{r,s}:=\Pi[\tfrac{r}{p}]\otimes\mathcal{V}_{r,s}/\tau(\Pi[\tfrac{r}{p}]\otimes\mathcal{V}_{r,s})$.
\end{proposition}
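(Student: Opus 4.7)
My plan is to follow the template of the proof of Proposition \ref{cor: Felder complex for sl2 I}, with the parameter $[b]=[0]$ replaced by $[b]=[\tfrac{r}{p}]$ and the ``plus'' short exact sequence \eqref{sequence by affine representation 1} replaced by its ``minus'' counterpart \eqref{sequence by affine representation 2}. The middle row of the diagram is the Felder complex \eqref{Felder complex for PiV} specialized at $b=\tfrac{r}{p}$; since $[\tfrac{r}{p}+\tfrac{p-r}{p}]=[0]$, its right-hand term is $\afWp{p-r}{3-s}(-\varpi)$. The left column is \eqref{sequence by affine representation 2}, and the right column is \eqref{sequence by affine representation 1} applied to the index $p-r$ at parameter $[0]$, then twisted by $-\varpi$. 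Commutativity of the two left columns is inherited from the definition $\afXm{r}{s}=g^{\ast}\mathcal{X}^{\mathrm{vir},-}_{r,s}\subseteq \afWm{r}{s}$, in complete analogy with Lemma \ref{prop: analysis on X}.

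Next I would identify the $\tau$-parts using \eqref{lemm:bounded L+ and unbounded SL+}. All composition factors of $\afXm{r}{s}$, namely the $L_k^{+}(\lambda_{-r,-(2n+s)})$, lie in $V^k(\sll_2)\text{-bmod}$. For $1\leq r<p$, a direct computation shows that neither $\lambda_{-(r-1),-(2n+s)}$ nor $\lambda_{p-r+1,2n+3-s}$ coincides with any $\lambda_{1,s'\geq 1}$, so no composition factor of $\mathcal{S}\afXm{r-1}{s}$ or of $\mathcal{S}\afXp{p-r+1}{3-s}$ is bounded. This yields $\tau(\afWm{r}{s})=\afXm{r}{s}$ and singles out the bounded composition factors of $\Pi[\tfrac{r}{p}]\otimes\mathcal{V}_{r,s}$ as exactly those of $\afXm{r}{s}$ together with those of $\afXp{p-r}{3-s}(-\varpi)$.

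With the top two rows in place, I would deduce the third row by a diagram chase/snake lemma combined with a Grothendieck group count. Writing $X_{ij}^{r}$ for the $(i,j)$-entry, the surjection $X_{22}^{r}\twoheadrightarrow X_{23}^{r}$ induces $X_{12}^{r}/X_{11}^{r}\hookrightarrow X_{13}^{r}$ and $X_{32}^{r}/X_{31}^{r}\twoheadrightarrow X_{33}^{r}$, and equating the classes $[X_{22}^{r}]=[X_{21}^{r}]+[X_{23}^{r}]=[X_{12}^{r}]+[X_{32}^{r}]$ forces both maps to be isomorphisms, producing the third row with middle term $N_{r,s}$. The boundary cases $r=1$ and $r=p-1$ would be handled by the spectral flow trick used at the end of the proof of Proposition \ref{cor: Felder complex for sl2 I}, applied to the analogue of \eqref{lemm lower bound of betagamma}.

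The main obstacle, as in Proposition \ref{cor: Felder complex for sl2 I}, is to rule out the a priori possibility that $\tau(\Pi[\tfrac{r}{p}]\otimes\mathcal{V}_{r,s})$ is a proper submodule of the putative top-row extension. This is where Lemma \ref{Ext vanishing} becomes essential: it excludes the non-split extensions between the $L_k^{+}$-type (bounded) and $\mathcal{S}L_k^{+}$-type (unbounded) composition factors straddling the $\tau$-threshold, provided their highest weights satisfy the separation condition $\lambda-\mu>\alpha$. Verifying this condition systematically for all bidegrees $(r,s,n)$ in the relevant range is the technical heart of the argument; once this is done, the bounded submodule is exactly the full extension of $\afXp{p-r}{3-s}(-\varpi)$ by $\afXm{r}{s}$, completing the proof.
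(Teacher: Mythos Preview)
Your proposal has the right ingredients and correctly identifies Lemma \ref{Ext vanishing} as the crux, but the template of Proposition \ref{cor: Felder complex for sl2 I} does not transfer as directly as you suggest, and this creates a logical gap in your outline. In Proposition \ref{cor: Felder complex for sl2 I} the middle entry $X_{12}=\beta\gamma\otimes\V_{r,s}$ was given \emph{explicitly} as $\ker Q_{\mathrm{FMS}}$; Lemma \ref{prop: analysis on X} established the first row by a concrete screening computation, and only then did the Grothendieck-group count prove $\tau(X_{22})=X_{12}$. Here the situation is reversed: $X_{12}=\tau(\Pi[\tfrac{r}{p}]\otimes\V_{r,s})$ by \emph{definition}, so there is no analogue of Lemma \ref{prop: analysis on X} to invoke and no independent description of $[X_{12}]$ to count against. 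Your step ``with the top two rows in place, deduce the third row by a Grothendieck-group count'' therefore cannot run---the first row is precisely what must be constructed. The paper proceeds in the opposite order: from the middle row and \eqref{sequence by affine representation 2} one obtains, inside the quotient $\Pi[\tfrac{r}{p}]\otimes\V_{r,s}/\afXm{r}{s}$, a submodule $M$ with $0\to\mathcal{S}\afXm{r-1}{s}\to M\to\afXp{p-r}{3-s}(-\varpi)\to 0$; applying $\mathcal{S}^{-1}$ and checking $\lambda_{p-r+1,2n+5-s}-\lambda_{-r+1,-2m-s}=(n+m+1)\alpha+\varpi>\alpha$ together with the Verma-freeness hypotheses (both hold since $r/p\notin\Z$), Lemma \ref{Ext vanishing} splits this sequence. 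The resulting lift of $\afXp{p-r}{3-s}(-\varpi)$ yields a bounded submodule of $\Pi[\tfrac{r}{p}]\otimes\V_{r,s}$ that exhausts all bounded composition factors, hence equals $\tau$; this \emph{constructs} the first row. Only afterward does a short diagram chase (showing $X_{11}=X_{12}\cap X_{21}$ and that $X_{31}\hookrightarrow X_{22}/X_{12}$) yield the third row. Note also that the weight-separation check is uniform in $1\le r<p$, so the boundary cases you anticipate do not arise.
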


\proof
The first and third columns are the third and first columns of Proposition \ref{cor: Felder complex for sl2 I}, respectively.
The second row is \eqref{Felder complex for PiV}.
\begin{comment}
In particular, we obtain the composition factors of $\Pi[\tfrac{r}{p}]\otimes\mathcal{V}_{r,s}$ in $V^k(\sll_2)\textup{-wmod}$.
\end{comment}

In particular, at the level of Grothendieck group of $V^k(\sll_2)\textup{-wmod}$, we have
\begin{align*}
[\Pi[\tfrac{r}{p}]\otimes\V_{r,s}]=[\afXm{r}{s}]+[\afXp{p-r}{3-s}]+[\mathcal{S}\afXm{r-1}{s}]+[\mathcal{S}\afXp{p-r+1}{3-s}].
\end{align*}
To construct the first row, we consider the quotient $V^k(\sll_2)$-module $ \Pi[\tfrac{r}{p}]\otimes \V_{r,s}/\afXm{r}{s}$. 
Thanks to \eqref{sequence by affine representation 2}, it contains a submodule, say $M$, which fits into a short exact sequence
\begin{align*}
0\rightarrow \mathcal{S}\afXm{r-1}{s}\rightarrow M \rightarrow \afXp{p-r}{3-s}(-\varpi) \rightarrow 0.
\end{align*} 
We show that it splits. Indeed, we apply the exact functor $\mathcal{S}^{-1}$. By the bifunctoriality of $\mathrm{Ext}^1(\bullet,\bullet)$, it suffices to show
\begin{align}\label{check for ext vanishing}
\mathrm{Ext}^1(L_k^-(\lambda_{p-r+1,2n+5-s}),L_k^+(\lambda_{-r+1,-2m-s}))=0,\quad (n,m\geq 0),
\end{align}
by \eqref{eq: X-decomposition}. 
As $\lambda_{p-r+1,2n+5-s}\notin \Z_{\leq0}\varpi$ and $\lambda_{-r+1,-2m-s}\notin \Z_{\geq0}\varpi$, $M^-(\lambda_{p-r+1,2n+5-s})$ is $U(\mathfrak{n}_+)$-free and $M^+(\lambda_{-r+1,-2m-s})$ is $U(\mathfrak{n}_-)$-free.
Since 
\begin{align*}
\lambda_{p-r+1,2n+5-s}-\lambda_{-r+1,-2m-s}=(n+m+1)\alpha+\varpi>\alpha,
\end{align*}
Lemma \ref{Ext vanishing} implies \eqref{check for ext vanishing}. 
Therefore, we have a $V^k(\sll_2)$-submodule $M\subset \Pi[\frac{r}{p}]\otimes \V_{r,s}$ satisfying the short exact sequence
$$0\rightarrow \afXm{r}{s} \rightarrow M \rightarrow \afXp{p-r}{3-s}(-\varpi)  \rightarrow 0.$$
As the composition factors of $M$ exhaust all the composition factors of $\Pi[\tfrac{r}{p}]\otimes \V_{r,s}$ lying in $V^k(\sll_2)$-bmod, we conclude $M=\tau(\Pi[\tfrac{r}{p}]\otimes \V_{r,s})$.
Let us show the remaining claims.
For $1\leq i,j\leq 3$, let $X_{ij}$ denotes the $(i,j)$-th entry of the diagram.
We have $X_{11}=X_{12}\cap X_{21}$ by the first row/column and
$
[X_{12}\cap X_{21}]\leq [X_{12}]\cap [X_{21}]=[X_{11}]
$.
As $\tau(X_{22}/X_{12})=0$ and $X_{11}=\tau(X_{21})$, $X_{21}\hookrightarrow X_{22}\twoheadrightarrow X_{22}/X_{12}$ factors through $X_{31}\rightarrow X_{22}/X_{12}$.
This map is injective because
\begin{align*}
X_{31}\simeq X_{21}/X_{11}\simeq X_{21}/X_{12}\cap X_{21}\simeq (X_{21}+X_{12})/X_{12}\subseteq X_{22}/X_{12}.
\end{align*}
Similarly,
$X_{22}\twoheadrightarrow X_{23}\twoheadrightarrow X_{33}$ factors through
$X_{22}/X_{12}\twoheadrightarrow X_{33}$.
\endproof

\begin{remark}
We expect $N_{r,s}\simeq \mathcal{S}\tau(\Pi[\tfrac{r-1}{p}]\otimes \V_{r-1,s})$ as $(B^{\af},V^k(\sll_2))$-bimodules.
\end{remark}
\begin{proof}[Proof of Proposition \ref{decomposition of weight modules}]
Finally, we complete the proof of Proposition \ref{decomposition of weight modules} \textup{(2)}-(ii), (iii).
Let us recall the $(\mathrm{B}^{\af},V^k(\sll_2))$-submodules \eqref{socle sequence of PiVp} of $M_0=\Pi[0]\otimes\V_{p,s}$.

First we consider the case $p\geq 2$.
By applying $\mathcal{S}^{-1}$ to the third row in Proposition \ref{cor: Felder complex for sl2 I} for $r=p-1$, we have $M_2/M_3\simeq\afXm{0}{3-s}$.
On the other hand, by applying $\mathcal{S}$ to the first row in Proposition \ref{cor: Felder complex for sl2 I} for $r=1$, we obtain that $M_1/M_2\simeq\afXm{0}{3-s}$.
By \eqref{lemm lower bound of $M_i/M_{i+1}$}, we have ${N_2^{s'}}/{N_3^{s'}}\simeq\delta_{s',1}L^+_k(s'-2)$ for $s'\geq 1$. 
By process of elimination, we obtain that ${N_1^{s'}}/{N_2^{s'}}\simeq L^+_k(s')$, which is generated by $y_{s'}$.

Let us consider the case of $p=1$.
In this case, the Weyl modules $\mathbb{V}^k(n\varpi)$ ($n\in\Z_{\geq 0}$) are known to be simple.
By comparing the characters of $M_2$ and $M_3$ (see \S\ref{subsection character formulae} below), we obtain that $N_2^{s'}/N_3^{s'}\simeq\delta_{s',1}L^+_k(s'-2)$.
One can show the remaining similarly.
\end{proof}

\section{Main results for $\mathrm{FT}_p(\sll_2)$}\label{section: main results}

\subsection{$\mathrm{FT}_p(\sll_2)$-modules}
Let us consider the equivariant bundles
\begin{align*}
\mathrm{SL}_2\times_{\mathrm{B}}(\Pi[b]\otimes\mathcal{V}_{r,s}),\quad
\mathrm{SL}_2\times_{\mathrm{B}}\tau(\Pi[b]\otimes\mathcal{V}_{r,s})
\end{align*}
and take their global sections
\begin{align*}
H^0(\mathrm{SL}_2\times_{\mathrm{B}}(\Pi[b]\otimes\mathcal{V}_{r,s})),\quad
H^0(\mathrm{SL}_2\times_{\mathrm{B}}\tau(\Pi[b]\otimes\mathcal{V}_{r,s})),
\end{align*}
for $1\leq r\leq p$, $s=1,2$ and $[b]\in \C/\Z$.
Note that sheaves of local sections for equivalent bundles are (sheaves of) $V^k(\sll_2)$-modules as
\begin{align*}
  \mathrm{SL}_2\times_\mathrm{B}V^k(\sll_2) \subset \mathrm{SL}_2\times_\mathrm{B}(\beta\gamma\otimes V_{\sqrt{p}A_1})
\end{align*}
is a sub-bundle. In particular, the global sections are $V^k(\sll_2)$-modules. 
Moreover, the first sheaf is a $\sFT(\sll_2)$-module and thus $H^0(\mathrm{SL}_2\times_{\mathrm{B}}(\Pi[b]\otimes\mathcal{V}_{r,s}))$ is a $\sFT(\sll_2)$-modules by construction.

\begin{theorem}\label{FT(sl2,0)}\hspace{0cm}\\
\textup{(1)} 
We have 
\begin{align*}
H^{n}(\mathrm{SL}_2\times_{\mathrm{B}}(\Pi[b]\otimes \V_{r,s}))&\simeq \delta_{n,0} \afWb{r}{s}{b},\quad (b\in\C,~1\leq r\leq p),\\
H^{n}(\mathrm{SL}_2\times_{\mathrm{B}} \tau(\Pi[b]\otimes \V_{r,s}))&\simeq \delta_{n,0} \afXpm{r}{s},\quad ([b]=[0], [\tfrac{r}{p}],~1\leq r< p),\\
H^{n}(\mathrm{SL}_2\times_{\mathrm{B}} \beta\gamma\otimes \V_{p,s}))&\simeq \delta_{n,0} \afXp{p}{s}.
\end{align*}
\textup{(2)} 
The $(\mathrm{SL}_2^{\af},V^k(\sll_2))$-bimodules $\afWb{r}{s}{b}$ and $\afXpm{r}{s}$ are $\mathrm{FT}_p(\sll_2)$-modules.
\end{theorem}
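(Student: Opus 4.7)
Proof plan. The argument follows the template of Theorem \ref{main result for the virasoro triplet algebra}, augmented to accommodate the $V^k(\sll_2)$-module structure on the Fock-like factors. First I would extract the intrinsic cohomology of $\afWb{r}{s}{b}$ and $\afXpm{r}{s}$ themselves: by the bimodule decomposition \eqref{eq: X-decomposition}, both modules are of the form $\bigoplus_n V_n\otimes N_n$ where $V_n=\C^{2n+s}$ is a finite-dimensional simple $\mathrm{SL}_2^{\af}$-module and $N_n$ is a $V^k(\sll_2)$-module on which $\mathrm{SL}_2^{\af}$ acts trivially. Hence each summand is a genuine $\mathrm{SL}_2^{\af}$-module, and applying \eqref{fact:Sug-114} summand-wise together with \eqref{coh for line bundles} yields, for all $n$,
\begin{align*}
H^n(\mathrm{SL}_2\times_{\mathrm{B}}\afWb{r}{s}{b})\simeq \delta_{n,0}\afWb{r}{s}{b},\qquad
H^n(\mathrm{SL}_2\times_{\mathrm{B}}\afXpm{r}{s})\simeq \delta_{n,0}\afXpm{r}{s},
\end{align*}
while the corresponding $(-\varpi)$-twisted versions are acyclic in every degree.

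Next, for the intermediate objects $\Pi[b]\otimes\mathcal{V}_{r,s}$ and $\tau(\Pi[b]\otimes\mathcal{V}_{r,s})$, I would apply $H^\bullet(\mathrm{SL}_2\times_\mathrm{B}-)$ to the Felder-type short exact sequences already assembled earlier in the paper: the complex \eqref{Felder complex for PiV} for the $\Pi[b]\otimes\mathcal{V}_{r,s}$ case when $r<p$ (the case $r=p$ is automatic since $\afWb{p}{s}{b}=\Pi[b]\otimes\mathcal{V}_{p,s}$), and the first rows of Propositions \ref{cor: Felder complex for sl2 I} and \ref{cor: Felder complex for sl2 II} for the $\tau$ case. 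In each instance the cohomology of the outer terms is controlled by the preceding step: the left-hand term contributes the desired module in degree zero, and the $(-\varpi)$-twist on the right-hand term annihilates all of its cohomology. The case $\beta\gamma\otimes\mathcal{V}_{p,s}$ is handled analogously using the short exact sequence
\begin{align*}
0\to \afXp{p}{s}\to \beta\gamma\otimes\mathcal{V}_{p,s}\to \afXm{0}{3-s}(-\varpi)\to 0
\end{align*}
extracted from the filtration \eqref{socle sequence of PiVp} and the analysis carried out in the proof of Proposition \ref{decomposition of weight modules}.

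For part (2), the $\mathrm{FT}_p(\sll_2)$-module structure on $\afWb{r}{s}{b}$ is immediate, since $\mathrm{SL}_2\times_\mathrm{B}(\Pi[b]\otimes\mathcal{V}_{r,s})$ is tautologically a sheaf of modules over $\sFT(\sll_2)$, and hence its $H^0$ is a module over $H^0(\sFT(\sll_2))=\mathrm{FT}_p(\sll_2)$. For $\afXpm{r}{s}$, Propositions \ref{cor: Felder complex for sl2 I} and \ref{cor: Felder complex for sl2 II} identify it with $\tau(\afWpm{r}{s})$; since $\mathrm{FT}_p(\sll_2)$ is positively graded with respect to the conformal grading, its action on the ambient $\mathrm{FT}_p(\sll_2)$-module $\afWpm{r}{s}$ preserves the property of lying in $V^k(\sll_2)\text{-mod}^{\geq 0}$ and therefore preserves $\tau(\afWpm{r}{s})=\afXpm{r}{s}$.

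The main obstacle is concentrated in the first step, which rests on the structural analysis carried out in \S\ref{subsection: decomposition through Phi}--\S\ref{subsection structure of free field modules}: one must know that the $\mathrm{SL}_2^{\af}$-action genuinely decouples onto the finite-dimensional multiplicity spaces $\C^{2n+s}$, a fact whose proof requires the intertwining automorphism $g$ of \eqref{isom of vertex algebra} and the inverse Hamiltonian reduction framework of Proposition \ref{eq: inverse hamiltonian diagram}. Once this decoupling is in place, the remaining long-exact-sequence manipulations are formal, directly parallel to the Feigin--Tipunin argument reproduced in Theorem \ref{main result for the virasoro triplet algebra}.
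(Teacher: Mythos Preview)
Your argument for part (1) is essentially the same as the paper's: use the decomposition \eqref{eq: X-decomposition} together with \eqref{fact:Sug-114} and \eqref{coh for line bundles} to compute the cohomology of the outer terms of the relevant Felder-type sequences, then run the long exact sequence. One minor difference: for the first case $H^n(\mathrm{SL}_2\times_{\mathrm{B}}(\Pi[b]\otimes\V_{r,s}))$, the paper bypasses the Felder complex \eqref{Felder complex for PiV} entirely and instead transports the computation to the Virasoro side via the automorphism $g$, exactly as in \eqref{computation by automorphisms}. Your route via \eqref{Felder complex for PiV} is also valid, just a small reorganization.

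There is however a real gap in your treatment of part (2) for $\afXm{r}{s}$. The claim that ``$\FT(\sll_2)$ is positively graded, hence its action preserves $\tau$'' does not follow: for $a\in\FT(\sll_2)$ and $v\in\tau(\afWm{r}{s})$, the mode $a_{(n)}v$ for large $n$ can have arbitrarily low conformal weight, and since the ambient module $\afWm{r}{s}$ is \emph{not} bounded below, nothing forces the $V^k(\sll_2)$-submodule generated by $a_{(n)}v$ to remain bounded below. You would need to know the short exact sequence \eqref{sequence by affine representation 2} is already $\FT(\sll_2)$-equivariant, which is precisely what is to be shown. The paper avoids this circularity: for $\afXp{r}{s}$ it uses that $\tau(\Pi[0]\otimes\V_{r,s})=\beta\gamma\otimes\V_{r,s}$ is a $\beta\gamma\otimes V_{\sqrt{p}A_1}$-module, so its equivariant bundle is an $\sFT(\sll_2)$-module sheaf and $\afXp{r}{s}$ inherits the $\FT(\sll_2)$-structure from $H^0$. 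Then $\afXm{r}{s}$ is obtained as the cokernel of the $\FT(\sll_2)$-equivariant inclusion $\afXp{p-r}{3-s}\hookrightarrow\beta\gamma\otimes\V_{p-r,3-s}$ (the first row of Proposition~\ref{cor: Felder complex for sl2 I}), which is $\FT(\sll_2)$-equivariant because both terms sit as $\FT(\sll_2)$-submodules inside $\Pi[0]\otimes\V_{p-r,3-s}$.
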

\proof
(1) The first case is shown in the same way as \eqref{computation by automorphisms}. The remaining case is shown in the same way as Theorem \ref{main result for the virasoro triplet algebra} by replacing \eqref{virasoro felder complex} with \eqref{Felder complex for PiV}, the first row in Proposition \ref{cor: Felder complex for sl2 I}, and the third row in Proposition \ref{cor: Felder complex for sl2 II}, respectively. %so we omit it.
The last case follows from the third row of Proposition \ref{cor: Felder complex for sl2 I} for $r=p-1$. 
(2) The cases $\afWb{r}{s}{b}$ and $\afXp{r}{s}$ follow from (1) and the fact that the sheaves of local sections for the equivalent bundles are $\sFT(\sll_2)$-modules by construction. 
Then the first row for the diagram in Proposition \ref{cor: Felder complex for sl2 I} (2) is a short exact sequence of $\FT(\sll_2)$-modules. Hence, $\afXm{r}{s}$ is a $\FT(\sll_2)$-module. This completes the proof.
\endproof
%By \eqref{sequence by affine representation 1}, $\afXm{0}{s}$ is also $\FT(\sll_2)$-module.
\begin{comment}
\begin{remark}
By Theorem \ref{FT(sl2,0)} \textup{(ii)}, $\mathcal{S}^{a}\mathcal{W}^{\af}_{r,s}[b]$ and $\mathcal{S}^{a}\mathcal{X}^{\af}_{r,s}[b]$ are also $\mathrm{FT}_p(\sll_2)$-modules for each $a\in\Z$.
%Note that we have$\mathcal{S}_{-a}H^0(\mathrm{SL}_2\times_B(\Pi_0[b]\otimes\mathcal{V}_{r,s}))\simeq H^0(\mathrm{SL}_2\times_B(\Pi_a[b]\otimes\mathcal{V}_{r,s}))$ for any $a\in\Z$.
By \S\ref{section: weight V-modules}, we have $\mathcal{S}^{a}\mathcal{W}^{\af}_{r,s}[b]\in V^k(\sll_2)\text{-mod}^{\geq 0}$ iff $a=1$.
On the other hand, we have $\mathcal{S}^{a}\mathcal{X}^{\af}_{r,s}[b]\in V^k(\sll_2)\text{-mod}^{\geq 0}$ iff $a\in\{0,1\}$.
\end{remark}
\end{comment}
\begin{remark}
\textup{ 
It follows from Theorem \ref{FT(sl2,0)} that the short exact sequence \eqref{sequence by affine representation 2}
is indeed a complex of $\FT(\sll_2)$-modules. In particular, the quotients $\mathcal{S}\afXm{0}{s}$ and thus $\afXm{0}{s}$ ($s=1,2$) are also $\FT(\sll_2)$-modules.
}
\end{remark}
\begin{remark}\label{remark FT-module hom among nine terms}
\textup{The short exact sequences \eqref{Felder complex for PiV} and those in Proposition \ref{cor: Felder complex for sl2 I}, \ref{cor: Felder complex for sl2 II} are as $\mathrm{FT}_p(\sll_2)$-modules.
In fact, since the second rows/columns are short exact sequences of $\mathrm{FT}_p(\sll_2)$-modules, the first ones so are.
By applying Proposition \ref{prop:dual of PiV} below to Proposition \ref{cor: Felder complex for sl2 I}, \ref{cor: Felder complex for sl2 II}, also the third ones so are.}
\end{remark}

\begin{corollary}\label{cor: FT(sl2,0)}
For $1\leq r<p$, we have $\mathrm{FT}_p(\sll_2)$-module isomorphisms
\begin{align*}
&H^n(\mathrm{SL}_2\times_{\mathrm{B}}(\Pi[b]\otimes \V_{r,s}))\simeq H^{n+1}(\mathrm{SL}_2\times_{\mathrm{B}}\Pi[b+\tfrac{p-r}{p}]\otimes \V_{p-r,3-s})(-\varpi)),\\
&H^n(\mathrm{SL}_2\times_{\mathrm{B}} \tau(\Pi[0]\otimes \V_{r,s}))\simeq H^{n+1}(\mathrm{SL}_2\times_{\mathrm{B}}\tau(\Pi[\tfrac{p-r}{p}]\otimes \V_{p-r,3-s})(-\varpi)),\\
&H^n(\mathrm{SL}_2\times_{\mathrm{B}} \tau(\Pi[\tfrac{p-r}{p}]\otimes \V_{r,s}))\simeq H^{n+1}(\mathrm{SL}_2\times_{\mathrm{B}}\tau(\Pi[0]\otimes \V_{p-r,3-s})(-\varpi)),\\
&H^n(\mathrm{SL}_2\times_{\mathrm{B}}\beta\gamma\otimes\mathcal{V}_{p,s})
\simeq H^{n+1}(\mathrm{SL}_2\times_{\mathrm{B}}\mathcal{S}^{-1}N_{1,3-s}(-\varpi)).
\end{align*}
\end{corollary}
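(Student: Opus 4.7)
The plan is to derive each of the four isomorphisms as a degree-$1$ shift in the long exact sequence of sheaf cohomology on $\mathbb{P}^1$ attached to a short exact sequence of $(\mathrm{B}^{\af},V^k(\sll_2))$-bimodules, twisted by $(-\varpi)$, engineered so that one outer term has vanishing cohomology while the other is concentrated in degree $1$.

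For the four statements I would use, respectively, the following short exact sequences, each tensored by the character $\C_{-\varpi}$: the Felder complex \eqref{Felder complex for PiV} at parameters $(p-r,3-s,b+\tfrac{p-r}{p})$; the first row of Proposition~\ref{cor: Felder complex for sl2 I} at parameters $(p-r,3-s)$; the first row of Proposition~\ref{cor: Felder complex for sl2 II} at parameters $(r,s)$; and the $\mathcal{S}^{-1}$ of the bottom row of the diagram in Proposition~\ref{cor: Felder complex for sl2 II} specialized to $(r,s)=(1,3-s)$, namely
\begin{align*}
0\to \afXm{0}{3-s}\to \mathcal{S}^{-1}N_{1,3-s}\to \afXp{p}{s}(-\varpi)\to 0.
\end{align*}
In every case both outer terms are genuine $\mathrm{SL}_2^{\af}$-modules, so after twisting by $-\varpi$ the identity \eqref{fact:Sug-114} reduces their cohomology to that of twisted line bundles on $\mathbb{P}^1$. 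Borel--Weil--Bott then gives $H^n(\underline{\C}_{-\varpi})=0$ for all $n$ and $H^n(\underline{\C}_{-2\varpi})=\delta_{n,1}\C$, so the long exact sequence collapses to $H^n(\text{middle twisted})\simeq\delta_{n,1}\otimes(\text{cokernel module})$, with the cokernel module being $\afWb{r}{s}{b}$, $\afXp{r}{s}$, $\afXm{r}{s}$, respectively $\afXp{p}{s}$ in the four cases (using $\afWb{r}{s}{b+1}=\afWb{r}{s}{b}$ for the first). Comparing with the direct computation of the left-hand side furnished by Theorem~\ref{FT(sl2,0)} then yields the claimed shift; the vanishing for $n\geq 1$ is automatic since $\mathbb{P}^1$ has cohomological dimension $1$.

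The main obstacle is to upgrade the identifications from $(\mathrm{SL}_2^{\af},V^k(\sll_2))$-bimodules to $\mathrm{FT}_p(\sll_2)$-modules. This is handled by Remark~\ref{remark FT-module hom among nine terms}, which asserts that the short exact sequences in question are already short exact sequences of $\mathrm{FT}_p(\sll_2)$-modules, combined with the functoriality of the $(-\varpi)$-twist for $\mathrm{B}^{\af}$-actions and the naturality of the derived global sections long exact sequence. The remaining arithmetic bookkeeping---that $[b+1]=[b]$ and $[\tfrac{p-r}{p}]+[\tfrac{r}{p}]=[0]$ in $\C/\Z$, so the swap $(r,s,b)\leftrightarrow(p-r,3-s,b+\tfrac{p-r}{p})$ closes up---is straightforward.
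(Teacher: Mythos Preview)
Your overall strategy is exactly the one the paper intends: the corollary is left without proof precisely because it follows by the Borel--Weil--Bott/long-exact-sequence trick already used in Theorem~\ref{main result for the virasoro triplet algebra} and Theorem~\ref{FT(sl2,0)}. The point is to twist a suitable short exact sequence by $(-\varpi)$ so that the outer $\mathrm{SL}_2^{\af}$-modules sit over $\underline{\C}_{-\varpi}$ and $\underline{\C}_{-2\varpi}$, making the long exact sequence degenerate.

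However, your assignment of sequences to the second and third isomorphisms is wrong. For the second isomorphism the right-hand side involves $\tau(\Pi[\tfrac{p-r}{p}]\otimes \V_{p-r,3-s})$, which is the middle term of the first row of Proposition~\ref{cor: Felder complex for sl2 II} at parameters $(p-r,3-s)$, not of Proposition~\ref{cor: Felder complex for sl2 I}; after the $(-\varpi)$-twist the cokernel becomes $\afXp{r}{s}(-2\varpi)$, giving $\afXp{r}{s}$ in degree~$1$ as required. Conversely, for the third isomorphism the right-hand side involves $\tau(\Pi[0]\otimes \V_{p-r,3-s})=\beta\gamma\otimes\V_{p-r,3-s}$, which is the middle term of the first row of Proposition~\ref{cor: Felder complex for sl2 I} at parameters $(p-r,3-s)$; its twisted cokernel is $\afXm{r}{s}(-2\varpi)$. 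Your choice of Proposition~\ref{cor: Felder complex for sl2 II} at $(r,s)$ has middle term $\tau(\Pi[\tfrac{r}{p}]\otimes\V_{r,s})$, which after twisting does not match any of the four right-hand sides. So you have interchanged the two propositions and used the wrong parameters in the third case; with these corrections your argument goes through verbatim, and your first and fourth sequences are correct as written.

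One smaller point: invoking Remark~\ref{remark FT-module hom among nine terms} for the $\FT(\sll_2)$-module structure is a forward reference, since that remark in turn appeals to Proposition~\ref{prop:dual of PiV}. For the first rows this is unnecessary---they are already $\FT(\sll_2)$-module sequences by the proof of Theorem~\ref{FT(sl2,0)}(2)---and the third row used in the fourth case is then an $\FT(\sll_2)$-module sequence as a cokernel of such.
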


\subsection{Character formulae.}\label{subsection character formulae}
Let us introduce the Pochhammer symbol
\begin{align*}
(z_1,\cdots,z_m;q)_\infty:=\prod_{n=0}^{\infty}(1-z_1q^n)\cdots (1-z_mq^n).
\end{align*}
\begin{comment}
and a new parameterization of highest weights
\begin{align*}
\Lambda_{r,s}:=(s-\tfrac{r}{p})\varpi,\quad (s,r\in \Z).
\end{align*}
We note that the conformal weight $\Delta_{r,s}$ of the corresponding affine Verma module $\mathbb{M}_k^+(\Lambda_{r,s})$ is 
%\begin{align*}
%\Delta_{r,s}:=\frac{(sp-r)^2-p^2}{4p}
%\end{align*} 
$\Delta_{r,s}:=\frac{(sp-r)^2-p^2}{4p}$
and that the previous notation is related as 
$$\lambda_{r,s}^{n,-}=\Lambda_{r-1,2n+s-1},\quad \lambda^{-(n+1),+}_{r+2,s}=\Lambda_{-r-1,-(2n+s)-1}.$$
\end{comment}
For a $(\mathrm{H}^{\af},V^k(\sll_2))$-bimodule $M$, we set the characters
\begin{align*}
    \ch_{M}(w;z,q):=\tr_{M}w^hz^{h_{0}}q^{L_0},\quad \ch_{M}(z,q):=\tr_{M}z^{h_{0}}q^{L_0}
\end{align*}
and use $z^{\lambda}$ instead of $z^{(\lambda,h)}$ $(\lambda\in \h^*)$.
Note that the conformal weight of the affine Verma module $\mathbb{M}_k^+(\lambda_{r,s})$ is 
$\Delta_{r,s}=\frac{(sp-(r-1))^2-p^2}{4p}$.
\begin{corollary}\label{character formula for sl2}
For $\lambda_{r,s}$ such that $r\not=0$ or $s>0$, the character of the $V^k(\sll_2)$-module $L_k^+(\lambda_{r,s})$ is given by 
\begin{align}\label{character formula}
\ch_{L_k^+(\lambda_{r,s})}(z,q)=
\frac{z^{\lambda_{r,s}}q^{\Delta_{r,s}}-z^{\lambda_{r,-s}}q^{\Delta_{r,-s}} }{(z^\alpha q,z^{-\alpha},q;q)_\infty}.
\end{align}
\end{corollary}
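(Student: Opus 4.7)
The right-hand side of \eqref{character formula} equals $\ch\mathbb{V}_k^+(\lambda_{r,s})-\ch\mathbb{V}_k^+(\lambda_{r,-s})$, where $\mathbb{V}_k^+(\lambda)=U(\widehat{\sll}_{2,k})\otimes_{U(\sll_2[\![t]\!])}L^+(\lambda)$. Indeed, PBW gives $\mathbb{V}_k^+(\lambda)\simeq U(\sll_2[t^{-1}]t^{-1})\otimes L^+(\lambda)$ as graded vector spaces, hence $\ch\mathbb{V}_k^+(\lambda)(z,q)=q^{\Delta(\lambda)}\ch L^+(\lambda)(z)/(q, z^\alpha q, z^{-\alpha}q;q)_\infty$. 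For $r\geq 2$ (including $r=p$), $\lambda_{r,s}\notin\Z\varpi$ so $L^+(\lambda_{r,s})$ is the $\sll_2$-Verma with character $z^{\lambda_{r,s}}/(1-z^{-\alpha})$, yielding the claimed Verma character; for $r=1$, $L^+(\lambda_{1,s})$ is $s$-dimensional and, using $\Delta_{1,s}=\Delta_{1,-s}$ together with $\lambda_{1,-s}=\lambda_{1,s}-s\alpha$, the two terms on the right of \eqref{character formula} collapse into the single Weyl character $\ch\mathbb{V}_k^+(\lambda_{1,s})$. Thus the stated formula is equivalent to a two-term resolution
\begin{equation}\label{plan: BGG}
0\to\mathbb{V}_k^+(\lambda_{r,-s})\to\mathbb{V}_k^+(\lambda_{r,s})\to L_k^+(\lambda_{r,s})\to 0
\end{equation}
for $r\geq 2$, and to the simplicity $L_k^+(\lambda_{1,s})=\mathbb{V}_k^+(\lambda_{1,s})$ for $r=1$.

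To establish these I would extract graded dimensions from the bimodule decomposition $\afXp{r}{s}\simeq\bigoplus_{n\geq 0}\C^{2n+s}\otimes L_k^+(\lambda_{r,2n+s})$ of \eqref{eq: X-decomposition} and equate them with the free-field character of $\afXp{r}{s}$ computed from the middle column of Proposition \ref{cor: Felder complex for sl2 I},
\[
0\to\afXp{r}{s}\to\beta\gamma\otimes\V_{r,s}\to\afXm{p-r,3-s}(-\varpi)\to 0,
\]
together with the analogous sequence for $\afXm{p-r,3-s}$ in the third column (which turns the calculation into a recursion terminating at the boundary indices). Matching coefficients of the $\mathrm{SL}_2^\af$-characters $\chi_{2n+s}(w)=(w^{2n+s}-w^{-(2n+s)})/(w-w^{-1})$ on the two sides simultaneously fixes $\ch L_k^+(\lambda_{r,2n+s})(z,q)$ for every $n\geq 0$ and $1\leq r<p$, giving \eqref{character formula}. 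The case $r=p$ is treated separately through the third row of Proposition \ref{cor: Felder complex for sl2 I} at $r=p-1$ (which determines $\beta\gamma\otimes\V_{p,s}$ up to spectral flow) combined with the analysis at the end of \S\ref{section:FT-modules}.

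The main obstacle is the bookkeeping across the two families $\afXp{r}{s}$ and $\afXm{r'}{s'}$, whose composition factors sit on opposite sides of the spectral-flow divide recorded in \eqref{lemm:bounded L+ and unbounded SL+}, so the character identity must be disentangled into bounded and unbounded parts before the Grothendieck-group matching can be carried out cleanly. An alternative route I would keep in mind is to produce the embedding in \eqref{plan: BGG} directly: the Kac--Kazhdan relation $\langle\lambda_{r,s}+\hat\rho,\alpha^\vee+(r-1)K\rangle=s$ at $k=-2+1/p$ guarantees a singular vector of $\h$-weight $\lambda_{r,-s}$ and conformal depth $s(r-1)$ in $\mathbb{V}_k^+(\lambda_{r,s})$, producing a nonzero homomorphism $\mathbb{V}_k^+(\lambda_{r,-s})\to\mathbb{V}_k^+(\lambda_{r,s})$ by Frobenius reciprocity; injectivity and identification of the image with the maximal proper submodule then again reduce to the same character bound extracted from \eqref{eq: X-decomposition}.
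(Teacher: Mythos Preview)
Your reformulation in terms of the two-term resolution \eqref{plan: BGG} is correct and is exactly what the paper records as Corollary~\ref{cor: inverse Feigin-Fuks short exact sequence}; note however that in the paper's logic this resolution is \emph{deduced from} the character formula rather than used to prove it, so your alternative Kac--Kazhdan route really would need an independent upper bound on $\ch L_k^+$, as you acknowledge.

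Your main approach---extracting $\ch L_k^+(\lambda_{r,2n+s})$ by comparing the bimodule decomposition \eqref{eq: X-decomposition} against a free-field character of $\afXp{r}{s}$---is the same idea the paper uses, but the paper executes it more cleanly. Rather than feeding the exact sequence of Proposition~\ref{cor: Felder complex for sl2 I} into a recursion, the paper applies the Atiyah--Bott fixed-point formula to the bundle $\mathrm{SL}_2\times_{\mathrm{B}}(\beta\gamma\otimes\V_{r,s})$: since the higher cohomology vanishes (Theorem~\ref{FT(sl2,0)}), the Euler characteristic equals $\ch\afXp{r}{s}$, and Atiyah--Bott expresses this directly as $\sum_{\lambda\in P_+}\chi_\lambda(w)\bigl(\ch V^{h=\lambda}-\ch V^{h=s_\alpha\circ\lambda}\bigr)$. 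Matching against \eqref{eq: X-decomposition} then reads off $\ch L_k^+(\lambda_{r,2n+s})=\ch(\beta\gamma\otimes\pi^\alpha_{\alpha_{r,2n+s}})-\ch(\beta\gamma\otimes\pi^\alpha_{\alpha_{r,-(2n+s)}})$ immediately, with no recursion. Your ``matching coefficients of $\chi_{2n+s}(w)$'' step is in effect a by-hand re-derivation of this Weyl-alternation identity; it works because the functions $\{\chi_{2n+s}(w)\}\cup\{w^{-1}\chi_{2m+3-s}(w)\}$ are linearly independent, but invoking Atiyah--Bott is what cuts through the bookkeeping you flag as the main obstacle. The negative-index characters $\ch L_k^+(\lambda_{-r,-2n-s})$ are then obtained, as you suggest, from the first row of Proposition~\ref{cor: Felder complex for sl2 I} once $\ch\afXp{p-r}{3-s}$ is known, via a constant-term extraction in $w$.
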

\proof
We combine Theorem \ref{FT(sl2,0)} and 
%Proposition \ref{app-character}. 
the Atiyah--Bott fixed point formula \cite{AB}
\begin{align*}
\sum_{n\geq 0}(-1)^n\ch_{H^n(\mathrm{G}\times_{\mathrm{B}}V)}(w;z,q)
=
\sum_{\lambda\in P_+}\ch_{L^+(\lambda)}(w)\sum_{\sigma\in W}(-1)^{l(\sigma)}\ch_{V^{h=\sigma\circ\lambda}}(z,q)
\end{align*}
as \cite{FT,Sug}.
By comparing this formula with 
%\eqref{decomposition data}, 
\eqref{eq: X-decomposition}, 
%\eqref{eq: X-decomposition II},
we have
\begin{align*}
\ch_{L_k^+(\lambda_{r,2n+s})}(z,q)
&=\ch_{\beta\gamma\otimes \pi^\alpha_{\alpha_{r,2n+s}}}(z,q)-\ch_{\beta\gamma\otimes \pi^\alpha_{\alpha_{r,-(2n+s)}}}(z,q).
\end{align*}
Then it follows from \eqref{sl2 wakimoto} that
\begin{align*}
\ch_{\beta\gamma}(z,q)=\frac{1}{(z^\alpha q,z^{-\alpha};q)_\infty},\quad \ch_{\pi^\alpha_{\alpha_{r,s}}}(z,q)=\frac{z^{\lambda_{r,s}}q^{\Delta_{{r,s}}}}{(q;q)_\infty}
\end{align*}
and thus the assertion for $s\geq 1$ and $1\leq r\leq p$. 
Next, we compute the characters $\ch_{L_k^+(\lambda_{-r,-2n-s})}$ appearing in $\afXm{r}{s}$.
By the first row in Proposition \ref{cor: Felder complex for sl2 I},
%By \eqref{eqn: affine Felder complex for sl2},
\begin{align*}
\ch_{\afXm{r}{s}}(w;z,q)=w(\ch_{\beta\gamma \otimes \V_{p-r,3-s}}(w;z,q)-\ch_{\afXp{p-r}{3-s}}(w;z,q)).
\end{align*}
Let us introduce 
$f_{r,s}=z^{\lambda_{r,s}}q^{\Delta_{r,s}}$.
Since
\begin{align*}
&A_{r,s}:=\ch_{\beta\gamma \otimes \V_{r,s}}(w;z,q)=\frac{\sum_{n\in\Z}w^{(s+2n-1)}f_{r,2n+s}}{(z^\alpha q,z^{-\alpha},q;q)_\infty},\\
&B_{r,s}:=\ch_{\afXp{r}{s}}(w;z,q)=\sum_{n\geq 0}\frac{w^{(2n+s)}-w^{-(2n+s)}}{w-w^{-1}}\frac{(f_{r,2n+s}-f_{r,-(2n+s)})}{(z^\alpha q,z^{-\alpha},q;q)_\infty},
\end{align*}
%By \eqref{decomposition data}, we have 
by \eqref{eq: X-decomposition}
%and \eqref{eq: X-decomposition II}
, we have
\begin{align*}
\ch_{L_k^+(\lambda_{-r,-2n-s})}&(z,q)
=\mathrm{CT}_{w}\left[\left(w^{-(2n+s-1)}-w^{-(2n+s+1)} \right)w(A_{p-r,3-s}-B_{p-r,3-s})  \right]\\
&=\frac{(f_{p-r,2n+s-1}-f_{p-r,2n+s+1})-(f_{p-r,2n+s-1}-f_{p-r,-(2n+s-1)})}{(z^\alpha q,z^{-\alpha},q;q)_\infty}\\
&=\frac{f_{p-r,-(2n+s-1)}-f_{p-r,2n+s+1}}{(z^\alpha q,z^{-\alpha},q;q)_\infty}
\end{align*}
where for $f(w,z,q)=\sum_{n\in\Z}w^nf_n(z,q)$, $\mathrm{CT}_w[f(w,z,q)]$ denotes the constant term $f_{0}(z,q)$.
Then we obtain the assertion for $s\leq 0$ and $1\leq r<p$. 
\endproof
\begin{corollary}\label{cor: inverse Feigin-Fuks short exact sequence}
%For $\lambda_{r,s}$ such that $s\in\Z$ and $r\not\equiv 0\mod p$, we have the following exact sequences of $V^k(\sll_2)$-modules
For $\lambda_{r,s}$ such that $r\not =0$ or $s>0$, we have the following exact sequences of $V^k(\sll_2)$-modules
\begin{align}\label{BGG resolution}
0\rightarrow\mathbb{M}^+_k(\lambda_{r,-s})\rightarrow \mathbb{M}_k^+(\lambda_{r,s})\rightarrow L_k^+(\lambda_{r,s})\rightarrow 0.
\end{align}
\end{corollary}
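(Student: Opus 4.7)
The plan is to derive the short exact sequence directly from the character formula \eqref{character formula} established in Corollary~\ref{character formula for sl2}, combined with the universal property of affine Verma modules. First I would observe that for any $\lambda\in\mathfrak{h}^*$, the affine Verma module $\mathbb{M}^+_k(\lambda)$ is free of rank one over $U(\widehat{\mathfrak{n}}_-)$ with $\widehat{\mathfrak{n}}_-:=\mathfrak{n}_-\oplus \sll_2[t^{-1}]t^{-1}$, so by the PBW theorem its character is
\begin{align*}
\ch_{\mathbb{M}^+_k(\lambda)}(z,q)=\frac{z^{\lambda}q^{\Delta_{\lambda}}}{(z^{\alpha}q,z^{-\alpha},q;q)_\infty},
\end{align*}
where $\Delta_\lambda$ denotes the conformal weight of the highest weight vector. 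Specializing to $\lambda=\lambda_{r,s}$ and comparing with \eqref{character formula} gives the key numerical identity
\begin{align*}
\ch_{L_k^+(\lambda_{r,s})}(z,q)=\ch_{\mathbb{M}^+_k(\lambda_{r,s})}(z,q)-\ch_{\mathbb{M}^+_k(\lambda_{r,-s})}(z,q).
\end{align*}

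Next, by the universal property there is a canonical surjection $\pi\colon\mathbb{M}^+_k(\lambda_{r,s})\twoheadrightarrow L_k^+(\lambda_{r,s})$, and the identity above forces $K:=\Ker\pi$ to have exactly the same character as $\mathbb{M}^+_k(\lambda_{r,-s})$. In particular, the lowest-conformal-weight subspace of $K$ sits in conformal weight $\Delta_{r,-s}$ and has $h$-weight $\lambda_{r,-s}$; since it has minimal conformal weight in $K$, it is annihilated by all modes of positive conformal weight and is therefore a singular vector of weight $\lambda_{r,-s}$ inside $\mathbb{M}^+_k(\lambda_{r,s})$. By the universal property of $\mathbb{M}^+_k(\lambda_{r,-s})$, this singular vector produces a nonzero $V^k(\sll_2)$-module homomorphism $\iota\colon \mathbb{M}^+_k(\lambda_{r,-s})\rightarrow\mathbb{M}^+_k(\lambda_{r,s})$ whose image lies in $K$. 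The map $\iota$ is injective because affine Verma modules are free of rank one over $U(\widehat{\mathfrak{n}}_-)$: any nonzero homomorphism from a Verma module must remain faithful. Comparing characters, $\im\iota\subseteq K$ is an inclusion of modules with the same character, hence an equality, which produces the required short exact sequence.

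The only subtle step is verifying that the singular vector is indeed a \emph{highest weight} vector for the affine Lie algebra (not merely annihilated by some positive modes), but this is automatic from the minimality of its conformal weight in $K$ combined with the fact that there is a unique $h$-weight at that conformal weight (as read off from the Verma character). The hypothesis ``$r\neq 0$ or $s>0$'' simply ensures that $\lambda_{r,s}\neq\lambda_{r,-s}$ and that the character formula \eqref{character formula} is available, so no separate degenerate case arises. As a sanity check, the statement can also be regarded as dual to a standard Feigin--Fuchs-type resolution at admissible level, but the approach above is self-contained given Corollary~\ref{character formula for sl2}.
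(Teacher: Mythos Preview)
Your proof is correct and follows essentially the same route as the paper's own proof: both use the character formula of Corollary~\ref{character formula for sl2} to locate a singular vector of weight $\lambda_{r,-s}$ in the kernel of $\mathbb{M}^+_k(\lambda_{r,s})\twoheadrightarrow L_k^+(\lambda_{r,s})$, invoke the universal property of the affine Verma module to obtain a map $\mathbb{M}^+_k(\lambda_{r,-s})\rightarrow\mathbb{M}^+_k(\lambda_{r,s})$, note injectivity from freeness over $U(\widehat{\mathfrak{n}}_-)$, and conclude exactness by character comparison. Your write-up is simply more detailed (spelling out the Verma character and the highest-weight verification), but the argument is the same.
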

\proof
By Corollary \ref{character formula for sl2}, the surjection $\mathbb{M}^+_k(\lambda_{r,s})\twoheadrightarrow L_k^+(\lambda_{r,s})$ has a singular vector of weight $\lambda_{r,-s}$. 
Thus we have a homomorphism $\mathbb{M}^+_k(\lambda_{r,-s})\rightarrow \mathbb{M}^+_k(\lambda_{r,s})$, which is injective since affine Verma modules are free $U(\widehat{\mathfrak{n}}_-)$-modules of rank one where $\widehat{\mathfrak{n}}_-:=\mathfrak{n}_-\oplus \sll_2[t^{-1}]t^{-1}$. Hence, we have the complex in the assertion, which is exact by Corollary \ref{character formula for sl2}.
\endproof
\begin{comment}
\proof
By Corollary \ref{character formula for sl2} (1), the surjection $\mathbb{M}^+_k(\lambda_{r,s})\twoheadrightarrow L_k^+(\lambda_{r,s})$ has a singular vector of weight $\lambda_{r,-s}$. 
Thus we have a homomorphism $\mathbb{M}^+_k(\lambda_{r,-s})\rightarrow \mathbb{M}^+_k(\lambda_{r,s})$, which is injective since affine Verma modules are free $U(\widehat{\mathfrak{n}}_-)$-modules of rank one where $\widehat{\mathfrak{n}}_-:=\mathfrak{n}_-\oplus \sll_2[t^{-1}]t^{-1}$. Hence, we have the complex in the assertion, which is exact by Corollary \ref{character formula for sl2} (1).
(2) is similarly proven by using Corollary \ref{character formula for sl2} (2).
\endproof
\end{comment}

\begin{corollary}\label{Property of Weyl modules}\hspace{0mm}\\
%Let $p\geq 2$.\\
%\begin{enumerate}\renewcommand{\labelenumi}{(\roman{enumi})}
\textup{(1)} 
%\item 
The Weyl modules $\mathbb{V}^k(n\varpi)\ (n \in \Z_{\geq0})$ are simple.\\
\textup{(2)} 
%\item 
We have isomorphisms of $(\mathrm{SL}_2^{\af},V^k(\sll_2))$-bimodules
\begin{align*}
\mathrm{FT}_p(\sll_2)=H^0(\mathrm{SL}_2\times _{\mathrm{B}} (\beta\gamma \otimes \V_{1,1}))&\simeq \bigoplus_{n\colon \mathrm{even}}L(n\varpi)\otimes \mathbb{V}^k(n\varpi),\\
H^0(\mathrm{SL}_2\times _{\mathrm{B}} (\beta\gamma \otimes \V_{1,2}))&\simeq \bigoplus_{n\colon \mathrm{odd}}L(n\varpi)\otimes \mathbb{V}^k(n\varpi).
\end{align*}
%\end{enumerate}
\end{corollary}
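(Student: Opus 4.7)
The plan is to deduce both statements from \eqref{eq: X-decomposition} combined with Theorem \ref{FT(sl2,0)}, with the simplicity of the Weyl modules falling out of a character comparison using Corollary \ref{character formula for sl2}.

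First I would identify the global sections. For $p\geq 2$, Proposition \ref{cor: Felder complex for sl2 I} applied with $r=1$ gives $\tau(\Pi[0]\otimes\V_{1,s})=\beta\gamma\otimes\V_{1,s}$, so the second case of Theorem \ref{FT(sl2,0)}(1) yields
\begin{equation*}
H^0(\mathrm{SL}_2\times_{\mathrm{B}}(\beta\gamma\otimes\V_{1,s}))\simeq\afXp{1}{s};
\end{equation*}
for $p=1$ the same conclusion is given directly by the third case of Theorem \ref{FT(sl2,0)}(1). Invoking the decomposition \eqref{eq: X-decomposition} together with $\lambda_{1,2n+s}=(2n+s-1)\varpi$, one obtains
\begin{equation*}
\afXp{1}{s}\simeq\bigoplus_{n\geq 0}\C^{2n+s}\otimes L^+_k((2n+s-1)\varpi),
\end{equation*}
in which the $(2n+s)$-dimensional multiplicity spaces $\C^{2n+s}$ carry a simple $\mathrm{SL}_2^{\af}$-module structure, namely $L((2n+s-1)\varpi)$, inherited through the identification $\afXp{1}{s}=g^{\ast}\mathcal{X}^{\mathrm{vir},+}_{1,s}$ from the $\mathrm{SL}_2^{\mathrm{vir}}$-module structure on multiplicity spaces of $\W_{1,s}$ built in Theorem \ref{main result for the virasoro triplet algebra}. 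Separating according to the parity of $s\in\{1,2\}$ then produces the two decompositions of (2) modulo identifying $L^+_k(n\varpi)$ with $\mathbb{V}^k(n\varpi)$.

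For (1), the canonical surjection $\mathbb{V}^k(n\varpi)\twoheadrightarrow L^+_k(n\varpi)$ is an isomorphism as soon as the two characters agree. Since $\mathbb{V}^k(n\varpi)$ is freely induced from the finite-dimensional $L(n\varpi)$ over $U(t^{-1}\sll_2[t^{-1}])$ with top at Sugawara conformal weight $\Delta_n=pn(n+2)/4$, its character equals
\begin{equation*}
\ch\mathbb{V}^k(n\varpi)(z,q)=\frac{q^{\Delta_n}\ch L(n\varpi)(z)}{(z^\alpha q,z^{-\alpha}q,q;q)_\infty}.
\end{equation*}
On the other hand, Corollary \ref{character formula for sl2} with $\lambda_{1,n+1}=n\varpi$, combined with the observation $\Delta_{1,n+1}=\Delta_{1,-(n+1)}=\Delta_n$, yields
\begin{equation*}
\ch L^+_k(n\varpi)(z,q)=\frac{q^{\Delta_n}\bigl(z^{n\varpi}-z^{-(n+2)\varpi}\bigr)}{(z^\alpha q,z^{-\alpha},q;q)_\infty}.
\end{equation*}
Factoring $(z^{-\alpha};q)_\infty=(1-z^{-\alpha})(z^{-\alpha}q;q)_\infty$ and using the Weyl numerator identity $(1-z^{-\alpha})\ch L(n\varpi)=z^{n\varpi}-z^{-(n+2)\varpi}$ reconciles the two expressions, giving $\mathbb{V}^k(n\varpi)\simeq L^+_k(n\varpi)$. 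Substituting this back into the decomposition of $\afXp{1}{s}$ finishes (2).

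The only genuinely delicate point is confirming the $\mathrm{SL}_2^{\af}$-module structure claimed on the multiplicity spaces $\C^{2n+s}\subset\afXp{1}{s}$; this follows transparently because $g$ is a bimodule isomorphism intertwining the $\mathrm{SL}_2^{\mathrm{vir}}$- and $\mathrm{SL}_2^{\af}$-actions and the corresponding statement on the triplet side is already known from Theorem \ref{main result for the virasoro triplet algebra}. The remaining work is thus the character bookkeeping above.
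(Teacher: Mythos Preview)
Your argument is correct and follows the same route as the paper: part (1) is the character comparison via Corollary \ref{character formula for sl2} (using $\lambda_{1,n+1}=n\varpi$, $\Delta_{1,n+1}=\Delta_{1,-(n+1)}$, and the Weyl numerator identity), and part (2) follows from Theorem \ref{FT(sl2,0)}(1) together with the bimodule decomposition \eqref{eq: X-decomposition}. The paper compresses (2) to a single sentence, while you spell out the identification $H^0(\mathrm{SL}_2\times_{\mathrm{B}}(\beta\gamma\otimes\V_{1,s}))\simeq\afXp{1}{s}$ via Proposition \ref{cor: Felder complex for sl2 I} (for $p\geq 2$) and the third line of Theorem \ref{FT(sl2,0)}(1) (for $p=1$); this extra care is welcome but not a different method. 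Note also that the $\mathrm{SL}_2^{\af}$-structure on the multiplicity spaces is already part of the statement of \eqref{eq: X-decomposition}, so your final paragraph, while correct, is not strictly needed.
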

\proof
\textup{(1)} Since $n\varpi=\lambda_{1,n+1}$, we have
%the right hand of  \eqref{character formula} gives
\begin{align*}
\ch_{L_k^+(n\varpi)}(z,q)
&=\frac{z^{\lambda_{1,n+1}}q^{\Delta_{1,n+1}}(1-z^{-(n+1)\alpha})}{(z^\alpha q,z^{-\alpha},q;q)_\infty}\\
&=\frac{q^{\Delta_{1,n+1}}\ch_{L(n\varpi)}(z)}{(z^\alpha q,z^{-\alpha}q,q;q)_\infty}=\ch_{\mathbb{V}^k(n\varpi)}(z,q).
\end{align*}
%by the Weyl character formula. 
As we have a natural surjection $\mathbb{V}^k(n\varpi)\twoheadrightarrow L_k^+(n\varpi)$, the equality implies $\mathbb{V}^k(n\varpi)\simeq L_k^+(n\varpi)$.
\textup{(2)} It is immediate from \textup{(1)} and Theorem \ref{FT(sl2,0)} \textup{(1)}. 
\endproof

\subsection{BRST reduction}
%\sug{Rewrite to include the case $r=p$}
%Corollary \ref{Property of Weyl modules} 
Corollary \ref{cor: inverse Feigin-Fuks short exact sequence}
builds a bridge between $\mathrm{FT}_p(\sll_2)$ and $\W(p)$.
For the simplicity of notation, we write the BRST reduction for the principal nilpotent element $f=f_{\mathrm{prin}}$ as $H_{DS}^\bullet=H^\bullet_{DS,f_{\mathrm{prin}}}$. 
In our case, $H_{DS}^\bullet(N)$ for a $V^k(\sll_2)$-module $N$ is the cohomology of the complex 
\begin{align*}
    C_{DS}^\bullet(N)=N\otimes V_\Z,\quad d=\int Y((e+\mathbf{1})\otimes e^{-x},z)dz
\end{align*}
with the cohomological degree given by the lattice $\Z=\Z x$.
By \cite{Ar}, we apply $H_{DS}^\bullet$ to \eqref{BGG resolution} and obtain the short exact sequence of $L(c_{1,p},0)$-modules, namely the Feigin--Fuchs resolution \cite{FeFu} of the simple $L(c_{1,p},0)$-module $L_{r,s}$ by Virasoro Verma modules $M_{r,s}$'s:
\begin{align}\label{Virasoro module through BRST reduction}
0\rightarrow \underbrace{H_{DS}^0(\mathbb{M}^+_k(\lambda_{r,-s}))}_{M_{p-r,s+1}}\rightarrow \underbrace{H_{DS}^0(\mathbb{M}_k^+(\lambda_{r,s}))}_{M_{r,s}}\rightarrow \underbrace{H_{DS}^0(L_k^+(\lambda_{r,s}))}_{L_{r,s}}\rightarrow 0.
\end{align}
for $1\leq r \leq p$ and $s\in \Z_{\geq 1}$.
%by setting (1) $\lambda_0=(s-1)\varpi$, $\lambda_p=\frac{r-1}{p}\varpi$ (2) $\lambda_0=-(s+1)\varpi$, $\lambda_p=-\frac{r+1}{p}\varpi$, respectively.

\begin{theorem}\label{compatibility of FT algeras for sl2}\hspace{0mm}\\
\textup{(1)} {\cite[Theorem 14]{ACGY}} There is an isomorphism of vertex algebras 
\begin{align*}
H_{DS}^{n}(\mathrm{FT}_{p}(\sll_2))\simeq \delta_{n,0}\W(p).
\end{align*}
\textup{(2)} There is an isomorphism of $\W(p)$-modules
\begin{align*}
\quad H_{DS}^{n}(\afXpm{r}{s})\simeq \delta_{n,0}\W_{r,s},\quad 
H_{DS}^{n}(\afXp{p}{s})\simeq \delta_{n,0}\V_{p,s}\quad
(s=1,2,\ 1\leq r <p).
\end{align*}
Equivalently, the functors $H_{DS}^0(\text{-})$ and $H^0(\mathrm{SL}_2\times_{\mathrm{B}}\text{-})$ commute
\begin{align*}
H_{DS}^0(H^0(\mathrm{SL}_2\times_{\mathrm{B}} M))
&\simeq 
H^0(\mathrm{SL}_2\times_{\mathrm{B}} H_{DS}^0(M))
\end{align*}
for $M=\beta\gamma\otimes \V_{r,s}, \tau(\Pi[\tfrac{r}{p}]\otimes \V_{r,s})$.
\end{theorem}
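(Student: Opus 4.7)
The plan is to use the decomposition \eqref{eq: X-decomposition} of $\afXpm{r}{s}$ as a direct sum of simple positive highest weight $V^{k}(\sll_2)$-modules $L_k^+(\lambda_{\pm r,\pm(2n+s)})$ with multiplicity spaces $\C^{2n+s}$, reduce the computation of $H_{DS}^\bullet$ to the individual summands, and then reassemble the result using the Virasoro decomposition \eqref{decomposition of triplet} of the triplet modules.

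First, I would compute $H_{DS}^\bullet$ on each simple summand. Applying $H_{DS}^\bullet$ — which is exact and vanishes in positive degrees on the relevant subcategory of positive highest weight $\widehat{\sll}_2$-modules — to the BGG-type short exact sequence \eqref{BGG resolution} of Corollary \ref{cor: inverse Feigin-Fuks short exact sequence}, one obtains the Feigin--Fuchs resolution \eqref{Virasoro module through BRST reduction} of a simple Virasoro module. The highest weight of the output can be matched with $h_{r,s}$ using $\lambda_{r,s}=-\alpha_{r,s}/\sqrt{p}$ and the Virasoro conformal vector $\omega_{1,p}$ of \eqref{vir wakimoto}. Combined with the symmetries $h_{r,s}=h_{-r,-s}$ and $L_{r,s}=L_{-r,-s}$, this yields
\[
H_{DS}^n\bigl(L_k^+(\lambda_{\pm r,\pm(2n+s)})\bigr)\simeq \delta_{n,0}\,L_{r,2n+s}.
\]

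Summing over $n$ and invoking \eqref{decomposition of triplet} then gives $H_{DS}^n(\afXpm{r}{s})\simeq \delta_{n,0}\W_{r,s}$ for $1\leq r<p$, and $H_{DS}^n(\afXp{p}{s})\simeq \delta_{n,0}\V_{p,s}$ using the equality $\W_{p,s}=\V_{p,s}$. For the equivalent commutativity formulation, I would combine Theorem \ref{FT(sl2,0)} (computing the left hand side) with the standard compatibility of the Wakimoto realizations \eqref{sl2 wakimoto} and \eqref{vir wakimoto} with Drinfeld--Sokolov reduction, i.e.\ $H_{DS}^\bullet(\beta\gamma\otimes\pi^\alpha)\simeq\delta_{\bullet,0}\,\pi^\alpha$, which implies $H_{DS}^0(\beta\gamma\otimes\V_{r,s})\simeq\V_{r,s}$. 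For the other case, the first row of Proposition \ref{cor: Felder complex for sl2 II} together with the Virasoro Felder complex \eqref{virasoro felder complex} yields $H_{DS}^0(\tau(\Pi[\tfrac{r}{p}]\otimes\V_{r,s}))\simeq\V_{r,s}$. Theorem \ref{main result for the virasoro triplet algebra} then identifies both sides of the commutativity with $\W_{r,s}$ (respectively $\V_{p,s}$).

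The hard part will be the weight bookkeeping between the two parametrizations: although $L_k^+(\lambda_{r,2n+s})$ and $L_k^+(\lambda_{-r,-(2n+s)})$ lie in different $\mathfrak{h}$-weight sectors of $V^{k}(\sll_2)$, both must descend to the same simple Virasoro module $L_{r,2n+s}$ after $H_{DS}^0$. Verifying this requires the identities $h_{-r,-(2n+s)}=h_{r,2n+s}$ and $h_{r-p,-(2n+s+1)}=h_{r,-(2n+s)}$, which guarantee that the Verma subobject in \eqref{BGG resolution} applied to $\lambda_{-r,-(2n+s)}$ reduces under $H_{DS}^0$ to the same Virasoro Verma that appears as the kernel term in the Feigin--Fuchs resolution of $L_{r,2n+s}$. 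Once this parametric matching is in place, the remaining steps are a direct assembly.
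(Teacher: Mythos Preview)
Your approach coincides with the paper's for the core computation: both decompose $\afXpm{r}{s}$ via \eqref{eq: X-decomposition}, apply Arakawa's exactness to the BGG sequence \eqref{BGG resolution} to get $H_{DS}^0(L_k^+(\lambda_{\pm r,\pm(2n+s)}))\simeq L_{r,2n+s}$, and reassemble via \eqref{decomposition of triplet}. This is precisely how the paper obtains \eqref{isom at the level of Virasoro}.

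There is, however, a genuine gap: your argument establishes the isomorphisms only as $L(c_{1,p},0)$-modules, not as $\W(p)$-modules (and not as vertex algebras for part (1)). Knowing that $H_{DS}^0(\afXp{r}{s})$ and $\W_{r,s}$ are abstractly isomorphic Virasoro modules does not identify their $\W(p)$-structures. The paper closes this gap by applying $H_{DS}^\bullet$ to the first row of Proposition~\ref{cor: Felder complex for sl2 I} (not \ref{cor: Felder complex for sl2 II}), obtaining the short exact sequence
\[
0\rightarrow H_{DS}^0(\afXp{r}{s})\rightarrow \V_{r,s} \rightarrow H_{DS}^0(\afXm{p-r}{3-s})(-\varpi) \rightarrow 0
\]
of $H_{DS}^0(\FT(\sll_2))$-modules, with the middle term the $V_{\sqrt{p}A_1}$-module $\V_{r,s}$. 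Then the key observation is uniqueness: since the simple Virasoro constituents of $\W_{r,s}$ and $\W_{p-r,3-s}$ are disjoint, $\W_{r,s}$ is the \emph{unique} $L(c_{1,p},0)$-submodule of $\V_{r,s}$ with those constituents, so $H_{DS}^0(\afXp{r}{s})$ must equal $\W_{r,s}$ as a submodule of $\V_{r,s}$, inheriting its $\W(p)$-structure. This simultaneously yields the vertex algebra isomorphism of part~(1) for $(r,s)=(1,1)$.

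Your argument for $H_{DS}^0(\tau(\Pi[\tfrac{r}{p}]\otimes\V_{r,s}))\simeq\V_{r,s}$ is also incomplete: matching the outer terms of a short exact sequence to those of \eqref{virasoro felder complex} does not identify the middle term, since a priori there could be distinct extensions of $\W_{p-r,3-s}$ by $\W_{r,s}$. The paper avoids this by working with $\beta\gamma\otimes\V_{r,s}$, whose BRST cohomology is known directly from the Wakimoto side, and deduces the $\afXm{r}{s}$ case from the resulting cohomological Felder complex rather than computing $H_{DS}^0(\tau(\Pi[\tfrac{r}{p}]\otimes\V_{r,s}))$ separately.
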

\proof
(1)
By \eqref{Virasoro module through BRST reduction}, we have isomorphisms of $L(c_{1,p},0)$-modules
\begin{align}\label{isom at the level of Virasoro}
    H_{DS}^{n}(\afXpm{r}{s})\simeq \delta_{n,0}\W_{r,s}.
\end{align}
Since the $V^k(\sll_2)$-modules $\beta\gamma\otimes \V_{r,s}$ are direct sum of Wakimoto representations, we have an isomorphism 
$H_{DS}^n(\beta\gamma\otimes \V_{1,1})\simeq \delta_{n,0} V_{\sqrt{p}\mathrm{A}_1}$ of vertex algebras and $H_{DS}^n(\beta\gamma\otimes \V_{r,s})\simeq \delta_{n,0} \V_{r,s}$ as $V_{\sqrt{p}\mathrm{A}_1}$-modules by \cite{FF2}.
Hence, applying $H_{DS}^\bullet$ to 
the first row in Proposition \ref{cor: Felder complex for sl2 I},
we obtain a short exact sequence of $L(c_{1,p},0)$-modules
\begin{align}\label{cohomological Felder complex}
0\rightarrow H_{DS}^0(\afXp{r}{s})\rightarrow \V_{r,s} \rightarrow H_{DS}^0(\afXm{p-r}{3-s})(-\varpi) \rightarrow 0.
\end{align}
As the simple $L(c_{1,p},0)$-submodules appearing in $\W_{r,s}$ and $\W_{p-r,3-s}$ are different, the $L(c_{1,p},0)$-submodule $\W_{r,s}\subset \V_{r,s}$ exists uniquely. 
Hence, $H_{DS}^0(\mathrm{FT}_{p}(\sll_2))\simeq \W(p)$ as vertex algebras.
(2) The case $\afXpm{r}{s}$ for $r\neq p$ follows from \eqref{cohomological Felder complex} and $\afXp{p}{s}$ from \eqref{isom at the level of Virasoro}, respectively.
\endproof

The compatibility of modules over $\FT(\sll_2)$ and $\W(p)$ under the BRST reduction indeed extends to the Felder complex.
\begin{corollary}\label{compatibility of Felder complex}
The complex \eqref{cohomological Felder complex} is isomorphic to the Felder complex \eqref{virasoro felder complex}.
\end{corollary}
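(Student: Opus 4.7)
The plan is to obtain the cohomological Felder complex \eqref{cohomological Felder complex} by applying the exact functor $H_{DS}^{0}$ to the first row of the diagram in Proposition \ref{cor: Felder complex for sl2 I}, and then identify it term-by-term and map-by-map with the Felder complex \eqref{virasoro felder complex} by invoking the uniqueness of the $L(c_{1,p},0)$-submodule $\W_{r,s}\subset \V_{r,s}$ already used in the proof of Theorem \ref{compatibility of FT algeras for sl2}(1).

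Concretely, applying $H_{DS}^{\bullet}$ to
\[
0\rightarrow \afXp{r}{s} \rightarrow \beta\gamma\otimes \V_{r,s} \rightarrow \afXm{p-r}{3-s}(-\varpi) \rightarrow 0
\]
produces a short exact sequence whose middle term is $\V_{r,s}$, because $\beta\gamma\otimes \V_{r,s}$ is a direct sum of Wakimoto modules \cite{FF2}, and whose outer terms are $\W_{r,s}$ and $\W_{p-r,3-s}(-\varpi)$ by Theorem \ref{compatibility of FT algeras for sl2}(2). This is exactly \eqref{cohomological Felder complex}. To match the inclusion map in \eqref{cohomological Felder complex} with the canonical inclusion $\W_{r,s}\hookrightarrow \V_{r,s}$ in the Felder complex, let $\W_{r,s}'\subset \V_{r,s}$ denote its image. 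Then $\W_{r,s}'$ is an $L(c_{1,p},0)$-submodule isomorphic to $\W_{r,s}\simeq \bigoplus_{n\geq 0}\C^{2n+s}\otimes L_{r,2n+s}$ by \eqref{decomposition of triplet}, whose simple constituents are disjoint from the constituents $L_{p-r,3-s+2n}$ of $\W_{p-r,3-s}$. The composition $\W_{r,s}'\hookrightarrow \V_{r,s}\twoheadrightarrow \V_{r,s}/\W_{r,s}\simeq \W_{p-r,3-s}(-\varpi)$ therefore vanishes, so $\W_{r,s}'\subset \W_{r,s}$; a character comparison (or the fact that $\W_{r,s}'$ and $\W_{r,s}$ are isomorphic simple-type quotients of each other) yields $\W_{r,s}'=\W_{r,s}$. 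The quotient map in \eqref{cohomological Felder complex} then automatically identifies with the quotient map in \eqref{virasoro felder complex} up to the fixed isomorphism on the right term.

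The only genuine ingredient beyond Theorem \ref{compatibility of FT algeras for sl2} is this uniqueness of $\W_{r,s}$ as a submodule of $\V_{r,s}$ with the prescribed composition factors, which is the same elementary observation (disjointness of the constituents of $\W_{r,s}$ and $\W_{p-r,3-s}$) that was already invoked in the proof of Theorem \ref{compatibility of FT algeras for sl2}(1) to identify $H_{DS}^{0}(\FT(\sll_2))$ with $\W(p)$. Consequently, once that diagram chase is performed, no further obstacle arises.
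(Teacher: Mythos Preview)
Your argument is correct at the level of $\W(p)$-module complexes (equivalently, $L(c_{1,p},0)$-module complexes): once the middle term is identified with $\V_{r,s}$, the disjointness of the simple constituents of $\W_{r,s}$ and $\W_{p-r,3-s}$ pins down the image of the inclusion, and the quotient map is then forced. This is indeed the same uniqueness observation already used in the proof of Theorem~\ref{compatibility of FT algeras for sl2}(1), so your route is economical.

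The paper takes a different, more explicit route and proves a bit more. Rather than invoking uniqueness of the submodule, it computes the cohomology class of the screening operator directly: using the identification $\V_{r,s}\xrightarrow{\sim}H_{DS}^0(\Pi[b]\otimes\V_{r,s})$ from \cite{ACGY}, one finds $[Q_-^{[r]}]=T_{-\frac{r}{p}(u+v)}\otimes \mathcal{Q}_-^{[r]}$, which literally identifies the differential of \eqref{cohomological Felder complex} with that of \eqref{virasoro felder complex}. In addition, the paper tracks the $\mathrm{B}$-module structure: the induced $\mathrm{B}^{\mathrm{aff}}$-action on the cohomological complex differs from the $\mathrm{B}^{\mathrm{vir}}$-action on \eqref{virasoro felder complex} only by $[F^{\mathrm{aff}}]=-[F^{\mathrm{vir}}]$, and this sign is absorbed by the weight automorphism $(-1)^{\bullet}$ on $\V_{r,s}$. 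Thus the paper's isomorphism of complexes is $(\mathrm{B},\W(p))$-equivariant, not just $\W(p)$-equivariant.

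Your proof does not address this $\mathrm{B}$-equivariance. If the corollary is read purely as an identification of $\W(p)$-module complexes, your argument suffices; if one wants the stronger $(\mathrm{B},\W(p))$-bimodule statement (which is what the paper actually proves and what the Feigin--Tipunin framework naturally asks for), you would still need to check that the two $\mathrm{B}$-actions on $\V_{r,s}$ match up to the sign twist, as the paper does.
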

\proof
It is straightforward so show that the $\mathrm{B}$-action on \eqref{cohomological Felder complex} induced by $\mathrm{B}^{\af}$ is expressed as
\begin{align*}
    &[F^{\af}]=\left[\int Y(\beta\otimes e^{\sqrt{p}\alpha},z)dz\right]=\left[\int Y(-e^{\sqrt{p}\alpha},z)dz\right]=-[F^{\mathrm{vir}}],\\
    &[H^{\af}]|_{\pi_{\alpha_{r,2n+s}}}=-\tfrac{1}{\sqrt{p}}(h,\alpha_{1,2n+s})=H^{\mathrm{vir}}|_{\pi_{\alpha_{r,2n+s}}}.
\end{align*}
Hence, the $\mathrm{B}$-action on $\V_{r,s}$ differs from $\mathrm{B}^{\mathrm{vir}}$ by the sign $[F^{\af}]=-[F^{\mathrm{vir}}]$. 
Such a difference by signs can be eliminated by the $\mathrm{B}$-module isomorphism $(-1)^\bullet\colon M_\lambda\xrightarrow{\simeq} M_\lambda$, $(m \mapsto (-1)^{\lambda(h)/2}m)$ for weight $\mathrm{B}$-modules $M=\oplus_{\lambda}M_\lambda$ in general.
Since $(-1)^\bullet$ on $V_{\sqrt{p}A_1}$ is an automorphism of vertex algebras and extends to the $V_{\sqrt{p}A_1}$-modules $\V_{r,s}$, we may use it to fix vertical isomorphisms:
\begin{align*}
\xymatrix@=18pt{
0\ar[r] & H_{DS}^0(\afXp{r}{s}) \ar[rr]\ar[d]^-{\simeq}_{(-1)^\bullet}&& \V_{r,s} \ar[rr]^-{[Q_-^{[r]}]}\ar[d]^-{\simeq}_{(-1)^\bullet}&& H_{DS}^0(\afXm{p-r}{3-s})(-\varpi) \ar[r]\ar[d]^-{\simeq}_{(-1)^\bullet} & 0\\
0\ar[r] & \W_{r,s} \ar[rr]&& \V_{r,s} \ar[rr]^-{\mathcal{Q}_-^{[r]}}&& \W_{p-r,3-s}(-\varpi) \ar[r] & 0.
}
\end{align*}
To show that it is an isomorphism of complexes, it remains to show the commutativity of the left square. 
For this, we compute the differential $[Q_-^{[r]}]$ through the embedding $H_{DS}^0(\afXm{p-r}{3-s})\subset H_{DS}^{0}(\Pi[\tfrac{p-r}{p}]\otimes \V_{r,s})$.
By \cite[Proposition 7]{ACGY}, we have $H_{DS}^{n}(\Pi[b]\otimes \V_{r,s})$=0 for $n\neq 0$ and an isomorphism of $V_{\sqrt{p}\mathrm{A}_1}$-modules
\begin{align*}
    \V_{r,s} \xrightarrow{\simeq} H_{DS}^{0}(\Pi[b]\otimes \V_{r,s}),\quad A\mapsto [e^{(b+m)(u+v)}\otimes A]
\end{align*}
for any fixed $m\in \Z$.
Since
\begin{align*}
    [Q_-^{[r]}]
    &=\left[\int_{[\Gamma_r]}\prod_{a=1}^rY(e^{-\frac{1}{p}(u+v)}\otimes e^{-\frac{1}{\sqrt{p}}\alpha},z_a) dz\right]\\
    &=T_{-\frac{r}{p}(u+v)}\otimes \int_{[\Gamma_r]}\prod_{a=1}^rY(e^{-\frac{1}{\sqrt{p}}\alpha},z_a)dz =T_{-\frac{r}{p}(u+v)}\otimes \mathcal{Q}_-^{[r]},
\end{align*}
$[Q_-^{[r]}]$ is identified with $\mathcal{Q}_-^{[r]}\colon \V_{r,s}\rightarrow \V_{p-r,3-s}$ by the natural choice of $m$'s. 
This completes the proof.
\endproof

\begin{corollary}
There are isomorphisms of $\mathcal{W}(p)$-modules
\begin{comment}
\begin{align*}
H^n_{\mathrm{DS}}(\mathcal{S}^a\afWb{r}{s}{b})
\simeq
H^n_{\mathrm{DS}}(\mathcal{S}^a\afXpm{r}{s})
\simeq\delta_{a,0}\delta_{n,0}\W_{r,s}
\quad
(b\in\C).
\end{align*}
\end{comment}
\begin{align*}
&H^n_{\mathrm{DS}}(\mathcal{S}^a\afWb{r}{s}{b})
\simeq\delta_{a,0}\delta_{n,0}\W_{r,s}
\quad
(b\in\C),\\
&H^{n-1}_{\mathrm{DS}}(\mathcal{S}^{a+1}\afXpm{r\pm 1}{s})
\simeq 
H^{n}_{\mathrm{DS}}(\mathcal{S}^{a}\afXpm{r}{s}), \quad (1\leq r <p,~a\not=0)\\
&H^n_{\mathrm{DS}}(\afXm{0}{s})=H^n_{\mathrm{DS}}(\mathcal{S}\afXp{1}{s})=0,\quad (n\in\Z).
\end{align*}
\begin{comment}
\begin{align}
&H^n_{\mathrm{DS}}(\mathcal{S}^a(\beta\gamma\otimes\mathcal{V}_{r,s}))
\simeq
\begin{cases}
0&(a> 0)\\
\delta_{n+a,0}\V_{r-a,s}&(a\leq 0)
\end{cases}\\
&H^{n-1}_{\mathrm{DS}}(\mathcal{S}^{a+1}\afXpm{r\pm 1}{s})
\simeq 
H^{n}_{\mathrm{DS}}(\mathcal{S}^{a}\afXpm{r}{s}), \quad (1\leq r <p-1,~a\not=0)\\
H^n_{\mathrm{DS}}(\mathcal{S}^a\afXp{r}{s})
\simeq
\begin{cases}
0&(a>0)
\end{cases}
\end{align}
\end{comment}
\end{corollary}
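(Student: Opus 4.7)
The plan is to deduce the three claims in tandem from the short exact sequences \eqref{sequence by affine representation 1}, \eqref{sequence by affine representation 2} twisted by the spectral flow $\mathcal{S}^a$, together with the following vanishing lemma, which is the core technical input: for any $a \neq 0$ and any free-field-type module $M \in \{\afWb{r}{s}{b}, \afXpm{r}{s}, \beta\gamma \otimes \mathcal{V}_{r,s}\}$,
\begin{align*}
H^n_{\mathrm{DS}}(\mathcal{S}^a M) = 0 \quad (n \in \Z).
\end{align*}
Via the decomposition \eqref{eq: X-decomposition}, each such $M$ is a direct sum of Wakimoto-type modules, either $\beta\gamma \otimes \pi^\alpha_\lambda$ or its inverse-Hamiltonian-reduction twist $g^\ast(\Pi[b'] \otimes L_{r',s'})$; under $\mathcal{S}^a = \mathcal{S}_{-a\mathbf{v}}$ with $a \neq 0$ these become spectral-flow-twisted Wakimoto modules whose BRST reduction is acyclic, essentially by the classical computation of \cite{FF2}. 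I expect this vanishing lemma to be the main obstacle, as it requires tracking the effect of $g^\ast$ on the BRST differential; however, the untwisted case (namely $H^n_{\mathrm{DS}}(\beta\gamma \otimes \mathcal{V}_{r,s}) \simeq \delta_{n,0} \mathcal{V}_{r,s}$) is already handled in the proof of Theorem~\ref{compatibility of FT algeras for sl2}, and the twisted version should follow by the same Wakimoto argument after accounting for the mode shifts induced by $\mathcal{S}^a$.

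Granted the lemma, the first claim for $a \neq 0$ is immediate, and for $a = 0$ with $[b] \neq [0], [\tfrac{r}{p}]$ the Feigin--Fuchs/screening computation used in the proof of Theorem~\ref{compatibility of FT algeras for sl2} and Corollary~\ref{compatibility of Felder complex} identifies $H^0_{\mathrm{DS}}(\afWb{r}{s}{b})$ with $\W_{r,s}$ via the free-field decomposition. The second claim then follows directly: applying $\mathcal{S}^a$ and $H^\bullet_{\mathrm{DS}}$ to \eqref{sequence by affine representation 1}, \eqref{sequence by affine representation 2} produces a long exact sequence whose middle term $H^\bullet_{\mathrm{DS}}(\mathcal{S}^a \mathcal{W}^\pm_{r,s})$ vanishes for $a \neq 0$ by the lemma, so the connecting map yields the stated isomorphism.

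For the third claim, $\afXm{0}{s} = 0$ is obtained for free from the third row of Proposition~\ref{cor: Felder complex for sl2 I} at $r = p-1$: its first two terms $\mathcal{S}\afXp{p}{s}$ and $\mathcal{S}(\beta\gamma \otimes \mathcal{V}_{p,s})$ coincide via Theorem~\ref{FT(sl2,0)}, forcing the quotient $\mathcal{S}\afXm{0}{3-s}(-\varpi)$ to vanish. For $H^\bullet_{\mathrm{DS}}(\mathcal{S}\afXp{1}{s}) = 0$, I would iterate the second claim:
\begin{align*}
H^n_{\mathrm{DS}}(\mathcal{S}\afXp{1}{s}) \simeq H^{n-1}_{\mathrm{DS}}(\mathcal{S}^2\afXp{2}{s}) \simeq \cdots \simeq H^{n-(p-1)}_{\mathrm{DS}}(\mathcal{S}^p(\beta\gamma \otimes \mathcal{V}_{p,s})),
\end{align*}
which vanishes by the lemma. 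Finally, the remaining case of the first claim at $a = 0$ with $\afWb{r}{s}{b} = \mathcal{W}^\pm_{r,s}$ is obtained by applying $H^\bullet_{\mathrm{DS}}$ to \eqref{sequence by affine representation 1}, \eqref{sequence by affine representation 2} and combining $H^n_{\mathrm{DS}}(\afXpm{r}{s}) \simeq \delta_{n,0}\W_{r,s}$ from Theorem~\ref{compatibility of FT algeras for sl2} with the vanishing $H^\bullet_{\mathrm{DS}}(\mathcal{S}\afXpm{r\pm 1}{s}) = 0$ obtained by the same iterative reduction to $\beta\gamma \otimes \mathcal{V}_{p,s}$ (or to the already-established vanishing of $\afXm{0}{s}$).
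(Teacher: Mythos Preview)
Your argument has a genuine gap in the handling of the third claim, rooted in a misreading of Theorem~\ref{FT(sl2,0)}. That theorem asserts
\[
H^{0}(\mathrm{SL}_2\times_{\mathrm{B}}(\beta\gamma\otimes\mathcal{V}_{p,s}))\simeq \afXp{p}{s},
\]
i.e.\ the global sections of the equivariant bundle over $\mathbb{P}^1$ recover $\afXp{p}{s}$. It does \emph{not} say that $\afXp{p}{s}$ and $\beta\gamma\otimes\mathcal{V}_{p,s}$ coincide as modules. In fact, by \eqref{socle sequence of PiVp} we have a strict inclusion $\afXp{p}{s}=M_3\subsetneq M_2=\beta\gamma\otimes\mathcal{V}_{p,s}$, and the quotient $M_2/M_3$ is precisely $\afXm{0}{3-s}$ (see the discussion preceding Proposition~\ref{cor: Felder complex for sl2 I} and the Loewy diagram in Figure~\ref{fig:Loewy diagram of PiV}). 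So your claim that $\afXm{0}{s}=0$ is false, and your iteration chain
\[
H^n_{\mathrm{DS}}(\mathcal{S}\afXp{1}{s})\simeq\cdots\simeq H^{n-(p-1)}_{\mathrm{DS}}(\mathcal{S}^p(\beta\gamma\otimes\mathcal{V}_{p,s}))
\]
is wrong at the last step: the iteration of the second claim via \eqref{sequence by affine representation 1} terminates at $H^{n-(p-1)}_{\mathrm{DS}}(\mathcal{S}^p\afXp{p}{s})$, which is \emph{not} a module covered by your vanishing lemma. Relatedly, your vanishing lemma is not justified for $M=\afXpm{r}{s}$: by \eqref{eq: X-decomposition} these decompose into simples $L_k^+(\lambda)$, not Wakimoto modules, so the classical computation of \cite{FF2} does not apply directly.

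The paper avoids these difficulties by reversing the logical order. It proves the first claim directly, for all $a$, using the key observation that the automorphism $g$ of \eqref{isom of vertex algebra} fixes $e=e^{u+v}$ and hence commutes with the BRST differential. Thus, via \eqref{pull-back of PiV},
\[
H^n_{\mathrm{DS}}(\mathcal{S}^a\afWb{r}{s}{b})\simeq H^n_{\mathrm{DS}}(\mathcal{S}_{-a\mathbf{v}}\Pi[b']\otimes\W_{r,s})\simeq H^n_{\mathrm{DS}}(\Pi_a[b'])\otimes\W_{r,s},
\]
and \cite[Proposition~7]{ACGY} gives $H^n_{\mathrm{DS}}(\Pi_a[b'])\simeq\delta_{a,0}\delta_{n,0}\C$. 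With the first claim in hand, the second and third claims then follow from the long exact sequences of $H^\bullet_{\mathrm{DS}}$ applied to \eqref{sequence by affine representation 1} and \eqref{sequence by affine representation 2}, exactly as you suggest --- but now no separate vanishing lemma for $\afXpm{r}{s}$ or identification of $\afXp{p}{s}$ with a Wakimoto module is needed.
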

\proof
Note that the automorphism $g$ in \eqref{isom of vertex algebra} fixes $e=e^{u+v}$. Then it follows from \eqref{pull-back of PiV} and \cite[Proposition 7]{ACGY} that we have an isomorphism of $L(c_{1,p},0)$-modules
\begin{align*}
H^n_{\mathrm{DS}}(\mathcal{S}^a\afWb{r}{s}{b})
&\simeq H^n_{\mathrm{DS}}(\mathcal{S}_{-a\mathbf{v}}\Pi[b]\otimes \W_{r,s})\\
&\simeq H^n_{\mathrm{DS}}(\Pi_a[b])\otimes \W_{r,s}\simeq \delta_{a,0}\delta_{n,0}\W_{r,s}.
\end{align*}
It is an isomorphism of vertex algebras for $(r,s)=(1,1)$ and the both-hand sides are isomorphic to $\W(p)$. Then it is straightforward to show that $H^n_{\mathrm{DS}}(\mathcal{S}^a\afWb{r}{s}{b})\simeq \delta_{a,0}\delta_{n,0}\W_{r,s}$ is indeed an isomophism of $\W(p)$-modules.
The remaining assertions follow from the first one by the long exact sequences of $H^\bullet_{\mathrm{DS}}(-)$ applied for \eqref{sequence by affine representation 1} and \eqref{sequence by affine representation 2}, respectively.
\endproof
\subsection{Simplicity}
To prove the simplicity of the $\FT(\sll_2)$-modules $\mathcal{W}^{\af}_{r,s}[b]$ and $\mathcal{X}^{\af}_{r,s}[b]$, we use the contragredient dual of modules, which we recall briefly.
Let $V$ be a vertex operator algebra with the conformal vector $L$ and $M$ be a $V$-module such that for each conformal weight $\Delta\in\C$, $\dim M_\Delta$ is finite. 
Then the contragredient dual $M^\ast_L:=\bigoplus_{\Delta}M_\Delta^\ast$ has a
$V$-module structure by
\begin{align}
\langle Y(A,z)\xi, m \rangle
:=
\langle \xi, Y(e^{zL_1}(-z^{-2})^{L_0}A,z^{-1})m\rangle
(=:\langle \xi, Y(A,z)^\dagger m\rangle).
\end{align}
The following lemma is proved straightforwardly.
\begin{lemma}\label{lemm:dual and spectral flow}\hspace{0mm}\\
\textup{(1)}
For a Heisenberg VOA $V=\pi^Q$ with $L^\mu=L_{\mathrm{sug}}+\partial\mu$,  $(\pi_\lambda^Q)^\ast_{L^\mu}\simeq \pi^{Q}_{-\lambda+2\mu}$.\\
\textup{(2)}For $V=V^k(\sll_2)$, $(L^+_k(\lambda))^\ast_{L_{\mathrm{sug}}}\simeq L_k^-(-\lambda)$.\\
\textup{(3)} Let $(V,L)$ be a conformal vertex algebra and $x(z)$ be a Heisenberg field satisfying 
\begin{align*}
    L(z)x(w)\sim \tfrac{a}{(z-w)^3}+\tfrac{x(w)}{(z-w)^2}+\tfrac{\partial x(w)}{(z-w)},
\end{align*}
such that $x_0$ acts on $V$ semisimplly with $\Z$-eigenvalues. Then for a $V$-module $M$, $(\mathcal{S}_xM)^\ast_L\simeq \mathcal{S}_{-x}(M^\ast_L)$. 
\end{lemma}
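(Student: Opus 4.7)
The plan is to verify each claim by direct computation using the defining formula $\langle Y(A,z)^\dagger \xi, v\rangle = \langle \xi, Y(e^{zL_1}(-z^{-2})^{L_0} A, z^{-1}) v\rangle$ for the contragredient action, where the modes $L_n$ are those of whichever conformal vector is specified in the subscript.

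For part (1), I would single out the lowest-$L_0^\mu$ graded piece of $(\pi_\lambda^Q)^\ast_{L^\mu}$, which is spanned by the functional $\xi$ dual to the highest weight vector $|\lambda\rangle$, and compute its Heisenberg charge. The modified Virasoro modes satisfy $L_n^\mu = L_n - (n+1)\mu_n$, so for any Heisenberg primary $H \in \pi^Q$ one gets $L_0^\mu H = H$ and $L_1^\mu H = -2(\mu, H)|0\rangle$. Substituting $e^{zL_1^\mu}(-z^{-2})^{L_0^\mu} H = -z^{-2} H + 2z^{-1}(\mu, H)|0\rangle$ into the contragredient formula yields
\[
(H_n)^{\ast} = -H_{-n}^{\#} + 2(\mu, H)\,\delta_{n,0},
\]
where $H_m^{\#}$ denotes the ordinary transpose action. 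Evaluating the zero mode on $\xi$ gives charge $-\lambda + 2\mu$, and cyclicity of the Fock module identifies $(\pi_\lambda^Q)^\ast_{L^\mu} \simeq \pi^Q_{-\lambda+2\mu}$.

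For part (2), I would specialize the computation of (1) to $\mu = 0$ applied to the Heisenberg subalgebra of $V^k(\sll_2)$, and combine it with the standard $\sll_2$ duality. Since every $X \in \sll_2 \subset V^k(\sll_2)$ is a conformal primary of weight one with $L_1 X = 0$, the formula gives $(X_n)^{\ast} = -X_{-n}^{\#}$. Thus the contragredient dual of the top $\sll_2$-module $L^+(\lambda)$ of $L^+_k(\lambda)$ is a lowest weight $\sll_2$-module of weight $-\lambda$, namely $L^-(-\lambda)$, annihilated by $f_0$ and all strictly positive $V^k(\sll_2)$-modes. Since the contragredient dual of a simple module is simple, this identifies $(L^+_k(\lambda))^\ast_{L_{\mathrm{sug}}}$ with $L^-_k(-\lambda)$ by the universal property of the lowest weight simple module.

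For part (3), I would directly compare the two vertex operator structures on the underlying space $M^\ast$. The dual $(\mathcal{S}_xM)^\ast_L$ uses $Y_x(A,z)^\dagger = Y\bigl(\Delta(x,z^{-1})\, e^{zL_1}(-z^{-2})^{L_0} A,\, z^{-1}\bigr)$, while $\mathcal{S}_{-x}(M^\ast_L)$ uses $Y\bigl(e^{zL_1}(-z^{-2})^{L_0}\Delta(-x,z) A,\, z^{-1}\bigr)$. The identification therefore reduces to the operator identity
\[
\Delta(x, z^{-1})\, e^{zL_1}(-z^{-2})^{L_0} \;=\; e^{zL_1}(-z^{-2})^{L_0}\, \Delta(-x, z)
\]
in $\mathrm{End}(V)$. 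This is checked factor by factor using $\Delta(x,z) = z^{x_0} E^-(x, -z)$ together with the hypothesis that $x$ is a Virasoro primary of weight one with central term $a/(z-w)^3$ in its OPE with $L$. The expected main obstacle is this last step: although the strategy is transparent, bookkeeping of the infinitely many terms from $E^-$ and from $e^{zL_1}$, as well as the precise contribution of the central term (which is what produces the sign flip $x \to -x$), requires care. Parts (1) and (2) are then essentially routine verifications once the Virasoro shift formula $L_n^\mu = L_n - (n+1)\mu_n$ is in hand.
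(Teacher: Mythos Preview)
Your approach is correct and is precisely the direct verification the paper has in mind; the paper itself gives no argument beyond the remark that the lemma ``is proved straightforwardly.'' The reductions you set up in each part are the standard ones, and in particular your reformulation of (3) as the operator identity $\Delta(x,z^{-1})\,e^{zL_1}(-z^{-2})^{L_0}=e^{zL_1}(-z^{-2})^{L_0}\,\Delta(-x,z)$ is exactly the expected computation.
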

In our case $V=V^k(\sll_2)$, we always take the contragredient dual $M^*=M^*_{L_{\mathrm{sug}}}$. If $M$ is further a weight $\mathrm{B}^\af$-module, we equip $M^*$ with a $\mathrm{B}^\af$-action by $\langle h\xi,m\rangle=-\langle \xi,hm\rangle$ and $\langle f\xi,m\rangle=\langle \xi,fm\rangle$.
Note that the action of $f$ is changed by (-1) from the dual representation i.e. $\langle X\xi,m\rangle=-\langle \xi,Xm\rangle$ ($X\in \mathfrak{b}^\af$). But the these two $\mathrm{B}^\af$-module structures are isomorphic since $M$ are weight $\mathrm{B}^\af$-modules in both cases, see the proof of Corollary \ref{compatibility of Felder complex}.
\begin{proposition}\label{prop:dual of PiV}
 We have a
$\Pi[0]\otimes V_{\sqrt{p}\mathrm{A}_1}$-module isomorphism
\begin{align}
\begin{split}
\phi\colon\Pi[b]\otimes\V_{r,s}&\simeq \mathcal{S}^2(\Pi[-b]\otimes\V_{p-r,3-s})^\ast((\delta_{r,p}-1)\varpi),\\
G(u,v,\alpha)e^{b'(u+v)+\alpha_{r,s}}&\mapsto (-1)^{b'}G(u,v,\alpha)(e^{-(b'+1)(u+v)+\alpha_{p-r,1-s}})^{\ast}.
\end{split}
\end{align}
\end{proposition}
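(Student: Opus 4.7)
The plan is to reduce to componentwise computations on Heisenberg Fock modules. First I would decompose the source as
$$\Pi[b]\otimes\mathcal{V}_{r,s}=\bigoplus_{b'\in b+\Z,\ s'\in s+2\Z}\pi^{u,v}_{b'(u+v)}\otimes\pi^{\alpha}_{\alpha_{r,s'}},$$
and decompose the target similarly, keeping in mind that $\mathcal{S}^{2}$ shifts the Heisenberg indices by $-2\mathbf{v}$ (by \eqref{twists of free field}) and that taking the contragredient dual, by Lemma \ref{lemm:dual and spectral flow}(1), replaces the index $\lambda$ by $-\lambda+2\mu$, where $\mu$ is the Heisenberg vector such that $L_{\mathrm{sug}}=L_{\mathrm{std}}+\partial\mu$.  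The explicit $\mu$ is read off from Proposition \ref{eq: inverse hamiltonian diagram}.  A direct calculation then shows that the Fock summand of $\mathcal{S}^{2}(\Pi[-b]\otimes\mathcal{V}_{p-r,3-s})^{\ast}$ labelled by $(b'',s'')=(-b'-1,1-s')$ coincides with $\pi^{u,v}_{b'(u+v)}\otimes\pi^{\alpha}_{\alpha_{r,s'}}$ as a Heisenberg Fock module.

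Next I would track $\mathrm{H}$-weights: using $h=-2\mathbf{v}-\tfrac{1}{\sqrt{p}}\alpha$, the residual difference in the $h$-eigenvalue between matched summands computes to exactly $(\delta_{r,p}-1)\varpi(h)$, which is absorbed by the twist in the statement; the indicator $\delta_{r,p}$ reflects that for $r=p$ the module $\mathcal{V}_{p,s}$ coincides with its dual lattice partner whereas for $r<p$ one must compensate by $-\varpi$.  With the bijection on Heisenberg Fock summands in place, define $\phi$ on lattice highest-weight vectors by the stated formula and extend $\pi^{u,v,\alpha}$-equivariantly (Heisenberg modes act identically on the two sides).

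To upgrade this Heisenberg-module isomorphism to a $\Pi[0]\otimes V_{\sqrt{p}A_{1}}$-module isomorphism, it suffices to verify compatibility with the generating lattice vectors $e^{\pm(u+v)}$ and $e^{\pm\sqrt{p}\alpha}$.  By the definition $\langle Y(A,z)\xi,m\rangle=\langle\xi,Y(A,z)^{\dagger}m\rangle$, this reduces to a short comparison of the $\Delta(\lambda,z)$ factor and the $\Z_{2}$-cocycle occurring in the product of lattice vectors; the sign $(-1)^{b'}$ in the definition of $\phi$ is exactly the cocycle discrepancy coming from the transposition $Y\mapsto Y^{\dagger}$, so $\phi$ intertwines the action and is manifestly bijective.

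The main obstacle is the bookkeeping of the three simultaneous shifts: the spectral flow $\mathcal{S}^{2}$, the background-charge shift $2\mu$ induced by dualizing with respect to $L_{\mathrm{sug}}$, and the $\mathrm{H}$-twist by $(\delta_{r,p}-1)\varpi$.  They are designed to cancel exactly, and the sign $(-1)^{b'}$ is the cocycle correction that makes $\phi$ intertwine the lattice vertex-operator action; both verifications are direct, though tedious, computations in $\Pi[0]\otimes V_{\sqrt{p}A_{1}}$.
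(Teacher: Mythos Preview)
Your plan is essentially the paper's: first observe that $\phi$ is a $\pi^{u,v,\alpha}$-module isomorphism (your Fock-summand matching via Lemma~\ref{lemm:dual and spectral flow}(1)), then verify compatibility with the lattice operators $e^{x}(z)$ directly from the adjoint formula. One clarification worth flagging: the sign $(-1)^{b'}$ is not a lattice $2$-cocycle but is there to absorb the factor $(-1)^{\Delta(e^x)}=(-1)^{m'}$ produced by $(-z^{-2})^{L_0}$ in $Y^\dagger$ (for $x=m'(u+v)+m\sqrt{p}\alpha$), and the twist $(\delta_{r,p}-1)\varpi$ records the $\mathrm{B}^{\mathrm{aff}}$-weight shift rather than the $\widehat{\sll}_2$-weight $h_0$, so it is bookkeeping orthogonal to the $\Pi[0]\otimes V_{\sqrt{p}A_1}$-module claim itself.
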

\proof
For $m',m\in \Z$ and $b'\in\Z$, set
\begin{align*}
x=m'(u+v)+m\sqrt{p}\alpha, \
y=b'(u+v)+\alpha_{r,s}, \
\phi[y]=-(b'+1)(u+v)+\alpha_{p-r,1-s}.
\end{align*}
We show that $\phi$ is a $\Pi[0]\otimes V_{\sqrt{p}\mathrm{A}_1}$-module homomorphism. 
Since $\phi$ is a $\pi^{u,v,\alpha}$-module homomorphism by construction, it suffices to show 
\begin{align}\label{align: phi is a hom of Pi0V}
\phi(e^x(z)G(u,v,\alpha)e^y)=e^x(z)\phi(G(u,v,\alpha)e^y).
\end{align}
\begin{comment}
The left hand side of \eqref{align: phi is a hom of Pi0V} is given by
\begin{align*}
(-1)^{b'+m'}z^{(x,y)}E_+(x,z)E_-(x,z)G(u,v,\alpha)(e^{\phi[x+y]})^\ast.
\end{align*}
\end{comment}
We compute the right-hand side.
By applying Lemma \ref{lemm:dual and spectral flow} \textup{(1)} to our case $\mu=-\frac{1}{2}(u-v+\sqrt{p}\alpha)$, we have 
$\langle \xi,x_{(n)}v\rangle=-\langle x_{(-n)}\xi, v\rangle$
for $n\not =0$ and $\xi\in\mathcal{S}^2(\Pi[-b]\otimes\V_{p-r,3-s})^\ast$.
\begin{comment}
\begin{align*}
\langle x(z)\xi, v\rangle=
\sum_{n\in\Z}z^{-n-1}\langle \xi, (-x_{(-n)}+2m(p-1)\delta_{n,0})v\rangle
\end{align*}
for $\xi\in\mathcal{S}^2(\Pi[-b]\otimes\V_{p-r,3-s})^\ast$, and thus 
\begin{align}\label{align: move Heisenberg action to dual}
\langle \xi,x_{(n)}v\rangle=-\langle x_{(-n)}\xi, v\rangle\quad (n\not=0).
\end{align}
\end{comment}
It follows that
\begin{align}\label{align: move Heisenberg action to dual}
\begin{split}
\langle e^x(z)\xi, v\rangle
&=(-1)^{\Delta}z^{(h,x)-2\Delta}\langle \xi, T_xz^{-(x,-)}E^+(x,z^{-1})E^-(x,z^{-1})v\rangle\\
&=(-1)^{\Delta}z^{(h,x)-2\Delta}\langle E^+(x,z)E^-(x,z)\xi, T_xz^{-(x,-)}v\rangle,
\end{split}
\end{align}
where $\Delta$ is the conformal weight \eqref{align: conformal weight of lattice vectors} of $e^x$.  
Thus,
\begin{align*}
&\langle e^x(z)\phi(G(u,v,\alpha)e^y), v\rangle\\
&=(-1)^{b'+m'}z^{-2m-2p(m^2-m)}\langle E^+(x,z)E^-(x,z)\xi, T_xz^{-(x,\phi[y]-x)}v\rangle\\
&=\langle (-1)^{b'+m'}z^{(x,y)} E^+(x,z)E^-(x,z)G(u,v,\alpha)(e^{\phi[y]-x})^\ast,v\rangle.
\end{align*}
As $\phi[x+y]=\phi[y]-x$, it coincides with the left-hand side of \eqref{align: phi is a hom of Pi0V}.

Next, we show that $\phi$ is a ${\mathrm{B}}^{\mathrm{aff}}$-module homomorphism. 
Since it is clear that the ${\mathrm{H}}^{\af}$-action commutes with $\phi$, it remains to show that ${\mathrm{N}}^{\af}$-action does:
\begin{align}
\phi(fG(u,v,\alpha)e^{y})=f\phi(G(u,v,\alpha)e^{y}).
\label{align: N- commutes with phi}
\end{align}
%As \eqref{align: H commutes with phi} is immediate from $\lceil -\frac{1}{\sqrt{p}}h(\alpha_{r,s}) \rceil=s-1$ and $-\lceil -\frac{1}{\sqrt{p}}h(\alpha_{p-r,1-s}) \rceil=s-\delta_{r,p}$,
\begin{comment}

We consider the case of $x=u+v+\sqrt{p}\alpha$ and $\xi=\phi(G(u,v,\alpha)e^y)=(-1)^{(b')}G(u,v,\alpha)e^{\phi[y]-x}$.
The left hand side of \eqref{align: N- commutes with phi} is given by
\begin{align*}
\int dz (-1)^{b'+1}G(u,v,\alpha)E^+(x,z)E^-(x,z)(e^{-(b'+2)(u+v)+\alpha_{p-r,2-s}})^\ast
\end{align*}
Let us compute the right hand side of \eqref{align: N- commutes with phi}.
\end{comment}
As the left hand side is easy, let us compute the right hand side.
We have
\begin{align*}
\langle f\xi, v\rangle
=\langle \xi, fv\rangle
\stackrel{\eqref{align: move Heisenberg action to dual}}
{=}\int dz\langle T_{-x}z^{-(x,\phi[y]-x)}E^+(x,z^{-1})E^-(x,z^{-1})\xi, v\rangle,
\end{align*}
where $x=u+v+\sqrt{p}\alpha$
and the first equality follows from the cancellation of $(-1)$s by $(-1)^{\bullet}$.
By changing the variable $w=z^{-1}$
we have
\begin{align*}
f\phi(G(u,v,\alpha)e^{y})=\int dw (-1)^{b'+1}G(u,v,\alpha)E_+(x,w)E_-(x,w)(e^{-(b'+2)(u+v)+\alpha_{p-r,2-s}})^\ast.
\end{align*}
\begin{comment}
\begin{align*}
f\xi
&=\int dz T_{-x}z^{-(x,\phi[y]-x)}E_+(x,z^{-1})E_-(x,z^{-1})\xi\\
&=-\int dw T_{-x}w^{-(x,\phi[y]-x)-2}E_+(x,w)E_-(x,w)\xi\\
&=\int dw (-1)^{b'+1}G(u,v,\alpha)E_+(x,w)E_-(x,w)(e^{-(b'+2)(u+v)+\alpha_{p-r,2-s}})^\ast.
\end{align*}
\end{comment}
It coincides with the left hand side of \eqref{align: N- commutes with phi}.
\endproof
\begin{comment}
Note that under the isomorphism in Proposition \ref{prop:dual of PiV}, we have
\begin{align}\label{dual of betagamma for r=p}
\beta\gamma\otimes\V_{p,s}\simeq\mathcal{S}^2(\beta\gamma\otimes\V_{p,s})^\ast
\end{align}
as $\beta\gamma\otimes V_{\sqrt{p}\mathrm{A}_1}$-modules.
%$(s+2n-1)\sqrt{p}\alpha+\alpha_{p,s+2n}=\alpha_{0,1-s-2n}$.
Indeed, the generators $e^{\alpha_{p,2n+s}}$ and $e^{\alpha_{0,1-s-2n}}$ are send to to ${e^{\alpha_{0,1-s-2n}}}^\ast$ and ${e^{\alpha_{p,2n+s}}}^\ast$, respectively.
\end{comment}

\begin{corollary}\label{prop: contragredient dual}
We have isomorphisms of %$(B^{\mathrm{aff}},V^k(\sll_2))$-bimodules
$\mathrm{FT}_p(\sll_2)$-modules
\begin{align*}
&\afWb{r}{s}{b}\simeq \mathcal{S}^2\left(\afWb{r}{s}{-b+\tfrac{r}{p}}\right)^\ast\quad (b\in\C,~1\leq r\leq p),\\
&\afXp{r}{s}{}^\ast\simeq \mathcal{S}^{-1}\afXm{r-1}{s},\quad
\afXm{r}{s}{}^\ast\simeq\mathcal{S}^{-1}\afXp{r+1}{s}\quad (1\leq r<p).
\end{align*}
\end{corollary}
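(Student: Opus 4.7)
The plan is to derive the first family of isomorphisms from Proposition~\ref{prop:dual of PiV} combined with the Felder short exact sequence (second row of Proposition~\ref{cor: Felder complex for sl2 I}), and then deduce the second family by specializing to $[b]=[0]$ and dualizing the sequences \eqref{sequence by affine representation 1}--\eqref{sequence by affine representation 2}. For the first claim I distinguish the cases $r=p$ and $1\leq r<p$.

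The case $r=p$ is direct: since $\afWb{p}{s}{b}=\Pi[b]\otimes\V_{p,s}$, $\V_{0,3-s}\simeq\V_{p,s}$ for $s\in\{1,2\}$ (using $\alpha_{p,s'+1}=\alpha_{0,s'}$), and $[-b+1]=[-b]\in\C/\Z$, Proposition~\ref{prop:dual of PiV} (with $\delta_{p,p}-1=0$, hence no weight shift) gives the claim immediately. For $1\leq r<p$, I dualize the Felder short exact sequence for $\Pi[-b]\otimes\V_{p-r,3-s}$, apply the twisted spectral flow $\mathcal{S}^{2}(-)(-\varpi)$, and pull back via $\phi$ from Proposition~\ref{prop:dual of PiV}. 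The result is a short exact sequence of $\FT(\sll_2)$-modules
\begin{equation*}
0\to\mathcal{S}^{2}(\afWb{r}{s}{-b+r/p})^{\ast}\to\Pi[b]\otimes\V_{r,s}\to\mathcal{S}^{2}(\afWb{p-r}{3-s}{-b})^{\ast}(-\varpi)\to 0
\end{equation*}
sitting inside $\Pi[b]\otimes\V_{r,s}$ alongside the original Felder sequence whose submodule is $\afWb{r}{s}{b}$. I identify the two submodules by matching $V^{k}(\sll_2)$-composition factors: for generic $[b]$ the simple summands of $\afWb{r}{s}{b}$ are determined by the data $(r,s+2n,[b-a_{r,s}])$, which is disjoint from the data $(p-r,3-s+2n,[b+(p-r)/p-a_{p-r,3-s}])$ of the quotient, while for the special values $[b]\in\{[0],[r/p]\}$ the finer structure in Propositions~\ref{cor: Felder complex for sl2 I}--\ref{cor: Felder complex for sl2 II} supplies the same disjointness; hence the Felder submodule is the unique $\FT(\sll_2)$-submodule of this composition type, and the two must coincide.

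Specializing the first isomorphism to $[b]=[0]$ yields $\afWp{r}{s}\simeq\mathcal{S}^{2}(\afWm{r}{s})^{\ast}$. Dualizing \eqref{sequence by affine representation 2}, applying $\mathcal{S}^{2}$, and using $\mathcal{S}^{2}(\mathcal{S} M)^{\ast}\simeq\mathcal{S}(M^{\ast})$ from Lemma~\ref{lemm:dual and spectral flow}(3) produces
\begin{equation*}
0\to\mathcal{S}(\afXm{r-1}{s})^{\ast}\to\afWp{r}{s}\to\mathcal{S}^{2}(\afXm{r}{s})^{\ast}\to 0
\end{equation*}
inside $\afWp{r}{s}$ alongside \eqref{sequence by affine representation 1}. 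A short computation using Lemma~\ref{lemm:dual and spectral flow}(2) and \eqref{align L=SL} verifies $\mathcal{S}(L^{+}_{k}(\lambda_{1-r,-s-2n}))^{\ast}\simeq L^{+}_{k}(\lambda_{r,s+2n})$ and $\mathcal{S}^{2}(L^{+}_{k}(\lambda_{-r,-s-2n}))^{\ast}\simeq \mathcal{S}L^{+}_{k}(\lambda_{r+1,s+2n})$, so the two short exact sequences have matching submodule and quotient composition factors. Uniqueness of the filtration---equivalently, the characterization $\tau(\afWp{r}{s})=\afXp{r}{s}$ from Proposition~\ref{cor: Felder complex for sl2 I}---then forces $\afXp{r}{s}\simeq\mathcal{S}(\afXm{r-1}{s})^{\ast}$ and $\mathcal{S}\afXp{r+1}{s}\simeq\mathcal{S}^{2}(\afXm{r}{s})^{\ast}$; dualizing and twisting by $\mathcal{S}^{-1}$ yields both claimed isomorphisms.

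The main obstacle is the submodule identification in the $1\leq r<p$ step: verifying that the pullback via $\phi^{-1}$ of the dualized-Felder submodule coincides with $\afWb{r}{s}{b}$ inside $\Pi[b]\otimes\V_{r,s}$. Since $\phi$ is only asserted in Proposition~\ref{prop:dual of PiV} to be a $\Pi[0]\otimes V_{\sqrt{p}A_1}$- and $\mathrm{B}^{\af}$-module homomorphism and its compatibility with screening operators is not made explicit, I rely on the composition-factor uniqueness argument sketched above; an alternative, more hands-on route would be to directly verify that $\phi$ intertwines $Q_{-}^{[r]}$ with the adjoint of $Q_{-}^{[p-r]}$. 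Once this identification is settled, the remainder is routine manipulation of contragredient duals and spectral flows.
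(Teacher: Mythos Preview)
Your approach is correct but differs from the paper's in the first family of isomorphisms. For $1\leq r<p$, the paper works geometrically: it realises $\afWb{r}{s}{b}$ as $H^0(\mathrm{SL}_2\times_{\mathrm{B}}(\Pi[b]\otimes\V_{r,s}))$ via Theorem~\ref{FT(sl2,0)}, passes to $H^1$ of the companion bundle via Corollary~\ref{cor: FT(sl2,0)}, applies Proposition~\ref{prop:dual of PiV} to the fibre, and then invokes Serre duality to land on $\mathcal{S}^2(H^0)^{\ast}$. This avoids your submodule-identification step entirely, since Serre duality furnishes the isomorphism directly at the level of $\FT(\sll_2)$-modules. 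Your route instead dualises the Felder sequence, transports it through $\phi$, and identifies submodules by disjointness of $V^k(\sll_2)$-composition factors. This is exactly the technique the paper uses in the \emph{next} result, Corollary~\ref{dual of betagamma}; you have effectively anticipated that argument and applied it one step earlier. The composition-factor disjointness you need does hold (the relevant $L_{r,2n+s}$ and $L_{p-r,2m+3-s}$ have distinct highest weights for all $n,m\geq 0$, and for the special values $[b]=[0],[\tfrac{r}{p}]$ the bounded/unbounded dichotomy from \eqref{lemm:bounded L+ and unbounded SL+} separates the remaining factors), so the argument goes through, though it is more laborious than the Serre-duality route.

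For the second family your argument essentially matches the paper's: both dualise \eqref{sequence by affine representation 1}--\eqref{sequence by affine representation 2}, use Lemma~\ref{lemm:dual and spectral flow}(2),(3), and compare inside $\afWpm{r}{s}$. One small comment: rather than checking all the $L^+_k$-identities by hand, you may invoke the characterisation $\afXp{r}{s}=\tau(\afWp{r}{s})$ (Proposition~\ref{cor: Felder complex for sl2 I}) together with the fact that $\tau$ commutes with $\FT(\sll_2)$-module isomorphisms to pin down the submodule immediately.
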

\proof
The case $\afWb{p}{s}{b}=\Pi_0[b]\otimes\mathcal{V}_{p,s}$ is Proposition \ref{prop:dual of PiV}.
For $r\neq p$, we have
\begin{align}
\begin{split}\label{analog of Sug2 Cor3.3}
\afWb{r}{s}{b}
\stackrel{\text{Theorem \ref{FT(sl2,0)}}}{\simeq}\hspace{2mm}
&H^0(\mathrm{SL}_2\times_{\mathrm{B}}(\Pi[b]\otimes\mathcal{V}_{r,s}))\\
\stackrel{\text{Corollary \ref{cor: FT(sl2,0)}}}{\simeq}\hspace{2mm}
&H^1(\mathrm{SL}_2\times_{\mathrm{B}}(\Pi[b+\tfrac{p-r}{p}]\otimes\mathcal{V}_{p-r,3-s})(-\varpi))\\
\stackrel{\text{Proposition \ref{prop:dual of PiV}}}{\simeq}
&H^1(\mathrm{SL}_2\times_{\mathrm{B}}\mathcal{S}^2(\Pi[-b+\tfrac{r}{p}]\otimes\mathcal{V}_{r,s})^{\ast}(-2\varpi))\\
\simeq\hspace{8.5mm}
&\mathcal{S}^2H^1(\mathrm{SL}_2\times_{\mathrm{B}} (\Pi[-b+\tfrac{r}{p}]\otimes \mathcal{V}_{r,s})^{\ast}(-2\varpi))\\
\stackrel{\text{Serre duality}}{\simeq}\hspace{2mm}
&\mathcal{S}^2H^0(\mathrm{SL}_2\times_{\mathrm{B}} (\Pi[-b+\tfrac{r}{p}]\otimes\mathcal{V}_{r,s}))^{\ast}\\
\stackrel{\text{Theorem \ref{FT(sl2,0)}}}{\simeq}\hspace{2mm}
&\mathcal{S}^2\left(\afWb{r}{s}{-b+\tfrac{r}{p}}\right)^\ast,
\end{split}
\end{align}
see \cite[Corollary 3.3]{Sug2} for a similar proof for $\W(p)$.
The remaining cases follow from Lemma \ref{lemm:dual and spectral flow} \textup{(2)}, \textup{(3)} by applying the contragredient dual to \eqref{sequence by affine representation 1} and \eqref{sequence by affine representation 2}.
\endproof

\begin{corollary}\label{dual of betagamma}
For $1\leq r<p$,
we have isomorphisms of %$(B^{\mathrm{aff}},V^k(\sll_2))$-bimodules
$\mathrm{FT}_p(\sll_2)$-modules
\begin{align*}
\beta\gamma\otimes \V_{r,s}&\simeq \mathcal{S}^2(\mathcal{S}_{-v}(\beta\gamma)\otimes \V_{p-r,3-s})^\ast
(-\varpi),
\\
%\W_{r,s}[0]&\simeq & \mathcal{S}_h\ (\W_{r,s}[\tfrac{r}{p}])^*\\
\tau(\Pi[\tfrac{r}{p}]\otimes\mathcal{V}_{r,s})&\simeq\mathcal{S}^2N_{p-r,3-s}^\ast
(-\varpi).
\end{align*}
%In particular, $\afXp{1}{s}$ is self-dual and $\afXm{0}{s}\simeq\mathcal{S}\afXp{1}{s}$ as $\FT(\sll_2)$-modules.
\end{corollary}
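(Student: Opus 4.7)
Both isomorphisms follow the same template: apply the explicit duality of Proposition \ref{prop:dual of PiV} to the enclosing module $\Pi[b]\otimes\mathcal{V}_{r,s}$, and then identify the resulting submodule with the maximal bounded-below submodule $\tau$ via a composition-factor argument.

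Concretely, Proposition \ref{prop:dual of PiV} gives $\Pi[b]\otimes\mathcal{V}_{r,s}\simeq \mathcal{S}^2(\Pi[-b]\otimes\mathcal{V}_{p-r,3-s})^\ast(-\varpi)$, which we use with $b=0$ and $b=r/p$ (noting $[-r/p]=[(p-r)/p]$). On the right-hand side, apply the exact functor $\mathcal{S}^2(-)^\ast(-\varpi)$ to the middle column of Proposition \ref{cor: Felder complex for sl2 I} (i.e.\ \eqref{short exact sequence for PiV and betagamma}) at index $p-r$ in the first case, and to the middle column of Proposition \ref{cor: Felder complex for sl2 II} at index $p-r$ in the second. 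Using Lemma \ref{lemm:dual and spectral flow}\textup{(3)}, which gives $(\mathcal{S}M)^\ast\simeq\mathcal{S}^{-1}M^\ast$ and hence $\mathcal{S}^2(\mathcal{S}M)^\ast\simeq\mathcal{S}M^\ast$, and absorbing the $(-\varpi)$-twists, the dualized sequences become
\begin{align*}
0&\to\mathcal{S}(\beta\gamma\otimes\mathcal{V}_{p-r+1,3-s})^\ast(-\varpi)\to\Pi[0]\otimes\mathcal{V}_{r,s}\to\mathcal{S}^2(\beta\gamma\otimes\mathcal{V}_{p-r,3-s})^\ast(-\varpi)\to 0,\\
0&\to\mathcal{S}^2 N_{p-r,3-s}^\ast(-\varpi)\to\Pi[\tfrac{r}{p}]\otimes\mathcal{V}_{r,s}\to\mathcal{S}^2\tau(\Pi[\tfrac{p-r}{p}]\otimes\mathcal{V}_{p-r,3-s})^\ast(-\varpi)\to 0,
\end{align*}
after identifying the middle terms via Proposition \ref{prop:dual of PiV}.

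The key step is to show that each submodule above equals $\tau(\Pi[b]\otimes\mathcal{V}_{r,s})$, which is $\beta\gamma\otimes\mathcal{V}_{r,s}$ by Proposition \ref{cor: Felder complex for sl2 I}\textup{(1)} in the first case and $\tau(\Pi[\tfrac{r}{p}]\otimes\mathcal{V}_{r,s})$ by definition in the second. For the inclusion $\subseteq$, apply the same functor $\mathcal{S}^2(-)^\ast(-\varpi)$ to the top row of Proposition \ref{cor: Felder complex for sl2 I} or \ref{cor: Felder complex for sl2 II} at the appropriate dual index, and simplify the emerging $\afXp{\bullet}{\bullet}{}^\ast$ and $\afXm{\bullet}{\bullet}{}^\ast$ via the dualities $\afXp{r}{s}{}^\ast\simeq\mathcal{S}^{-1}\afXm{r-1}{s}$ and $\afXm{r}{s}{}^\ast\simeq\mathcal{S}^{-1}\afXp{r+1}{s}$ from Corollary \ref{prop: contragredient dual}; this recovers precisely the composition factors $\afXp{r}{s}$, $\afXm{p-r}{3-s}(-\varpi)$ (resp.\ $\afXm{r}{s}$, $\afXp{p-r}{3-s}(-\varpi)$) of the corresponding $\tau$-part, both of which lie in $V^k(\sll_2)\text{-bmod}$. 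For the reverse inclusion, the composition factors of each quotient are spectral flows $\mathcal{S}\afXp{r'}{s'}$ and $\mathcal{S}\afXm{r'}{s'}$ with $r'\not\equiv 1 \pmod p$; by \eqref{lemm:bounded L+ and unbounded SL+} every simple summand fails to be bounded below, so the quotient contains no nonzero submodule in $V^k(\sll_2)\text{-mod}^{\geq 0}$. Maximality of $\tau$ yields the two equalities, and \eqref{align: digression betagamma 1} rewrites $\mathcal{S}(\beta\gamma\otimes\mathcal{V}_{p-r+1,3-s})\simeq\mathcal{S}_{-v}\beta\gamma\otimes\mathcal{V}_{p-r,3-s}$ to put the first isomorphism in the stated form.

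The main obstacle is the careful bookkeeping of twists throughout, tracking the spectral flow $\mathcal{S}$, the contragredient dual $(-)^\ast$, and the $\mathrm{B}$-twist by $-\varpi$ across several commutative diagrams simultaneously. A secondary point is the boundary index $r=1$, where $\beta\gamma\otimes\mathcal{V}_{p-r+1,3-s}=\beta\gamma\otimes\mathcal{V}_{p,3-s}=\afXp{p}{3-s}$ lies outside the range of Corollary \ref{prop: contragredient dual}; this case requires a direct verification via the explicit form of the map $\phi$ in Proposition \ref{prop:dual of PiV} together with Proposition \ref{cor: Felder complex for sl2 I}\textup{(1)} applied at $r=p$. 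The promotion of the resulting $V^k(\sll_2)$-module isomorphisms to $\mathrm{FT}_p(\sll_2)$-module isomorphisms is automatic from Remark \ref{remark FT-module hom among nine terms}.
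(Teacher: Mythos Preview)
Your argument is correct and follows essentially the same route as the paper's proof: pull back the right-hand side through the explicit duality $\phi$ of Proposition \ref{prop:dual of PiV}, compute its composition factors via Corollary \ref{prop: contragredient dual}, and identify it with the known submodule of $\Pi[b]\otimes\V_{r,s}$. The only real difference is in the last identification step: the paper observes directly that an extension of $\afXm{p-r}{3-s}$ by $\afXp{r}{s}$ occurs \emph{uniquely} inside $\Pi[0]\otimes\V_{r,s}$ (this is immediate from the Loewy diagram in Proposition \ref{cor: Felder complex for sl2 I}, since the four composition factors are pairwise non-isomorphic), whereas you instead verify that the submodule is bounded below and the quotient is not, and invoke the maximality of $\tau$. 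Both arguments are valid; the paper's is slightly shorter because it avoids checking unboundedness of the quotient factors, while yours has the virtue of making the role of $\tau$ explicit. Your treatment of the boundary case $r=1$ is more careful than the paper's, which simply absorbs it into the general argument (the dualities $(\afXp{p}{s})^\ast\simeq\mathcal{S}^{-1}\afXm{p-1}{s}$ and $(\afXm{0}{s})^\ast\simeq\mathcal{S}^{-1}\afXp{1}{s}$ follow from the same Lemma \ref{lemm:dual and spectral flow} computation even though they lie outside the index range stated in Corollary \ref{prop: contragredient dual}).
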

\proof
We show the first case. Note that $N':=\phi^{-1}(\mathcal{S}^2(S_{-v}(\beta\gamma)\otimes \V_{p-r,3-s})^*)$ is a $\FT(\sll_2)$-submodule of $\Pi[0]\otimes \V_{r,s}$ by Proposition \ref{prop:dual of PiV}. 
We apply $\mathcal{S}^2(\text{-})^*$ to the horizontal sequence in the bottom the diagram of Proposition \ref{cor: Felder complex for sl2 I} and change the index $(r,s)\mapsto (p-r,3-s)$. Then $N'$ satisfies the short exact sequence 
$$0 \rightarrow \afXp{r}{s} \rightarrow N' \rightarrow \afXm{p-r}{3-s} \rightarrow 0. $$
By Proposition \ref{cor: Felder complex for sl2 I} again, such an extension exists uniquely inside $\Pi[0]\otimes \V_{r,s}$, namely, $\beta\gamma\otimes \V_{r,s}$, we conclude $N=N'$ and thus the first assertion.
One can show the second case similarly.
\endproof
\begin{theorem}
\label{simplicity of X}
The $\mathrm{FT}_p(\sll_2)$-modules $\afWb{r}{s}{b}$ $([b]\not=[0],[\tfrac{r}{p}], 1\leq r\leq p)$, $\afXp{r}{s}$ $(1\leq r\leq p)$, and $\afXm{r}{s}$ $(0\leq r< p)$ are simple.
\end{theorem}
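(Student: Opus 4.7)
The plan is to reduce the simplicity assertions to the simplicity of the triplet modules $\W_{r,s}$ (due to Adamović--Milas \cite{AM}) via the BRST reduction functor $H_{DS}^0$, with the boundary module $\afXm{0}{s}$ handled separately by contragredient duality.

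Let $\mathcal{M}$ denote any of $\afWb{r}{s}{b}$ with $[b] \neq [0], [\tfrac{r}{p}]$; $\afXp{r}{s}$ with $1 \leq r \leq p$; or $\afXm{r}{s}$ with $1 \leq r < p$. By \eqref{eq: X-decomposition} together with Proposition \ref{decomposition of weight modules}, $\mathcal{M}$ decomposes as an $(\mathrm{SL}_2^{\af}, V^k(\sll_2))$-bimodule into $\mathcal{M} \simeq \bigoplus_{n \geq 0} \C^{2n+s} \otimes M_n$, where each $M_n$ is a simple $V^k(\sll_2)$-module and the $M_n$'s are pairwise non-isomorphic because their minimal conformal weights are pairwise distinct. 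Consequently every $V^k(\sll_2)$-submodule of $\mathcal{M}$ has the form $N = \bigoplus_{n \geq 0} V_n \otimes M_n$ for subspaces $V_n \subseteq \C^{2n+s}$. Given a nonzero $\FT(\sll_2)$-submodule $N$, applying the exact functor $H_{DS}^0$, which commutes with direct sums and with tensoring by finite-dimensional vector spaces, yields
$$H_{DS}^0(N) \simeq \bigoplus_{n \geq 0} V_n \otimes H_{DS}^0(M_n) = \bigoplus_{n \geq 0} V_n \otimes L_{r,2n+s}$$
as a $\W(p)$-submodule of $H_{DS}^0(\mathcal{M}) \simeq \W_{r,s}$, using Theorem \ref{compatibility of FT algeras for sl2}\textup{(2)} and the corollary computing $H^\bullet_{DS}$ on the $\afWb{r}{s}{b}$ family. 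Since $\W_{r,s}$ is simple as a $\W(p)$-module, $H_{DS}^0(N) \in \{0, \W_{r,s}\}$: the first possibility forces each $V_n = 0$ and contradicts $N \neq 0$, while the second forces $V_n = \C^{2n+s}$ for every $n$, yielding $N = \mathcal{M}$.

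The module $\afXm{0}{s}$ falls outside this argument because $H^\bullet_{DS}(\afXm{0}{s}) = 0$. For it I would appeal to contragredient duality: Corollary \ref{prop: contragredient dual} applied for $r = 1$ gives $\afXp{1}{s}{}^\ast \simeq \mathcal{S}^{-1}\afXm{0}{s}$, so $\afXm{0}{s} \simeq \mathcal{S}\bigl(\afXp{1}{s}{}^\ast\bigr)$; since contragredient duality and spectral flow preserve simplicity of weight modules with finite-dimensional conformal-weight spaces, simplicity of $\afXp{1}{s}$ (established above) implies simplicity of $\afXm{0}{s}$.

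The main technical hurdle is the uniform identification $H_{DS}^0(M_n) \simeq L_{r,2n+s}$ across the three families, particularly for the generic Wakimoto-type components $M_n = g^\ast(E^{0,[b-a_{r,s}]}_{r,2n+s})$ appearing in $\afWb{r}{s}{b}$. This will be handled by combining \cite[Proposition 7]{ACGY} (vanishing of $H_{DS}^{>0}$ on $\Pi[b] \otimes \V_{r,s}$ and $H_{DS}^0(\Pi[b] \otimes \V_{r,s}) \simeq \V_{r,s}$) with the Felder-complex description established in Corollary \ref{compatibility of Felder complex}. Once this is in place, the argument reduces to bookkeeping against the simplicity of $\W_{r,s}$.
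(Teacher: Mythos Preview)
Your argument is correct and takes a genuinely different route from the paper's proof. The paper argues simplicity by showing, via the contragredient duals computed in Corollary~\ref{prop: contragredient dual}, that both $\genafXp{r}$ and its dual are cyclic over the generalized VOA $\genafXp{1}=\afXp{1}{1}\oplus\afXp{1}{2}$; a standard pairing argument then gives simplicity over $\genafXp{1}$, and Dong--Mason's result \cite[Corollary~4.2]{DM} descends this to simplicity over $\FT(\sll_2)$. Your approach instead pushes everything through $H_{DS}^0$ and invokes the Adamovi\'c--Milas simplicity of $\W_{r,s}$. This is conceptually clean and exploits the compatibility theorem (Theorem~\ref{compatibility of FT algeras for sl2}) in a sharp way; the price is that you import the simplicity of $\W_{r,s}$ as a black box, whereas the paper's argument stays on the affine side and does not use that result. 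One small imprecision: for the generic family $\afWb{r}{s}{b}$ the components $M_n=g^*(E^{0,[b-a_{r,s}]}_{r,2n+s})$ are not positively graded, so ``minimal conformal weight'' is not literally available; pairwise non-isomorphism is still immediate, however, since spectral flow and $g^*$ are invertible and the positively graded modules $E^{1,[b-a_{r,s}]}_{r,2n+s}$ have distinct lowest $L_0$-eigenvalues $\tfrac{k}{4}+h_{r,2n+s}$. Injectivity of $H_{DS}^0(N)\to H_{DS}^0(\mathcal{M})$ is clear because the inclusion $N\hookrightarrow\mathcal{M}$ splits as $V^k(\sll_2)$-modules.
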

We note that the cases $\afXp{1}{s}$ ($s=1,2$) are proved in \cite[Proposition 3]{ACGY}.
\proof
%The proof is similar to \cite[Lemma 3.22]{Sug2}, so we omit the detail.
As the proof is similar, we omit the case $\afWb{r}{s}{b}$.
By Corollary \ref{prop: contragredient dual}, it suffices to consider the case $[b]=[0]$. 
Let $\genafXp{1}$ denote the generalized vertex operator algebra $H^0({\mathrm{SL}_2}\times_{\mathrm{B}}\beta\gamma\otimes(\V_{1,1}\oplus\V_{1,2}))\simeq
\afXp{1}{1}\oplus\afXp{1}{2}$ and $\genafXp{r}$ the $\genafXp{1}$-module $H^0({\mathrm{SL}_2}\times_{\mathrm{B}}\beta\gamma\otimes(\V_{r,1}\oplus\V_{r,2}))\simeq
\afXp{r}{1}\oplus\afXp{r}{2}$.
By \eqref{shape of (co)singular vectors}, $\genafXp{r}$ is generated by $\mathbf{1}\otimes e^{\alpha_{r,1}}$ as $\genafXp{1}$-module.
Furthermore, $\genafXp{r}$ is simple as $\genafXp{1}$-module.
In fact, Corollory \ref{prop: contragredient dual} implies that $\genafXp{r}{}^\ast$ is also generated by $(\mathbf{1}\otimes e^{\alpha_{r,1}})^\ast$, and hence for any $m\in \genafXp{r}$, there exists $F\in U(\genafXp{1})$ such that
\begin{align*}%\label{pairing and simplicity}
0
\not=\langle m^\ast,m\rangle
=\langle F(\mathbf{1}\otimes e^{\alpha_{r,1}})^\ast, m\rangle
=\langle(\mathbf{1}\otimes e^{\alpha_{r,1}})^\ast, F^{\dagger} m\rangle.
\end{align*}
Let $M$ be a nonzero $\mathrm{FT}_p(\sll_2)$-submodule of $\afXp{r}{s}$.
In the same manner as above, $\afXp{r}{s}$ is generated by $\Omega_s:=L^+(s-1)\otimes(\mathbf{1}\otimes e^{\alpha_{r,s}})$ as $\mathrm{FT}_p(\sll_2)$-module and we may take a nonzero $m\in M\cap \Omega_s$.
By \cite[Corollary 4.2]{DM} for $\genafXp{1}$ and the fact that $h(x)-h(y)\in2\Z$ for $x,y\in L^+(s-1)$, we have $U(\mathrm{FT}_p(\sll_2))m\supseteq\Omega_s$.
%Thus the claim is proved for $r<p$. For the case $r=p$, by Corollary \ref{prop: contragredient dual}, we have $\afXp{p}{s}\simeq \mathcal{S}(\afXm{p-1}{s}{}^\ast)$.Since the right hand side is simple, $\afXp{p}{s}$ so is.
\endproof

\begin{corollary}
The Loewy diagrams of $\Pi[b]\otimes\V_{r,s}$ $([b]=[0],[\tfrac{r}{p}],~1\leq r\leq p)$ are given by Figure \ref{fig:Loewy diagram of PiV}.
\end{corollary}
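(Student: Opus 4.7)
The plan is to read off the Loewy diagrams directly from the two $3\times 3$ commutative diagrams in Propositions \ref{cor: Felder complex for sl2 I} and \ref{cor: Felder complex for sl2 II}, using the simplicity of the nine corner entries (Theorem \ref{simplicity of X}) to pin down the composition factors.

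For $\Pi[0]\otimes\V_{r,s}$ with $1\le r<p$, the composition factors are the four corners of Proposition \ref{cor: Felder complex for sl2 I}, namely $\afXp{r}{s}$, $\afXm{p-r}{3-s}(-\varpi)$, $\mathcal{S}\afXp{r+1}{s}$ and $\mathcal{S}\afXm{p-r-1}{3-s}(-\varpi)$. I would argue that the socle is $\afXp{r}{s}$ as follows: any simple submodule must be bounded from below (since the other candidate factors $\mathcal{S}\afXpm{*}{*}$ are not), hence lies in $\tau(\Pi[0]\otimes\V_{r,s})=\beta\gamma\otimes\V_{r,s}$, whose socle is $\afXp{r}{s}$ by the first row of the diagram. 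Then $\beta\gamma\otimes\V_{r,s}/\afXp{r}{s}\simeq\afXm{p-r}{3-s}(-\varpi)$ contributes the next layer, while the two spectral-flow-twisted factors sit above, in the arrangement dictated by the third row of the diagram. Indecomposability at each stage is guaranteed by pulling back the extensions, via the automorphism $g$, to the non-split triplet Felder sequence \eqref{virasoro felder complex}, and by the non-triviality of the columns \eqref{sequence by affine representation 1}, \eqref{short exact sequence for PiV and betagamma}, \eqref{sequence by affine representation 2}.

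The case $[b]=[\tfrac{r}{p}]$, $1\le r<p$, is handled identically using Proposition \ref{cor: Felder complex for sl2 II}, yielding layers $\afXm{r}{s}$, $\afXp{p-r}{3-s}(-\varpi)$, $\mathcal{S}\afXm{r-1}{s}$, $\mathcal{S}\afXp{p-r+1}{3-s}(-\varpi)$ from bottom to top. The endpoint $r=p$ is treated using the filtration \eqref{socle sequence of PiVp} together with the analysis of $M_i/M_{i+1}$ carried out at the end of the proof of Proposition \ref{decomposition of weight modules}; and the endpoints $r=1$ or $r=p-1$ differ only by the collapse of the outer corner $\afXpm{0}{s}$ or $\afXp{p+1}{s}$, which is automatically zero.

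The main obstacle is proving the non-splittings needed so that the four composition factors stack into a single Loewy chain rather than decoupling into direct summands. I would address this by combining three inputs: (i) the non-triviality of the screening operators $Q_+$, $Q_{\mathrm{FMS}}$, $Q_-^{[r]}$ that define the rows and columns of the diagrams; (ii) the already-established non-splitness of the triplet Felder sequence \eqref{virasoro felder complex}, transferred to the affine side via the automorphism $g$ of Proposition \ref{eq: inverse hamiltonian diagram}; and (iii) contragredient self-duality of the whole picture (Proposition \ref{prop:dual of PiV} and Corollary \ref{prop: contragredient dual}), which swaps socle with top and therefore forces both ends of the socle filtration to be consistent. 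These three together rule out every would-be direct summand in the candidate Loewy diagram and fix the arrangement uniquely as shown in Figure \ref{fig:Loewy diagram of PiV}.
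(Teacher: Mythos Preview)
Your proposal misidentifies the shape of the Loewy diagram. Figure \ref{fig:Loewy diagram of PiV} is a diamond of Loewy length three --- a simple socle, a simple head, and a semisimple middle layer consisting of two simples --- not a single chain of length four. For $[b]=[0]$ and $r<p$, the second socle layer of $\Pi[0]\otimes\V_{r,s}$ is $\afXm{p-r}{3-s}(-\varpi)\oplus\mathcal{S}\afXp{r+1}{s}$: both simples sit directly above the socle $\afXp{r}{s}$, because $\beta\gamma\otimes\V_{r,s}$ and $\afWp{r}{s}$ are two distinct length-two submodules of $\Pi[0]\otimes\V_{r,s}$ whose intersection is exactly $\afXp{r}{s}$ (Lemma \ref{prop: analysis on X}(1)). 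Your placement of $\afXm{p-r}{3-s}$ alone in layer two, with the spectral-flow factors stacked strictly above it, does not describe the module. (Relatedly, your claim that $\afXpm{0}{s}$ collapses to zero at the endpoints is false: $\afXm{0}{s}$ is a nonzero simple by Theorem \ref{simplicity of X}, and it appears in the $r=p$ diagram.)

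Consequently the main technical obstacle is the opposite of what you state. For $r<p$ the diamond shape follows directly from the $3\times 3$ diagrams together with the simplicity of the corner entries; no further non-splitting needs to be proved. The nontrivial case is $r=p$, where the paper has to show that the middle quotient $M_1/M_3$ of the filtration \eqref{socle sequence of PiVp} \emph{splits} as an $\FT(\sll_2)$-module --- after first verifying that $M_1$ is an $\FT(\sll_2)$-submodule, via local nilpotence of $f_0$ on $\FT(\sll_2)$. The splitting is extracted from the cosingular-vector analysis \eqref{lemm lower bound of $M_i/M_{i+1}$}: a non-split $M_1/M_3$ (or $N_1^{2n+s}/N_3^{2n+s}$) would force $M_1=M_2$ or $N_1^{2n+s}=N_2^{2n+s}$, both already excluded. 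Your strategy of ruling out direct summands via screening operators and duality is aimed at the wrong target.
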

\proof
Let us recall the notation used in the discussion just before Proposition \ref{cor: Felder complex for sl2 I}.
It suffices to show that $M_1$ is a $\FT(\sll_2)$-module and $M_1/M_3$ splits.
For $m\in M_1$ and $a\in \mathrm{FT}_p(\sll_2)$, we have
\begin{align*}
f_{1}(a_{(n)}m)=(f_{0}a)_{(n+1)}m+a_{(n)}f_{1}m.
\end{align*}
As $L^+_k(n\varpi)\in \operatorname{KL}_{k}(\sll_2)$ (in particular, locally nilpotent with respect to $f_{0}$) by Proposition \ref{decomposition of weight modules}, we have $f_{1}^{\gg 0}(a_{(n)}m)=0$ and the first assertion is true.

Let us show that $M_1/M_3$ (and $N_1^{s+2n}/N_3^{s+2n}$) split.
From the proof of Proposition \ref{decomposition of weight modules} \textup{(2)}-(ii), (iii), $N_1^{s+2n}/N_2^{s+2n}$ and $N_2^{s+2n}/N_3^{s+2n}$ are generated by $y_{s+2n}$ and $v_{s+2n}$ as $V^k(\sll_2)$-modules, respectively.
However, $M_1/M_2$ is generated by $Q_+^jv_{s+2n}$ as $V^k(\sll_2)$-modules by \eqref{lemm lower bound of $M_i/M_{i+1}$}.
If $M_1/M_3$ (resp. $N_1^{s+2n}/N_3^{s+2n}$) does not split, then it immediately leads the contradiction $N_1^{s+2n}=N_2^{s+2n}$ (resp. $M_1=M_2$).
\endproof

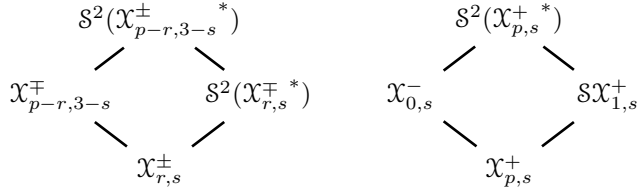
\begin{figure}
\centering
%\begin{center}
\begin{tikzpicture}[x=2mm,y=2mm]
\begin{comment}
\node (A1) at (0,0) {$\mathcal{W}^{\af}_{r,s}[b]$};
\node (A0) at (0,5) {$\mathcal{W}^{\af}_{p-r,3-s}[b+\tfrac{p-r}{p}]$};
\draw [line width=1pt] (A0) -- (A1);
\end{comment}

\node (B3) at (6.5,0) {$\afXpm{r}{s}$};
\node (B2) at (0,5) {$\afXmp{p-r}{3-s}$};
\node (B1) at (13,5) {$\mathcal{S}^2({\afXmp{r}{s}}^\ast)$};
\node (B0) at (6.5,10) {$\mathcal{S}^2({\afXpm{p-r}{3-s}}^\ast)$};
\draw [line width=1pt] (B0) -- (B1);
\draw [line width=1pt] (B2) -- (B3);
\draw [line width=1pt] (B1) -- (B3);
\draw [line width=1pt] (B2) -- (B0);

\begin{comment}
\node (C3) at (29.5,0) {$\mathcal{X}_{r,s}^{\af}[\tfrac{r}{p}]$};
\node (C2) at (23,5) {$\mathcal{X}_{p-r,3-s}^{\af}[0]$};
\node (C1) at (36,5) {$\mathcal{S}\mathcal{X}_{r-1,s}^{\af}[\tfrac{r-1}{p}]$};
\node (C0) at (29.5,10) {$\mathcal{S}\mathcal{X}_{p-r+1,3-s}^{\af}[0]$};
\draw [line width=1pt] (C0) -- (C1);
%\draw [line width=1pt,red] (D1) -- (D2);
\draw [line width=1pt] (C2) -- (C3);
\draw [line width=1pt] (C1) -- (C3);
\draw [line width=1pt] (C2) -- (C0);
\end{comment}

%\node (A) at (0,0) {$\Pi_0[b]\otimes\mathcal{V}_{p,s}$};
\node (D3) at (29.5,0) {$\afXp{p}{s}$};
\node (D2) at (23,5) {$\afXm{0}{s}$};
%\node (D1) at (36,5) {$\afXm{0}{s}$};
\node (D1) at (36,5) {$\mathcal{S}\afXp{1}{s}$};
\node (D0) at (29.5,10) {$\mathcal{S}^2({\afXp{p}{s}}^\ast)$};
\draw [line width=1pt] (D0) -- (D1);
%\draw [line width=1pt,red] (D1) -- (D2);
\draw [line width=1pt] (D2) -- (D3);
%\draw [line width=1pt,blue] (D1) -- (D3);
%\draw [line width=1pt,blue] (D2) -- (D0);
\draw [line width=1pt] (D1) -- (D3);
\draw [line width=1pt] (D2) -- (D0);
\end{tikzpicture}
%\end{center}
\caption{Loewy diagrams of 
$\Pi[b]\otimes\mathcal{V}_{r,s}$ ($[b]=[0],[\tfrac{r}{p}]$).}
%If $M_1/M_3$ splits, then the blue lines are adapted and otherwise the red.}
\label{fig:Loewy diagram of PiV}
\end{figure}

\section{Associated variety}\label{section: associated variety}
\subsection{Nilpotent cone}
Given a vertex algebra $V$, the Zhu's $C_2$-algebra is defined as the quotient $R_V=V/C_2(V)$ with $C_2(V)=\mathrm{span}\{a_{(-2)}b\mid a,b\in V \}$, which is a Poisson algebra by 
$$a\cdot b=a_{(-1)}b,\quad \{a,b\}=a_{(0)}b.$$
The reduced scheme $X_V=\mathrm{Specm}(R_V)$ is called the associated variety of $V$. 
Note that we may extend the definition to the super-setting. 
For $V=V^k(\sll_2)$, $R_{V^(\sll_2)}$ is naturally identified as Poisson algebras with the symmetric algebra $\mathrm{S}(\sll_2)$ equipped with Kostant--Kirillov Poisson bracket defined by $\{a,b\}=[a,b]$ ($a,b\in \sll_2$) and thus $X_{V^k(\sll_2)}\simeq \sll_2$ by using the normalized invariant form. 
At $X=x e+ y f+ z h\in \sll_2$, we have $e(X)=y$, $h(X)=2z$, $f(X)=x$, respectively. Then 
the zero locus of the Casimir element $\Omega:=h^2+4ef\in \mathrm{S}(\sll_2)$, namely the closed subvariety $\{z^2+xy=0\}$ is the nilpotent cone 
\begin{align*}
\mathcal{N}:=\{X\in \sll_2\mid \mathrm{ad}_X\colon \sll_2\rightarrow \sll_2\ \text{is nilpotent}\}\subset \sll_2.
\end{align*}
Given a homomophism $\eta\colon V \rightarrow W$ of vertex algebras, we have induced maps
\begin{align*}
    \overline{\eta}\colon R_V\rightarrow R_W,\quad \eta^\sharp \colon X_W\rightarrow X_V.
\end{align*}
for their Zhu's $C_2$-algebras and associated varieties.

\begin{theorem}\label{theorem: associated variety}\hspace{0mm}\\
\textup{(1)} 
The Feigin--Tipunin algebra $\FT(\sll_2)$ is strongly generated by 
\begin{align}\label{strong generators}
    e,\ h,\ f,\ x_{ij}=f_0^iQ_+^je^{-\sqrt{p}\alpha},\quad (0\leq i,j \leq 2).
\end{align}
Moreover, $x_{ij}$'s are all nilpotent in $R_{\FT(\sll_2)}$.\\
\textup{(2)} The embedding $\iota \colon V^k(\sll_2)\hookrightarrow \FT(\sll_2)$ induces an isomorphism 
\begin{align*}
    \iota^\sharp\colon X_{\mathrm{FT}_p(\sll_2)}\simeq \mathcal{N}\subset \sll_2.
\end{align*}
Therefore, $\FT(\sll_2)$ is quasi-lisse.
\end{theorem}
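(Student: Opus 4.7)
The plan is to exploit the $(\mathrm{SL}_2,V^k(\sll_2))$-bimodule decomposition
$\FT(\sll_2)\simeq\bigoplus_{m\geq 0} L(2m\varpi)\otimes\mathbb{V}^k(2m\varpi)$
of Corollary~\ref{Property of Weyl modules}, in which the lowest conformal weight subspace of the $m$-th summand is $L(2m\varpi)\otimes L(2m)$ at weight $pm(m+1)$. By~\eqref{align: conformal weight of lattice vectors} the vector $e^{-m\sqrt{p}\alpha}$ sits in the extremal bi-weight space of this top, and for $m=1$ the nine $x_{ij}$ have pairwise distinct bi-weights under the commuting inner $\sll_2$ (zero modes of $V^k(\sll_2)$) and outer $\sll_2$ ($\mathrm{B}$-action with $f\mapsto Q_+$), hence form a basis of $L(2\varpi)\otimes L(2)$. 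For $m\geq 2$ the key OPE $Y(e^{-\sqrt{p}\alpha},z)\,e^{-(m-1)\sqrt{p}\alpha}(w)\sim(z-w)^{2p(m-1)}e^{-m\sqrt{p}\alpha}(w)+\cdots$ produces $(e^{-\sqrt{p}\alpha})_{(-2p(m-1)-1)}e^{-(m-1)\sqrt{p}\alpha}\propto e^{-m\sqrt{p}\alpha}$ with mode index $\leq -3$, so inductively every $e^{-m\sqrt{p}\alpha}$ lies in the strong closure of $\{x_{00}\}\cup V^k(\sll_2)$. Acting by the outer and inner $\sll_2$ and then by $V^k(\sll_2)$-modes yields all of each summand, establishing strong generation by $e,h,f,x_{ij}$.

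For part~(2), observe that $\iota^\sharp$ maps $X_{\FT(\sll_2)}$ into $\sll_2\simeq X_{V^k(\sll_2)}$ as a closed, conical (by the $\Z_{\geq 0}$-grading of $R_{\FT(\sll_2)}$), and $\sll_2$-invariant (by the adjoint Poisson action of $e,h,f$) subvariety. The only such subvarieties of $\sll_2$ are $\{0\}$, $\mathcal{N}$, and $\sll_2$. By Theorem~\ref{compatibility of FT algeras for sl2} we have $H^0_{DS}(\FT(\sll_2))\simeq\W(p)$, which is $C_2$-cofinite by~\cite{AM}. The compatibility of associated varieties with principal BRST reduction (Arakawa) then forces $X_{\FT(\sll_2)}\cap S_{f_{\mathrm{prin}}}$ to be the singleton $\{f_{\mathrm{prin}}\}$, which rules out $X_{\FT(\sll_2)}=\sll_2$ since the principal Slodowy slice $S_{f_{\mathrm{prin}}}=f_{\mathrm{prin}}+\C e$ is one-dimensional. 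On the other hand, Theorem~B produces infinitely many simple $\FT(\sll_2)$-modules, so $\FT(\sll_2)$ is not $C_2$-cofinite and $X_{\FT(\sll_2)}\neq\{0\}$. Therefore $X_{\FT(\sll_2)}=\mathcal{N}$, and quasi-lisseness follows because $\mathcal{N}$ has finitely many (two) symplectic leaves.

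The remaining nilpotency claim in part~(1) is then automatic: strong generation shows $R_{\FT(\sll_2)}/\sqrt{0}$ is generated by $\bar e,\bar h,\bar f$, and since the reduced spectrum equals $\mathcal{N}\subset\sll_2$, each $x_{ij}$ vanishes on $X_{\FT(\sll_2)}$ and thus lies in the nilradical. The main obstacle will be to justify the precise form of the BRST-compatibility statement for the infinite conformal extension $V^k(\sll_2)\subset\FT(\sll_2)$. If a direct invocation is not available, an alternative route is to establish the nilpotency of the Casimir $\Omega=h^2+4ef$ in $R_{\FT(\sll_2)}$ by explicit computation, combining the Wakimoto identity $\Omega\mapsto(k+2)\alpha^2$ with the nilpotency of $\alpha$ in the $C_2$-cofinite lattice vertex algebra $V_{\sqrt{p}A_1}$, and then verifying that this nilpotency descends to $R_{\FT(\sll_2)}$ via the embedding $\FT(\sll_2)\hookrightarrow\beta\gamma\otimes V_{\sqrt{p}A_1}$.
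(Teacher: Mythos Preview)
Your strong–generation argument is essentially the paper's: both use the lattice OPE
$(e^{-\sqrt{p}\alpha})_{(-2p(m-1)-1)}e^{-(m-1)\sqrt{p}\alpha}\propto e^{-m\sqrt{p}\alpha}$
together with the Leibniz rule for $f_0$ and $Q_+$ to reduce all higher summands to normally ordered words in the $x_{ij}$.

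The gap is in your treatment of part~(2) and the nilpotency of the $x_{ij}$. You write that ``$\iota^\sharp$ maps $X_{\FT(\sll_2)}$ into $\sll_2$ as a closed \ldots\ subvariety'' and then trichotomize into $\{0\},\mathcal N,\sll_2$; but $\iota^\sharp$ is only a closed immersion on reduced schemes \emph{after} one knows that $R_{\FT(\sll_2)}^{\mathrm{red}}$ is generated by $\bar e,\bar h,\bar f$, i.e.\ that the $x_{ij}$ are nilpotent. Your later sentence ``strong generation shows $R_{\FT(\sll_2)}/\sqrt{0}$ is generated by $\bar e,\bar h,\bar f$'' is exactly this nilpotency claim, not a consequence of strong generation, so the argument is circular. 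The BRST compatibility $X_{\W(p)}\simeq X_{\FT(\sll_2)}\times_{\sll_2}\mathscr S_f$ controls only $(\iota^\sharp)^{-1}(\mathscr S_f)$; since $0\notin\mathscr S_f$, it says nothing about the fibre $(\iota^\sharp)^{-1}(0)=\mathrm{Specm}\bigl(R_{\FT(\sll_2)}/(\bar e,\bar h,\bar f)\bigr)$, which is governed precisely by the $x_{ij}$. Your alternative route via ``$\Omega\mapsto(k+2)\alpha^2$ nilpotent in $R_{\beta\gamma\otimes V_{\sqrt{p}A_1}}$'' has the same defect: the induced map $R_{\FT(\sll_2)}\to R_{\beta\gamma\otimes V_{\sqrt{p}A_1}}$ has no reason to be injective.

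The fix is already implicit in your strong-generation computation: the same OPE gives $(e^{-\sqrt{p}\alpha})_{(-1)}e^{-\sqrt{p}\alpha}=0$ in $\FT(\sll_2)$ (the leading power is $z^{2p}$ with $2p\geq 2$), so $x_{00}^2=0$ in $R_{\FT(\sll_2)}$; since $f_0$ and $Q_+$ act as derivations of $R_{\FT(\sll_2)}$, a one-line lemma (if $a$ is nilpotent and $D$ a derivation then $Da$ is nilpotent) propagates nilpotency to all $x_{ij}$. This is exactly what the paper does first. With nilpotency in hand, $\iota^\sharp$ is a closed immersion and your trichotomy is legitimate. From there the two proofs diverge: the paper rules out $X=\sll_2$ by an explicit computation---the $Q_+$-invariant quadratics $x_{01}^2,\ x_{01}x_{11},\ x_{11}^2+x_{01}x_{21},\ x_{11}x_{21},\ x_{21}^2$ lie in $V^k(\sll_2)$, are nilpotent, and via the injective $\overline{\mu}_{\mathrm{Wak}}$ generate an ideal containing a power of $\alpha^2=p\,\Omega$---whereas you invoke the $C_2$-cofiniteness of $\W(p)$ from \cite{AM} together with Arakawa's compatibility theorem. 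Your route is shorter but imports \cite{AM}; the paper's route is self-contained and yields Corollary~\ref{C2 cofiniteness of triplet} as a genuine new consequence rather than an input.
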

\proof
(1)
Note that by Corollary \ref{Property of Weyl modules} (2), $\FT(\sll_2)$ is strongly generated by the union of the basis of the subspaces
\begin{align*}
    \g\subset V^k(\sll_2),\quad L(n\alpha)\otimes L(n\alpha)\subset L(n\alpha)\otimes \mathbb{V}^k(n\alpha),\quad (n\geq 1).
\end{align*}
and that $L(n\alpha)\otimes L(n\alpha)$ is spanned by $x_{n,ij}=f_0^iQ_+^je^{-n\sqrt{p}\alpha}$ ($0\leq i,j\leq 2n$). 
Recall the Leibnitz rule of the $0^{\mathrm{th}}$ mode 
$$A_{(0)}(a_{(n)}b)=(A_{(0)}a)_{(n)}b+a_{(n)}(A_{(0)}b),$$
for fields $A(z)=\sum_{n\in \Z}A_{(n)}z^{-n-1}$ in general. 
Then 
\begin{align}\label{inductive construction}
   &x_{n+1,00}= x_{1,00}{}_{(-2pn-1)}x_{n,00}
\end{align}
implies by induction that $x_{n,ij}$ are expressed by sums of nomally ordered products of $x_{1,ij}$ ($0\leq i,j\leq 2$) and their derivatives. Hence, $\FT(\sll_2)$ is strongly generated by \eqref{strong generators}.
Next, we show that $x_{ij}$ are all nilpotent in $R_{\FT(\sll_2)}$. By \eqref{inductive construction},  
\begin{align}\label{fundamental nilpotency}
    x_{00}{}_{(-1)}x_{00}=0
\end{align}
in $\FT(\sll_2)$ and thus $x_{00}^2=0$ in $R_{\FT(\sll_2)}$. Hence, $x_{ij}$ are also nilpotent by Lemma \ref{nilpotency check} below.
(2) By the isomorphisms of Poisson algebras  
\begin{align*}
    R_{V^k(\sll_2)}\simeq \mathrm{S}(\sll_2),\quad R_{\beta\gamma}\simeq \C[\beta,\gamma],\quad R_{\pi^\alpha}\simeq \C[\alpha],
\end{align*}
where the Poisson bracket on $R_{\beta\gamma}$ is determined by $\{\beta,\gamma\}=1$, 
the Wakikimoto realization \eqref{sl2 wakimoto} induces the homomorphism
\begin{align}\label{C2 hom}
\begin{array}{cccc}
   \overline{\mu}_{\mathrm{Wak}}\colon  & \mathrm{S}(\sll_2) & \rightarrow & \C[\beta,\gamma,\alpha]\\
     & e &\mapsto & \beta\\
     & h &\mapsto & -2\beta\gamma-\tfrac{1}{\sqrt{p}}\alpha\\
     & f &\mapsto & -\beta\gamma^2-\tfrac{1}{\sqrt{p}}\gamma\alpha.
\end{array}
\end{align}
Since it is injective \cite[Proposition 5.2]{F}, we identify the elements of $\mathrm{S}(\sll_2)$ with their image.
The elementes $x_{i1}\in \FT(\sll_2)\cap \beta\gamma\otimes \pi^\alpha$ are expressed as 
\begin{align*}
    &x_{01}=\sum_{n+m=2p-1}\beta_{-n-1}\otimes s_m(\sqrt{p}\alpha)\\
    &x_{11}=s_{2p}(\sqrt{p}\alpha)+\sum_{n+m=2p-1}\beta_{-n-1}\gamma_0\otimes s_m(\sqrt{p}\alpha)\\
    &x_{21}=2\gamma_0\otimes s_{2p}(\sqrt{p}\alpha)+\sum_{n+m=2p-1}\beta_{-n-1}\gamma_0^2\otimes s_m(\sqrt{p}\alpha),
\end{align*}
where the summation is taken over $n,m\geq0$ and $s_m(x)$ is the Schur polynomial in \eqref{def of Epm}.
\begin{comment}
\begin{align*}
    \mathrm{exp}\left(-\sum_{n<0}\frac{t_n}{n}z^{-n}\right)=\sum_{n=0}^\infty s_n(t)z^n.
\end{align*}
\end{comment}
Then it follows that $x_{01},x_{11},x_{21}$ in $R_{\beta\gamma\otimes \pi^\alpha}$ are 
\begin{align}\label{align: explicit xi1}
   x_{01}= \varepsilon e \alpha^{2p-1},\quad x_{11}= -\varepsilon h \alpha^{2p-1},\quad x_{21}= -2\varepsilon f \alpha^{2p-1}, 
\end{align}
where $\varepsilon=\tfrac{p^{(2p-1)/2}}{(2p-1)!}$. 
By applying $Q_+^3$ to \eqref{fundamental nilpotency}, we obtain 
$$Q_+(x_{01}{}_{(-1)}x_{01})=0$$
and thus $x_{01}{}_{(-1)}x_{01}\in V^k(\sll_2)$ by Corollary \ref{Property of Weyl modules}. 
By applying the derivation $f_0$ successively, we obtain  
\begin{align}\label{align: nilpotent elements in S(Sl2)}
    x_{01}^2,\ x_{01}x_{11},\ x_{11}^2+x_{01}x_{21},\ x_{11}x_{21},\ x_{21}^2
\end{align}
which lie in $V^k(\sll_2)\subset \FT(\sll_2)$ and are nilpotent by (1).
By using the Gr\"{o}bner basis, we find that the elements \eqref{align: nilpotent elements in S(Sl2)} in $R_{V^k(\sll_2)}$ generate the ideal
$$I=(\alpha^{4p},\alpha^{4p-1}\beta, \alpha^{4p-2} \beta^2 ).$$
Since $\alpha^2=p\ \Omega$, it follows that $\iota^\sharp\colon X_{\FT(\sll_2)}\hookrightarrow \mathcal{N}$.
As $X_{\FT(\sll_2)}$ is stable under the Adjoint action of $\mathrm{SL}_2$, we have either $X_{\FT(\sll_2)}\simeq \mathcal{N}$ or $X_{\FT(\sll_2)}=\{0\}$. 
If $X_{\FT(\sll_2)}=\{0\}$, then $\FT(\sll_2)$ were $C_2$-cofinite and thus the simple modules are finitely many, a contradiction to Theorem \ref{simplicity of X}. Hence, $X_{\FT(\sll_2)}\simeq \mathcal{N}$.
Since, $\mathcal{N}$ has only finitely many symplectic leaves, namely, two nilpotent orbits, $\FT(\sll_2)$ is quasi-lisse \cite{AK} by definition.
\endproof
\begin{lemma}\label{nilpotency check}
Let $R$ be a commutative ring and $D$ a derivation. If $a\in R$ is nilpotent, then so is $Da$. 
\end{lemma}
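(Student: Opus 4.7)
The plan is to derive an explicit bound of the form $(Da)^{n^2}=0$ from the nilpotency relation $a^n=0$ by differentiating it and extracting the top-order piece. The starting point will be the identity $D^n(a^n)=0$, which holds because $a^n=0$. I will then expand the left-hand side using the iterated Leibniz rule
\begin{align*}
D^n(a^n)=\sum_{\substack{k_1+\cdots+k_n=n \\ k_i\geq 0}}\binom{n}{k_1,\ldots,k_n}\,D^{k_1}(a)\cdots D^{k_n}(a),
\end{align*}
valid in any commutative ring with derivation. The unique tuple with all $k_i=1$ contributes $n!\,(Da)^n$; every other tuple has at least one $k_i=0$, because as soon as some $k_j\geq 2$ the constraint $\sum k_i=n$ forces some other index to vanish. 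Thus every remaining term carries a factor $D^0(a)=a$.

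Consequently the expansion rearranges to $n!\,(Da)^n = a\cdot Y$ for some explicit element $Y\in R$ built from $a$ and its higher derivatives. Raising both sides to the $n$-th power, using commutativity of $R$, and invoking $a^n=0$ gives
\begin{align*}
(n!)^n\,(Da)^{n^2}=a^n\,Y^n=0.
\end{align*}
Since $R_{\mathrm{FT}_p(\sll_2)}$ is a $\C$-algebra, the scalar $(n!)^n$ is invertible, whence $(Da)^{n^2}=0$ and $Da$ is nilpotent. The lemma is then applied inductively: each of the operators $f_0$ and $Q_+$ acts as a derivation on the Poisson algebra $R_{\mathrm{FT}_p(\sll_2)}$, so starting from $x_{00}^2=0$ one obtains the nilpotency of every $x_{ij}=f_0^iQ_+^j x_{00}$.

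The main (very mild) obstacle is simply the implicit characteristic-zero hypothesis: without it the conclusion can fail, for instance $R=\F_2[\varepsilon]/(\varepsilon^2)$ with $D\varepsilon=1$ shows that $Da=1$ is a unit. In the application of the lemma here the base field is $\C$, so $n!$ is invertible and no adjustment is required. The only thing in the argument that demands any care is the combinatorial bookkeeping for the $D^n(a^n)$ expansion — verifying that no multinomial tuple other than $(1,\ldots,1)$ avoids the factor $a$ — and this is immediate from the constraint $\sum k_i=n$ with $n$ entries.
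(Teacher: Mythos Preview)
Your proof is correct and is essentially identical to the paper's: both differentiate $a^N=0$ a total of $N$ times, isolate the term $N!(Da)^N$, observe that every other multinomial term carries a factor $a$, and then raise to the $N$-th power to conclude $(Da)^{N^2}=0$. Your explicit remark that the argument requires $N!$ to be invertible (characteristic zero) is apt and in fact sharpens the paper's statement, which as written is false over general commutative rings, as your $\F_2$ counterexample shows.
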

\proof Let $a^N=0$. Since $0=D^N(a^N)=N! (Da)^N+ab$ for some $b\in R$, $(Da)^{N^2}=(-\frac{1}{N!}ab)^N=0$.
\endproof 
\begin{remark}
One can show that 
\begin{align*}
\begin{array}{lll}
(X_{00})^2&(X_{01})^4&(X_{02})^2\\
(X_{10})^4&(X_{11})^{24}&(X_{12})^4\\
(X_{20})^2&(X_{21})^4&(X_{22})^2
\end{array}
\end{align*}
are all zero in $R_{\FT(\sll_2)}$ by using \eqref{fundamental nilpotency} and $D^3X_{00}=0$ for $D=f_0, Q_+$.
\begin{comment} %proof is as follows.
Indeed, we have 
$$(D^2(X_{00}))^2=\tfrac{1}{6}D^4(X_{00})^2=0.$$
This shows that $(X_{02})^2=(X_{20})^2=0$ and then $(X_{22})^2=0$ similarly.
Then it follows from 
$$2(DX_{00})^2=D^2(X_{00})^2-2(D^2X_{00})X_{00}=-2(D^2X_{00})X_{00}.$$
that $(X_{01})^4=(X_{10})^4=0$. Similarly, we have $(X_{12})^4=(X_{21})^4=0$.
Finally, we show $(X_{11})^{24}=0$. Set $D=f_0$. Then it follows from $D^4X_{01}^4=0$ that 
\begin{align*}
    (DX_{01})^4=a_1 (D^2X_{01})^2X_{01}^2+a_2(D^2X_{01})(DX_{01})^2X_{01}
\end{align*}
for some $a_1,a_2\in\C$. As $X_{01}^4=X_{01}^2=0$, we conclude $(DX_{01})^{24}=0$.
\end{comment}
\end{remark}
\begin{remark}
\textup{
The embedding $\mu\colon \FT(\sll_2)\hookrightarrow \beta\gamma\otimes V_{\sqrt{p}A_1}$ induces
\begin{align*}
   \mu^\sharp\colon \C^2\rightarrow \mathcal{N}\subset \sll_2,\quad (a,b)\mapsto \left(\begin{array}{cc} -ab&-ab^2\\ a &ab \end{array} \right)
\end{align*}
for their associated varieties. 
Since $\mu^\sharp$ restricts to an isomorphism $\C^2\supset \{a\neq 0\}\simeq \{z \neq 0\}\subset \mathcal{N}$, $\mu^\sharp$ is birational and thus dominant. It provides a non-semisimple example of a conjectured by Arakawa--van Ekeren--Moreau \cite[Conjecture 1.3]{AvEM} in the semisimple setting.}
\end{remark}
By Corollary \ref{Property of Weyl modules}, $\mathrm{FT}_p(\sll_2)$ is a vertex algebra object in the Kazhdan--Lusztig category $\mathrm{KL}_k$. Then Theorem \ref{compatibility of FT algeras for sl2} implies the isomorphism 
\begin{align}\label{assocaited variety of triplet}
    X_{\W(p)}\simeq X_{\mathrm{FT}_p(\sll_2)}\underset{\sll_2}{\times}\mathscr{S}_f\simeq \{\mathrm{pt}\}
\end{align}
by \cite[Theorem 6.1.]{Ar2} where $\mathscr{S}_f=f+\C e\subset \sll_2$ denotes the Slodowy slice. 
By Theorem \ref{theorem: associated variety}, it follows that $\W(p)$ is strongly generated by $\omega_{1,p}$ and the cohomology classes associated with $x_{ij}$ under the BRST reduction. Thus $\W(p)$ is also finitely strongly generated \cite{AM}. Hence \eqref{assocaited variety of triplet} implies the following.
\begin{corollary}[\cite{AM}]\label{C2 cofiniteness of triplet}
The triplet algebra $\mathcal{W}(p)$ is $C_2$-cofinite.
\end{corollary}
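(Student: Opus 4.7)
The plan is to deduce $C_2$-cofiniteness by feeding the quasi-lisseness of $\FT(\sll_2)$ established in Theorem \ref{theorem: associated variety} through the BRST reduction $\W(p)\simeq H^0_{DS}(\FT(\sll_2))$ of Theorem \ref{compatibility of FT algeras for sl2}, following the blueprint already sketched in \eqref{assocaited variety of triplet}. Two ingredients are needed: that $X_{\W(p)}$ is zero-dimensional, and that $\W(p)$ is finitely strongly generated. Each follows from a corresponding statement upstairs, transported across $H^0_{DS}$.

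For the associated variety, I would invoke Arakawa's general principle \cite[Theorem 6.1]{Ar2}: for any vertex algebra object $V$ of the Kazhdan--Lusztig category $\mathrm{KL}_k$, the associated variety of its principal Drinfeld--Sokolov reduction satisfies $X_{H^0_{DS}(V)}\simeq X_V\times_{\sll_2}\mathscr{S}_f$. The hypothesis applies to $V=\FT(\sll_2)$ by Corollary \ref{Property of Weyl modules}(2). Combining this with $X_{\FT(\sll_2)}\simeq \mathcal{N}$ from Theorem \ref{theorem: associated variety}(2) yields $X_{\W(p)}\simeq \mathcal{N}\cap\mathscr{S}_f=\{f\}$, a reduced single point, as recorded in \eqref{assocaited variety of triplet}.

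For finite strong generation, Theorem \ref{theorem: associated variety}(1) supplies the explicit generating set $\{e,h,f,x_{ij}\}_{0\le i,j\le 2}$ for $\FT(\sll_2)$. I would then argue that their BRST cohomology classes, together with $[L_{\mathrm{sug}}]=\omega_{1,p}$, form a finite strong generating set for $\W(p)$; this uses the exactness of $H^\bullet_{DS}$ on $\mathrm{KL}_k$ and its compatibility with normally ordered products, which is the only nontrivial step beyond the already-established inputs. Once both facts are in hand, the conclusion is automatic: a finitely strongly generated vertex algebra whose associated variety is a single reduced point has $R_{\W(p)}$ equal to a finitely generated commutative $\C$-algebra that is Noetherian, of Krull dimension zero, and with a unique maximal ideal. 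Such a ring is Artinian, hence finite-dimensional, which is precisely $C_2$-cofiniteness. The main (minor) obstacle is making the transfer of strong generation under $H^0_{DS}$ precise, after which everything collapses to a standard commutative-algebra observation.
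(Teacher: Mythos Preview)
Your proposal is correct and follows essentially the same route as the paper: both arguments combine Arakawa's result \cite[Theorem~6.1]{Ar2} with $X_{\FT(\sll_2)}\simeq\mathcal{N}$ to obtain $X_{\W(p)}\simeq\{\mathrm{pt}\}$, then transfer the finite strong generating set of Theorem~\ref{theorem: associated variety}(1) across $H^0_{DS}$ to conclude finite-dimensionality of $R_{\W(p)}$. The paper also cites \cite{AM} as an independent confirmation of finite strong generation, but otherwise the logic is identical.
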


\subsection{Relative semi-infinite cohomology}
Corollary \ref{C2 cofiniteness of triplet} also follows from a conjecture on relative semi-infinite cohomology \cite{Fe,FGZ}, which is of independent interest.  
    Let $V$, $W$ be simple vertex superalgebras of CFT type which contains the rank one Heisenberg vertex algebra of opposite levels, namely,
    \begin{align}\label{decomposition into Focks}
      V\simeq \bigoplus_{n\in \Z}V_n\otimes \pi^{A}_n,\quad W\simeq \bigoplus_{n\in \Z}W_n\otimes \pi^{A^\dagger}_n.  
    \end{align}
Here $A(z)$, $A^\dagger(z)$ are Heisenberg fields with OPEs
\begin{align*}
    A(z)A(w)\sim \frac{a}{(z-w)^2},\quad A^\dagger(z)A^\dagger(w)\sim \frac{-a}{(z-w)^2}
\end{align*}
for some $a\in \C^\times$ and $V_n$, $W_n$ are multiplicity spaces, which are naturally modules over the Heisenberg coset $\Com(\pi^A,V)$ and $\Com(\pi^{A^\dagger},W)$, respectively.
As the diagonal Heisenberg field $A(z)+A^\dagger(z)$ on $V\otimes W$ is commutative, we may take the relative semi-infinite cohomology 
$$V\circ W:=\mathrm{H}_{\mathrm{rel}}^0(\gl_1,V\otimes W)\simeq \bigoplus_{n+m=0}V_n\otimes W_m,$$
which is an extension of $\Com(\pi^A\otimes \pi^{A^\dagger},V\otimes W)$.
Motivated by the results \cite{Ar2} for semisimple Lie algebras, we expect the following. 
\begin{conjecture}\label{rel semi-infinite conjecture}
If $V$, $W$ are strognly finitely generated, then so is $V\circ W$. 
The associated (super)variety $X_{V\circ W}$ of $V\circ W$ is isomorphic to the Hamiltonian reduction 
$$X_{V\circ W}\simeq (X_V\times X_W)/\!/\!/\mathrm{GL}_1$$
of $X_{V\otimes W}\simeq X_V\times X_W$ for the moment map 
$X_V,X_W\rightarrow \gl_1^*$ induced by the embedding $\pi\hookrightarrow V,W$. In particular the super-dimension of $X_{V\circ W}$ is
\begin{align}\label{expected dimension}
    \mathrm{sdim}X_{V\circ W}=\mathrm{sdim}X_{V}+\mathrm{sdim}X_{W}-(2,0).
\end{align}
\end{conjecture}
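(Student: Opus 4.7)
The plan is to realize $V\circ W$ as the degree zero cohomology of a small BRST complex and to compare its Li filtration with the classical Koszul complex of the Hamiltonian reduction. Concretely, I would introduce a rank one fermionic ghost system $\langle b,c\rangle$ (dual to $\gl_1$) and form
\begin{align*}
    C^\bullet = V\otimes W\otimes \langle b,c\rangle,\qquad d=\oint\bigl(A(z)+A^\dagger(z)\bigr)c(z)\,dz,
\end{align*}
with cohomological grading given by ghost number. Since $A(z)+A^\dagger(z)$ has trivial OPE with itself (opposite levels cancel), $d^2=0$ is automatic, and a Koszul argument combined with the decomposition \eqref{decomposition into Focks} identifies the relative $H^0$ (the subcomplex annihilated by the ghost zero mode) with $V\circ W$, all higher cohomologies vanishing.

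Second, I would equip $C^\bullet$ with the standard Li filtration $F^\bullet$ extended trivially across the ghost factor. Because $A+A^\dagger$ is a strong generator of conformal weight one, the induced differential $\bar d$ on
\begin{align*}
    \gr^F C^\bullet\simeq R_V\otimes R_W\otimes R_{\langle b,c\rangle}
\end{align*}
is precisely the classical Koszul differential $\bar d=(\bar A+\bar A^\dagger)\,\bar c$ for the moment map $\mu\colon X_V\times X_W\to\gl_1^\ast$ induced by the Heisenberg embeddings. Its cohomology on the $\mathrm{GL}_1$-invariant part is the coordinate ring of the Hamiltonian reduction $(X_V\times X_W)/\!/\!/\mathrm{GL}_1$. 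The resulting spectral sequence produces a surjective graded Poisson homomorphism from this coordinate ring onto $\gr^F R_{V\circ W}$, giving a closed embedding
\begin{align*}
    X_{V\circ W}\hookrightarrow (X_V\times X_W)/\!/\!/\mathrm{GL}_1,
\end{align*}
the super-dimension bound $\operatorname{sdim}X_{V\circ W}\leq \operatorname{sdim}X_V+\operatorname{sdim}X_W-(2,0)$, and strong finite generation of $V\circ W$ (via the standard fact that finite generation of the $C_2$-algebra implies strong finite generation of the vertex algebra).

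The main obstacle is the reverse inclusion, equivalently the degeneration of the spectral sequence at $E_1$: higher differentials $d_r$ ($r\geq 2$) encode quantum corrections that could in principle strictly shrink $X_{V\circ W}$, and I do not expect a purely formal argument without additional hypotheses such as freeness of $V$ (resp.\ $W$) over $\Com(\pi^A,V)\otimes \pi^A$ (resp.\ $\Com(\pi^{A^\dagger},W)\otimes \pi^{A^\dagger}$), or a suitable semisimplicity of the Heisenberg weight decomposition. A realistic attack is to first verify the conjecture when at least one factor is a lattice or free field algebra, where the Koszul complex is manifestly exact, and then to treat the Kazama--Suzuki pair $(V,W)=(\FT(\sll_2),s\W_p(\sll_{2|1}))$ studied in this paper: Theorem~\ref{theorem: associated variety} pins down $X_V=\mathcal{N}$, and one could compute enough explicit strong generators of $V\circ W$ using the description of $\FT(\sll_2)$ in Theorem~\ref{theorem: associated variety}(1) to match those expected from the Hamiltonian reduction. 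Combined with Theorem~\ref{compatibility of FT algeras for sl2}, a successful implementation would yield a new, purely geometric proof of Corollary~\ref{C2 cofiniteness of triplet} and, more optimistically, a systematic source of quasi-lisse vertex algebras from pairs of quasi-lisse factors sharing a Heisenberg.
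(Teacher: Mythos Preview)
This statement is labelled a \emph{conjecture} in the paper, and indeed the paper offers no proof: after stating it, the authors only verify it in two illustrative examples (the $\mathcal{N}=2$ superconformal algebra via Kazama--Suzuki, and the triplet $\W(p)$ via the subregular $\W$-algebra) and then use the expected dimension formula \eqref{expected dimension} to motivate a further conjecture about $C_2$-cofiniteness. So there is no paper proof to compare against; your proposal is an outline of how one might \emph{attack} the conjecture, and should be read as such.

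Your strategy --- BRST complex, Li filtration, comparison with the classical Koszul complex of the moment map --- is exactly the template of Arakawa's argument in \cite{Ar2} for quantized Drinfeld--Sokolov reduction, which the paper explicitly cites as the motivation for the conjecture. The surjection $R_{(X_V\times X_W)/\!/\!/\mathrm{GL}_1}\twoheadrightarrow \gr^F R_{V\circ W}$ and hence the closed embedding $X_{V\circ W}\hookrightarrow (X_V\times X_W)/\!/\!/\mathrm{GL}_1$ should indeed go through without serious difficulty, as should strong finite generation. You have correctly isolated the real content of the conjecture as the degeneration question (equivalently, the reverse inclusion), and your honest disclaimer that this likely requires extra hypotheses is well taken: in Arakawa's setting the analogous step uses the fact that Slodowy slices meet nilpotent orbits transversally, and there is no obvious substitute for a bare $\mathrm{GL}_1$-action.

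One small correction to your proposed test case: the pair $(V,W)=(\FT(\sll_2),s\W_p(\sll_{2|1}))$ is not the relevant one here --- these two algebras are Kazama--Suzuki \emph{duals}, not factors whose product one reduces. The paper's Example following the conjecture takes $W=V_\Z\otimes\pi$ (or $V_{\ssqrt{2p}\Z}$), i.e.\ one factor is a lattice/free-field algebra, which is precisely the ``easy'' case you flag. Verifying the conjecture there is routine and already implicit in the paper; the interesting open problem is when neither factor is free.
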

\begin{example}
\textup{
    By \cite{CGNS}, the $\mathcal{N}=2$ superconformal algebra $\W^\ell(\sll_{2|1})$ can be constructed from $V^k(\sll_2)$ as 
    \begin{align*}
        \W^\ell(\sll_{2|1})\simeq \mathrm{H}_{\mathrm{rel}}^0(\gl_1,V^k(\sll_2)\otimes V_\Z\otimes \pi),
    \end{align*}
    under the relation $(k+2)(\ell+1)=1$. It is a reformulation of Kazama--Suzuki coset construction \cite{KaSu,DVPYZ}
    \begin{align}\label{Kazama-Suzuki}
        \W^\ell(\sll_{2|1})\simeq \Com(\pi,V^k(\sll_2)\otimes V_\Z).
    \end{align}
In this case, we have 
\begin{align*}
    X_{V^k(\sll_2)}\simeq \sll_2,\quad X_{\W^\ell(\sll_{2|1})}\simeq \mathscr{S}_{f}=\left\{\left(\begin{array}{cc|c}a & b & c \\ 1 & a &0 \\\hline 0 & d &2a \\ \end{array} \right) \right\}\subset \sll_{2|1}.
\end{align*}
Then it is straightforward to show 
$$(X_{V^k(\sll_2)}\times X_{V_\Z\otimes \pi})/\!/\!/\mathrm{GL}_1\simeq (\sll_2\times \C^{1|2})/\!/\!/\mathrm{GL}_1\simeq \C^{2|2}\simeq \mathscr{S}_{f}$$
as Poisson (super)varieties.}
\end{example}
Similarly, the conjecture recovers Corollary \ref{C2 cofiniteness of triplet}.
\begin{example}
\textup{By \cite{ACGY}, the simple subregular $\W$-algebra $\W_\ell(\sll_{p-1},f_{\mathrm{sub}})$ at the boundary admissible level $\ell=-(p-1)+\frac{p-1}{p}$ has a realization 
    $$\W_\ell(\sll_{p-1},f_{\mathrm{sub}})\simeq (\W(p)\otimes V_{\sqrt{-2p}\Z})^{\mathrm{U}(1)}.$$
    Then $\W(p)$ can be reconstructed from $\W_\ell(\sll_{p-1},f_{\mathrm{sub}})$ as 
    \begin{align*}
        \W(p)\simeq \mathrm{H}_{\mathrm{rel}}^0(\gl_1,\W_\ell(\sll_{p-1},f_{\mathrm{sub}})\otimes V_{\sqrt{2p}\Z}).
    \end{align*}
    By \cite{Ar3,AvE2}, the associated variety $X_{\W_\ell(\sll_{p-1},f_{\mathrm{sub}})}$ is the nilpotent Slodowy slice
$$ \mathcal{N}\cap \mathscr{S}_{f_{\mathrm{sub}}}\simeq \{(x,y,z)\in \C^3\mid xy+z^{p-1}=0\}\subset \sll_{p-1}.$$
Since $V_{\sqrt{2p}\Z}$ is $C_2$-cofinite and thus $X_{V_{\sqrt{2p}\Z}}=\{\mathrm{pt}\}$, we obtain 
\begin{align}\label{computation of associated varieties}
(X_{\W_\ell(\sll_{p-1},f_{\mathrm{sub}})}\times X_{V_{\sqrt{2p}\Z}})/\!/\!/\mathrm{GL}_1\simeq (\mathcal{N}\cap \mathscr{S}_{f_{\mathrm{sub}}} \times \{\mathrm{pt}\})/\!/\!/\mathrm{GL}_1\simeq \{\mathrm{pt}\}.    
\end{align}
Hence, Conjecture \ref{rel semi-infinite conjecture} says that $\W(p)$ is strongly finitely generated and thus $C_2$-cofinite by \eqref{computation of associated varieties}.}
\end{example}

Conjecture \ref{rel semi-infinite conjecture} produces a concrete one. 
Let $L_k(\sll_n)$ be the simple affine vertex algebra for $\sll_n$ at the admissible level $k=-n+\frac{p}{q}$.
Then $X_{L_k(\sll_n)}$ is a certain nilpotent orbit closure $\overline{\mathbb{O}}_q$ depending on $q$ \cite{Ar3}. If $\overline{\mathbb{O}}_q$ is the minimal nilpotent orbit closure, then $\dim\overline{\mathbb{O}}_q=2(n-1)=2 \dim \h$ \cite{W}. On the other hand, when $k<0$, the decomposition \eqref{decomposition into Focks}: $L_k(\sll_n)\simeq \bigoplus_{\lambda\in Q} L_k(\sll_n)_\lambda\otimes \pi^k_{\h,\lambda}$ gives rise to a negative-definite rational lattice $V_{\ssqrt{1/k}Q}\simeq \bigoplus_{\lambda\in Q} \pi^k_{\h,\lambda}$. Therefore, by using $V_{\ssqrt{-1/k}N Q}$ and the successive $\mathrm{H}_{\mathrm{rel}}^0(\gl_1,\text{-})$ for $\gl_1\subset \h$, we obtain $V:=\mathrm{H}_{\mathrm{rel}}^0(\h,L_k(\sll_n)\otimes V_{\ssqrt{-1/k}N Q})$, which is in the setting of Conjecture \ref{rel semi-infinite conjecture}. Then we expect that $\dim X_V=0$ by \eqref{expected dimension} and thus $X_V=\{\mathrm{pt}\}$.
\begin{conjecture}
The following vertex algebras are $C_2$-cofinite:\\
\textup{(1)}
$\mathrm{H}_{\mathrm{rel}}^0(\h,L_k(\sll_2)\otimes V_{\sqrt{q(2q-p)}A_1})$ $(2q> p \geq 2,\ q\geq 2)$,\\
\textup{(2)}
$\mathrm{H}_{\mathrm{rel}}^0(\h,L_k(\sll_3)\otimes V_{\sqrt{2(6-p)}A_2})$ $(6> p \geq 3,\  q=3)$.
\end{conjecture}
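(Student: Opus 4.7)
The plan is to deduce the conjectured $C_2$-cofiniteness from the preceding Conjecture on relative semi-infinite cohomology, by carrying out the corresponding Hamiltonian quotient computation at the level of associated varieties. Taking $V = L_k(\sll_n)$ and $W = V_{\Lambda}$ with $\Lambda$ the specified rescaled root lattice, that conjecture predicts strong finite generation of $V \circ W$ together with
\begin{align*}
X_{V \circ W} \;\simeq\; X_{L_k(\sll_n)} \,/\!/\!/\, H,
\end{align*}
where we use $X_{V_{\Lambda}} = \{\mathrm{pt}\}$ since lattice VOAs are $C_2$-cofinite. By Arakawa's theorem, $X_{L_k(\sll_n)} = \overline{\mathbb{O}}_q \subseteq \sll_n$ is a nilpotent orbit closure, and in both cases $q \geq n$ forces $\overline{\mathbb{O}}_q = \mathcal{N}$, the full nilpotent cone. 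Thus $C_2$-cofiniteness reduces to verifying $\mathcal{N} \,/\!/\!/\, H = \{\mathrm{pt}\}$.

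For case (1) the computation is immediate. The Poisson algebra $\C[\mathcal{N}] = \C[e,h,f]/(h^2+4ef)$ has moment map $h$ for the Cartan $H = \C^{\times}$, and the scheme-theoretic zero fiber has coordinate ring $\C[e,f]/(ef)$. Since $H$ acts on $(e,f)$ with opposite nonzero weights, the invariant ring is generated by the weight-zero monomial $ef$, which already vanishes modulo the relation; hence
\begin{align*}
\bigl(\C[e,f]/(ef)\bigr)^{H} \;\simeq\; \C,
\end{align*}
yielding a single point as required.

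The main obstacle is case (2), where the analogous classical quotient of $\mathcal{N} \subset \sll_3$ by the rank-two Cartan torus is not a point but a two-dimensional variety. Writing a zero-diagonal matrix $X = (x_{ij})$ in $\sll_3$ and setting $A = x_{12}x_{21}$, $B = x_{13}x_{31}$, $C = x_{23}x_{32}$, $D = x_{12}x_{23}x_{31}$, $E = x_{13}x_{21}x_{32}$, the ring of $H$-invariants on $\mathcal{N} \cap \mathfrak{h}^{\perp}$ is generated by $A, B, C, D, E$ modulo the classical nilpotency relations $A+B+C = 0$, $D+E = 0$ (from $\tr X^2 = \tr X^3 = 0$) and the syzygy $DE = ABC$. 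Eliminating $C$ and $E$ yields the hypersurface
\begin{align*}
\C[A,B,D]\big/\bigl(D^{2} - AB(A+B)\bigr),
\end{align*}
of dimension two. To reconcile this with the conjectured $C_2$-cofiniteness one must go beyond the classical Hamiltonian quotient: either (i) exhibit extra relations in the BRST cohomology coming from singular vectors of the admissible simple quotient $L_k(\sll_3)$, refining the preceding conjecture in the non-semisimple setting, or (ii) identify the algebra in question with a known $C_2$-cofinite extension of a minimal $\W$-algebra at an admissible level via a Kazama--Suzuki-type coset, bypassing the preceding conjecture entirely---in the spirit of this paper's identification $H^{0}_{DS}(\mathrm{FT}_{p}(\sll_2)) \simeq \W(p)$, which transferred $C_2$-cofiniteness of $\W(p)$ to finite strong generation of $\FT(\sll_2)$.
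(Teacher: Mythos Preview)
This statement is labeled a \emph{Conjecture} in the paper, and the paper gives no proof---only the heuristic paragraph preceding it, which motivates the conjecture via the earlier associated-variety conjecture and the expected-dimension formula. So there is no ``paper's own proof'' to compare against; what you have written is an attempt to turn that heuristic into an argument.

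Your treatment of case~(1) does exactly this, correctly. Granting the earlier conjecture, the computation $\bigl(\C[e,f]/(ef)\bigr)^{\C^\times}\simeq\C$ is right and makes the paper's dimension count explicit: for $\sll_2$ the minimal orbit \emph{is} the full cone, so everything lines up.

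Your treatment of case~(2) is also correct, and it in fact exposes a tension already present in the paper's motivation. The paragraph before the conjecture explicitly invokes the \emph{minimal} nilpotent orbit, where $\dim\overline{\mathbb{O}}_q=2(n-1)=2\dim\h$ and the iterated dimension drop lands at zero. But for $\sll_3$ with $q=3$ Arakawa's theorem gives $\overline{\mathbb{O}}_q=\mathcal{N}$, the full cone of dimension $6$, not the minimal orbit of dimension $4$; so even the naive expected-dimension formula predicts $6-2\cdot 2=2$, not $0$. Your explicit computation of $\mathcal{N}/\!/\!/H\simeq\operatorname{Spec}\C[A,B,D]/(D^2-AB(A+B))$ confirms this at the level of the classical Hamiltonian reduction. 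In other words, the earlier conjecture alone does \emph{not} imply case~(2) as stated.

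Your proposed workarounds~(i) and~(ii) are sensible research directions, but as you acknowledge they are not proofs. The honest summary is: case~(1) follows from (and is essentially equivalent to, in this instance) the preceding conjecture; case~(2) does not, and remains a genuinely open conjecture requiring input beyond the associated-variety heuristic---either a refinement of that conjecture in the presence of admissible-level singular vectors, or an independent identification of the vertex algebra with something already known to be $C_2$-cofinite.
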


\section{The case $p=1$}\label{section: the case p=1}
We derive screening operators for $\mathrm{FT}_p(\sll_2)$ in the case $p=1$ to complete the excluded case in Theorem \ref{identificatio of affine FT}.
By Theorem \ref{FT(sl2,0)} and Corollary \ref{Property of Weyl modules}, we have
\begin{align}\label{p=1 sl2 FT algebra}
\mathrm{FT}_{1}(\sll_2)\simeq \bigoplus_{n\geq0}\C^{2n+1}\otimes \mathbb{V}^{-1}(n\alpha)\subset \beta\gamma\otimes V_{A_1} \subset \Pi[0]\otimes V_{A_1}
\end{align}
as $(\mathrm{SL}_2,V^{-1}(\sll_2))$-bimodules.
By setting $u(z)=\frac{1}{\sqrt{2}}\alpha(z)$, $\mathcal{M}(2)$ is defined as
\begin{align}\label{p=2 singlet}
    \mathcal{M}(2)=\mathrm{Ker}_{\pi^u}\int Y(e^{-u},z)dz
\end{align}
and is strongly generated by 
\begin{align*}
    L:=\omega_{1,2}=\tfrac{1}{2}u^2+\tfrac{1}{2}\partial u,\quad W=\tfrac{1}{3}u^3+\tfrac{1}{2}u \partial u +\tfrac{1}{12}\partial^2 u
\end{align*}
(\cite{A2}). 
The element $W$ is primary of conformal weight $3$ and satisfies the OPE
\begin{align*}
    W(z)W(w)\sim& \frac{-1}{(z-w)^6}+\frac{3L(w)}{(z-w)^4}+\frac{\tfrac{3}{2}\partial L(w)}{(z-w)^3}\\
    &+\frac{-\frac{3}{4}\partial^2L(w)+4:L(w)^2:}{(z-w)^2}+\frac{-\tfrac{1}{6}\partial^3 L(w)+4 :\partial L(w)\cdot L(w):}{(z-w)}.
\end{align*}
We change the basis of the Cartan subspace of $\Pi[0]\otimes V_{A_1}$ as 
\begin{align*}
    u_1=-u,\quad u_2=v+\alpha,\quad h=-2v-\alpha.
\end{align*}
Then we have an isomorphism 
\begin{align}\label{free field decomposition}
    \Pi[0]\otimes V_{A_1}\simeq \bigoplus_{n+m\equiv 0} \pi^{u_1}_{-n}\otimes \pi^{u_2}_{-m}\otimes \pi^{h}_{(n+m)\varpi}
\end{align}
where the summation runs over $n,m\in \Z$ with $n+m\equiv 0$ modulo $2$.
It is straightforward to see we have the sequence of embeddings
\begin{align*}
    \mathcal{M}(2)\otimes \mathcal{M}(2)\otimes \pi^h\subset \mathrm{FT}_{1}(\sll_2)\subset \Pi[0]\otimes V_{A_1}.
\end{align*}
As elements in $\mathrm{FT}_{1}(\sll_2)$, the generators $L_i,\ W_i$ ($i=1,2$) of the $i$-th factor of $\mathcal{M}(2)$ are expressed as
\begin{align*}
    &L_1=\tfrac{1}{2}\left(\tfrac{1}{2}h^2+ef-\tfrac{1}{2}\partial h \right)-\tfrac{1}{4}X_{11},\quad W_1=\tfrac{1}{12}A-\tfrac{1}{24} B,\\
    &L_2=\tfrac{1}{2}\left(\tfrac{1}{2}h^2+ef-\tfrac{1}{2}\partial h \right)+\tfrac{1}{4}X_{11},\quad W_2=\tfrac{1}{12}A+ \tfrac{1}{24} B,\\
\end{align*}
where we set
\begin{align*}
    A=2h^3+h \partial h+7\partial^2 h-15(\partial e\cdot f- \partial f\cdot e) -6 e f h ,\quad B=2f x_{01}-4 h x_{11}-e x_{21}.
\end{align*}

We decompose $\mathrm{FT}_1(\sll_2)$ as a $\mathcal{M}(2)\otimes \mathcal{M}(2)\otimes \pi^h$-module. 
Recall that the simple $\mathcal{M}(2)$-modules $\mathcal{M}_{s}:=\mathcal{M}_{1,s+1}$ in \S \ref{subsection: (1,p)-model} are simple currents with the fusion rules
\begin{align}\label{fusion of singlet modules}
    \mathcal{M}_n\boxtimes \mathcal{M}_m \simeq \mathcal{M}_{n+m}
\end{align}
by \cite[Theorem 1.1]{CMY} and \eqref{singlet decompsoition} is equivalent to 
\begin{align}\label{Short exact sequence for P=1}
    0\rightarrow \mathcal{M}_n\rightarrow \pi^u_{-n}\rightarrow \mathcal{M}_{n+1}\rightarrow 0.
\end{align}

\begin{proposition}\label{decomposition for p=1} \hspace{0mm}\\
\textup{(1)} There is an isomorphism of $\mathcal{M}(2)\otimes \mathcal{M}(2)\otimes \pi^h$-modules
\begin{align*}
    &\mathrm{FT}_1(\sll_2)\simeq \bigoplus_{n+m\equiv 0} \mathcal{M}_n\otimes \mathcal{M}_m\otimes \pi^{h}_{(n+m)\varpi}.
\end{align*}
\textup{(2)} There is an isomorphism of vertex algebras
\begin{align*}
    &\mathrm{FT}_1(\sll_2)\simeq \bigcap_{i=1,2} \mathrm{Ker} S_i\subset \Pi[0]\otimes V_{A_1},
\end{align*}
where we set 
\begin{align*}
   S_1=\int Y(e^{u},z)dz,\quad S_2=\int Y(e^{-(v+\alpha)},z)dz.
\end{align*}
\end{proposition}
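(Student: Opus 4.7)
The plan is to prove (1) and (2) simultaneously by establishing the master equality
$\mathrm{FT}_1(\sll_2)=\bigcap_{i=1,2}\mathrm{Ker}\,S_i$
inside $\Pi[0]\otimes V_{A_1}$ via a Fock tensor decomposition. First, the triple $\{u_1,u_2,h\}$ is an orthogonal basis of the three-dimensional Cartan of $\Pi[0]\otimes V_{A_1}$ with $(u_1,u_1)=(u_2,u_2)=1$, $(h,h)=-2$, and vanishing cross inner products. Parametrizing the lattice $\Z(u+v)\oplus\Z\alpha$ by $(n,m)=(p,p-2q)$ (which forces $n\equiv m\pmod 2$) then gives the Fock decomposition
\begin{align*}
\Pi[0]\otimes V_{A_1}\;\cong\;\bigoplus_{n+m\equiv 0}\pi^{u_1}_{-nu_1}\otimes\pi^{u_2}_{-mu_2}\otimes\pi^h_{(n+m)\varpi}.
\end{align*}
Since $e^u=e^{-u_1}$ and $e^{-(v+\alpha)}=e^{-u_2}$ each involve only one orthogonal factor, each $S_i$ acts as (a screening on $\pi^{u_i}$)$\otimes\mathrm{id}\otimes\mathrm{id}$. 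By Proposition \ref{description as virasoro} (the case $r=1$ of the singlet algebra $\mathcal{M}(2)$), $\mathrm{Ker}\,S_i|_{\pi^{u_i}_{-nu_i}}\cong\mathcal{M}_n$, hence
\begin{align*}
\bigcap_{i=1,2}\mathrm{Ker}\,S_i=\bigoplus_{n+m\equiv 0}\mathcal{M}_n\otimes\mathcal{M}_m\otimes\pi^h_{(n+m)\varpi},
\end{align*}
which is exactly the right-hand side of (1). Thus the master equality yields both claims.

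For the inclusion $\mathrm{FT}_1(\sll_2)\subseteq\bigcap_i\mathrm{Ker}\,S_i$: since $S_1$ coincides with the FMS screening $Q_{\mathrm{FMS}}$ by inspection, the realization \eqref{FMS} gives $\mathrm{FT}_1(\sll_2)\subseteq \beta\gamma\otimes V_{A_1}=\mathrm{Ker}\,S_1$. For $S_2$, the residue computation $S_2(e^{-n\alpha})=0$ follows from the regularity $(-v-\alpha,-n\alpha)=2n\geq 0$, while the simple-pole OPE
$Y(e^{-v-\alpha},z)Y(e^{u+v+\alpha},w)\sim (z-w)^{-1}Y(e^u,w)$
yields the key operator identity $[S_2,Q_+]=S_1$; since $S_1$ vanishes on $\mathrm{FT}_1(\sll_2)$, this shows $S_2$ commutes with the $\mathrm{SL}_2$-lowering operator $Q_+$ there. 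Commutation with $e$ and $h$ follows from the vanishing of the respective OPE poles (using $(-v-\alpha,u+v)=1$ and $(-v-\alpha,h)=0$), and a direct Wakimoto calculation gives $S_2(f)=0$. Combined with the $(\mathrm{SL}_2,V^{-1}(\sll_2))$-bimodule decomposition $\mathrm{FT}_1(\sll_2)\cong\bigoplus_{n\text{ even}}L(n\varpi)\otimes\mathbb{V}^{-1}(n\varpi)$ of Corollary \ref{Property of Weyl modules}, in which each summand is generated under the bimodule action by $e^{-n\alpha}$, these imply $S_2|_{\mathrm{FT}_1(\sll_2)}=0$.

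The reverse inclusion is established by character matching. The character of $\bigoplus_{n+m\equiv 0}\mathcal{M}_n\otimes\mathcal{M}_m\otimes\pi^h_{(n+m)\varpi}$ is assembled from the singlet-module characters of Proposition \ref{description as virasoro} and Heisenberg Fock characters, while the character of $\mathrm{FT}_1(\sll_2)$ is given by the Borel--Weil-type sum $\sum_{n\text{ even}}\chi_{L(n\varpi)}\cdot\ch\mathbb{V}^{-1}(n\varpi)$ with the affine characters supplied by Corollary \ref{character formula for sl2}; equality reduces to a lattice theta identity. The main technical obstacle will be the explicit verification $S_2(f)=0$, which requires careful normal-ordering in the FMS form of the Wakimoto $f$, together with the combinatorial character identity in the matching step.
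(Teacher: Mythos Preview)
Your approach is correct but genuinely different from the paper's. The paper proves (1) first and derives (2) as a consequence: using the strong generation result Theorem~\ref{theorem: associated variety}, it simply checks that each generator $e,f,x_{ij}$ lies in a specific simple-current piece $N_{n,m}=\mathcal{M}_n\otimes\mathcal{M}_m\otimes\pi^h_{(n+m)\varpi}$ (e.g.\ $e\in N_{1,1}$, $f\in N_{-1,-1}$, $x_{ij}\in N_{2-i-j,\,i-j}$), and then the fusion rules $\mathcal{M}_n\boxtimes\mathcal{M}_m\simeq\mathcal{M}_{n+m}$ from \cite{CMY} give both inclusions at once, since normally ordered products stay inside $\bigoplus N_{n,m}$ and the generators together with $N_{0,0}$ span the full index lattice. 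Part (2) then drops out of (1) and the socle sequence \eqref{Short exact sequence for P=1}.

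You instead prove (2) directly: the forward inclusion via the commutator identity $[S_2,Q_+]=S_1$ plus $S_2(e)=S_2(h)=S_2(f)=0$, and the reverse inclusion via character matching. This has the advantage of not invoking the tensor-category fusion rules from \cite{CMY}, at the cost of the two computations you flag. The verification $S_2(f)=0$ is genuinely a normal-ordering exercise in the FMS picture; it works but is not one line. The character identity is also true but is a nontrivial $q$-series computation. You could shortcut the reverse inclusion: once $\mathrm{FT}_1(\sll_2)\subseteq\bigoplus N_{n,m}$ is established and you already know $\mathcal{M}(2)^{\otimes 2}\otimes\pi^h\subset\mathrm{FT}_1(\sll_2)$, the left side is a sub-direct-sum of simple $N_{n,m}$'s, so it suffices to exhibit one nonzero element in each---and locating $e,f,e^{-n\alpha},Q_+^a e^{-n\alpha}$ in their respective Fock pieces (which you essentially do anyway) covers the whole lattice without any character identity. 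That hybrid would make your argument both self-contained and short.
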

\proof
(1) Set $N_{n,m}:=\mathcal{M}_n\otimes \mathcal{M}_m\otimes \pi^{h}_{(n+m)\varpi}$. It is straightforward to show that the strong generators of $\mathrm{FT}_1(\sll_2)$ satisfy
$$e\in N_{1,1},\quad f\in N_{-1,-1},\quad x_{ij}\in N_{2-i-j,i-j},\quad (0\leq i,j\leq 2).$$
Hence, \eqref{fusion of singlet modules} implies the assertion.
(2) By (1) and \eqref{Short exact sequence for P=1}, we have
\begin{align*}
    &\mathrm{FT}_1(\sll_2)\simeq \bigcap_{i=1,2} \mathrm{Ker} \widetilde{S}_i\subset \Pi[0]\otimes V_{A_1},\quad \widetilde{S}_i=\int Y(e^{-u_i},z)dz.
\end{align*}
Then we obtain the assertion by using \eqref{free field decomposition}.
\endproof
Since the simplicity of vertex algebras is preserved under simple current extensions, the above proposition gives an alternative proof for the simplicity of $\mathrm{FT}_1(\sll_2)$. 

Similarly, by comparing \eqref{FMS} with \eqref{p=2 singlet}, we find a conformal embedding $\mathcal{M}(2)\otimes \pi^{v}\hookrightarrow \beta\gamma$. Since $\beta$ generates $\mathcal{M}_1\otimes \pi^v_{-1}$ and $\gamma$ does $\mathcal{M}_{-1}\otimes \pi^v_{1}$, we obtain the decomposition 
\begin{align}\label{decomoposition of betagamma}
    \beta\gamma\simeq \bigoplus_{n\in \Z} \mathcal{M}_n\otimes \pi^v_{-n}
\end{align}
as $\mathcal{M}(2)\otimes \pi^v$-modules.
Let us decompose \eqref{p=1 sl2 FT algebra} in terms of the weight of $\mathrm{B}^{\mathrm{aff}}$-action (i.e. $(-\alpha_0)$-grading) on $\C^{2n+1}$. Then we have
\begin{align*}
    \mathrm{FT}_1(\sll_2)=\bigoplus_{n\in \Z} \mathrm{FT}_1^{n\alpha}(\sll_2),\quad \mathrm{FT}_1^{0}(\sll_2)\simeq\bigoplus_{n\geq0}\mathbb{V}_{n\alpha}^{-1}.
\end{align*}
Since the restriction of \eqref{free field decomposition} for the trivial $(-\alpha_0)$-grading is exactly the part $n=m$, Proposition \ref{decomposition for p=1} implies 
\begin{align}\label{decomposition of affine singlet for P=1}
    \mathrm{FT}_1^{0}(\sll_2)\simeq \bigoplus_{n\in \Z} \mathcal{M}_n\otimes \mathcal{M}_n\otimes \pi^{h}_{n\alpha}.
\end{align}
The descriptions of $\mathrm{FT}_1(\sll_2), \mathrm{FT}_1^{0}(\sll_2)$ and $\beta\gamma$ as extensions from $\mathcal{M}(2)$ and Heisenberg vertex algebras give some relations among them.
\begin{corollary}\hspace{0mm}\\
\textup{(1)} 
Set $z=v\otimes 1-1\otimes v$ inside $\beta\gamma^{\otimes 2}$ via \eqref{decomoposition of betagamma}. Then we have an isomorphism of vertex algebras
\begin{align*}
    \mathrm{FT}_1^{0}(\sll_2)\simeq \mathrm{Com}\left(\pi^z, \beta\gamma^{\otimes 2}\right).
\end{align*}
\textup{(2)} 
Set $y=z\otimes 1+1\otimes \alpha$ inside $\beta\gamma^{\otimes 2}\otimes V_{A_1}$. 
Then we have an isomorphism of vertex algebras
\begin{align*}
    \mathrm{FT}_1(\sll_2)\simeq H_{\mathrm{rel}}^{\frac{\infty}{2}+0}(\mathfrak{gl}_1,  \beta\gamma^{\otimes 2}\otimes V_{A_1}).
\end{align*}
\end{corollary}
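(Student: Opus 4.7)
The plan is to decompose both sides as simple current extensions of the common subalgebra $\mathcal{M}(2)\otimes \mathcal{M}(2)\otimes \pi^h$ and to match the decompositions block by block.

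For (1), I would tensor \eqref{decomoposition of betagamma} with itself to write
\begin{align*}
\beta\gamma^{\otimes 2}\simeq \bigoplus_{n,m\in \Z}\mathcal{M}_n\otimes \mathcal{M}_m\otimes \pi^{v_1}_{-n}\otimes \pi^{v_2}_{-m},
\end{align*}
where $v_i$ denotes the Heisenberg generator $v$ in the $i$-th $\beta\gamma$-factor. I would then diagonalize the rank-two Heisenberg $\pi^{v_1}\otimes \pi^{v_2}$ via the orthogonal change of basis $z=v_1-v_2$, $w=v_1+v_2$, which satisfies $(z,z)=(w,w)=-2$ and $(z,w)=0$, so that $\pi^{v_1}\otimes\pi^{v_2}\simeq \pi^z\otimes \pi^w$ and each Fock $\pi^{v_1}_{-n}\otimes\pi^{v_2}_{-m}$ decomposes into a product of a Fock module of $z$ of weight proportional to $n-m$ and one of $w$ of weight proportional to $n+m$. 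Taking the coset with $\pi^z$ retains only the summands of trivial $z_0$-weight, i.e.\ $n=m$, yielding
\begin{align*}
\mathrm{Com}(\pi^z,\beta\gamma^{\otimes 2})\simeq \bigoplus_{n\in \Z}\mathcal{M}_n\otimes \mathcal{M}_n\otimes \pi^w_{[2n]},
\end{align*}
where $\pi^w_{[2n]}$ denotes the Fock of $w_0$-weight $2n$. Identifying the rank-one Heisenberg $\pi^w$ at level $-2$ with $\pi^h$ and matching weights with $\pi^h_{n\alpha}$, this reproduces \eqref{decomposition of affine singlet for P=1}.

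For (2), I would apply the formula $V\circ W\simeq \bigoplus_{n+m=0}V_n\otimes W_m$ described in \S\ref{section: associated variety} to $V=\beta\gamma^{\otimes 2}$ with Heisenberg $A=z$ at level $-2$ and $W=V_{A_1}$ with $A^\dagger=\alpha$ at level $2$; then $y=z+\alpha$ is commutative at level $0$ as required. Combining the decomposition of $V$ above with $V_{A_1}=\bigoplus_{k\in\Z}\pi^\alpha_{k\alpha}$, the $y_0$-weight-zero condition translates to $n-m+2k=0$, equivalently $n+m\in 2\Z$, and the resulting direct sum matches the decomposition of $\mathrm{FT}_1(\sll_2)$ in Proposition \ref{decomposition for p=1}(1) under the identification $\pi^w\simeq \pi^h$.

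The main obstacle will be upgrading these module-level identifications to isomorphisms of vertex (super)algebras. Both sides are simple current extensions of $\mathcal{M}(2)^{\otimes 2}\otimes \pi^h$ by the same indexed family of simple-current modules $\mathcal{M}_n\otimes \mathcal{M}_m\otimes \pi^h_{(n+m)\varpi}$. Since the target vertex algebras on both sides are simple, the fusion $\mathcal{M}_n\boxtimes \mathcal{M}_m\simeq \mathcal{M}_{n+m}$ (\cite[Theorem 1.1]{CMY}) together with the Heisenberg grading should determine the simple-current extension up to isomorphism; the relevant cohomological obstruction vanishes trivially in (1) since the grading group is cyclic, and in (2) it is pinned down by the existence of both extensions. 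Alternatively, one can construct the isomorphism explicitly by mapping the strong generators $e,h,f,x_{ij}$ of $\mathrm{FT}_1(\sll_2)$ from Theorem \ref{theorem: associated variety} to concrete elements in the prescribed weight spaces on the other side and verifying the OPEs directly.
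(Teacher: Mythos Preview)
Your proposal is correct and takes essentially the same approach as the paper: the paper's entire proof reads ``Straightforward by using the uniqueness of simple vertex algebra structure of simple current extensions,'' and your argument spells out exactly this—decompose both sides as $\mathcal{M}(2)^{\otimes 2}\otimes \pi^h$-modules via \eqref{decomoposition of betagamma} and \eqref{decomposition of affine singlet for P=1}/Proposition~\ref{decomposition for p=1}, observe that the graded pieces agree, and conclude by uniqueness of the simple vertex algebra structure on a simple current extension. Your remark that the grading group in (1) is cyclic (so the obstruction is automatic) and that in (2) existence on both sides pins things down is a reasonable gloss on that uniqueness principle.
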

\proof
Straightforward by using the uniqueness of simple vertex algebra structure of simple current extensions.
\endproof
Let us describe the projective cover $\mathcal{P}_{\mathrm{FT}_1(\sll_2)}$ of $\mathrm{FT}_1(\sll_2)$. 
Let $\mathrm{Rep}^{\mathrm{wt}}(\mathrm{FT}_1(\sll_2))$ be the category of finite-length weight modules over $\mathrm{FT}_1(\sll_2)$, see \S \ref{Correspondence of categories}, for a precise definition.
Recall that $\mathrm{FT}_1(\sll_2)$ is a simple current extension of $\mathcal{M}(2)\otimes \mathcal{M}(2)\otimes \pi^h$ by Proposition \ref{decomposition for p=1}.
Let $\mathcal{O}_{\mathcal{M}(2)}$ denote the category of finite-length grading-restricted generalized $\mathcal{M}(2)$-modules. It is a braided tensor category and agrees with the category of $C_1$-cofinite grading-restricted generalized $\mathcal{M}(2)$-modules. 
The previous simple modules $\mathcal{M}_n$ are objects in $\mathcal{O}_{\mathcal{M}(2)}$ and indeed simple currents satisfying \eqref{fusion of singlet modules}. 
Since $\mathcal{O}_{\mathcal{M}(2)}$ does not have enogh projectives, we restrict to the braided tensor subcategory $\mathcal{O}_{\mathcal{M}(2)}^T$, that is, the full subcategory of $\mathcal{O}_{\mathcal{M}(2)}$ consisting of objects $M$ such that the monodromy operator $\mathscr{M}_{\mathcal{M}_1,M}$ is semisimple on $\mathcal{M}_1\boxtimes M$. 
Then $\mathcal{O}_{\mathcal{M}(2)}^T$ has enough projects, see \cite{CMY,CMY3}. 

Let $\mathrm{Rep}^{\mathrm{wt}}(\pi^h)$ denote the category of Fock modules over $\pi^h$ 
and introduce the category $\mathcal{D}:=\mathcal{D}(\mathcal{M}(2),\mathcal{M}(2), \pi^h)$ consisting of finite-length grading-restricted generalized modules $M$ over $\mathcal{M}(2)\otimes \mathcal{M}(2)\otimes \pi^h$ 
such that the submodules over $\mathcal{M}(2)$ (resp.\ $\pi^h$) generated by $m\in M$ lies in $\mathcal{O}_{\mathcal{M}(2)}^T$ (resp.\ $\mathrm{Rep}^{\mathrm{wt}}(\pi^h)$).
Then $\mathcal{D}$ is a braided tensor category and is naturally equivalent to the Deligne product $\mathcal{O}_{\mathcal{M}(2)}^T\boxtimes \mathcal{O}_{\mathcal{M}(2)}^T \boxtimes \mathrm{Rep}^{\mathrm{wt}}(\pi^h)$ by \cite[Theorem 3.15, 4.11]{M}. 
Now, $\mathrm{FT}_1(\sll_2)=\bigoplus_{n+m0}N_{n,m}$ is an object of the ind-completion $\mathrm{Ind}(\mathcal{D})$ of $\mathcal{D}$.
Since $\mathcal{O}_{\mathcal{M}(2)}^T$ is a subcategory of $\mathcal{O}_{\mathcal{M}(2)}$ consisting of $C_1$-cofinite modules, it is striaghtforward to show that $\mathrm{Ind}(\mathcal{D})$ is a braided tensor category by \cite{CMY2}.
Now, let $\mathrm{Ind}(\mathcal{D})^0\subset \mathrm{Ind}(\mathcal{D})$ denote the full braided tensor subcateogry consisting of modules $M$ satisfying $\mathscr{M}_{N_{n,m},M}=\mathrm{id}_{N{n,m}\boxtimes M}$ 
and $\mathrm{Rep}^\mathcal{D}(\mathrm{FT}_1(\sll_2))$ be the braided tensor category of $\mathrm{FT}_1(\sll_2)$-modules objects in $\mathrm{Ind}(\mathcal{D})$, or equivalently, $\mathrm{FT}_1(\sll_2)$-modules which lie in $\mathrm{Ind}(\mathcal{D})$ as $\mathcal{M}(2)\otimes \mathcal{M}(2)\otimes \pi^h$-modules.
By \cite[Lemma 3.1]{CMY4}, $\mathrm{Ind}(\mathcal{D})^0$ agrees with the ind-completion $\mathrm{Ind}(\mathcal{D}^0)$ of $\mathcal{D}^0=\mathcal{D}\cap \mathrm{Ind}(\mathcal{D})^0$.
Then we have a  braided tensor functor
\begin{align*}
    \mathrm{Ind}(\bullet)= \mathrm{FT}_1(\sll_2)\boxtimes \bullet \colon \mathrm{Ind}(\mathcal{D}^0)\rightarrow \mathrm{Rep}^\mathcal{D}(\mathrm{FT}_1(\sll_2))
\end{align*}
called the induction functor \cite{CKM,CMY2}.
By the same proof as \cite[Lemma 3.12]{CMY3}, one can show that $\mathrm{Rep}^{\mathrm{wt}}(\mathrm{FT}_1(\sll_2))$ is a subcategory of $\mathrm{Rep}^\mathcal{D}(\mathrm{FT}_1(\sll_2))$ and the functor $\mathrm{Ind}$ restricts to 
\begin{align*}
    \mathrm{Ind}(\bullet)
\colon \mathcal{D}^0\rightarrow \mathrm{Rep}^{\mathrm{wt}}(\mathrm{FT}_1(\sll_2)).
\end{align*}
The projective cover $\mathcal{P}_{\mathrm{FT}_1(\sll_2)}$ inside $\mathrm{Rep}^{\mathrm{wt}}(\mathrm{FT}_1(\sll_2))$ can be constructed from the projective cover $\mathcal{P}$ of 
$\mathcal{M}(2)$ in $\mathcal{O}_{\mathcal{M}(2)}^T$ \cite[Theorem 5.1.3]{CMY}, which is an indecomposable module with the following Loewy diagram: 
\begin{center}
\begin{tikzpicture}[x=2mm,y=2mm]
\useasboundingbox (0,0) rectangle (30,10);
% Object
\node (A) at (5,10) {$\bullet$ };
\node (B1) at (0,5) {$\lozenge$}; \node (B2) at (10,5) {$\blacklozenge$};
\node (C) at (5,0) {$\circ$};
\draw [line width=1pt, blue] (A) -- (B1);
\draw [line width=1pt, red] (A) -- (B2);
\draw [line width=1pt, red] (B1) -- (C); 
\draw [line width=1pt, blue] (B2) -- (C);
\draw (5,5) node {$\mathcal{P}$};
\draw (25,5) node {$\begin{array}{ccc}\bullet, \circ &\simeq& \mathcal{M}(2)\\ \lozenge&\simeq &\mathcal{M}_{-1} \\ \blacklozenge&\simeq &\mathcal{M}_1 \end{array}$.};
\end{tikzpicture}
\end{center}
The diagram reads as follows: the socle of $\mathcal{P}$ is $\circ\simeq \mathcal{M}(2)$, the socle of the quotient $\mathcal{P}/ \circ$ is $\lozenge\oplus \blacklozenge\simeq \mathcal{M}_{-1}\oplus \mathcal{M}_1$, and the top $\bullet\simeq \mathcal{M}(2)$ is the head, i.e. the maximal semisimple quotient of $\mathcal{P}$.
\begin{proposition}
The module $\mathcal{P}\otimes \mathcal{P}\otimes \pi^h$ lies in $\mathcal{D}^0$ and  the induced module $\mathrm{Ind}(\mathcal{P}\otimes \mathcal{P}\otimes \pi^h)$ is a projective cover of $\mathrm{FT}_1(\sll_2)$ in $\mathrm{Rep}^{\mathrm{wt}}(\mathrm{FT}_1(\sll_2))$.
\end{proposition}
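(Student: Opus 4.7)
The plan is to verify the locality condition and then invoke the standard induction machinery for simple-current extensions of vertex operator algebras to deduce projectivity and the cover property. First I would show that $\mathcal{P}\otimes \mathcal{P}\otimes \pi^h$ lies in $\mathcal{D}^0$, i.e., that the monodromy with every simple-current summand $N_{n,m}$ of $\mathrm{FT}_1(\sll_2)$ is trivial. Under the Deligne product decomposition $\mathcal{D}\simeq \mathcal{O}_{\mathcal{M}(2)}^T\boxtimes \mathcal{O}_{\mathcal{M}(2)}^T\boxtimes \mathrm{Rep}^{\mathrm{wt}}(\pi^h)$, the monodromy $\mathscr{M}_{N_{n,m},\,\mathcal{P}\otimes \mathcal{P}\otimes \pi^h}$ factorises as a product of monodromies in the three factors. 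The Heisenberg factor contributes a scalar determined by the standard bilinear pairing on $\pi^h$, while the two singlet factors act semisimply on $\mathcal{P}$ by the very definition of $\mathcal{O}_{\mathcal{M}(2)}^T$, and the acting scalars coincide with those on the simple composition factors $\mathcal{M}(2),\mathcal{M}_{\pm 1}$ of $\mathcal{P}$. Since $\mathrm{FT}_1(\sll_2)$ is itself a local extension, the scalars arising from $\mathscr{M}_{N_{n,m},N_{a,b}}$ already cancel; the same cancellation then applies verbatim on each composition factor of $\mathcal{P}\otimes \mathcal{P}\otimes \pi^h$, yielding triviality of the total monodromy.

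Next I would use the fact that the induction functor $\mathrm{Ind}=\mathrm{FT}_1(\sll_2)\boxtimes \bullet$ is exact and admits the restriction functor as an exact right adjoint, so $\mathrm{Ind}$ preserves projectives. The external tensor product $\mathcal{P}\otimes \mathcal{P}\otimes \pi^h$ is projective in $\mathcal{D}^0$ because $\mathcal{P}$ is projective in $\mathcal{O}_{\mathcal{M}(2)}^T$ by \cite[Theorem 5.1.3]{CMY} and every Fock module is projective in $\mathrm{Rep}^{\mathrm{wt}}(\pi^h)$ (the category being semisimple on objects of different highest weights). Therefore $\mathrm{Ind}(\mathcal{P}\otimes \mathcal{P}\otimes \pi^h)$ is projective in $\mathrm{Rep}^{\mathcal{D}}(\mathrm{FT}_1(\sll_2))$, and an argument analogous to \cite[Lemma 3.12]{CMY3} together with the inclusion $\mathrm{Rep}^{\mathrm{wt}}(\mathrm{FT}_1(\sll_2))\subseteq \mathrm{Rep}^{\mathcal{D}}(\mathrm{FT}_1(\sll_2))$ identifies it as projective in $\mathrm{Rep}^{\mathrm{wt}}(\mathrm{FT}_1(\sll_2))$ as well.

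To conclude with the cover property, the essential surjection $\mathcal{P}\twoheadrightarrow \mathcal{M}(2)$ in $\mathcal{O}_{\mathcal{M}(2)}^T$ gives an essential surjection $\mathcal{P}\otimes \mathcal{P}\otimes \pi^h\twoheadrightarrow \mathcal{M}(2)\otimes \mathcal{M}(2)\otimes \pi^h$ in $\mathcal{D}^0$, since the head of a Deligne product is the Deligne product of the heads and $\pi^h$ has simple head equal to itself. Applying the exact functor $\mathrm{Ind}$ produces a surjection
\begin{equation*}
\mathrm{Ind}(\mathcal{P}\otimes \mathcal{P}\otimes \pi^h)\twoheadrightarrow \mathrm{Ind}(\mathcal{M}(2)\otimes \mathcal{M}(2)\otimes \pi^h)=\mathrm{FT}_1(\sll_2),
\end{equation*}
whose kernel is contained in the radical by exactness of restriction plus essentiality downstairs; this identifies the left-hand side as a projective cover of $\mathrm{FT}_1(\sll_2)$.

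\textbf{Main obstacle.} The principal technical point is Step 1: although locality of $\mathrm{FT}_1(\sll_2)$ as a simple-current extension forces cancellation of monodromy scalars on the simple composition factors $\mathcal{M}_0,\mathcal{M}_{\pm 1}$ of $\mathcal{P}$, propagating this cancellation to the full non-semisimple module $\mathcal{P}$ requires a careful use of naturality of the braiding together with the fact that the monodromy with a simple current acts by a single scalar on each summand of $\mathcal{P}$ after passage to $\mathcal{O}_{\mathcal{M}(2)}^T$. This is exactly the content that distinguishes $\mathcal{O}_{\mathcal{M}(2)}^T$ from the larger category $\mathcal{O}_{\mathcal{M}(2)}$, and it is the step where the explicit ribbon structure on $\mathcal{O}_{\mathcal{M}(2)}$ from \cite{CMY} must be invoked.
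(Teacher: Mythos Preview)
Your overall architecture (check locality, then projectivity via Frobenius reciprocity, then the cover property via the surjection to the simple head) matches the paper's, and Steps~2--3 are essentially the same argument the paper gives. The issue is in Step~1.

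Your locality argument does not go through as written. You claim that, because $\mathrm{FT}_1(\sll_2)$ is a local extension, the monodromy scalars cancel on the summands $N_{a,b}$, and that ``the same cancellation then applies verbatim on each composition factor of $\mathcal{P}\otimes\mathcal{P}\otimes\pi^h$.'' But the composition factors of $\mathcal{P}\otimes\mathcal{P}\otimes\pi^h$ are $\mathcal{M}_a\otimes\mathcal{M}_b\otimes\pi^h_0$ with $a,b\in\{-1,0,1\}$; these have Heisenberg weight $0$, not $(a+b)\varpi$, and $a+b$ can be odd. They are \emph{not} among the $N_{a,b}$, so the locality of the extension says nothing about them directly. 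For instance, for the composition factor $\mathcal{M}_1\otimes\mathcal{M}_0\otimes\pi^h_0$ the monodromy with $N_{1,1}$ is $\mathscr{M}_{\mathcal{M}_1,\mathcal{M}_1}\cdot\mathscr{M}_{\mathcal{M}_1,\mathcal{M}_0}\cdot 1=\mathscr{M}_{\mathcal{M}_1,\mathcal{M}_1}$, and locality of the extension alone only forces this to be $\pm1$, not $1$.

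The paper circumvents this by working with the generators $N_{2,0}$ and $N_{1,1}$ of the simple-current group and using indecomposability of $\mathcal{P}$ rather than locality. Since $\mathcal{M}_1$ is invertible, $\mathcal{M}_1\boxtimes\mathcal{P}$ is indecomposable; the semisimple endomorphism $\mathscr{M}_{\mathcal{M}_1,\mathcal{P}}$ must then be a single scalar $c$, and $\mathscr{M}_{\mathcal{M}_2,\mathcal{P}}=c^2$. Evaluating on the socle $\mathcal{M}_0=\mathcal{M}(2)$ (where the monodromy with anything is trivial) gives $c^2=1$, hence $\mathscr{M}_{\mathcal{M}_2,\mathcal{P}}=\mathrm{id}$ and $\mathscr{M}_{\mathcal{M}_1\otimes\mathcal{M}_1,\mathcal{P}\otimes\mathcal{P}}=c^2=\mathrm{id}$ regardless of the sign. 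You actually identify this mechanism in your ``Main obstacle'' paragraph (single scalar on $\mathcal{P}$ in $\mathcal{O}_{\mathcal{M}(2)}^T$), but your main argument does not use it and instead makes the incorrect appeal to locality on composition factors.

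A minor point on Step~3: your essentiality claim (``kernel contained in the radical by exactness of restriction plus essentiality downstairs'') tacitly uses that $\mathrm{Ind}(\mathcal{P}\otimes\mathcal{P}\otimes\pi^h)$ has simple head. The paper verifies this by exhibiting the full Loewy diagram of the induced module, using that $\mathrm{Ind}$ preserves simples; you should at least note that $\mathrm{Ind}$ sends simples to simples here, so the image of the radical has no simple quotient other than $\mathrm{FT}_1(\sll_2)$.
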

\proof 
Since the group of simple currents appearing in $\mathrm{FT}_1(\sll_2)$ is generated by $N_{2,0}$, $N_{1,1}$ (see the proof of Proposition \ref{decomposition for p=1}), it suffices to show $\mathcal{P}\otimes \mathcal{P}\otimes \pi^h$ is monodromy-free for them. Since $\mathcal{P}$ lies in $\mathcal{O}_{\mathcal{M}(2)}^T$, it satisfies 
$\mathscr{M}_{\mathcal{M}_2,\mathcal{P}}=\mathrm{id}_{\mathcal{M}_2\boxtimes \mathcal{P}}$, which implies the monodromy-freeness for $N_{2,0}$. Since $\mathcal{P}$ is indecomposable, $\mathcal{M}_1\boxtimes \mathcal{M}_1\simeq \mathcal{M}_2$ implies $\mathscr{M}_{\mathcal{M}_1,\mathcal{P}}=\pm\mathrm{id}_{\mathcal{M}_\boxtimes \mathcal{P}}$ by \cite[Proposition 7]{CGNS}. In either case, 
$\mathscr{M}_{\mathcal{M}_1\otimes \mathcal{M}_1,\mathcal{P}\otimes \mathcal{P}}=\mathrm{id}_{(\mathcal{M}_1\otimes \mathcal{M}_1)\boxtimes (\mathcal{P}\otimes \mathcal{P})}$ and thus the monodromy-freeness for $N_{1,1}$. Therefore, we have shown the first assertion. 
Since $\mathcal{P}\otimes \mathcal{P}\otimes \pi^h$ is projective in $\mathcal{D}$ and thus in $\mathrm{Ind}(\mathcal{D}^0)$, the induced module $\mathrm{Ind}(\mathcal{P}\otimes \mathcal{P}\otimes \pi^h)$ is also projective in $\mathrm{Rep}^{\mathrm{wt}}(\mathrm{FT}_1(\sll_2))$ by the Frobenius reciprocity. 
Finally, we show that $\mathrm{Ind}(\mathcal{P}\otimes \mathcal{P}\otimes \pi^h)$ is a projective cover of $\mathrm{FT}_1(\sll_2)$.
Since the simplicity is preserved by the functor $\mathrm{Ind}$, the Loewy diagram of $\mathcal{P}\otimes \mathrm{P}\otimes \pi^h$ as a $\mathcal{M}(2)\otimes \mathcal{M}(2)\otimes \pi^h$-modules induces that of  $\mathrm{Ind}(\mathcal{P}\otimes \mathcal{P}\otimes \pi^h)$ as in Figure \ref{fig:Loewy diagram}. The surjection $\mathrm{Ind}(\mathcal{P}\otimes \mathcal{P}\otimes \pi^h)\twoheadrightarrow \mathrm{FT}_1(\sll_2)$ is given by the quotient to the head $\bullet\bullet$. Hence, $\mathrm{Ind}(\mathcal{P}\otimes \mathcal{P}\otimes \pi^h)$ is a projective cover of $\mathrm{FT}_1(\sll_2)$.
\endproof
\begin{figure}[h!]
    \centering
\begin{tikzpicture}[x=1.5mm,y=1.5mm]
\useasboundingbox (0,0) rectangle (60,40);
% Object
\node (A) at (25,40) {$\bullet\bullet$ };
\node (B1) at (10,30) {$\lozenge\bullet$}; \node (B2) at (20,30) {$\blacklozenge\bullet$}; \node (B3) at (30,30) {$\bullet\blacklozenge$}; \node (B4) at (40,30) {$\bullet\lozenge$};
\node (C1) at (0,20) {$\circ\bullet$};
\node (C2) at (10,20) {$\lozenge\blacklozenge$};
\node (C3) at (40,20) {$\blacklozenge\lozenge$};
\node (C4) at (30,20) {$\lozenge\lozenge$};
\node (C5) at (20,20) {$\blacklozenge\blacklozenge$};
\node (C6) at (50,20) {$\bullet\circ$};  
\node (D1) at (10,10) {$\circ\blacklozenge$}; \node (D2) at (20,10) {$\circ\lozenge$}; \node (D3) at (30,10) {$\lozenge\circ$}; \node (D4) at (40,10) {$\blacklozenge\circ$}; 
\node (E) at (25,0) {$\circ\circ$};

\draw (50,3) node {{\small$\clubsuit\spadesuit=\mathrm{Ind}(\clubsuit\otimes \spadesuit\otimes \pi^h)$}};
\draw (50,0) node {{\small($\clubsuit,\spadesuit=\bullet, \circ, \lozenge, \blacklozenge$)}};

\draw [line width=1pt, blue] (A) -- (B1);
\draw [line width=1pt, red] (A) -- (B2);
\draw [line width=1pt, green] (A) -- (B3);
\draw [line width=1pt, brown] (A) -- (B4);

\draw [line width=1pt, red] (B1) -- (C1); 
\draw [line width=1pt, green] (B1) -- (C2);
\draw [line width=1pt,brown] (B1) -- (C4);

\draw [line width=1pt, blue] (B2) -- (C1); 
\draw [line width=1pt, brown] (B2) -- (C3);
\draw [line width=1pt, green] (B2) -- (C5);

\draw [line width=1pt, blue] (B3) -- (C2); 
\draw [line width=1pt, red] (B3) -- (C5);
\draw [line width=1pt,brown] (B3) -- (C6);

\draw [line width=1pt, red] (B4) -- (C3); 
\draw [line width=1pt, blue] (B4) -- (C4);
\draw [line width=1pt, green] (B4) -- (C6);

\draw [line width=1pt, green] (C1) -- (D1); 
\draw [line width=1pt, brown] (C1) -- (D2); 

\draw [line width=1pt, red] (C2) -- (D1); 
\draw [line width=1pt,brown] (C2) -- (D3); 

\draw [line width=1pt, blue] (C3) -- (D2); 
\draw [line width=1pt, green] (C3) -- (D4); 

\draw [line width=1pt, red] (C4) -- (D2); 
\draw [line width=1pt, green] (C4) -- (D3); 

\draw [line width=1pt, blue] (C5) -- (D1); 
\draw [line width=1pt, brown] (C5) -- (D4); 

\draw [line width=1pt, blue] (C6) -- (D3); 
\draw [line width=1pt, red] (C6) -- (D4); 

\draw [line width=1pt, brown] (D1) -- (E); 
\draw [line width=1pt, green] (D2) -- (E); 
\draw [line width=1pt, red] (D3) -- (E); 
\draw [line width=1pt, blue] (D4) -- (E); 
\end{tikzpicture}
    \caption{Loewy diagram of $\mathcal{P}(\mathrm{FT}_1(\sll_2))$
    }
    \label{fig:Loewy diagram}
\end{figure}

\section{Logarithmic Kazhdan--Lusztig correspondence}\label{section: log KL}
\subsection{Kazama--Suzuki dual}
We can use the Kazama--Suzuki coset construction \eqref{Kazama-Suzuki} to introduce an extension of $\W^\ell(\sll_{2|1})$ corresponding to $\FT(\sll_2)$, namely 
\begin{align}\label{Kazama-Suzuki dual}
    s\mathcal{W}_p(\sll_{2|1}):=\mathrm{Com}\left(\pi^\Delta,\FT(\sll_2)\otimes V_\Z\right) ,
\end{align}
where 
$\Delta(z)=\varpi(z)\otimes 1-1\otimes x(z)$ and $x(z):=u(z)$ is the Heisenberg field of $V_\Z$, see \S \ref{sec: iHR} for the notation. It is an extension of $\W^\ell(\sll_{2|1})$ at the level $\ell$ satisfying $(k+2)(\ell+1)=1$, i.e. $\ell=-1+p$.
\begin{proposition}\label{super-side}
We have an isomorphism of vertex superalgebras
\begin{align}\label{free field realizations}
     s\mathcal{W}_p(\sll_{2|1})\simeq \bigcap_{i=1,2} \mathrm{Ker}\widehat{S}_i\subset V_{\sqrt{p}A_1}\otimes V_\Z\otimes \pi^{\alpha^\dagger},
\end{align}
where $\alpha^\dagger(z)$ is the Heisenberg field $\alpha^\dagger(z)\alpha^\dagger(w)\sim -2/(z-w)^2$ and  
\begin{align}\label{screening charges}
   \widehat{S}_1=\begin{cases}
   \displaystyle{\int Y(e^{-\frac{1}{\sqrt{p}}\alpha},z)dz} & (p\geq 2),\\
\displaystyle{\int Y(e^{x-\frac{1}{2}(\alpha+\alpha^\dagger)},z)dz} & (p=1),
   \end{cases}
   \quad \widehat{S}_2=\int Y(e^{x+\frac{1}{2\sqrt{p}}(\alpha-\alpha^\dagger)},z)dz. 
\end{align}
\end{proposition}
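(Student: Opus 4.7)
The plan is to translate the screening realizations of $\FT(\sll_2)$ from Theorem \ref{identificatio of affine FT} (for $p\geq 2$) and Proposition \ref{decomposition for p=1} (for $p=1$) through the Kazama--Suzuki coset \eqref{Kazama-Suzuki dual}. First I would embed $\FT(\sll_2) \hookrightarrow \Pi[0] \otimes V_{\sqrt{p}A_1}$ via $\mu_{\mathrm{FMS}} \circ \mu_{\mathrm{Wak}}$ and express it as the joint kernel of $Q_{\mathrm{FMS}}$ together with $Q_-$ (for $p\geq 2$) or with $S_1, S_2$ (for $p=1$). Tensoring with $V_\Z$ embeds everything into $\Pi[0] \otimes V_{\sqrt{p}A_1} \otimes V_\Z$, and tracking the Wakimoto map \eqref{sl2 wakimoto} through the FMS identification \eqref{FMS} gives $\varpi = v - \tfrac{1}{2\sqrt{p}}\alpha$, so $\Delta = \varpi - x$ becomes an explicit vector in the rank-four Cartan with coordinates $u, v, \alpha, x$.

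Next I would decompose the rank-four Heisenberg orthogonally as $\pi^\Delta \otimes \pi^{\Delta^\perp}$ and realize the three-dimensional complement as spanned by $\alpha$, $x$, and a third field $\alpha^\dagger$ built from the $u$-direction (with small shifts in $v, \alpha, x$ required to enforce orthogonality to $\Delta$ and to achieve $(\alpha^\dagger, \alpha^\dagger) = -2$). The key identification to establish is
\begin{align*}
    \mathrm{Com}(\pi^\Delta,\, \Pi[0] \otimes V_{\sqrt{p}A_1} \otimes V_\Z) \simeq V_{\sqrt{p}A_1} \otimes V_\Z \otimes \pi^{\alpha^\dagger},
\end{align*}
which follows by observing that lattice vectors in the left-hand side have integer $\Delta$-charge precisely on the sublattice spanned by $\sqrt{p}\alpha$ and $x$, while the $(u+v)$-direction of $\Pi[0]$ becomes proportional to $\alpha^\dagger$ after projection to $\Delta^\perp$, giving the claimed factorization.

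Finally I would translate the screening operators under the basis change. Each screening integrand $e^{\gamma}$ decomposes as $e^{\lambda\Delta} \cdot e^{\gamma'}$ with $\gamma' \in \Delta^\perp$, and only the $\gamma'$-part survives in the commutant (the $\pi^\Delta$-factor is neutral). A direct exponent computation shows that the $\Delta^\perp$-projections of $e^u$ (the integrand of $Q_{\mathrm{FMS}}$) and $e^{-\frac{1}{p}(u+v) - \frac{1}{\sqrt{p}}\alpha}$ (the integrand of $Q_-$) match $e^{x + \frac{1}{2\sqrt{p}}(\alpha - \alpha^\dagger)}$ and $e^{-\frac{1}{\sqrt{p}}\alpha}$ respectively, yielding $\widehat{S}_2$ and $\widehat{S}_1$ in \eqref{screening charges} for $p\geq 2$. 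Because $\mathrm{Com}(\pi^\Delta, -)$ commutes with taking joint kernels of screenings whose integrands are $\Delta$-neutral (automatic after the above decomposition), this produces \eqref{free field realizations}. For $p=1$, the same computation applied to $S_1 = \int Y(e^u,z)dz$ and $S_2 = \int Y(e^{-(v+\alpha)},z)dz$ from Proposition \ref{decomposition for p=1} gives the modified $\widehat{S}_1 = \int Y(e^{x - \frac{1}{2}(\alpha+\alpha^\dagger)},z)dz$.

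The main obstacle will be the bookkeeping of the lattice reorganization: verifying that the $\Delta$-neutral piece of the lattice in $\Pi[0] \otimes V_{\sqrt{p}A_1} \otimes V_\Z$ assembles precisely into $V_{\sqrt{p}A_1} \otimes V_\Z$ with the correct conformal structure, and that $\alpha^\dagger$ carries signature $-2$ as required by the statement. These are elementary but fiddly computations in a four-dimensional inner product space of mixed signature, and one must be careful to match the screenings exactly with the claimed $\widehat{S}_i$ rather than with another set of screenings whose joint kernel only happens to coincide with $s\W_p(\sll_{2|1})$.
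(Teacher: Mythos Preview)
Your approach matches the paper's proof essentially step for step: realize $\FT(\sll_2)$ as a joint screening kernel inside $\Pi[0]\otimes V_{\sqrt{p}A_1}$, tensor with $V_\Z$, split the rank-four Heisenberg orthogonally along $\Delta$, identify the commutant with $V_{\sqrt{p}A_1}\otimes V_\Z\otimes \pi^{\alpha^\dagger}$, and translate the screenings through the change of basis. The only slip is a sign: tracking \eqref{sl2 wakimoto} through \eqref{FMS} gives $\varpi = -v - \tfrac{1}{2\sqrt{p}}\alpha$ (not $+v$), so $\Delta = -(v + \tfrac{1}{2\sqrt{p}}\alpha + x)$; this is exactly the ``fiddly'' bookkeeping you anticipate and does not affect the strategy.
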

\proof
By Proposition \ref{decomposition for p=1} for $p=1$ and Theorem \ref{identificatio of affine FT}, \eqref{FMS} for $p\geq2$, $\FT(\sll_2)$ is characterized as the joint kernel of screening operators 
\begin{align}\label{screenings of affien FT}
    \FT(\sll_2)\simeq \bigcap_{i=1,2} \mathrm{Ker} S_i|_{\Pi[0]\otimes V_{\sqrt{p}A_1}}
\end{align}
with 
\begin{align*}
  S_1=
  \begin{cases}
   \displaystyle{\int Y(e^{-\frac{1}{p}(u+v)-\frac{1}{\sqrt{p}}\alpha},z)dz,} & (p\geq 2),\\
   \displaystyle{\int Y(e^{-(v+\alpha)},z)dz} & (p=1),
   \end{cases}
  \quad S_2=\int Y(e^u,z)dz.
\end{align*}
Ii is straightforward to show that there is a unique isomorphism of vertex algebras 
$$\pi^H\otimes \pi^{\alpha} \otimes \pi^x\otimes \pi^{\alpha^\dagger} \simeq \pi^{u,v}\otimes \pi^{\alpha}\otimes \pi^x,$$
which sends
\begin{align}\label{identification of Heisenbergs}
\begin{array}{llllll}
 H &\mapsto& \Delta=-(v+\tfrac{1}{2\sqrt{p}}\alpha+x),&x &\mapsto& x+u+v, \\
 \alpha &\mapsto&  \alpha+\tfrac{1}{\sqrt{p}}(u+v), &\alpha^\dagger &\mapsto &\alpha+2\sqrt{p}(x+v)+\tfrac{1}{\sqrt{p}}(u+v).
\end{array}
\end{align}
It induces an isomorphism of vertex superalgebras 
\renewcommand{\arraystretch}{1.5}
\begin{align*}
\begin{array}{ccc}
V_{\Z}\otimes V_{\sqrt{p}A_1}\otimes \pi^{\alpha^\dagger}&\xrightarrow{\simeq}&\mathrm{Com}(\pi^\Delta, \Pi[0]\otimes V_{\sqrt{p}A_1}\otimes V_\Z)\\
e^{nx}\otimes e^{m\sqrt{p}\alpha} \otimes \mathbf{1} &\mapsto& e^{(n+m)(u+v)}\otimes e^{m\sqrt{p}\alpha}\otimes e^{nx}.
\end{array}
\end{align*}
\renewcommand{\arraystretch}{1}
Then the assertion follows from the isomorphisms
\begin{align*}
    \mathrm{Com}\left(\pi^\Delta,\FT(\sll_2)\otimes V_\Z\right) 
    &\simeq \bigcap_{i=1,2} \mathrm{Ker}S_i|_{\mathrm{Com}\left(\pi^\Delta,\Pi[0]\otimes V_{\sqrt{p}A_1}\otimes V_\Z\right)}\\
    &\simeq \bigcap_{i=1,2} \mathrm{Ker}S_i|_{V_{\Z}\otimes V_{\sqrt{p}A_1}\otimes \pi^{\alpha^\dagger}},
\end{align*}
which identifies $S_i$ with $\widehat{S}_i$ by \eqref{identification of Heisenbergs}. 
This completes the proof.
\endproof
\begin{remark}
\textup{For $p\geq 2$, $\widehat{S}_i$ can also be derived from the Wakimoto modules of $V^\ell(\sll_{2|1})$ which is a super-analogue of \cite{FF,F}. Set $M_{\sll_{2|1}}:=\beta\gamma\otimes bc^{\otimes 2}$, $\pi_\h^{\ell+1}$ the Heisenberg vertex algebra for the Cartan subspace $\h\subset \sll_{2|1}$ and $\mathbb{W}^\ell_{\lambda}:=M_{\sll_{2|1}}\otimes \pi^\ell_{\h,\lambda}$ ($\lambda\in \h^*$).
Then we have an embedding $\mu\colon V^\ell(\sll_{2|1})\hookrightarrow \mathbb{W}^\ell_{0}$ of vertex superalgebras and thus $\mathbb{W}^\ell_{\lambda}$ has a structure of $V^\ell(\sll_{2|1})$-modules, called Wakimoto representation. 
The embedding $\mu$ is extended to a complex
\begin{align*}
    0\rightarrow V^\ell(\sll_{2|1})\xrightarrow{\mu} \mathbb{W}^\ell_{0} \xrightarrow{\oplus \mathbb{S}_i}  \bigoplus_{i=1,2} \mathbb{W}^\ell_{-\alpha_i}
\end{align*}
of $V^\ell(\sll_{2|1})$-modules with the screening operators
$$\mathbb{S}_1=\int Y((\beta+c_1b_2)e^{-\frac{1}{\ssqrt{\ell+1}}\alpha_1},z)dz,\quad \mathbb{S}_2=\int Y(b_1e^{-\frac{1}{\ssqrt{\ell+1}}\alpha_2},z)dz$$
where $\alpha_1,\alpha_2\in \h$ are roots corresponding to $e_{1,2}, e_{2,3}\in\sll_{2|1}$.
Applying the BRST reduction, we obtain a complex of $\W^\ell(\sll_{2|1})$-modules 
\begin{align*}
    0\rightarrow \W^\ell(\sll_{2|1})\xrightarrow{\mu} bc\otimes \pi^{\ell+1}_\h \xrightarrow{\oplus \mathbb{S}_i^W}  \bigoplus_{i=1,2} bc\otimes \pi^{\ell+1}_{\h,-\alpha_i}
\end{align*}
with the screening operators
$$\mathbb{S}_1^W=\int Y(e^{-\frac{1}{\ssqrt{\ell+1}}\alpha_1},z)dz,\quad \mathbb{S}_2^W=\int Y(b_1e^{-\frac{1}{\ssqrt{\ell+1}}\alpha_2},z)dz.$$
By identifying $V_\Z\simeq bc$ and $\alpha=\alpha_1$, $\alpha^\dagger=\alpha_1+2\alpha_2$, $\widehat{S}_i$ coincides with $\mathbb{S}_i^W$.
}
\end{remark}
\begin{comment}
    Explicit formula:
\begin{align*}
&e_{1,2}\mapsto \beta,\quad e_{2,3}\mapsto b_1+\gamma b_2\\
&h_1\mapsto (-2\gamma\beta+c_1b_1-c_2b_2)+h_1,\quad  h_2\mapsto (\gamma\beta+c_2b_2)+h_2\\
&f_1\mapsto (-\gamma^2\beta-\gamma c_2b_2+(\gamma c_1-c_2)b_1)+\gamma h_1 +\ell \gamma'\\
&f_2\mapsto c_2\beta +c_1 h_2+(\ell+1)c_1'
\end{align*}
\end{comment}
Let us also introduce the following subalgebras
\begin{align*}
    &\mathcal{M}_p(\sll_2):=\FT(\sll_2)\cap (\Pi[0]\otimes \pi^\alpha),\quad s\mathcal{M}_p(\sll_{2|1}):=s\mathcal{W}_p(\sll_{2|1})\cap V_\Z\otimes \pi^{\alpha,\alpha^\dagger}
\end{align*}
which play the role of the singlet algebra $\mathcal{M}(p)\subset \W(p)$ in the of setting of $\FT(\sll_2)$ and $ s\mathcal{W}_p(\sll_{2|1})$, respectively.
\subsection{Correspondence of categories}\label{Correspondence of categories}
We introduce the category of finite-length weight modules $\mathrm{Rep}^{\mathrm{wt}}(\FT(\sll_2))$ for $\FT(\sll_2)$ as the category of weak finite-length $\FT(\sll_2)$-modules $M$ satisfying 
(i) $M$ admits a simultaneous generalized eigenspace decomposition for $(L_0,J^+_0)$ with $J^+(z)=\frac{1}{2}h$ the form $$M=\bigoplus_{\Delta,\lambda} M_\lambda[\Delta],\quad M_\lambda[\Delta]:=\left\{m\in M \left| \begin{array}{l}(L_0-\Delta)^{>\!\!>0}m=0\\ (J^+_0-\lambda)m=0\end{array}\right.\right\},  (\Delta,\lambda \in \C),$$ 
(ii) $\dim M_\lambda[\Delta]<\infty$ and $M_\lambda[\Delta+N]=0$ as $N\in \Z$ goes to $-\infty$.
Note that $M$ decomposes into a direct sum of Heisenberg Fock modules for $\pi^{J^+}$. 
Since $J^+_0$ acts semisimplly on $\FT(\sll_2)$ with $\Z$-eigenvalues, it decomposes into
\begin{align*}
    \mathrm{Rep}^{\mathrm{wt}}(\FT(\sll_2))=\bigoplus_{[\lambda]\in \C/ \Z}\mathrm{Rep}^{\mathrm{wt}}(\FT(\sll_2))^{[\lambda]}
\end{align*}
in terms of the $J^+_0$-eigenvalues.
The $\W$-superalgebra $\W^\ell(\sll_{2|1})$ also has a Heisenberg field $J^-(z)$ such that  
$J^-(z)J^-(w)\sim -(2\ell+1)/(z-w)^2$ and $J^+_0$ acts semisimplly on $s\W_p(\sll_{2|1})$ with $\Z$-eigenvalues.
The pair of blocks 
$$\mathrm{Rep}^{\mathrm{wt}}(\FT(\sll_2))^{[\lambda]},\quad \mathrm{Rep}^{\mathrm{wt}}(s\W_p(\sll_{2|1}))^{[\mu]}$$
for suitable $([\lambda], [\mu])$ are equivalent as abelian categories.
It is an upgrade of Feigin--Semikhatov duality between the category of weight modules for the subalgebras $V^k(\sll_2)$ and $\W^\ell(\sll_{2|1})$ \cite{CGNS}. 
Since the results/proofs are obtained by word-by-word translation from \cite{CGNS,FN}, we will omit the detail.
We decompose $\FT(\sll_2)$ and $s\W_p(\sll_{2|1})$ as modules over the Heisenberg vertex subalgebras and their coset
\begin{align*}
  \FT(\sll_2)\simeq \bigoplus_{n\in \Z} \mathscr{C}_n\otimes \pi^{J^+}_n,\quad s\W_p(\sll_{2|1})\simeq \bigoplus_{n\in \Z} \mathscr{D}_n\otimes \pi^{J^-}_n.
\end{align*}
By construction \eqref{Kazama-Suzuki dual}, the coset vertex subalgebra $\mathscr{C}_0$ is isomorphisc to $\mathscr{D}_0$ and moreover $\mathscr{C}_n\simeq \mathscr{D}_n$ as their modules. 
We introduce the Heisenberg vertex algebra $\pi^{J^+_*}$ such that $J^+_*(z)J^+_*(w)\sim -J^+(z)J^+(w)$. Then we have an isomorphism 
\begin{align}\label{kernel VOA}
    V_\Z\otimes \pi^v\xrightarrow{\simeq} \bigoplus_{n\in \Z}\pi^{J^+_*}_n\otimes \pi^{J^-}_n.
\end{align}
By using the relative semi-infinite cohomology \cite{Fe,FGZ}, the coset constrcuction \eqref{Kazama-Suzuki dual} is reformulated as an isomorphism of vertex superalgebras 
\begin{align*}
    \mathrm{H}_{\mathrm{rel}}^m(\gl_1,\FT(\sll_2)\otimes V_\Z\otimes \pi^v)\simeq \delta_{m,0}s\W_p(\sll_{2|1})
\end{align*}
thanks to the property
$\mathrm{H}_{\mathrm{rel}}^m(\gl_1,\pi^{J^+}_a\otimes\pi^{J^+_*}_b)\simeq \delta_{m,0} \delta_{a+b,0}\C$.
Replacing \eqref{kernel VOA} with
\begin{align*}
    K^\lambda:=V_\Z\otimes \pi^v_{\lambda v}\simeq \bigoplus_{n\in \Z}\pi^{J^+_*}_{-n-\varepsilon\lambda}\otimes \pi^{J^-}_{n+\frac{1}{\varepsilon}\lambda},\quad (\varepsilon=1/\ssqrt{2p}),
\end{align*}
we obtain a functor 
\renewcommand{\arraystretch}{1.3}
\begin{align*}
\begin{array}{cccc}
    \mathrm{H}_\lambda \colon  & \mathrm{Rep}^{\mathrm{wt}}(\FT(\sll_2))^{[\varepsilon\lambda]} &\rightarrow & \mathrm{Rep}^{\mathrm{wt}}(s\W_p(\sll_{2|1}))^{[\frac{1}{\varepsilon}\lambda]}\\
    & M &\mapsto & \mathrm{H}_{\mathrm{rel}}^0(\gl_1,M \otimes K^\lambda).
\end{array}
\end{align*}
\renewcommand{\arraystretch}{1}
Let us replace $\mathrm{H}_\lambda$ with $\mathrm{H}_{\lambda(\theta)}$ for $\lambda(\theta)=\lambda-\frac{1}{\varepsilon}\theta$ ($\theta\in \Z$). Then the domain $\mathrm{Rep}^{\mathrm{wt}}(\FT(\sll_2))^{[\varepsilon\lambda]}$ remains the same, but the target does change. 
These functors give equivalences of categories and related to each other by spectral flow twists $S_\bullet$ of modules. 

\begin{theorem}\label{Feigin-Semikhatov duality}\hspace{0mm}\\
\textup{(1)} We have equivalences of categories
\begin{align*}
    \mathrm{H}_{\lambda(\theta)} \colon  \mathrm{Rep}^{\mathrm{wt}}(\FT(\sll_2))^{[\varepsilon\lambda]} \xrightarrow{\simeq} \mathrm{Rep}^{\mathrm{wt}}(s\W_p(\sll_{2|1}))^{[\frac{1}{\varepsilon}\lambda(\theta)]},\quad (\theta\in \Z).
\end{align*}
\textup{(2)} The spectral flow twists $S_\bullet$ intertwine $\mathrm{H}_{\lambda(\theta)}$ up to natural isomorphisms
	\begin{center}
		\begin{tikzcd}[row sep=large, column sep = huge]
			\mathrm{Rep}^{\mathrm{wt}}(\FT(\sll_2))^{[\varepsilon\lambda]}
			\arrow[d,"S_{\theta_1}"']
			\arrow[r,"\mathrm{H}_{\lambda(\theta_3)}"]&
			\mathrm{Rep}^{\mathrm{wt}}(s\W_p(\sll_{2|1}))^{[\frac{1}{\varepsilon}\lambda(\theta_3)]}
			\arrow[d,"S_{\theta_2}"]\\
			\mathrm{Rep}^{\mathrm{wt}}(\FT(\sll_2))^{[\varepsilon\lambda']}
			\arrow[r, "\mathrm{H}_{\lambda'(\theta_4)}"]&
			\mathrm{Rep}^{\mathrm{wt}}(s\W_p(\sll_{2|1}))^{[\frac{1}{\varepsilon}\lambda'(\theta_4)]}
		\end{tikzcd}
	\end{center} 
for $\theta_i\in\Z$ and $\lambda,\lambda'\in \C$ satisfying $\theta_4=\theta_2+\theta_3$, $\lambda'=\lambda+\varepsilon \theta_1$.\\
\textup{(3)} The nonzero Fock intertwiners $I(\text{-},z)\colon \pi^v_{\lambda v}\otimes \pi^v_{\mu v}\rightarrow \pi^v_{(\lambda+\mu)v} \{z\}$ induces a natural isomorphism on the spaces of intertwiners for $\FT(\sll_2)$ and $s\W_p(\sll_{2|1})$
\begin{align*}
 I_{\FT(\sll_2)}\binom{M_3}{M_1\ M_2}\xrightarrow{\simeq} I_{s\W_p(\sll_{2|1})}\binom{\mathrm{H}_{\lambda+\mu}(M_3)}{\mathrm{H}_\lambda(M_1)\ \mathrm{H}_\mu(M_2)}.
\end{align*}
\end{theorem}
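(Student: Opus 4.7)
The plan is to adapt verbatim the strategy of \cite{CGNS,FN}, where the analogous Feigin--Semikhatov duality is established for the subalgebras $V^k(\sll_2)$ and $\W^\ell(\sll_{2|1})$; the extensions $\FT(\sll_2)$ and $s\W_p(\sll_{2|1})$ inherit the duality through the coset realization \eqref{Kazama-Suzuki dual}, which identifies the Heisenberg cosets $\mathscr{C}_0\simeq \mathscr{D}_0$ and their modules $\mathscr{C}_n\simeq \mathscr{D}_n$. All three parts of the theorem reduce to this identification together with the Fock cancellation $\mathrm{H}^m_{\mathrm{rel}}(\gl_1,\pi^{J^+}_a\otimes \pi^{J^+_\ast}_b)\simeq \delta_{m,0}\delta_{a+b,0}\C$.

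For part (1), I would first reformulate the coset construction as the relative semi-infinite cohomology functor $\mathrm{H}^0_{\mathrm{rel}}(\gl_1,\,-\otimes K^{\lambda(\theta)})$, using the decomposition $K^{\lambda(\theta)}\simeq \bigoplus_n \pi^{J^+_\ast}_{-n-\varepsilon\lambda(\theta)}\otimes \pi^{J^-}_{n+\lambda(\theta)/\varepsilon}$. The Fock cancellation then pairs exactly one summand of $K^{\lambda(\theta)}$ with each charge component of a weight $\FT(\sll_2)$-module in block $[\varepsilon\lambda]$, producing a weight $s\W_p(\sll_{2|1})$-module in block $[\lambda(\theta)/\varepsilon]$. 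To show it is an equivalence I would construct an explicit quasi-inverse $\bar{\mathrm{H}}_{\lambda(\theta)}(N):=\mathrm{H}^0_{\mathrm{rel}}(\gl_1,N\otimes \bar{K}^{\lambda(\theta)})$ from a dual kernel with opposite Heisenberg charges, and verify that both compositions yield the identity by the symmetric cancellation applied to the pair $(\pi^{J^-},\pi^{J^-_\ast})$, combined with $\mathscr{C}_n\simeq \mathscr{D}_n$.

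For part (2), commutativity with spectral flow follows by tracking how $S_{\theta_1}$ and $S_{\theta_2}$ act on the Heisenberg factors: $S_{\theta_1}$ shifts $J^+_0$-charges sending $[\varepsilon\lambda]$ to $[\varepsilon\lambda']$ with $\lambda'=\lambda+\varepsilon\theta_1$, while $S_{\theta_2}$ on the target amounts to tensoring the kernel $K^{\lambda(\theta_3)}$ by a lattice vector in $V_\Z$, shifting $\theta_3\mapsto \theta_4=\theta_2+\theta_3$; naturality of $\mathrm{H}^0_{\mathrm{rel}}$ then assembles these into the commuting square. For part (3), the nonzero Fock intertwiner $I^{\lambda+\mu}_{\lambda,\mu}(\cdot,z)$ tensor-multiplies an $\FT(\sll_2)$-intertwiner of type $\binom{M_3}{M_1\,M_2}$ to an intertwiner among $M_i\otimes K^{\bullet}$, and descending to relative cohomology produces the natural map on intertwiner spaces; its invertibility follows by applying the quasi-inverse of part (1) and the fact that Fock intertwiners among the $\pi^v_{\lambda v}$ form a one-dimensional space.

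The main obstacle, as in \cite{CGNS,FN}, is verifying that $\mathrm{H}_{\lambda(\theta)}$ indeed lands in $\mathrm{Rep}^{\mathrm{wt}}$: namely, that the cohomology of a finite-length weight $\FT(\sll_2)$-module is again finite-length with locally finite generalized $L_0$-eigenspaces bounded below, and that $\mathrm{H}^{\neq 0}_{\mathrm{rel}}$ vanishes on each generalized weight subspace. This requires a careful analysis of the interaction of the BRST differential with the conformal grading and with the block decomposition by $J^\pm_0$-eigenvalues modulo $\Z$. Once these finiteness and vanishing statements are secured, the remainder of the argument copies over directly from the cited references.
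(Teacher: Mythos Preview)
Your proposal is correct and matches the paper's approach exactly: the paper itself does not give a proof but states that ``the results/proofs are obtained by word-by-word translation from \cite{CGNS,FN}'' and omits the details. Your sketch of the relative semi-infinite cohomology functor, the quasi-inverse via a dual kernel, the spectral-flow bookkeeping, and the intertwiner correspondence via Fock intertwiners is precisely the content of those references specialized to the extensions $\FT(\sll_2)\supset V^k(\sll_2)$ and $s\W_p(\sll_{2|1})\supset \W^\ell(\sll_{2|1})$, using the coset identification $\mathscr{C}_n\simeq\mathscr{D}_n$ already set up in the paper.
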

We note that the theorem holds also for the subalgebras $\mathcal{M}_p(\sll_2)$, $s\mathcal{M}_p(\sll_{2|1})$.
\subsection{Yetter--Drinfeld modules}
Following \cite{CLR}, we formulate the logarithmic Kazhdan--Lusztig correspondence.
By the previous subsection, we have the following vertex (super)algebra extensions $V\subset A$:
\begin{align}\label{extension pairs}
\begin{array}{ll}
    \FT(\sll_2)\subset \Pi[0]\otimes V_{\sqrt{p}A_1}, & s\W_p(\sll_{2|1})\subset V_\Z\otimes V_{\sqrt{p}A_1}\otimes \pi^{\alpha^\dagger},\\
    \mathcal{M}_p(\sll_2)\subset \Pi[0]\otimes \pi^\alpha, & s\mathcal{M}_p(\sll_{2|1})\subset V_\Z\otimes \pi^{\alpha,\alpha^\dagger}.
\end{array}
\end{align}
Suppose that the category of weight modules $\cU:=\mathrm{Rep}^{\mathrm{wt}}(V)$ has the structure of braided tensor categories. Then $A$ is a commutative algebra object of $\mathcal{U}$. We write $\cU_A$ the tensor category consisting of $A$-module objects in $\cU$ and $\cU_A^0$ the braided tensor category consisting of local $A$-module objects.
By \cite{CKM}, we have the following functors 
\begin{center}
\begin{tikzpicture}[x=2mm,y=2mm]
\useasboundingbox (0,0) rectangle (50,13);
% Object
\node (A) at (0,10) {$\cU$ };
\node (B) at (15,10) {$\cU_A$};
\node (C) at (15,0) {$\cU_A^0$};
\draw [->] (2,10.5) -- (13,10.5);
\draw [->] (13,9.5) -- (2,9.5);
\draw [->] (14,1) -- (14,9);
\node  at (7.5,12) {$\mathrm{Ind}_A$ };
\node  at (8,8) {$\mathrm{Forget}_A$};
\node  at (13,5) {$\iota$};
\node at (40,5) {$\begin{array}{ll}\mathrm{Ind}_A:=A \boxtimes_\cU\colon& \text{Induction functor}\\ \mathrm{Forget}_A\colon& \text{Forgetful functor} \\ \iota\colon& \text{Tautological embedding}.\end{array}$};
\end{tikzpicture}
\end{center}
The category $\cU_A^0$ is braided tensor equivalent to the category of modules over the vertex algebra $A$ consisting of modules $M$ lies in $\cU$ as $A$-modules. If $\cU$ is good enough, then we would have 
\begin{itemize}
    \item the Schauenburg functor gives the braided tensor equivalence
    \begin{align*}
    \mathcal{U}\xrightarrow{\simeq} \mathcal{Z}_{\cU_A^0}(\cU_A)
\end{align*} 
to the relative Drinfeld center $\mathcal{Z}_{\cU_A^0}(\cU_A)$,
    \item $\cU_A^0$ is identified with the category $\mathrm{Rep}^{\mathrm{wt}}(A)$ of weight modules, i.e. $A$-modules which are direct sums of Fock modules for the Heisenberg fields,
    \item the embedding $\iota\colon \cU_A^0\hookrightarrow \cU_A$ splits, which implies the existence of a Nichols algebra $\fN=\fN(X)$ (associated with an object $X$) in $\mathrm{Rep}^{\mathrm{wt}}(A)$ such that the category $^{\fN}_\fN\mathcal{YD}(\cU_A)$ of $(\mathfrak{N},\mathfrak{N})$ Yetter-Drinfeld module satisfies
        \begin{align*}
         \mathcal{Z}_{\cU_A^0}(\cU_A)\xrightarrow{\simeq} {}^{\fN}_\fN\mathcal{YD}(\cU_A)
\end{align*} 
as braided tensor categories. 
\end{itemize}
Therefore, we would have a braided tensor equivalence 
$$\mathrm{Rep}^{\mathrm{wt}}(V)\xrightarrow{\simeq} {}^{\fN}_\fN\mathcal{YD}(\mathrm{Rep}^{\mathrm{wt}}(A)).$$
Since $\mathrm{Rep}^{\mathrm{wt}}(A)$ is braided tensor equivalent to the category $\mathrm{Vect}_\Gamma^Q$ of graded vector spaces for a quadratic space $(\Gamma,Q)$, the Nichols algebra $\fN$ likely falls into the Heckenberger's classification \cite{He}. 
The general expectation is that the braided vector space $X$, which generates $\fN$, agrees with the space spanned by the highest weights of the screening operators. Then, we obtain a (quasi-)Hopf algebra defined as the braided Drinfeld double
$$\mathscr{U}:=\mathrm{Drin}_{\mathcal{H}}(\mathfrak{N}(X),\mathfrak{N}(X)^*),$$
over the (quasi-)Hopf algebra $\mathcal{H}$ representing $\mathrm{Vect}_\Gamma^Q$ gives the conjectural quantum group side of the logarithmic Kazhdan--Lusztig correspondence
\begin{align}
    \mathrm{Rep}^{\mathrm{wt}}(V)\xrightarrow{\simeq} {}^{\fN}_\fN\mathcal{YD}(\mathrm{Rep}^{\mathrm{wt}}(A)) \xrightarrow{\simeq}\mathrm{Rep}^{\mathrm{wt}}(\mathscr{U}).
\end{align}
The final conjecture for $V=\FT(\sll_2)$ was conjectured by Semikhatov--Tipunin \cite{ST1} and they identified $\mathscr{U}$ with a version of quantum supergroup for $\sll_{2|1}$. 

\subsection{Hopf algebra $\mathcal{H}$}
We describe $\mathscr{U}$ explicitly for $(V,A)=(s\mathcal{M}_p(\sll_{2|1}), V_\Z\otimes \pi^{\alpha,\alpha^\dagger})$. 
To start with, we fix a Hopf algebra $\mathcal{H}$ whose module category is braided tensor equivalent to $\mathrm{Rep}^{\mathrm{wt}}(V_\Z\otimes \pi^{\alpha,\alpha^\dagger})$. 
The category $\mathrm{Rep}^{\mathrm{wt}}(V_\Z\otimes \pi^{\alpha,\alpha^\dagger})$ has, by definition, morphisms of even parity and then is semisimple with simple objects 
\begin{align}\label{Fock type modules}
    V_\Z^{+}\otimes \pi^{\alpha,\alpha^\dagger}_{\xi},\ V_\Z^{-}\otimes \pi^{\alpha,\alpha^\dagger}_{\xi},\quad  (\xi\in \h)
\end{align}
where $\h=\C\alpha\oplus \C \alpha^\dagger$ and $V_\Z^{\pm}$ denotes $V_\Z$ with the vacuum of even (resp.\ odd) parity.
We introduce the notation
\begin{align}\label{highest weights}
\begin{split}
    &
    \widetilde{\beta}_1^\as=\underbrace{-\tfrac{1}{\sqrt{p}}\alpha}_{\widehat{\beta}_1^\as},\hspace{1.4cm}
    \widetilde{\beta}_2^\as=x+\underbrace{\tfrac{1}{2\sqrt{p}}(\alpha-\alpha^\dagger)}_{\widehat{\beta}_2^\as},\\
    &
    \widetilde{\beta}_1^\s=x\underbrace{-\tfrac{1}{2}(\alpha+\alpha^\dagger)}_{\widehat{\beta}_1^\s},\quad 
    \widetilde{\beta}_2^\s=x+\underbrace{\tfrac{1}{2}(\alpha-\alpha^\dagger)}_{\widehat{\beta}_2^\s},
\end{split}
\end{align}
corresponding the highest weights of the screening operators $\widehat{S}_i$ in Proposition \ref{super-side} for $p\geq 2$ and $p=1$, respectively. The simple modules \eqref{Fock type modules} can be expressed as
$$V_\Z^{+}\otimes \pi^{\alpha,\alpha^\dagger}_{\lambda\widehat{\beta}_1^\bullet+\mu \widehat{\beta}_2^\bullet },\quad (\lambda,\mu \in \C)$$
for both cases $\bullet=\as, \s$ and the braiding matrices $\mathcal{B}^\bullet$ for the simple $V_\Z\otimes \pi^{\alpha,\alpha^\dagger}$-modules corresponding to \eqref{highest weights} reads $(e^{\pi\ssqrt{-1}(\widetilde{\beta}_i^\bullet,\widetilde{\beta}_j^\bullet)})_{i,j=1,2}$ and thus
\begin{align}\label{braiding matrix}
    \mathcal{B}^\as=\left(\begin{array}{cc} 1 & 0 \\ 0 & -1 \end{array} \right)
    \left(\begin{array}{cc} q^2 & q^{-1} \\ q^{-1} & 1 \end{array} \right),\quad 
    \mathcal{B}^\s=\left(\begin{array}{cc} -1 & 0 \\ 0 & -1 \end{array} \right)
    \left(\begin{array}{cc} 1 & q^{-1} \\ q^{-1} & 1 \end{array} \right)
\end{align}
where $q=\mathrm{e}^{\pi \ssqrt{-1}/p}$ is a primitive $2p$-th root of unity.
Although $\mathcal{B}^\s$ is introduced only for $p=1$, it is important to see $\mathcal{B}^\s$ defined for $p\geq2$ as well.
The first factor of $\mathcal{B}^\bullet$ gives the parity of the object and the second the root data since the entries are expressed as  $q^{c_{ij}^\bullet}$ through the Cartan matrix
\begin{align}\label{Cartan matrix}
   C^\as=\left(\begin{array}{cc}2 &-1\\ -1&0 \end{array}\right),\quad C^\s=\left(\begin{array}{cc}0 &-1\\ -1&0 \end{array}\right),
\end{align}
respectively.
Recall that both $C^\bullet$ are Cartan matrices $\sll_{2|1}$ realized by
\begin{align*}
    & {}^\as H_1=e_{11}-e_{22},\quad {}^\as H_2=e_{22}+e_{33},\quad E_{\beta_1^\as}=e_{12},\quad E_{\beta_2^\as}=e_{23},\\
    & {}^\s H_1=e_{11}+e_{33},\quad {}^\s H_2=e_{22}+e_{33},\quad E_{\beta_1^s}=e_{31},\quad E_{\beta_2^\s}=e_{23}.
\end{align*}

Now, introduce the Hopf algebra 
\begin{align}\label{Cartan subalgebra}
\mathcal{H}^\bullet=\C[ H_i, K_i^{\pm 1}, K_0\mid i=1,2]/( K_0^2-1),\quad (\bullet=\as, \s),
\end{align}
whose coproduct $\Delta$, counit $\varepsilon$, and the antipode $S$ is given by  
\begin{align}\label{Hopf structre}
\begin{array}{lll}
    \Delta\colon&  H_i\mapsto  H_i\otimes 1+1\otimes  H_i,&  K_j\mapsto  K_j\otimes  K_j,\\
    \varepsilon\colon&  H_i\mapsto 0,&  K_j\mapsto 1,\\
    S\colon &  H_i\mapsto - H_i,&  K_j\mapsto  K_j.
\end{array}
\end{align}
In the below, we write ${}^\bullet H_i$ etc when we specify the algebras $\mathcal{H}^\bullet$.
The $q$-weight $\mathcal{H}{}^\bullet$-modules $M$ is, by definition, the $\mathcal{H}{}^\bullet$-modules $M$ such that $H_i$, $K_j$ act semisimply and $K_i=q^{H_i}$ $(i=1,2)$ hold as operators on $M$.
Then the simple $q$-weight $\mathcal{H}^\bullet$-modules are parameterized by the one dimensional representations
$$\C_{(\pm,\lambda\beta_1^\bullet+\mu\beta_2^\bullet)},\quad (\lambda,\mu\in \C).$$
Here $K_0$ acts by $\pm1$, ${}^\bullet H_i$ by the pairing $({}^\bullet H_i,\beta_j^\bullet)=c_{ij}^\bullet$ through the Cartan matrix \eqref{Cartan matrix}.
We denote by $\mathrm{Rep}^{\mathrm{wt}}(\mathcal{H}^\bullet)$ the braided tensor category of $q$-weight $\mathcal{H}$-modules equipped with the standard universal R-matrix $\mathcal{R}^\bullet=(-1)^{\overline{K}_0\otimes \overline{K}_0}q^{\Omega^\bullet}$ where $\Omega^\bullet$ in $\mathcal{H}^\bullet\otimes \mathcal{H}^\bullet$ is the quadratic Casimir element
\begin{align*}
\Omega^\bullet=\begin{cases}
-(H_1\otimes H_2+H_2\otimes H_1+2 H_2\otimes H_2) & (\bullet=\as),\\
-(H_1\otimes H_2+H_2\otimes H_1) & (\bullet=\s).
\end{cases}
\end{align*}
The following lemma is checked straightforwardly.
\begin{lemma}\label{free field equiv}\hspace{0mm}\\
\textup{(1)} We have an isomorphism of Hopf algebras $ \mathcal{H}^\as\xrightarrow{\simeq} \mathcal{H}^\s$ such that  
\begin{align*}
    \begin{array}{lll}
       {}^\as H_1\mapsto {}^\s H_1-{}^\s H_2,  &  {}^\as H_2 \mapsto {}^\s H_2,  &  K_0\mapsto {}^\s K_0, \\
       {}^\as K_1\mapsto {}^\s K_1 {}^\s K_2^{-1},  & {}^\as K_2 \mapsto{}^\s K_2. &
    \end{array}
\end{align*}
It sends $\mathcal{R}^\as$ to $\mathcal{R}^\s$ and thus induces a braided tensor equivalence
$$\mathrm{Rep}^{\mathrm{wt}}(\mathcal{H}^\as)\xrightarrow{\simeq} \mathrm{Rep}^{\mathrm{wt}}(\mathcal{H}^\s).$$
\textup{(2)} We have braided tensor equivalences
\begin{align*}
\begin{array}{ccc}
\mathrm{Rep}^{\mathrm{wt}}(V_\Z\otimes \pi^{\alpha,\alpha^\dagger})&\xrightarrow{\simeq}&\mathrm{Rep}^{\mathrm{wt}}_q(\mathcal{H}^\bullet)\\
V_\Z^{\pm}\otimes \pi^{\alpha,\alpha^\dagger}_{\lambda\widehat{\beta}_1^\bullet+\mu\widehat{\beta}_2^\bullet}&\mapsto& \C_{(\pm,\lambda\beta_1^\bullet+\mu\beta_2^\bullet)},
\end{array}
\quad (\bullet=\as,\s).
\end{align*}
\end{lemma}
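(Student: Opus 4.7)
The plan is to verify the two parts by direct computation, as the author's phrasing ``checked straightforwardly'' suggests. For part (1), I would first check that the assignment on generators extends to a well-defined algebra map: the defining relations of $\mathcal{H}^\as$ (commutativity of Cartan generators together with $K_0^2=1$ and, on $q$-weight modules, $K_i = q^{H_i}$) map to identities in $\mathcal{H}^\s$ essentially because $q^{{}^\s H_1 - {}^\s H_2} = {}^\s K_1 \, {}^\s K_2^{-1}$ holds on $q$-weight modules. Bijectivity is automatic since the assignment is triangular in the generators. Preservation of the Hopf structure \eqref{Hopf structre} is immediate: each ${}^\as H_i$ is sent to a primitive element, each ${}^\as K_i$ to a grouplike, and the antipode formulas are clearly respected.

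The substantive check is that $\mathcal{R}^\as \mapsto \mathcal{R}^\s$. The grouplike factor $(-1)^{K_0 \otimes K_0}$ is preserved since $K_0 \mapsto {}^\s K_0$. For the Cartan factor, substitute the assignment into $\Omega^\as$:
\begin{align*}
\Omega^\as &\mapsto -\bigl(({}^\s H_1 - {}^\s H_2)\otimes {}^\s H_2 + {}^\s H_2 \otimes ({}^\s H_1 - {}^\s H_2) + 2\, {}^\s H_2 \otimes {}^\s H_2\bigr) \\
&= -\bigl({}^\s H_1 \otimes {}^\s H_2 + {}^\s H_2 \otimes {}^\s H_1\bigr) = \Omega^\s,
\end{align*}
so $q^{\Omega^\as} \mapsto q^{\Omega^\s}$, and the induced functor on $q$-weight modules is a braided tensor equivalence.

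For part (2), I would construct the functor on objects by the assignment indicated, extended by additivity to arbitrary direct sums of Fock modules. The category $\mathrm{Rep}^{\mathrm{wt}}(V_\Z \otimes \pi^{\alpha,\alpha^\dagger})$ is semisimple with simple objects precisely $V_\Z^{\pm} \otimes \pi^{\alpha,\alpha^\dagger}_{\xi}$ ($\xi \in \mathfrak{h}$), matching the simple $q$-weight $\mathcal{H}^\bullet$-modules $\C_{(\pm, \lambda\beta_1^\bullet + \mu\beta_2^\bullet)}$ under the parameterization $\xi = \lambda \widehat{\beta}_1^\bullet + \mu \widehat{\beta}_2^\bullet$. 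On morphisms (which are of even parity), the identification is forced. The tensor structure on the vertex algebra side is given by the fusion of Fock modules, which corresponds to addition of weights, matching the tensor product of one-dimensional $\mathcal{H}^\bullet$-modules. The braiding on the vertex algebra side between simples is precisely the scalar matrix $\mathcal{B}^\bullet$ displayed in \eqref{braiding matrix}, whose $(i,j)$-entry equals $(-1)^{\delta_\bullet} q^{c_{ij}^\bullet} = (-1)^{\delta_\bullet} q^{({}^\bullet H_i, \beta_j^\bullet)}$; this matches exactly the action of the R-matrix $\mathcal{R}^\bullet = (-1)^{K_0 \otimes K_0} q^{\Omega^\bullet}$ on the corresponding tensor product of simple $\mathcal{H}^\bullet$-modules, upon noting that $\Omega^\bullet$ acts by $-({}^\bullet H_i, \beta_j^\bullet)$ suitably paired.

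The only mild obstacle is keeping the sign conventions consistent between the parity in $V_\Z^\pm$ (controlled by $K_0$) and the root data (controlled by $H_i$, $K_i$); once the two factors in $\mathcal{B}^\bullet$ of \eqref{braiding matrix} are matched respectively to the grouplike prefactor and the Cartan exponential of $\mathcal{R}^\bullet$, both equivalences follow. Associativity is trivially preserved since both categories have trivial associator on Fock-type simple objects (all Fock modules for a Heisenberg vertex algebra have trivial associativity isomorphisms), completing the verification.
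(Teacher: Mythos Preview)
Your proposal is correct and follows exactly the approach the paper indicates: the paper gives no proof beyond the remark ``The following lemma is checked straightforwardly,'' and your write-up supplies precisely those straightforward checks, including the key computation that the assignment sends $\Omega^\as$ to $\Omega^\s$ (which is the only nontrivial point). There is nothing to add.
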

Note that $(-1)^{\overline{K}_0\otimes \overline{K}_0}$ in $\mathcal{R}^\bullet$ is understood as the Koszul sign rule for swapping elements in vector superspaces $v\otimes w\mapsto (-1)^{\bar{v}\cdot \bar{w}}w\otimes v$. 

\subsection{Quantum superalgebra}
The highest weights \eqref{highest weights} satisfy $(\widetilde{\beta}_i^\bullet,\widetilde{\beta}_i^\bullet)\leq 1$ and form a positive-definite lattice $\Z\widetilde{\beta}_1^\bullet\oplus \Z\widetilde{\beta}_2^\bullet\subset \C^3(=\mathrm{span}\{x,\alpha,\alpha^\dagger\}$).
By \cite[Theorem 7.1]{L}, $\widehat{S}_1$, $\widehat{S}_2$ generate an action of the Nichols algebra $\mathfrak{N}(X^\bullet)$ associated with the braided vector space $X^\bullet=\C\widetilde{\beta}_1^\bullet\oplus \C\widetilde{\beta}_2^\bullet$, i.e. the object $X^\bullet:=\C_{(+,\beta_1)}\bigoplus \C_{(-,\beta_2)}$ in $\mathrm{Rep}^{\mathrm{wt}}_q(\mathcal{H}^\bullet)$.
By the classification of finite dimensional Nichols algebras of diagonal type \cite{He}, we associate with $\mathcal{B}^\bullet$ in \eqref{braiding matrix} the generalized Dynkin diagram
\begin{center}
\begin{tikzpicture}[x=1mm,y=1mm]
\useasboundingbox (0,0) rectangle (60,8);
\node (A) at (0,0) {};
\node (B) at (20,0) {};
\draw (A) circle (1);
\draw (B) circle (1);
\node at (0,4) {{\small$q^2$}};
\node at (20,4) {{\small$-1$}};
\node at (10,4) {{\small$q^{-2}$}};
\draw [line width=1pt, black] (A)-- (B);
\node (C) at (40,0) {};
\node (D) at (60,0) {};
\draw (C) circle (1);
\draw (D) circle (1);
\node at (40,4) {{\small$-1$}};
\node at (60,4) {{\small$-1$}};
\node at (50,4) {{\small$q^{-2}$}};
\draw [line width=1pt, black] (C)-- (D);
\end{tikzpicture}
\end{center}
named as $\mathbf{A}_2(q^2;\{1\})$ and $\mathbf{A}_2(q^{-2};\mathbb{I}_2)$ in \cite[\S 5.1.11]{AA}, respectively.
\footnote{Strictly speaking, the case $p=1$ in $\mathbf{A}_2(q^{-2};\mathbb{I}_2)$ is a degenerated case and the generalized Dynkin diagram is just a disjoint union of two nodes. Since the Nichols algebras in both cases are the same, we keep the notation $\mathbf{A}_2(q^{-2};\mathbb{I}_2)$.}
Then $\mathfrak{N}(X^\bullet)$ has a presentation by generators and relations \cite{A,AA} as 
{\small
\begin{align}\label{qSerre relations}
\begin{array}{ll}
\text{Generators}\colon &e_1, e_2,\\
\text{Relations}\colon &
\begin{cases} e_1^p=0,\ e_2^2=0,\
e_1^2e_2-(q+q^{-1})e_1e_2e_1+e_2e_1^2=0 & (\bullet=\as), \\
e_1^2=0,\quad e_2^2=0,\quad (e_1e_2+q^{-1}e_2e_1)^p=0 & (\bullet=\s).
\end{cases}
\end{array}
\end{align}}
For $\bullet=\s$, we have $(e_1e_2+q^{-1}e_2e_1)^p=(e_1e_2)^p-(e_2e_1)^p$  thanks to $e_i^2=1$.
\begin{remark}\label{asymmetric degenerate case}
\textup{For $\bullet=\as$ and $p=2$, the relation $e_1^2e_2-(q+q^{-1})e_1e_2e_1+e_2e_1^2=0$ is trivial ($0=0$) and we replace it with $(e_1e_2)^2-q^{-1}e_2e_1)^2=0$, see \cite[\S2]{ST2}.}
\end{remark}
Now, we introduce the braided Drinfeld double \cite{AA,LS} of $\mathfrak{N}(X^\bullet)$ 
$$\mathscr{U}^\bullet:=\mathrm{Drin}_{\mathcal{H}^\bullet}(\mathfrak{N}(X^\bullet),\mathfrak{N}(X^\bullet)^*),\quad (\bullet=\as,\s).$$
Following the notation \cite[Example 6.7]{CLR}, $\mathscr{U}^\bullet$ is generated by
$$ K_0,  H_i,  K_i^{\pm1}, x_i, x_i^*,\quad (i=1,2)$$
subject to the following.
\begin{enumerate}
    \item $ K_0,  H_i, K_i^{\pm1}$ generate $\mathcal{H}^\bullet$ in \eqref{Cartan subalgebra}.
    \item $\{  x_1, x_2\}$, $\{ x_1^*, x_2^*\}$ generate $\mathfrak{N}(X^\bullet)$, i.e. satisfy \eqref{qSerre relations} respectively.
    \item The weight condition holds:
    \begin{align}\label{weght condition}
\begin{array}{lll}
   {[} H_i, x_j{]}=c_{ij}^\bullet  x_j, &  K_i x_j  K_i^{-1}=q^{c_{ij}^\bullet}  x_j,  & K_0x_jK_0^{-1}=(-1)^{p(j)}x_j,\\
   {[}H_i,x_j^*{]}=-c_{ij}^\bullet x_j^*, & K_ix_j^*K_i^{-1}=q^{-c_{ij}^\bullet}x_j^*,  & K_0x_j^*K_0^{-1}=(-1)^{p(j)}x_j^*,
\end{array}
\end{align}
where $p(j)$ is the parity of $x_{j}$, $x_{j}^*$ given by the $j$-th diagonal entry of the first factor in \eqref{braiding matrix}.
    \item The linking relation holds: 
    \begin{align}\label{linking relations}
   x_i^*x_j-\mathcal{B}_{ij}^\bullet x_jx_i^*=\delta_{ij}(1-K_i^2),\quad (i,j=1,2).
\end{align}
\end{enumerate}
The algebra $\mathscr{U}^\bullet$ is naturally a Hopf algebra by the coproduct $\Delta$, counit $\varepsilon$, and the antipode $S$ is given by  
\begin{align*}
\begin{array}{lll}
    \Delta\colon&  x_i\mapsto  K_0^{p(j)}K_i\otimes x_i+x_i\otimes 1,&  x_i^*\mapsto  K_0^{p(j)}K_i\otimes x_i^*+x_i^*\otimes 1,\\
    \varepsilon\colon&  x_i, x_i^*\mapsto 0,&  \\
    S\colon &  x_i\mapsto - K_0^{p(j)}K_i^{-1}x_i,&  x_i^*\mapsto - K_0^{p(j)}K_i^{-1}x_i^*,
\end{array}
\end{align*}
and has a standard universal R-matrix, see e.g.\ \cite[\S 3]{LS}. Therefore, the category $\mathrm{Rep}^{\mathrm{wt}}(\mathscr{U}^\bullet)$ of left $q$-weight $\mathscr{U}^\bullet$-modules is a braided tensor category. 
Since $K_0$ exactly defines the super structure on $\scU^\bullet$ and their modules, we may omit $K_0$ from the definition of $\scU^\bullet$ and regard it as the Hopf superalgebra.
\begin{proposition}[\cite{ST1}] Suppose $p\geq 3$.\\
\textup{(1)} We have isomorphisms $F\colon \mathscr{U}^\as\rightarrow\mathscr{U}^\s$ and $G\colon \mathscr{U}^\s\rightarrow\mathscr{U}^\as$ of associative algebras
\renewcommand{\arraystretch}{1.3}
 \begin{align*}
  \begin{array}{lllll}
      F\colon & K_0, K_1, K_2 \mapsto K_0, K_1K_2, K_2^{-1}, &  & G\colon & K_0, K_1, K_2 \mapsto K_0, K_1K_2, K_2^{-1} \\
      &  H_1, H_2 \mapsto H_1+H_2, -H_2 & & & H_1, H_2 \mapsto H_1+H_2, -H_2\\
      &  x_1\mapsto x_1^*x_2^*+q^{-1}x_2^*x_1^* & & & x_1\mapsto x_2^*x_1^*-qx_1^*x_2^*\\
      &  \displaystyle{x_1^*\mapsto \frac{-(x_1x_2+q^{-1}x_2x_1)}{q-q^{-1}}} & & & \displaystyle{x_1^* \mapsto \frac{-(x_2x_1-qx_1x_2)}{q-q^{-1}}}\\
      & x_2, x_2^*\mapsto  x_2, -K_2^{-2}x_2^* &&& x_2, x_2^*\mapsto  x_2, -K_2^{-2}x_2^*.
  \end{array}
 \end{align*}
 \renewcommand{\arraystretch}{1}
 Moreover, they are inverse to each other. \\
 \textup{(2)} The isomorphisms $F$ and $G$ induce an equivalence of tensor categories
\begin{align*}
\mathrm{Rep}^{\mathrm{wt}}(\mathscr{U}^\as)\simeq \mathrm{Rep}^{\mathrm{wt}}(\mathscr{U}^\s).
\end{align*}
\end{proposition}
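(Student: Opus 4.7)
The plan is to verify the three claims separately: well-definedness of $F$ and $G$ as algebra maps, that they are mutually inverse, and that they induce the tensor equivalence. For well-definedness, the relations defining $\mathscr{U}^\bullet$ split into four families, and I would check each in turn. The Cartan relations in $\mathcal{H}^\bullet$ are immediate from the formulas on $K_i, H_i$: indeed, the map on the Cartan part is exactly the isomorphism $\mathcal{H}^\as \simeq \mathcal{H}^\s$ from Lemma \ref{free field equiv}. The weight conditions \eqref{weght condition} reduce to matrix identities $C^\as \sim C^\s$ under the change of basis, and the linking relations \eqref{linking relations} follow by a direct expansion using the braiding matrices \eqref{braiding matrix}; the relations involving $x_2, x_2^*$ are essentially trivial because they map to themselves up to a $K_2^{-2}$ factor.

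The core computation is to check the Nichols algebra relations \eqref{qSerre relations}. Under $F$, the generator $x_1\in\mathscr{U}^\as$ satisfies $x_1^p=0$, and $F(x_1)=x_1^*x_2^*+q^{-1}x_2^*x_1^*$; so I must verify
\[
\bigl(x_1^*x_2^*+q^{-1}x_2^*x_1^*\bigr)^p=0
\]
in $\mathscr{U}^\s$, which is exactly the symmetric-side Nichols relation on the starred generators (one of the relations in \eqref{qSerre relations} for $\bullet=\s$). Likewise, $F(x_1^*)$ is the analogous quadratic combination with coefficient $-(q-q^{-1})^{-1}$, and the relation $x_1^2=0$ in $\mathscr{U}^\as$ (after $F$) becomes $(x_1x_2+q^{-1}x_2x_1)^2=0$, which must follow from $x_i^2=0$ in $\mathscr{U}^\s$ using the linking relation to move factors past one another. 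The $q$-Serre relation in $\mathscr{U}^\as$ transforms under $F$ into the $\bullet=\s$ relation $(x_1x_2+q^{-1}x_2x_1)^p=0$. Each of these identities is a finite computation in the PBW basis of the Nichols algebra $\mathfrak{N}(X^\s)$. The map $G$ is handled symmetrically.

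To show $F\circ G=\mathrm{id}$ and $G\circ F=\mathrm{id}$, I would check this on generators. On Cartan generators both compositions are obviously the identity (the change-of-basis matrix on $H_1,H_2$ is an involution, and the relation $K_i=q^{H_i}$ propagates). On $x_2,x_2^*$ one uses $K_2\mapsto K_2^{-1}$ twice. The serious check is $G(F(x_1))=x_1$: this expands to $G(x_1^*x_2^*+q^{-1}x_2^*x_1^*)$, and after substituting the formulas for $G$ on $x_1^*,x_2^*,x_2$ and collecting, one uses the linking relation \eqref{linking relations} of $\mathscr{U}^\as$ to reduce the resulting quartic expression back to $x_1$. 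The analogous check for $x_1^*$ then follows by the symmetry of the formulas.

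Finally, for the tensor equivalence, the subtle point is that $F$ and $G$ are algebra but not Hopf isomorphisms — the coproduct of $F(x_1)=x_1^*x_2^*+q^{-1}x_2^*x_1^*$ is not primitive in the naive sense. The mechanism is precisely that of an odd-reflection / Lusztig-type isomorphism for the Weyl groupoid attached to the diagonal Nichols algebra of type $\mathbf{A}_2$: pulling back representations along $F$ transports the tensor product along a Drinfeld twist built from the $R$-matrix factors associated to the isotropic simple root. I expect the main obstacle to be a clean identification of this twist, which I would extract either by mimicking the odd-reflection argument of \cite{ST1} for $U_q(\sll_{2|1})$, or conceptually by noting that both $\mathscr{U}^\as$ and $\mathscr{U}^\s$ arise as braided Drinfeld doubles of Nichols algebras in the same Weyl-equivalence class in Heckenberger's classification \cite{He,AA}, so their categories of $q$-weight modules are canonically equivalent as braided tensor categories; the maps $F,G$ realize this equivalence at the level of algebras.
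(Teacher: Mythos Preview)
Your outline for part (1) matches the paper's approach, which simply says ``Direct computation.'' One slip: you write ``the relation $x_1^2=0$ in $\mathscr{U}^\as$ (after $F$) becomes $(x_1x_2+q^{-1}x_2x_1)^2=0$,'' but for $p\geq 3$ there is no relation $x_1^2=0$ in $\mathscr{U}^\as$; the relevant relation is $(x_1^*)^p=0$, whose image under $F$ is $(x_1x_2+q^{-1}x_2x_1)^p=0$ up to scalar, which is indeed the symmetric-side Nichols relation. This is a harmless confusion in an outline of a routine check.

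For part (2) the paper takes a more concrete route than you propose. Rather than appealing to Weyl-groupoid equivalence of Nichols algebras or extracting a twist from \cite{ST1}, it writes the twist down explicitly:
\[
\Phi = 1\otimes 1 - {}^\s K_0\,{}^\s K_2^{-1}\,{}^\s x_2^*\otimes {}^\s x_2,
\]
verifies the intertwining identity $\Delta^\s(F(v))\cdot\Phi = \Phi\cdot(F\otimes F)(\Delta^\as(v))$ for all generators $v$, and then checks that $\Phi$ satisfies the cocycle condition $(\Delta^\s\otimes 1)(\Phi)\cdot(\Phi\otimes 1)=(1\otimes\Delta^\s)(\Phi)\cdot(1\otimes\Phi)$. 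This makes $(F^*,\Phi)$ a tensor equivalence directly, with no external input. Your conceptual argument is correct in spirit and explains \emph{why} such a $\Phi$ exists, but the paper's explicit formula is what actually closes the proof; your ``main obstacle'' of identifying the twist is resolved by this one-line expression, after which everything is again a finite computation.
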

\proof
(1) Direct computation. 
(2) For $\scU^\s$-modules $M$, let $F^*M$ denote the $\scU^\as$-modules $M$ on which $\scU^\as$ acts via $F$. 
Clearly, it gives an equivalence of abelian categories
$F^*\colon \mathrm{Rep}^{\mathrm{wt}}(\scU^\s)\xrightarrow{\simeq}\mathrm{Rep}^{\mathrm{wt}}(\scU^\as)$.
Let $\delta^\bullet$ denote the coproduct for $\scU^\bullet$. Then it is straightforward to show that
\begin{align*}
    \Delta^\s (F(v))\cdot \Phi=\Phi\cdot (F\otimes F)(\Delta^\as(v)),\quad \Phi=1\otimes1-{}^\s K_0{}^\s K_2^{-1}{}^\s x_2^*\otimes{}^\s x_2
\end{align*}
for all $v\in \scU^\as$ ans thus 
$\Phi\colon F^*(M)\otimes F^*(N)\xrightarrow{\simeq} F^*(M\otimes N)$
are isomoprhisms of $\scU^\as$-modules for $\scU^\s$-modules $M,N$. Since $\Phi$ satisfies the cocycle condition
\begin{align*}
    (\Delta^\s\otimes 1)(\Phi)\cdot (\Phi\otimes 1)=(1\otimes \Delta^\s)(\Phi)\cdot (1\otimes \Phi), 
\end{align*}
$(F^*,\Phi)$ gives an equivalence of tensor categories.
\endproof

The unrolled restricted quantum supergroup $u_q^H(\sll_{2|1})$ ($q=e^{\pi\ssqrt{-1}/p}$, $p\geq2$) is the Hopf (super)algebra generated by 
\begin{align*}
   E_1, F_1, H_1, H_2, K_1^{\pm 1}, K_2^{\pm 1}\ \text{(even)},\quad E_2, F_2\ \text{(odd)}
\end{align*}
with relations 
\begin{align*}
    & H_iH_j=H_jH_i,\quad H_iK_j^{\pm1}=K_j^{\pm1}H_i,\quad K_i^{\pm1}K_j^{\pm1}=K_j^{\pm1}K_i^{\pm1},\\
    & {[} H_i, E_j{]}=c_{ij}^\as  E_j,\quad K_i E_j  E_i^{-1}=q^{c_{ij}^\as}  E_j,\\
    & {[} H_i, F_j{]}=-c_{ij}^\as  F_j,\quad K_i F_j  K_i^{-1}=q^{-c_{ij}^\as}  F_j,\\
    &e_1^p=0,\quad  e_2^2=0,\quad e_1^2e_2-(q+q^{-1})e_1e_2e_1+e_2e_1^2=0,\quad (e=E,F), \\
    &{[}E_i,F_j{]}=\delta_{i,j}\displaystyle{\frac{K_i-K_i^{-1}}{q-q^{-1}}},
\end{align*}
where we use the super commutator $[X,Y]=XY-(-1)^{p(X)p(Y)}YX$. The coproduct $\Delta$, counit $\varepsilon$, and the antipode $S$ are given by  
\renewcommand{\arraystretch}{1.3}
\begin{align*}
\begin{array}{lll}
    \Delta\colon&  H_i\mapsto  H_i\otimes 1+1\otimes  H_i,&  K_i\mapsto  K_i\otimes  K_i,\\
     &E_i\mapsto E_i\otimes 1+K_i^{-1}\otimes E_i,& F_i\mapsto F_i\otimes K_i+1\otimes F_i\\
    \varepsilon\colon&  H_i, E_i, F_i\mapsto 0,&  K_j\mapsto 1,\\
    S\colon &  H_i\mapsto - H_i,&  K_j\mapsto  K_j,\\
    & E_i\mapsto -K_iE_i, & F_i\mapsto -F_iK_i^{-1}.
\end{array}
\end{align*}
 \renewcommand{\arraystretch}{1}
We note that the word ``unrolled" corresponds to the additional generators $H_i$'s compared with the usual one $U_q(\sll_{2|1})$ and ``restricted" to the additional relations $E_1^p=F_1^p=0$. 
Th elements $E_1^p, F_1^p$ are not central in $U_q(\sll_{2|1})$, but the left ideals $(E_1^p)$, $(F_1^p)$ are still two-sided, whereas $E_1^{2p}, F_1^{2p}$ and central in $U_q(\sll_{2|1})$, see e.g. \cite{AAB,ACF,Ha}.
\begin{proposition}[\cite{ST1}]
 For $p\geq 3$, we have an isomorphism of Hopf algebras 
 \begin{align*}
    \mathscr{U}^\as\xrightarrow{\simeq} u_q^H(\sll_{2|1}).
 \end{align*}
\end{proposition}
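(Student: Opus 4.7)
The plan is to construct an explicit Hopf superalgebra isomorphism $\phi\colon\mathscr{U}^{\as}\to u_q^H(\sll_{2|1})$ by matching generators and then verifying the defining relations. The parity generator $K_0$ is absorbed into the super structure on both sides, as the remark preceding the statement indicates.

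First I would fix the Cartan part by $H_i\mapsto H_i$ and $K_i\mapsto K_i$, and set $x_i\mapsto c_i E_i$ and $x_i^{\ast}\mapsto c_i' F_i K_i$ for scalars $c_i,c_i'\in\C^\times$. The weight conditions \eqref{weght condition} translate automatically using $K_i E_j K_i^{-1}=q^{c_{ij}^{\as}}E_j$ and the analogous formulas for $F_j K_j$. The off-diagonal linking relations $x_i^{\ast} x_j - q^{-1} x_j x_i^{\ast} = 0$ for $i\neq j$ become scalar multiples of $[F_i,E_j]_{\mathrm{super}}K_i$, which vanish since $E_j$ and $F_i$ supercommute for $i\neq j$. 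The diagonal linking relations fix the scalars: in the $i=j=1$ case one computes $x_1^{\ast} x_1 - q^2 x_1 x_1^{\ast} = c_1 c_1' q^2 (F_1 E_1 - E_1 F_1) K_1 = c_1 c_1' q^2(1-K_1^2)/(q-q^{-1})$, giving $c_1 c_1' = q^{-1}(1-q^{-2})$; the odd $i=j=2$ case analogously yields $c_2 c_2' = -(q-q^{-1})$.

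Next I would verify the Nichols algebra relations \eqref{qSerre relations}. Since both algebras have the same Cartan matrix $C^{\as}$ and the same parity pattern, the Andruskiewitsch--Angiono presentation of $\mathfrak{N}(X^{\as})$ implies that the images $\{c_i E_i\}$ and (after commuting $K$-factors past) $\{c_i' F_i K_i\}$ satisfy the corresponding relations in $u_q^H(\sll_{2|1})$. The truncations $E_1^p = 0 = F_1^p$ in $u_q^H$ match the Heckenberger truncation of $\mathfrak{N}(X^{\as})$; the hypothesis $p\geq 3$ ensures that the quantum Serre relation is non-degenerate (cf.\ Remark \ref{asymmetric degenerate case}).

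For the coalgebra structure, $\Delta(x_i)=K_i\otimes x_i + x_i\otimes 1$ on $\mathscr{U}^{\as}$ versus $\Delta(E_i)=E_i\otimes 1 + K_i^{-1}\otimes E_i$ on $u_q^H$ differ by a Hopf twist: absorbing a factor of $K_i$ into the rescaling (equivalently, writing $E_i\mapsto K_i E_i$) brings them into agreement, and similarly for $\Delta(x_i^{\ast})$ versus $\Delta(F_i K_i)$. The antipode and counit then match automatically, and bijectivity of $\phi$ follows from the triangular decomposition $\mathfrak{N}^{-}\otimes\mathcal{H}^{\as}\otimes\mathfrak{N}^{+}$ on both sides, on each factor of which $\phi$ is visibly an isomorphism by the previous steps. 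The hard part will be the careful bookkeeping of this Hopf twist together with the $K_i$-factors accompanying each rescaling, verifying that a single choice of scalars simultaneously satisfies the linking relations, the Serre relations (after moving $K$-factors), and the twisted coproducts.
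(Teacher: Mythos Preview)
Your overall strategy---match generators explicitly, verify weight/linking/Serre relations, then check the coalgebra structure---is exactly what the paper does. The algebra side of your argument is fine.

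The one place where your argument does not go through as written is the coalgebra matching. Your proposed fix, sending $x_i\mapsto c_iK_iE_i$, gives
\[
\Delta(K_iE_i)=(K_i\otimes K_i)(E_i\otimes 1+K_i^{-1}\otimes E_i)=K_iE_i\otimes K_i+1\otimes K_iE_i,
\]
which is the \emph{co-opposite} of $\Delta(x_i)=K_i\otimes x_i+x_i\otimes 1$, not equal to it; so no rescaling by a grouplike alone repairs the mismatch, and calling this a ``Hopf twist'' is not the right mechanism. The paper resolves exactly this point by first introducing a Chevalley-type algebra automorphism $\omega$ of $\mathscr{U}^{\as}$ sending $K_i\mapsto K_i^{-1}$, $H_i\mapsto -H_i$, and $\widehat{E}_i\leftrightarrow\widehat{F}_i$ (up to a sign in the odd slot), and then writing the Hopf isomorphism $u_q^H(\sll_{2|1})\to\mathscr{U}^{\as}$ as $E_i\mapsto(-1)^{\delta_{i,2}}\omega(\widehat{F}_iK_i)$, $F_i\mapsto\omega(K_i^{-1}\widehat{E}_i)$, $H_i\mapsto H_i$, $K_i\mapsto K_i$. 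The inversion $K_i\mapsto K_i^{-1}$ built into $\omega$ is precisely what converts the $K_i^{-1}\otimes E_i$ convention of $u_q^H$ into the $K_i\otimes x_i$ convention of $\mathscr{U}^{\as}$. Once you replace your ``Hopf twist'' sentence by this device, the rest of your outline (bijectivity via the triangular decomposition, etc.)\ lines up with the paper's proof.
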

\proof
Set elements $\widehat{E}_j,\widehat{F}_j\in \mathscr{U}^\as$ by the formulae
\begin{align*}
    x_1=\widehat{E}_1,\quad x_2=\widehat{E}_2,\quad x_1^*=(q-q^{-1})K_1\widehat{F}_1,\quad x_2^*=-(q-q^{-1})K_2\widehat{F}_2.
\end{align*}
Then we have an automorphism $\omega\colon \mathscr{U}^\as\xrightarrow{\simeq} \mathscr{U}^\as$ such that 
\begin{align*}
K_0\mapsto K_0,\quad K_i\mapsto K_i^{-1},\quad H_i\mapsto -H_i,\quad 
\widehat{E}_i\mapsto \widehat{F}_i,\quad 
\widehat{F}_i\mapsto (-1)^{\delta_{i,2}}\widehat{E}_i,\quad (i=1,2).
\end{align*}
Then, it is straightforward to show that we have an isomorphism of Hopf (super)algebras $u_q^H(\sll_{2|1}) \xrightarrow{\simeq} \scU^\as$ such that
\begin{align*}
H_i\mapsto H_i,\quad K_i\mapsto K_i,\quad 
E_i\mapsto (-1)^{\delta_{i,2}}\omega(\widehat{F}_iK_i),\quad F_i\mapsto \omega(K_i^{-1}\widehat{E}_i).
\end{align*}
This completes the proof.
\endproof
Since neither $u^H_q(\sll_{2|1})$ nor $\scU^\as$ are defined for the degenerate case $p=1$, i.e. $q=-1$, we define
$u^H_q(\sll_{2|1})$ as $\mathscr{U}^\s$ in this case.
\begin{remark}
\textup{For $p=2$, $\mathscr{U}^\as$ is related to $\overline{\mathcal{U}}^H_{\mathbf{i}}(\sll_3)$,in \cite[\S2]{CRR}.}
\end{remark}

To represent $\mathscr{U}$ for the remaining cases \eqref{extension pairs} requires the formulation of quasi-Hopf algebra \cite{GLO,CGR} as the associators of $\mathrm{Rep}^{\mathrm{wt}}(A)$ are nontrivial and thus $\mathcal{H}$ is already a quasi-Hopf algebra. Here, we present $\mathcal{H}$ as $\C$-algebra for $A=V_\Z\otimes V_{\ssqrt{p}A_1}\otimes \alpha^\dagger$ based on \eqref{Cartan subalgebra}. 
The category $\mathrm{Rep}^{\mathrm{wt}}(A)$ is semisimple with simple objects 
\begin{align*}
V_{\Z}^\pm\otimes V_{-\frac{a}{2\ssqrt{p}}\alpha+\sqrt{p}A_1}\otimes \pi^{\alpha^\dagger}_{-\frac{b}{2\ssqrt{p}}\alpha^\dagger},\quad (a\in \Z_{2p}, b\in \C).
\end{align*}
By Lemma \ref{free field equiv} (2), it corresponds to the $\mathcal{H}^\as$-module
\begin{align*}
   M_{a,b}:=\bigoplus_{n\in \Z}\C_{(\pm,\frac{a+2pn}{2}\beta^\as_1+b\gamma)},\quad \gamma:=\frac{1}{2}\beta^\as_1+\beta^\as_2.
\end{align*}
Let us introduce $\overline{H}_2=-(H_1+2H_2)$, $\overline{K}_2=(K_1K_2^2)^{-1}$. Then we find that 
$$K_1=q^a,\quad \overline{H}_2=b,\quad \overline{K}_2=q^b$$
hold as operators. We introduce the following subquotient of $\mathcal{H}^\as$:
\begin{align*}
\overline{\mathcal{H}}:=\C[\overline{H}_2,K_0, K_1^{\pm1},\overline{K}_2^{\pm}]/(K_0^2-1,K_1^{2p}-1). 
\end{align*}
Then we obtain the equivalence of abelian categories 
\begin{align*}
   \mathrm{Rep}^{\mathrm{wt}}(V_\Z\otimes V_{\ssqrt{p}A_1}\otimes \pi^{\alpha^\dagger})\simeq \mathrm{Rep}^{\mathrm{wt}}(\overline{\mathcal{H}}).
\end{align*}
We can find $\mathcal{H}$ as $\C$-algebras similarly for the cases $\FT(\sll_2)$ and $\mathcal{M}_p(\sll_2)$.
Note that $\overline{\mathcal{H}}$ is uprolled only for the direction $\C\beta^\as_1$ and still has the primitive element $\overline{H}_2$. This reflects the continuous parameter $b$ for the simple modules $M_{a,b}$. 
Through the equivalences of categories in Theorem\ref{Feigin-Semikhatov duality}, this corresponds with to the fact that $\FT(\sll_2)$ has a continuous family of simple modules $\afWb{r}{s}{b}$, see Theorem \ref{simplicity of X}.

\subsection{Logarithmic Kazhdan--Lusztig correspondence}
The following was conjectured by Semikhatov--Tipunin \cite{ST1} for $\FT(\sll_2)$. 
\begin{conjecture}[Semikhatov--Tipunin \cite{ST1}] \hspace{0mm}\\
\textup{(1)} $\mathrm{Rep}^{\mathrm{wt}}(V)$ with $V$ as in \eqref{extension pairs} has a structure of braided tensor category in the sense of Huang--Lepowsky--Zhang and, moreover,
\begin{align*}
    \mathrm{Rep}^{\mathrm{wt}}(V)\simeq {}^{\fN}_\fN\mathcal{YD}(\mathrm{Rep}^{\mathrm{wt}}(A))
\end{align*}
as braided tensor categories.\\
\textup{(2)} For $V=s\mathcal{M}_p(\sll_{2|1})$, we have, in particular,  
\begin{align*}
    \mathrm{Rep}^{\mathrm{wt}}(s\mathcal{M}_p(\sll_{2|1}))\simeq \mathrm{Rep}^{\mathrm{wt}}(u_q^H(\sll_{2|1})).
\end{align*}
\end{conjecture}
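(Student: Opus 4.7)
The plan is to prove the degenerate case $p=1$ in full (this is Theorem D as stated in the introduction) and to outline the expected strategy for general $p$. The $p=1$ case rests on lifting the already-established logarithmic Kazhdan--Lusztig correspondence for the singlet algebra $\mathcal{M}(2)$ to every $V$ in \eqref{extension pairs}. First, combine Proposition \ref{decomposition for p=1} with the decomposition \eqref{decomoposition of betagamma} of $\beta\gamma$ and the Kazama--Suzuki construction \eqref{Kazama-Suzuki dual} to realize each $V$ in \eqref{extension pairs} as an abelian-intertwining (simple-current) extension of a tensor product of $\mathcal{M}(2)$'s with Heisenberg factors. The starting equivalence $\mathrm{Rep}^{\mathrm{wt}}(\mathcal{M}(2))\simeq \mathrm{Rep}^{\mathrm{wt}}(u_{\mathbf{i}}^H(\sll_2))$ is that of \cite{CLR}, and tensor products are handled by the Deligne-product identification \cite[Thm.~3.15, 4.11]{M}. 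The existence of enough projectives in $\mathcal{O}_{\mathcal{M}(2)}^T$ from \cite{CMY,CMY3}, together with the monodromy-freeness of the relevant simple currents $N_{2,0}, N_{1,1}$ verified just before the Conjecture, ensures that the induction functor $\mathrm{Ind}_A$ of \cite{CKM,CMY2} lands inside a well-behaved braided tensor category of local modules in $\mathrm{Ind}(\mathcal{D}^0)$.

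Applying $\mathrm{Ind}_A$ on both the vertex-algebraic and quantum-group sides then produces the equivalence $\mathrm{Rep}^{\mathrm{wt}}(V)\simeq \mathrm{Rep}^{\mathrm{wt}}(\mathscr{U}^\bullet)$, and the identification $\mathscr{U}^\s\simeq u_{-1}^H(\sll_{2|1})$ has already been recorded via the braided Drinfeld double of $\mathfrak{N}(X^\s)$. For the algebra $s\mathcal{M}_1(\sll_{2|1})$ explicitly named in the Conjecture, one either performs this simple-current induction directly or transports from $\mathcal{M}_1(\sll_2)$ along the Kazama--Suzuki equivalence of Theorem \ref{Feigin-Semikhatov duality}; Lemma \ref{free field equiv} shows that the Nichols algebra is preserved under that equivalence, so the quantum-group side is again $\mathscr{U}^\s$. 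The net effect is the stated equivalence $\mathrm{Rep}^{\mathrm{wt}}(s\mathcal{M}_1(\sll_{2|1}))\simeq \mathrm{Rep}^{\mathrm{wt}}(u_{-1}^H(\sll_{2|1}))$.

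For general $p\geq 2$ the intended framework is Laugwitz's theorem \cite[Thm.~7.1]{L}: the screenings of Proposition \ref{super-side} generate a finite-dimensional Nichols algebra action $\mathfrak{N}(X^\bullet)\curvearrowright\mathrm{Rep}^{\mathrm{wt}}(A)$ coming from the generalized Dynkin diagrams $\mathbf{A}_2(q^2;\{1\})$ and $\mathbf{A}_2(q^{-2};\mathbb{I}_2)$, and one expects a chain of braided tensor equivalences
\[
\mathrm{Rep}^{\mathrm{wt}}(V)\simeq {}^{\fN}_\fN\mathcal{YD}(\mathrm{Rep}^{\mathrm{wt}}(A))\simeq \mathrm{Rep}^{\mathrm{wt}}(\mathscr{U}^\bullet),
\]
the second equivalence being the braided Drinfeld double identification already performed above. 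Step (iii) of the Laugwitz recipe (kernel-of-screenings equals commutant of $\mathfrak{N}$) is literally Proposition \ref{super-side}, and step (ii) (splittability of $\iota\colon \mathcal{U}_A^0\hookrightarrow \mathcal{U}_A$, $A$ an ind-object) will follow from Theorem \ref{theorem: associated variety} and its super analogue together with the Nichols-algebra finiteness.

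The principal obstacle is step (i), namely the existence of a Huang--Lepowsky--Zhang braided tensor structure on $\mathrm{Rep}^{\mathrm{wt}}(V)$ for $p\geq 2$; this requires a $C_1$-cofiniteness-type control that is presently unavailable in our logarithmic setting. A secondary obstacle specific to $V=\FT(\sll_2)$ and $V=\mathcal{M}_p(\sll_2)$ is that the Heisenberg factor of $A$ already forces the auxiliary Hopf algebra $\overline{\mathcal{H}}$ to be genuinely quasi-Hopf, so one must work throughout in the quasi-Hopf variant of the braided-Drinfeld-double formalism developed in \cite{CGR,CLR,GN} for the Virasoro triplet. Once these two analytic/coherence inputs are in place, Heckenberger's classification pins down $\mathfrak{N}(X^\bullet)$ uniquely, and the identification of $\mathscr{U}^\bullet$ with (a quasi-Hopf modification of) $u_q^H(\sll_{2|1})$ in the Propositions immediately above finishes the argument; hence the hard part will be precisely the vertex-tensor-categorical analysis at higher $p$, while the Hopf-algebraic half of the correspondence is essentially already in hand.
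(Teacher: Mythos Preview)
Your proposal for $p=1$ is correct and follows essentially the same approach as the paper's proof of Theorem \ref{log KL theorem}: realize each $V$ as a simple current extension of $\mathcal{M}(2)\otimes\mathcal{M}(2)\otimes\pi$, invoke the \cite{CLR} equivalence $\mathrm{Rep}^{\mathrm{wt}}(\mathcal{M}(2))\simeq\mathrm{Rep}^{\mathrm{wt}}(u^H_{\mathbf{i}}(\sll_2))$ on each factor, and then lift along the lattice extension via the uprolling machinery of \cite[\S 10]{CLR}. The paper carries this out explicitly for $s\mathcal{M}_1(\sll_{2|1})$ by the coordinate change $u_i=-\widetilde{\beta}_i^{\s}$, $v=x-\alpha^\dagger$, which makes the decomposition \eqref{description of SCE} and the extension lattice $L=\Z x$ transparent, and then remarks that $\mathcal{M}_1(\sll_2)$ and $\mathrm{FT}_1(\sll_2)$ follow by the same argument; you instead emphasize the categorical side (Deligne products, monodromy-freeness, induction functors) and offer Kazama--Suzuki transport as an alternative route, but the substance is the same.

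Two small corrections: the reference \cite{L} is Lentner, not Laugwitz; and the monodromy-freeness of $N_{2,0},N_{1,1}$ you cite was checked for $\mathrm{FT}_1(\sll_2)$ in \S\ref{section: the case p=1}, whereas for $s\mathcal{M}_1(\sll_{2|1})$ the paper works directly with the lattice $\Z x$ without appealing to that computation. Your discussion of the obstacles for $p\geq 2$ (existence of the HLZ tensor structure, and the quasi-Hopf modification needed when $A$ contains a lattice factor) accurately reflects why the statement remains conjectural in the paper.
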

\begin{theorem} \label{log KL theorem}
The conjecture is true for $p=1$. In particular, we have an equivalence of braided tensor categories
\begin{align*}
    \mathrm{Rep}^{\mathrm{wt}}(s\mathcal{M}_1(\sll_{2|1}))\simeq \mathrm{Rep}^{\mathrm{wt}}(u_{-1}^H(\sll_{2|1})).
\end{align*}
\end{theorem}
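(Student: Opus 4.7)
The plan is to reduce the statement for $p=1$ to the known logarithmic Kazhdan--Lusztig correspondence for the singlet algebra $\mathcal{M}(2)$ via simple current extensions, and then transport the resulting equivalences across the Kazama--Suzuki duality of Theorem \ref{Feigin-Semikhatov duality}. The key observation is that, by Proposition \ref{decomposition for p=1} and its analogue on the super side (obtained from the free field realization \eqref{free field realizations} together with the Kazama--Suzuki coset \eqref{Kazama-Suzuki dual}), each of the four vertex (super)algebras $\FT_1(\sll_2)$, $\mathcal{M}_1(\sll_2)$, $s\W_1(\sll_{2|1})$, $s\mathcal{M}_1(\sll_{2|1})$ is a simple current extension of a tensor product of $\mathcal{M}(2)$ with Heisenberg factors by the cyclic group generated by the simple current modules $\mathcal{M}_n$ described in \S\ref{subsection: (1,p)-model}. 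This places all four algebras in a uniform framework to which the induction machinery of \cite{CKM,CMY,CMY2,CMY3,CMY4} applies.

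First I would invoke the braided tensor equivalence
\[
\mathrm{Rep}^{\mathrm{wt}}(\mathcal{M}(2))\simeq \mathrm{Rep}^{\mathrm{wt}}(u_i^H(\sll_2))
\]
from \cite{CLR}, together with its refinement identifying the subcategory $\mathcal{O}_{\mathcal{M}(2)}^T$ of typical-monodromy modules (used in \S \ref{section: the case p=1}) with the corresponding subcategory of unrolled quantum group modules. I would then form the external Deligne product $\mathcal{O}_{\mathcal{M}(2)}^T\boxtimes \mathcal{O}_{\mathcal{M}(2)}^T\boxtimes \mathrm{Rep}^{\mathrm{wt}}(\pi^h)$, which by \cite[Theorem 3.15, 4.11]{M} is braided tensor equivalent to the category $\mathcal{D}$ of \S\ref{section: the case p=1}, and on the quantum group side matches a corresponding Deligne product of unrolled quantum groups together with a lattice-type Hopf algebra. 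Next I would apply the induction functor $\mathrm{Ind}_{\FT_1(\sll_2)}=\FT_1(\sll_2)\boxtimes_{-}$ (and its analogues for the three other algebras), noting that the simple current extension is realized on both sides by the same abelian group of simple currents; this transports the equivalence between the base categories to an equivalence on the local module categories. The identification of the extended Nichols algebra structure follows from the description of $\FT_1(\sll_2)$ by the two screening charges $S_1,S_2$ of Proposition \ref{decomposition for p=1} and its super-dual version \eqref{free field realizations}, which according to \cite[Theorem 7.1]{L} generate the Nichols algebra $\mathfrak{N}(X^{\s})$ of diagram $\mathbf{A}_2(q^{-2};\mathbb{I}_2)$ at $q=-1$ recorded in \eqref{qSerre relations}.

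Having established $\mathrm{Rep}^{\mathrm{wt}}(s\mathcal{M}_1(\sll_{2|1}))\simeq {}^{\mathfrak{N}}_{\mathfrak{N}}\mathcal{YD}(\mathrm{Rep}^{\mathrm{wt}}(V_{\Z}\otimes\pi^{\alpha,\alpha^\dagger}))$, I would identify the right-hand side with $\mathrm{Rep}^{\mathrm{wt}}(\mathscr{U}^{\s})$ using \cite{GLO,LS}: the Yetter--Drinfeld category over a Nichols algebra in a braided category represented by a (quasi-)Hopf algebra $\mathcal{H}$ is the module category of the braided Drinfeld double $\mathrm{Drin}_{\mathcal{H}}(\mathfrak{N},\mathfrak{N}^*)$, which here is by definition $\mathscr{U}^{\s}$. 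Since by our convention $\mathscr{U}^{\s}=u_{-1}^H(\sll_{2|1})$ for $p=1$, this yields the stated equivalence. The Feigin--Semikhatov duality of Theorem \ref{Feigin-Semikhatov duality}, applied to the singlet subalgebras, guarantees the compatibility on the vertex algebra side, so the correspondence indeed lands in the asserted block structure for $s\mathcal{M}_1(\sll_{2|1})$.

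The main obstacle will be verifying that the induction functor $\mathrm{Ind}_{\FT_1(\sll_2)}$ (and its analogues) is essentially surjective onto $\mathrm{Rep}^{\mathrm{wt}}(V)$ and that the resulting braided tensor structure agrees with the Huang--Lepowsky--Zhang structure on the weight module category of the extension. Concretely, one must show that every weight module over $V$ arises by induction from $\mathcal{D}^0$, which requires controlling the action of the simple currents $\mathcal{M}_n$ via the monodromy condition and showing that the needed $C_1$-cofinite hypotheses from \cite{CMY2} survive in the relevant non-semisimple blocks. A secondary technical point is reconciling the degenerate form of the $\mathbf{A}_2(q^{-2};\mathbb{I}_2)$ diagram at $q=-1$ with the presentation \eqref{qSerre relations} (cf.\ Remark \ref{asymmetric degenerate case}), which is why the argument is restricted to the $p=1$ case where the defining relation $(e_1e_2+q^{-1}e_2e_1)^p=0$ reduces cleanly to a quadratic relation and the Nichols algebra remains eight-dimensional.
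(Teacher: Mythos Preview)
Your proposal is correct and follows essentially the same route as the paper: reduce to the known correspondence for $\mathcal{M}(2)$ from \cite{CLR}, take a Deligne product with a second $\mathcal{M}(2)$ and a Heisenberg factor, then pass to the simple current extension. The paper carries this out directly on the super side using the decomposition $s\mathcal{M}_1(\sll_{2|1})\simeq \bigoplus_{n\in\Z}\mathcal{M}_n\otimes\mathcal{M}_n\otimes\pi^v_n$ (obtained from Proposition~\ref{super-side} and \eqref{decomposition of affine singlet for P=1}) and identifies the uprolling along the rank-one lattice $L=\Z x$ with $u_{-1}^H(\sll_{2|1})$, citing \cite[\S10]{CLR} for the passage from the base equivalence to the extension; your detour through the Yetter--Drinfeld description and then \cite{GLO,LS} is equivalent but slightly less direct, and your invocation of Theorem~\ref{Feigin-Semikhatov duality} is not actually needed since one works on the super side throughout. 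The obstacles you flag (essential surjectivity of induction, compatibility with the HLZ structure) are precisely what \cite[\S10]{CLR} packages, so rather than reproving them you may simply cite that reference.
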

\proof
We set $u_i=-\widetilde{\beta}_i^\s$, $(i=1,2)$ and $v=x-\alpha^\dagger$. Then the screening operators $\widehat{S}_i$ in Proposition \ref{super-side} are expressed as $\int Y(e^{-u_i},z)dz$ and thus we have the following embeddings
\begin{align*}
\xymatrix@=18pt{
s\mathcal{M}_1(\sll_{2|1}) \ar[r]^-{\simeq} & \displaystyle{\bigcap_{i=1,2}\ker \widehat{S}_i} \ar@{}[r]|*{\subset}& V_\Z\otimes \pi^{\alpha,\alpha^\dagger} \\
 \mathcal{M}(2)\otimes \mathcal{M}(2)\otimes \pi^v \ar@{}[u]|{\bigcup} \ar[r]^-{\simeq} & \displaystyle{\bigcap_{i=1,2}\ker \widehat{S}_i} \ar@{}[r]|*{\subset} & \pi^{u_1}\otimes \pi^{u_2}\otimes\pi^{v} \ar@{}[u]|{\bigcup}
}
\end{align*}
and decompositions
\begin{align}\label{description of SCE}
    s\mathcal{M}_1(\sll_{2|1})\simeq \displaystyle{\bigoplus_{n\in \Z} \mathcal{M}_n\otimes \mathcal{M}_n\otimes \pi^v_n},\ V_\Z\otimes \pi^{\alpha,\alpha^\dagger}\simeq \displaystyle{\bigoplus_{n\in\Z}\pi^{u_1}_{-nu_1}\otimes \pi^{u_2}_{-nu_2}\otimes\pi^{v}_{-nv}}
\end{align}
by Proposition \ref{super-side} and \eqref{decomposition of affine singlet for P=1}.
By \cite{CLR}, we have an equivalence
\begin{align}\label{equiv for the singlet}
    \mathrm{Rep}^{\mathrm{wt}}(\mathcal{M}(p))\xrightarrow{\simeq} \mathrm{Rep}^{\mathrm{wt}}(u^H_q(\sll_2))
\end{align}
of braided tensor categories with
\begin{align*}
    u^H_q(\sll_2)=\langle H_1, K_1^{\pm 1}, x_1, x_1^* \rangle \subset u^H_q(\sll_{2|1}),
\end{align*}
given by the braided Drinfeld double of the Nichols algebra for the braided vector space $\C \widetilde{\beta}_1^\as$ over the  Hopf algebra $\C[H_1,K_1^{\pm}]$.
By using \eqref{equiv for the singlet} with $p=2$ for each copy of $\mathcal{M}(2)$ inside $s\mathcal{M}_1(\sll_{2|1})$, we obtain 
\begin{align}\label{underlying equivalence}
    \mathrm{Rep}^{\mathrm{wt}}(\mathcal{M}(2)\otimes \mathcal{M}(2)\otimes \pi^v)\xrightarrow{\simeq} \mathrm{Rep}^{\mathrm{wt}}(\widetilde{\scU})
\end{align}
with $\widetilde{\scU}=\mathrm{Drin}_{\widetilde{\mathcal{H}}}(\mathfrak{N}(X^\s),\mathfrak{N}(X^\s)^*)$ 
over the Hopf algebra $\widetilde{\mathcal{H}}=\C[H_0,H_i,K_i^{\pm1} \mid i=1,2]$.
Here $H_0$ corresponds to $v$. 
By \eqref{description of SCE}, $s\mathcal{M}_1(\sll_{2|1})$ is a simple current extension of $\mathcal{M}(2)\otimes \mathcal{M}(2)\otimes \pi^v$ along the lattice 
$$L=\Z[u_1+u_2+v]=\Z x,$$
the equivalence \eqref{underlying equivalence} induces the desired equivalence 
\begin{align*}
    \mathrm{Rep}^{\mathrm{wt}}(s\mathcal{M}_1(\sll_{2|1}))\xrightarrow{\simeq} \mathrm{Rep}^{\mathrm{wt}}(u_{-1}^H(\sll_{2|1}))
\end{align*}
by \cite[\S 10]{CLR} since $u_{-1}^H(\sll_{2|1})$ coincides with the modification of $\widetilde{\scU}$ by replacing $\widetilde{\mathcal{H}}$ with $\mathcal{H}$ according to the uprolling along $L$. 
We can show the case $\mathcal{M}_1(\sll_2)$ by the same argument by using \eqref{decomposition of affine singlet for P=1} and then the case $\mathrm{FT}_1(\sll_2)$. This completes the proof.

\endproof

\end{document}